\DeclareFontFamily{U}{wncy}{}
    \DeclareFontShape{U}{wncy}{m}{n}{<->wncyr10}{}
    \DeclareSymbolFont{mcy}{U}{wncy}{m}{n}
    \DeclareMathSymbol{\Sha}{\mathord}{mcy}{"58} 
\newenvironment{sbm}
    {\left[ \begin{smallmatrix}
    }
    { 
     \end{smallmatrix} \right]
    }
\newcommand{\Ta}{
\begin{tikzpicture}[scale=.2]
  \draw (0,0) circle(0.3)  [fill=orange!50] ;
  \draw (0.5,0.87) circle(0.3)  [fill=orange!50];
  \draw  (1,0)  circle(0.3) [fill=orange!50];
\end{tikzpicture}}
\newcommand{\Tb}{
\begin{tikzpicture}[scale=.2]
  \draw (0,0) circle(0.3)  [fill=blue!50] ;
  \draw (0.5,0.87) circle(0.3)  [fill=orange!50];
  \draw  (1,0)  circle(0.3) [fill=orange!50];
\end{tikzpicture}}
\newcommand{\Tc}{
\begin{tikzpicture}[scale=.2]
  \draw (0,0) circle(0.3)  [fill=blue!50] ;
  \draw (0.5,0.87) circle(0.3)  [fill=blue!50];
  \draw  (1,0)  circle(0.3) [fill=orange!50];
\end{tikzpicture}}
\newcommand{\Td}{
\begin{tikzpicture}[scale=.2]
  \draw (0,0) circle(0.3)  [fill=blue!50] ;
  \draw (0.5,0.87) circle(0.3)  [fill=blue!50];
  \draw  (1,0)  circle(0.3) [fill=blue!50];
\end{tikzpicture}}
\newcommand{\Te}{
\begin{tikzpicture}[scale=.2]
  \draw (0,0) circle(0.3)  [fill=orange!50] ;
  \draw (0.5,0.87) circle(0.3)  ;
  \draw  (1,0)  circle(0.3) [fill=blue!50];
\end{tikzpicture}}
\newcommand{\newword}[1]{\textbf{\emph{#1}}}
\newcommand{\into}{\hookrightarrow}
\newcommand{\onto}{\twoheadrightarrow}
\newcommand{\from}{\leftarrow}
\newcommand{\covers}{\gtrdot}
\newcommand{\covered}{\lessdot}
\newcommand{\torsion}{T}
\newcommand{\free}{F}
\DeclareMathOperator{\Hom}{Hom}
\DeclareMathOperator{\Ext}{Ext}
\DeclareMathOperator{\End}{End}
\DeclareMathOperator{\Tor}{Tor}
\DeclareMathOperator{\Ker}{Ker}
\DeclareMathOperator{\CoKer}{CoKer}
\DeclareMathOperator{\Span}{Span}
\newcommand{\GL}{\mathrm{GL}}
\newcommand{\LCM}{\mathrm{LCM}}
\newcommand{\bigmeet}{\bigwedge}
\newcommand{\bigjoin}{\bigvee}
\newcommand{\inv}{\mathrm{inv}}
\newcommand{\hW}{\widehat{W}}
\newcommand{\hA}{\widehat{A}}
\newcommand{\hV}{\widehat{V}}
\def\shard{\sigma} 
\newcommand{\Sc}{\mathbb S}
\newcommand{\CC}{\mathbb{C}}
\newcommand{\FF}{\mathbb{F}}
\newcommand{\HH}{\mathbb{H}}
\newcommand{\KK}{\mathbb{K}}
\newcommand{\PP}{\mathbb{P}}
\newcommand{\RR}{\mathbb{R}}
\newcommand{\ZZ}{\mathbb{Z}}
\newcommand{\cS}{\mathcal{S}}
\newcommand{\cT}{\mathcal{T}}
\newcommand{\cX}{\mathcal{X}}
\newcommand{\cY}{\mathcal{Y}}
\newcommand{\Stab}{\mathrm{Stab}}
\newcommand{\Mod}{\mathrm{mod}} 
\newcommand{\NoSub}{\mathrm{NoSub}}
\newcommand{\NoQuot}{\mathrm{NoQuot}}
\newcommand{\DGamma}{\overline{\Gamma}}
\newcommand{\preceqR}{\preceq_{\Phi}}
\newcommand{\precR}{\prec_{\Phi}}
\newcommand{\succR}{\succ_{\Phi}}
\newcommand{\Brick}{\mathrm{Brick}}
\newcommand{\Pairs}{\operatorname{Pairs}}
\theoremstyle{plain}
\newtheorem{thm}{Theorem}[section]
\newtheorem{theorem}[thm]{Theorem}
\newtheorem{Theorem}[thm]{Theorem}
\newtheorem{prop}[thm]{Proposition}
\newtheorem{Prop}[thm]{Proposition}
\newtheorem{proposition}[thm]{Proposition}
\newtheorem{Proposition}[thm]{Proposition}
\newtheorem{DP}[thm]{Definition/Proposition}
\newtheorem{lem}[thm]{Lemma}
\newtheorem{lemma}[thm]{Lemma}
\newtheorem{Lemma}[thm]{Lemma}
\newtheorem{cor}[thm]{Corollary}
\newtheorem{conj}[thm]{Conjecture}
\newtheorem*{prop*}{Proposition}
\newtheorem*{thrm*}{Theorem}
\theoremstyle{definition}
\newtheorem{example}[thm]{Example}
\newtheorem{eg}[thm]{Example}
\newtheorem{remark}[thm]{Remark}
\newtheorem{Remark}[thm]{Remark}
\newtheorem{Definition}[thm]{Definition}
\newcommand{\margincolor}{red}      
\definecolor{darkgreen}{rgb}{0,0.7,0}
\newcounter{margincounter}
\newcommand{\marginnum}{
\ifnum\value{margincounter}<10
\textcolor{\margincolor}{\begin{picture}(0,0)\put(2.2,2.4){\circle{9}}\end{picture}\footnotesize\arabic{margincounter}}
\else\ifnum\value{margincounter}<100
\textcolor{\margincolor}{\begin{picture}(0,0)\put(4.256,2.5){\circle{11}}\end{picture}\footnotesize\arabic{margincounter}}
\else
\textcolor{\margincolor}{\begin{picture}(0,0)\put(6.8,2.5){\circle{14}}\end{picture}\footnotesize\arabic{margincounter}}
\fi\fi
}
\newcommand{\sgn}{\operatorname{sgn}}
\newcommand{\tr}{\operatorname{tr}}
\newcommand{\ot}{\leftarrow}
\title{Shard Modules}
\author{Will Dana, David E Speyer and Hugh Thomas}
\begin{document}

\maketitle
\begin{abstract}
Motivated by the goal of studying cluster algebras in infinite type, we study the stability domains of modules for the preprojective algebra in the corresponding infinite types.
Specifically, we study real bricks: those modules whose endomorphism algebra is a division ring and which have no self-extensions.
We define ``shard modules" to be those real bricks whose stability domain is as large as possible (meaning, of dimension one less than the rank of the preprojective algebra). 
We show that all real bricks are obtained by applying the  Baumann-Kamnitzer reflection functors to simple modules, and we give a recursive formula for the stability domain of a real brick. 
We show that shard modules are in bijection with Nathan Reading's ``shards", and that their stability domains are the shards; we also establish many foundational results about shards in infinite type which have not previously appeared in print.
With an eye toward applications to cluster algebras, our paper is written to handle skew-symmetrizable as well as skew-symmetric exchange matrices, and we therefore discuss the basics of the theory of species for preprojective algebras.
We also give some counterexamples to show ways in which infinite type is more subtle than the well-studied finite type cases.
\end{abstract}

\setcounter{tocdepth}{1} 
\tableofcontents

%
\section{Introduction}

The aim of this paper is to generalize to infinite root systems various results concerning preprojective algebras in Dynkin type. 
In this introduction, we summarize the results we want to generalize, state our main results, and explain why we want to establish such a generalization.
We begin with a review of the general theory of torsion classes.

\subsection{Bricks, torsion classes, stability} Let $k$ be a field and let $R$ be a $k$-algebra. We write $\Mod(R)$ for the abelian category of left $R$-modules which are finite dimensional over $k$, and call such an $R$-module a \newword{finite dimensional $R$-module}.

A \newword{torsion class} $\torsion$ is a collection of isomorphism classes of finite dimensional $R$-modules which contains $0$ and is closed under quotients and extensions.
A \newword{torsion free class} $\free$ is a collection of isomorphism classes of finite dimensional $R$-modules which contains $0$ and is closed under submodules and extensions.
If $C$ is any collection of finite dimensional $R$-modules, we define
\[  \begin{array}{ccl}
C^{\perp} &=& \{ M \in \Mod(R) : \Hom_{R}(X,M) = 0 \quad \forall X \in C \} \\ 
{}^{\perp} C &=& \{ M \in \Mod(R) : \Hom_{R}(M,X) = 0 \quad \forall X \in C \} \\
\end{array} . \]
The collection $C^{\perp}$ is always a torsion free class; ${}^{\perp} C$ is always a torsion class, and the maps $\torsion \mapsto \torsion^{\perp}$ and $\free \mapsto {}^{\perp} \free$ are mutually inverse bijections from torsion classes to torsion free classes and vice versa. 
The pair $(\torsion, \free)$ is called a \newword{torsion pair}.

We define a finite-dimensional $R$-module $B$ to be a \newword{brick} if $\Hom_R(B,B)$ is a division ring. In other words, $B$ is a brick if $B \neq 0$ and every nonzero endomorphism of $B$ is bijective. We write $\Brick(R)$ for the set of bricks of $R$.

The following results are straightforward:
\begin{Prop} \label{TorClassesLattice}
Torsion classes form a lattice with respect to containment. 
\end{Prop}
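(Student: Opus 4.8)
The plan is to prove the slightly stronger statement that torsion classes, ordered by containment, form a \emph{complete} lattice; the lattice structure then follows by restricting to two-element families. The engine of the argument is the observation that each of the three defining conditions of a torsion class --- containing $0$, closure under quotients, closure under extensions --- is inherited by arbitrary intersections.

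First I would check that for any family $\{\torsion_i\}_{i \in I}$ of torsion classes, the intersection $\bigcap_{i \in I} \torsion_i$ is again a torsion class. It contains $0$ because each $\torsion_i$ does. If $M \in \bigcap_i \torsion_i$ and $M \onto N$, then $N \in \torsion_i$ for every $i$ by closure under quotients, so $N \in \bigcap_i \torsion_i$. If $0 \to M' \to M \to M'' \to 0$ is exact with $M', M'' \in \bigcap_i \torsion_i$, then $M \in \torsion_i$ for each $i$ by closure under extensions, so $M \in \bigcap_i \torsion_i$. (The degenerate case $I = \varnothing$ gives $\Mod(R)$ itself, which is manifestly a torsion class: quotients and extensions of finite-dimensional modules are finite-dimensional.)

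Next I would observe that since $\Mod(R)$ is a torsion class, for any family $\{\torsion_i\}_{i\in I}$ the set of torsion classes containing every $\torsion_i$ is nonempty, so one may define
\[ \bigmeet_{i\in I} \torsion_i = \bigcap_{i\in I} \torsion_i, \qquad \bigjoin_{i\in I} \torsion_i = \bigcap\, \{\, \torsion \text{ a torsion class} : \torsion_i \subseteq \torsion \text{ for all } i \,\}, \]
both of which are torsion classes by the previous step. By construction $\bigcap_i \torsion_i$ is a lower bound for $\{\torsion_i\}$ and contains every torsion class that is a lower bound, hence is the greatest lower bound; dually $\bigjoin_i \torsion_i$ is the least upper bound. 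Taking $|I| = 2$ yields binary meets and joins, establishing the proposition. I do not anticipate any genuine obstacle: the only point requiring a moment's care is that the join must be defined as an intersection of an \emph{a priori} possibly empty collection of torsion classes, which is precisely why one first records that $\Mod(R)$ is a torsion class.
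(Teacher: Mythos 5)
Your proposal is correct. The paper labels this proposition ``straightforward'' and does not spell out a proof, but it does record its intended lattice operations later (in the section on shadows): $\bigmeet \cX = \bigcap_{T \in \cX} T$ and $\bigjoin \cX = {}^{\perp}\bigl( \bigcap_{T \in \cX} T^{\perp} \bigr)$. Your meet agrees with the paper's; for the join you take the generic closure-system route (torsion classes are stable under arbitrary intersections and $\Mod(R)$ is itself a torsion class, so the join is the intersection of all upper bounds), whereas the paper's formula produces the join explicitly by passing through the torsion-pair bijection $\torsion \mapsto \torsion^{\perp}$, $\free \mapsto {}^{\perp}\free$. The two constructions yield the same object: your argument is more elementary, requiring nothing about perpendicular classes, while the paper's explicit description is what gets used later (for instance when showing the map $\pi_F$ to $\cS^{\Sc}_{1F}$ respects joins), so it buys a concrete handle on $\bigjoin$ that the abstract argument does not provide. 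Your care about the empty family and the fact that $\Mod(R)$ is a torsion class is exactly the right point to flag, and nothing in your argument breaks.
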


\begin{Prop}  \label{TorClassesFromBricks}
Given two torsion classes $\torsion_1$ and $\torsion_2$, we have  $\torsion_1 \subseteq \torsion_2$ if and only if $\torsion_1 \cap \Brick(R) \subseteq \torsion_2 \cap \Brick(R)$; in particular, $\torsion_1 = \torsion_2$ if and only if $\torsion_1 \cap \Brick(R) = \torsion_2 \cap \Brick(R)$. The same holds for torsion free classes.
\end{Prop}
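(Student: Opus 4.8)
The plan is as follows. The implication $\torsion_1 \subseteq \torsion_2 \Rightarrow \torsion_1 \cap \Brick(R) \subseteq \torsion_2 \cap \Brick(R)$ is immediate, and once the stated equivalence is known, the ``in particular'' clause follows by applying it in both directions. So the real content is the converse: assuming $\torsion_1 \not\subseteq \torsion_2$, I want to exhibit a brick lying in $\torsion_1 \setminus \torsion_2$, and likewise with torsion free classes in place of torsion classes.

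For the torsion class statement, fix $M \in \torsion_1 \setminus \torsion_2$. The collection of quotient modules of $M$ not lying in $\torsion_2$ is nonempty (it contains $M$), and the $k$-dimensions of its members are bounded above by $\dim_k M$, so I may choose such a quotient $B$ with $\dim_k B$ minimal. Since $\torsion_1$ is closed under quotients, $B \in \torsion_1$; and $B \notin \torsion_2$, so in particular $B \neq 0$. The crux is to check that $B$ is a brick. If not, there is a non-invertible endomorphism $f \colon B \to B$; because $B$ is finite dimensional over $k$, this forces $\Ker f \neq 0$ and $\Im f \subsetneq B$. Then $\Im f \isom B/\Ker f$ and $B/\Im f$ are both \emph{proper} quotients of $B$, hence of strictly smaller $k$-dimension, hence both lie in $\torsion_2$ by minimality of $B$. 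But the short exact sequence $0 \to \Im f \to B \to B/\Im f \to 0$ together with closure of $\torsion_2$ under extensions then gives $B \in \torsion_2$, a contradiction. Hence $B$ is the desired brick.

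The torsion free statement is handled dually: given $M \in \free_1 \setminus \free_2$, pick a submodule $B \subseteq M$ of minimal $k$-dimension among submodules of $M$ not in $\free_2$; then $B \in \free_1$ by closure under submodules, and $B \neq 0$. If $f \colon B \to B$ is a non-invertible endomorphism, then $\Ker f$ and $\Im f$ are proper nonzero submodules of $B$, so both lie in $\free_2$ by minimality, and the sequence $0 \to \Ker f \to B \to \Im f \to 0$ exhibits $B$ as an extension of members of $\free_2$, a contradiction; so $B$ is a brick. The only input beyond the formal closure axioms is finite-dimensionality over $k$, which is exactly what guarantees that a non-invertible endomorphism has simultaneously nonzero kernel and proper image. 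That is the step to phrase carefully; I would not expect it to be a genuine obstacle so much as the lone place where the standing finiteness hypothesis is essential.
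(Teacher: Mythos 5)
The paper itself supplies no proof of this proposition --- it is one of the two results simply declared ``straightforward'' after the definitions --- so there is no argument of the authors to compare against; your proof is the standard one and it is correct. Passing to a quotient of $M\in\torsion_1\setminus\torsion_2$ that lies outside $\torsion_2$ and has minimal $k$-dimension, and then using the kernel and image of an endomorphism together with extension-closure of $\torsion_2$, is exactly the usual mechanism behind the fact that a torsion class is determined by the bricks it contains (compare the references \cite{DIRRT, DIJ} cited in the paper), and your dual treatment of torsion free classes is equally fine. One point of precision: since a brick is a nonzero module all of whose \emph{nonzero} endomorphisms are invertible, the negation of ``$B$ is a brick'' (for $B\neq 0$) is the existence of a nonzero non-invertible endomorphism, and your argument genuinely needs $f\neq 0$: if $f=0$ then $\Im f=0$, so $B/\Im f=B$ is not a proper quotient (dually $\Ker f=B$ is not a proper submodule), and no contradiction arises --- nor should it, as the zero map is permitted to be non-invertible. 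Inserting ``nonzero'' there repairs the statement ``$\Ker f\neq 0$ and $\Im f\subsetneq B$ force two proper quotients/submodules of strictly smaller dimension,'' and with that the contrapositive argument, the trivial forward implication, and the ``in particular'' clause are all complete.
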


These results raise the question of finding a concrete description of the lattice of torsion classes of $R$-modules, and in particular describing such torsion classes as subsets of the set of bricks.

One way to obtain a torsion class or torsion free class is using a stability condition. 
Let $e_1$, $e_2$, \dots, $e_n$ be a complete collection of primitive orthogonal idempotents of $R$.
So every finite dimensional $R$-module $M$ admits a vector space decomposition as $\bigoplus e_i M$.  We abbreviate $e_i M$ to $M_i$ and we define $\dim M$ to be the vector $(\dim_k M_i)$ in $\RR^n$. 
\begin{remark}
In all of our motivating examples, the algebra $A$ is basic, meaning that the $A$-modules $A e_i$ and $A e_j$ are not isomorphic for $i \neq j$. 
If $A$ is not basic, with $A e_i \cong A e_j$, then $\dim_k M_i = \dim_k M_j$ for every $M \in \Mod_A$. 
This does not make anything that we write false, and will also not occur for any of the examples we care about, but if we wanted to study non-basic algebras seriously, we would pay more attention to this.
\end{remark}

Let $\theta \in \RR^n$.
Define $\torsion^{\theta}$ to be the set of modules $M$ such that $\langle \theta, Q \rangle \leq 0$ for all quotients $Q$ of $M$, and define $\free^{\theta}$ to be the set of modules $M$ such that $\langle \theta, K \rangle \geq 0$ for all submodules $K$ of $M$. 
It is easy to check that $\torsion^{\theta}$ is a torsion class and $\free^{\theta}$ is a torsion free class. If, for every brick $B \in \Mod(R)$, we have $\langle \theta, \dim B \rangle \neq 0$, then $\torsion^{\theta} = {}^{\perp}(\free^{\theta})$ and $\free^{\theta} = (\torsion^{\theta})^{\perp}$. 

By Proposition~\ref{TorClassesFromBricks}, if $\theta_1$ and $\theta_2$ lie on the same side of $(\dim B)^{\perp}$ for all bricks $B$, then $\torsion^{\theta_1} = \torsion^{\theta_2}$. 
Thus, for each region of the hyperplane arrangement complement $\RR^n \setminus \bigcup_{B \in \Brick(R)} (\dim B)^{\perp}$, we obtain a torsion class which occurs as $\torsion^{\theta}$ for every $\theta$ in that region. 

It is important to understand, as $\theta$ passes through a wall $(\dim M)^{\perp}$, when the module $M$ jumps from $\free^{\theta}$ to $\torsion^{\theta}$. 
This happens when $M$ is $\theta$-semistable, which is defined as follows: Let $M \in \Mod(R)$ and let $\theta \in \RR^n$. Then $M$ is \newword{$\theta$-semistable} if and only if $M\in \free^\theta \cap \torsion^\theta$,  that is to say, if $\langle \theta, \dim K \rangle \geq 0$ for all submodules $K$ of $M$ and $\langle \theta, \dim Q \rangle \leq 0$ for all quotient modules $Q$ of $M$. Note that this implies that $\langle \theta, \dim M \rangle = 0$. Also, if we impose that  $\langle \theta, \dim M \rangle = 0$, then the conditions on submodules and on quotients become equivalent to each other.

Given a module $M$, only finitely many dimension vectors can occur as dimensions of submodules of $M$, so the set of $\theta$ for which $M$ is $\theta$-semistable is a closed polyhedral cone in $\RR^n$; we denote it by $\Stab(M)$ and call it the \newword{stability domain of $M$}.

\subsection{Dynkin type preprojective algebras}
To make our introduction simpler, in this subsection we will restrict ourselves to the simply laced types $ADE$. 
Many of these results have also been established in types $BCFG$; see Remark~\ref{NonSimplyLacedHistory}
 All of our results apply to any symmetrizable crystallographic Cartan matrix. 
For standard notation concerning Coxeter groups and root systems, see Section~\ref{sec Coxeter}.

Let $\Gamma$ be a 
Dynkin diagram  of type $A_n$, $D_n$, $E_6$, $E_7$ or $E_8$. 
We write $\Gamma_0$ for the vertices of $\Gamma$ and $\Gamma_1$ for the edges.
We write $W$ for the corresponding Coxeter group, with simple generators $s_i$ for $i \in \Gamma_0$. We write $\Phi$ for the root system, which lives in a vector space $V$; we write $\Phi^+$ for the set of positive roots and $\alpha_i$ for the simple roots, which are again indexed by $i \in \Gamma_0$.

We define $\DGamma$ to be the directed graph obtained by replacing each edge of $\Gamma$ by two directed edges, one in each direction. We write $\DGamma_1$ for the edges of $\DGamma$.
For each edge of $\Gamma$, choose one orientation of this edge to call $a$ and the other to call $a^{\ast}$. The \newword{preprojective algebra} $\Lambda$ is the quotient of the quiver path algebra $k \DGamma$ (for some field $k$) by the relation $\sum_{a \in \Gamma_1} a^{\ast} a=\sum_{a \in \Gamma_1} a a^{\ast}$.
Thus, a finite dimensional $\Lambda$-module $U$ breaks up as $\bigoplus U_i$ for a sequence of vector spaces $U_i$ indexed by $i \in \Gamma_0$, with maps $\rho(a) : U_i \to U_j$ and $\rho(a^{\ast}) : U_j \to U_i$ for each edge $i \overset{a}{\longrightarrow} j$. These maps obey $\sum \rho(a^{\ast}) \rho(a) = \sum \rho(a) \rho(a^{\ast})$ where the left sum runs over $a$ with source $i$ and the right sum runs over $a$ with target $i$.

Given a finite dimensional representation $U$, we define $\dim U$ to be the vector $\sum_{i \in Q_0} (\dim_k U_i) \alpha_i$. 
We write $S_i$ for the simple $\Lambda$-module with dimension $\alpha_i$.
Thus, our stability conditions $\theta$ now live in the dual vector space $V^{\ast}$.


\begin{Theorem} [{\cite[Theorem 1.2]{IRRT}}]\label{ADEBricksAreRoots}
Under our hypothesis that $\Gamma$ is of type $ADE$, all bricks  in $\Mod(\Lambda)$ have dimension vectors in $\Phi^{+}$. The endomorphism rings of these bricks are all isomorphic to $k$.
\end{Theorem}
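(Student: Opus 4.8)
The plan is to derive both assertions from a single homological identity, due in this generality to Crawley--Boevey, together with the positive definiteness of the Cartan form in Dynkin type. Recall that for the preprojective algebra $\Lambda$ of any finite graph one has, for all finite-dimensional $\Lambda$-modules $M$ and $N$,
\[
\dim_k \Ext^1_\Lambda(M,N) \;=\; \dim_k \Hom_\Lambda(M,N) + \dim_k \Hom_\Lambda(N,M) - (\dim M, \dim N),
\]
where $(\,\cdot\,,\,\cdot\,)$ is the symmetric bilinear form on the root lattice whose Gram matrix on the simple roots is the Cartan matrix of $\Gamma$. I would recall the proof of this identity from the standard length-two bimodule presentation of $\Lambda$, since that is the only place the preprojective relation enters; in particular the identity is valid over an arbitrary field $k$, which matters here because $k$ is not assumed algebraically closed.

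Now let $B$ be a brick and put $D := \End_\Lambda(B)$ and $d := \dim_k D$ (finite, since $D \subseteq \End_k(B)$). The first step is to observe that $\dim B$ is divisible by $d$ in the root lattice. Indeed, every $\varphi \in D$ is $\Lambda$-linear and hence commutes with the idempotents $e_i$, so each $B_i = e_i B$ is a $D$-submodule of $B$; being finitely generated over the division ring $D$ it is free, say of rank $\gamma_i$, so that $\dim_k B_i = d\,\gamma_i$. Setting $\gamma := \sum_i \gamma_i \alpha_i$, a nonzero vector of the root lattice, we obtain $\dim B = d\,\gamma$.

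Applying the identity with $M = N = B$ gives $0 \le \dim_k \Ext^1_\Lambda(B,B) = 2d - d^2 (\gamma,\gamma)$, hence $d\,(\gamma,\gamma) \le 2$. Since $\Gamma$ is of type $ADE$, the form $(\,\cdot\,,\,\cdot\,)$ is positive definite and takes even values on the lattice, so $(\gamma,\gamma) \ge 2$ for the nonzero vector $\gamma$; in fact its minimal nonzero value is $2$, attained exactly on $\Phi$. Combining, $2d \le d\,(\gamma,\gamma) \le 2$, which forces $d = 1$ and $(\gamma,\gamma) = 2$. Therefore $\End_\Lambda(B) \cong k$, and $\dim B = \gamma$ has $(\gamma,\gamma)=2$; being a nonzero nonnegative combination of the simple roots, $\gamma$ is a positive root. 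This proves both claims.

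I expect the only genuine work to be setting up the Crawley--Boevey identity in the required generality (arbitrary $k$, and in the body of the paper the symmetrizable/species version built on the theory of species for preprojective algebras); granting it, the proof is the short computation above, whose sole type-$ADE$ input is the positive definiteness of the Cartan form.
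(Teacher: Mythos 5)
Your proof is correct, and it is worth pointing out that the paper itself gives no proof of Theorem~\ref{ADEBricksAreRoots}: it is quoted from \cite{IRRT}, where the statement emerges from a finer structural analysis of bricks and torsion classes for Dynkin preprojective algebras, so your argument is a genuinely different (and more elementary) route. The identity you invoke is exactly Theorem~\ref{CB Identity} of the paper, due to Crawley--Boevey \cite{Crawley-Boevey} in the symmetric case and proved in the appendix in the symmetrizable setting, and your computation with $M=N=B$ is close in spirit to the paper's proof of Proposition~\ref{RealBrickProperties}; what you add, and what handles a possibly non-algebraically-closed ground field, is the divisibility step: since $D=\End_\Lambda(B)$ is a division ring commuting with the idempotents $e_i$, each $B_i$ is a free $D$-module, so $\dim B = d\gamma$ with $d=\dim_k D$ and $\gamma$ in the root lattice, and then $0\le \dim_k\Ext^1_\Lambda(B,B)=2d-d^2(\gamma,\gamma)$ together with positive definiteness and evenness of the form forces $d=1$ and $(\gamma,\gamma)=2$. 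This is all sound. Two small points you should make explicit: first, the closing step ``vectors of norm $2$ in an $ADE$ root lattice are exactly the roots'' is a classical fact but is doing real work, so either cite it or check it case by case for $A_n$, $D_n$, $E_n$; alternatively you could avoid it entirely by finishing as in the proof of Proposition~\ref{RealBrickProperties}, reflecting $B$ down to a simple module with the functors $\Sigma_i^{\pm}$, which shows directly that $\dim B$ is a positive root. Second, note that evenness of the form is what upgrades $(\gamma,\gamma)\ge 1$ to $(\gamma,\gamma)\ge 2$; without it your chain of inequalities would only give $d\le 2$. Compared with \cite{IRRT}, your argument proves exactly the stated theorem and nothing more (for instance, it does not show which positive roots occur or produce the bricks), but granting Theorem~\ref{CB Identity} it is complete and self-contained.
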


There can be more than one brick with the same dimension vector. For example, let $\Gamma$ have type $A_2$ and consider representations with dimension vector $(1,1)$. So such a representation is of the form $k \overset{a}{\underset{a^{\ast}}\rightleftarrows} k$  where the maps $a$ and $a^{\ast}$ obey $a a^{\ast}=0$ and $a^{\ast} a = 0$.
We get two non-isomorphic bricks by taking $(a,a^{\ast}) = (1,0)$ or $(a,a^{\ast}) =(0,1)$.

Let $W$ be the reflection group corresponding to $\Gamma$. Recall that $W$ acts simply transitively on the regions of the hyperplane arrangement complement $V^{\ast} \setminus \bigcup_{\beta \in \Phi^+} \beta^{\perp}$. Let $D$ be the region $\{ \theta \in V^{\ast} : \langle \alpha_i, \theta \rangle \geq 0 \ \mbox{for all}\ i \in \Gamma_0 \}$.

\begin{Theorem} [{\cite[Corollary 4.1]{Mizuno}}]\label{ADETorClassesAreW}
Under our hypothesis that $\Gamma$ is of type $ADE$, the torsion classes of $\Mod(\Lambda)$ are in bijection with the Coxeter group $W$. Namely, the bijection sends $w \in W$ to $\torsion^{\theta}$ for an arbitrarily chosen $\theta$ in the interior of  $wD$. The containment order on torsion classes corresponds to right weak order on $W$.
\end{Theorem}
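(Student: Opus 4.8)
The theorem bundles a set bijection (torsion classes $\leftrightarrow$ chambers of the Coxeter arrangement $\leftrightarrow W$) together with an order statement. My plan is: (i) show that the \emph{wall--and--chamber structure} of $\Lambda$ — the fan assembled from the stability domains $\Stab(B)$, $B \in \Brick(\Lambda)$ — has its codimension-one part contained in the Coxeter arrangement $\bigcup_{\beta \in \Phi^{+}} \beta^{\perp}$, and moreover that each $\beta^{\perp}$ is covered, up to codimension-two strata, by stability domains of bricks of dimension $\beta$; (ii) invoke $\tau$-tilting finiteness of $\Lambda$ so that every torsion class is functorially finite and hence equals $\torsion^{\theta}$ for some $\theta$ in a chamber of this structure; and (iii) match cover relations on both sides to pin down the order. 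Given (i) and (ii), the chambers of the wall--and--chamber structure are exactly the chambers $wD$ of the Coxeter arrangement and $\torsion^{\theta}$ is constant on each of them, so $w \mapsto \torsion^{\theta}$ (any $\theta$ in the interior of $wD$) is a well-defined surjection onto the set of torsion classes; injectivity will then follow from (iii), since an order embedding is injective.

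For (i), Theorem~\ref{ADEBricksAreRoots} immediately gives the containment of the brick arrangement in the Coxeter arrangement. For the covering statement I would argue by reflection functors. The base case is transparent: a simple module $S_{i}$ has no proper submodule or quotient, so $\Stab(S_{i}) = \alpha_{i}^{\perp}$, the entire hyperplane. One would then transport along the Baumann--Kamnitzer reflection functors (equivalently, the Buan--Iyama--Reiten--Scott functors built from the ideals $I_{i} = \Lambda(1 - e_{i})\Lambda$), using that they send bricks without an $S_{i}$-summand to bricks and act on dimension vectors by $s_{i}$, to conclude that every $\beta^{\perp}$ (for $\beta \in \Phi^{+}$) is exhausted, up to codimension-two strata, by the stability domains of bricks of dimension $\beta$. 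The care that this requires — the first real obstacle — is that reflection functors do \emph{not} simply conjugate $\Stab$ by $s_{i}$: the submodule lattice gets reshuffled, so one must track how stability domains deform near $\alpha_{i}^{\perp}$, and in general several bricks of a given dimension are needed to cover $\beta^{\perp}$, as the type $A_{2}$ computation with dimension vector $\alpha_{1} + \alpha_{2}$ already shows. The $S_{i}$-summand caveat must be dispatched by an induction on length along a reduced word.

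For (ii) and (iii): $\Brick(\Lambda)$ is finite, since by Theorem~\ref{ADEBricksAreRoots} its elements have dimension vectors in the finite set $\Phi^{+}$ and there are finitely many isomorphism classes of each dimension, so $\Lambda$ is $\tau$-tilting finite; hence every torsion class is functorially finite and so arises as $\torsion^{\theta}$ for $\theta$ in a chamber, which together with (i) identifies the torsion classes with the chambers $wD$, hence with $W$, and gives surjectivity. For the order, I would compare Hasse quivers: a cover relation in the lattice of torsion classes is a single wall-crossing (a mutation of support $\tau$-tilting modules), which by (i) is the crossing of a single Coxeter hyperplane $w\alpha_{i}^{\perp}$, i.e.\ corresponds to an adjacent pair $wD$, $ws_{i}D$; tracking which side carries the larger torsion class — for $\theta$ in the interior of $D$ one has $\torsion^{\theta} = \set{0}$, the bottom of the lattice — yields $\torsion^{\theta_{w}} \subseteq \torsion^{\theta_{w'}}$ precisely when $\inv(w) \subseteq \inv(w')$ (the sets of Coxeter hyperplanes separating $wD$, respectively $w'D$, from $D$), which is right weak order. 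The main obstacle overall is this last dictionary: that the wall--and--chamber structure is \emph{complete} (its chambers cover $V^{\ast}$ and number $|W|$) and that wall-crossings are mutations. In the literature this is achieved by combining Adachi--Iyama--Reiten's $\tau$-tilting theory with the ideal description of Buan--Iyama--Reiten--Scott, which is the route Mizuno follows.
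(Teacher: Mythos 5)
The paper does not prove Theorem~\ref{ADETorClassesAreW} at all: it is quoted from Mizuno~\cite{Mizuno}, with Remark~\ref{NonSimplyLacedHistory} sketching a second, more geometric route. Your proposal is a correct outline, but it is not Mizuno's argument; it is essentially the route of Remark~\ref{NonSimplyLacedHistory}. Mizuno works purely algebraically, showing that $w \mapsto I_w = I_{i_1}\cdots I_{i_k}$ (products of the ideals $I_i = \Lambda(1-e_i)\Lambda$) is a bijection from $W$ to support $\tau$-tilting modules compatible with weak order; the stability formulation ``$\torsion^{\theta}$ for $\theta$ in the interior of $wD$'' is then supplied by the $g$-fan results: \cite{BST} for ``chambers of a finite $g$-fan correspond to torsion classes via $\torsion^{\theta}$, with Hasse arrows given by wall crossings,'' and \cite{AH+} for ``the $g$-fan of a Dynkin preprojective algebra is the Coxeter fan.'' Your plan replaces the appeal to \cite{AH+} by the assertion that each $\beta^{\perp}$ is covered by stability domains of bricks of dimension $\beta$; that is true, but it is Thomas's Theorem~\ref{ADEStabsAreShards} (equivalently, the finite-type case of Theorems~\ref{ShardRecursionTheorem}--\ref{StabsAreShards} of this paper), so your reflection-functor sketch is reproducing the main machinery of this paper rather than a routine induction. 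What each route buys: Mizuno's is self-contained within $\tau$-tilting theory; yours makes the chamber geometry explicit and is the version that generalizes beyond Dynkin type, which is exactly why this paper develops it.

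One attribution to tighten: the facts you need in steps (ii) and (iii) --- that for a $\tau$-tilting finite algebra every torsion class is $\torsion^{\theta}$ for $\theta$ in a chamber, and that covers in the lattice of torsion classes are single wall crossings --- are not in Adachi--Iyama--Reiten or Buan--Iyama--Reiten--Scott; they are the content of \cite{BST} (together with \cite{DIJ} to pass from brick-finiteness to $\tau$-tilting finiteness). With those citations substituted, your argument closes.
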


By the observations of the previous section, the torsion class $\torsion^\theta$ changes when $\theta$ crosses through $\Stab(B)$ for a brick $B$ of $\Lambda$. And by Theorem \ref{ADETorClassesAreW}, this happens when $\theta$ crosses through some hyperplane $\beta^\perp$ for $\beta\in\Phi^+$. This raises the natural question of how stability domains of bricks sit inside the hyperplanes $\beta^\perp$.
They turn out to correspond to shards, which are polyhedral objects introduced by Nathan Reading, driven by pure lattice-theoretic questions concerning weak order. 
We now give the definition of shards:

We define a subset $R$ of $\Phi^+$ to be a \newword{rank two subsystem} if $\#(R) \geq 2$ and if $R$ is the intersection of $\Phi^+$ with a $2$-dimensional linear space $L$.
In every rank two subsystem $R$, there are two vectors, $\beta_1$ and $\beta_2$, such that every other vector in $R$ is a nonnegative linear combination of $\beta_1$ and $\beta_2$; these are called the \newword{fundamental roots} of $R$.
We will say that $R$ \newword{cuts}  $\gamma^\perp$ if $R$ is a rank two subsystem containing $\gamma$ in which $\gamma$ is \emph{not} fundamental. 
Even when $\Phi^+$ is infinite, any given hyperplane $\gamma^\perp$ will only be cut by finitely many root two subsystems (Corollary~\ref{CuttingBound}).
If $R$ cuts $\gamma^{\perp}$ then $\gamma \in R$, so the codimension two subspace $R^{\perp}$ is contained in the codimension one subspace $\gamma^{\perp}$. 
We call the collection of hyperplanes $\{ R^{\perp} \}_{R \ \text{cuts}\ \gamma^\perp}$ the \newword{shard arrangement} in $\gamma^{\perp}$. We call the closures of the connected components of $\gamma^{\perp} \setminus \bigcup_{R \ \text{cuts}\ \gamma^\perp} R^{\perp}$ \newword{shards} or, when we need to specify the vector $\gamma$, we call them \newword{shards of $\gamma^{\perp}$}.

 
 \begin{Theorem} [{\cite[Theorem 6]{Thom}}]\label{ADEStabsAreShards}
Under our hypothesis that $\Gamma$ is of type $ADE$, the bricks in $\Mod(\Lambda)$ of dimension $\gamma$ are in bijection with the shards of $\gamma^{\perp}$. More precisely, a brick $B$ corresponds to the shard $\Stab(B)$.
\end{Theorem}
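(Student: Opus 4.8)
The plan is to slice the hyperplane $\gamma^{\perp}$, where $\gamma=\dim B$, by the ambient Coxeter arrangement and to show that the bricks of dimension $\gamma$ and the shards of $\gamma^{\perp}$ induce one and the same gluing of the slices, with the brick $B$ matched to the union of the slices in $\Stab(B)$. Concretely, among the chambers $wD$ of the Coxeter fan consider those having a facet contained in $\gamma^{\perp}$; such a facet lies on a hyperplane $(w\alpha_i)^{\perp}$ with $w\alpha_i=\pm\gamma$, and crossing it exchanges $wD$ with $ws_iD$. These facets chop $\gamma^{\perp}$ into finitely many relatively open codimension-one cones, which I will call \emph{cells}. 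The shard arrangement of $\gamma^{\perp}$ is a coarsening of this cell decomposition: the wall $R^{\perp}$ separating two adjacent cells is a shard wall precisely when $R$ cuts $\gamma^{\perp}$, i.e.\ when $\gamma$ is \emph{not} fundamental in $R$, and otherwise the two cells lie in a common shard.

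First I would set up the brick side. In type $ADE$ the preprojective algebra $\Lambda$ is $\tau$-tilting finite, so the lattice of torsion classes is finite and carries a brick labeling of its cover relations; moreover, by Mizuno's description (Theorem~\ref{ADETorClassesAreW}) together with the standard wall-and-chamber theory, $\torsion^{\theta}$ is constant on the interior of each chamber $wD$, and for every brick $B$ the cone $\Stab(B)$ is a union of closed cells lying in $(\dim B)^{\perp}$. Crossing a cell $F$ between adjacent chambers $wD$ and $ws_iD$ realizes a cover relation between the corresponding torsion classes, whose brick label $B_F$ is semistable on the codimension-one cone $F$; since $F$ spans $\gamma^{\perp}$, this forces $\dim B_F\in\RR\gamma\cap\Phi^{+}=\{\gamma\}$ and $F\subseteq\Stab(B_F)$. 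Conversely, for $\theta$ in a cell $F$ the category of $\theta$-semistable modules is the rank-one wide subcategory attached to that cover, with unique simple object $B_F$; hence any brick $B$ with $F\subseteq\Stab(B)$ equals $B_F$, so $\Stab(B)=\bigcup\{\overline{F}:B_F=B\}$ and $B\mapsto\Stab(B)$ identifies the bricks of dimension $\gamma$ with the classes of the equivalence relation on cells generated by declaring $F\sim F'$ whenever $\overline{F}\cap\overline{F'}$ has codimension one and $B_F=B_{F'}$.

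With these two reformulations in hand, the theorem becomes the assertion that for adjacent cells $F,F'$ meeting along a generic point $\theta_0$ of $R^{\perp}$, with $R$ a rank two subsystem containing $\gamma$, one has $B_F=B_{F'}$ if and only if $\gamma$ is fundamental in $R$. This I would prove by a rank two reduction. At such a $\theta_0$, orthogonal to exactly the roots of $R$, the category of $\theta_0$-semistable $\Lambda$-modules is a finite-length abelian category, and the key input is that it is equivalent --- compatibly with dimension vectors, and so that $\Lambda$-stability near $\theta_0$ becomes stability near the origin --- to the module category of the preprojective algebra $\Lambda_R$ of the rank two root system $R$; since all roots of $\Phi$ have the same length, $R$ is of type $A_1\times A_1$ or $A_2$. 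The claim then reduces to the two rank two cases, checked by hand: if $R$ has type $A_1\times A_1$, or type $A_2$ with $\gamma$ simple, there is a single brick of dimension $\gamma$ in $\Mod(\Lambda_R)$, semistable on both cells; if $R$ has type $A_2$ and $\gamma=\beta_1+\beta_2$, there are exactly two such bricks, as in the example following Theorem~\ref{ADEBricksAreRoots}, with stability domains the two cells. Feeding this back, $\Stab(B)$ is exactly the shard obtained by merging the cells carrying brick label $B$, and $B\mapsto\Stab(B)$ is a bijection onto the shards of $\gamma^{\perp}$.

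The main obstacle is the rank two reduction itself: identifying the category of $\theta_0$-semistable modules at a generic point of $R^{\perp}$ with $\Mod(\Lambda_R)$, as an abelian category equipped with the correct dimension vectors and stability functional. This is the step that genuinely uses the representation theory of preprojective algebras --- reflection functors, or the classification of the relevant wide subcategories --- and it is also precisely where the skew-symmetrizable refinements of the later sections (species, and the possibility that $R$ is of type $B_2$ or $G_2$) would be needed outside the simply laced case. Everything else is bookkeeping with the Coxeter fan and the established torsion-class/brick dictionary.
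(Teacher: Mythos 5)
Your strategy is genuinely different from what this paper does: the statement you are proving is quoted from \cite{Thom} (with Remark~\ref{NonSimplyLacedHistory} sketching an alternative derivation via \cite{AH+} and \cite{MizunoShards}), and the paper's own machinery establishes the generalization, Theorem~\ref{StabsAreShards}, by a global recursion along positive expressions, applying the reflection functors $\Sigma_i^{\pm}$ and the cone operations $\shard_i^{\pm}$ one simple reflection at a time (Theorems~\ref{RealBrickRecursion}, \ref{StabOfAShardModule}, \ref{ShardRecursionTheorem}). Your wall-crossing plan --- refine $\gamma^{\perp}$ by the facets of the Coxeter fan, use the brick labelling of the finite lattice of torsion classes, and decide cell-by-cell whether the label persists across a wall $R^{\perp}$ via a rank two local model --- is a reasonable alternative in Dynkin type, where the Coxeter fan is complete and coincides with the $g$-fan (Theorem~\ref{ADETorClassesAreW} and Remark~\ref{NonSimplyLacedHistory}); it is close in spirit to Mizuno's shard theory for $g$-fans. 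What it buys is that no recursion on positive expressions is needed; what it costs is that it is tied to finiteness of the fan and of the torsion lattice, so it does not extend to the situation the paper is actually after.

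However, as written there are two genuine gaps. First, the rank two reduction, which you correctly identify as the crux, is only asserted. To identify the category of $\theta_0$-semistable modules at a generic point $\theta_0$ of $R^{\perp}$ with $\Mod(\Lambda_R)$, compatibly with dimension vectors and with stability parameters near $\theta_0$, you must transport $\theta_0$ into the closure of the dominant chamber by some $w\in W$ and realize this at the level of modules by a sequence of reflection functors, checking at each step that (semi)stability on the two sides matches; that check is essentially Lemma~\ref{KeyStabilityReflectionLemma} of this paper, so your proposal presupposes the key technical input rather than supplying it (and you give no citable substitute, e.g.\ a classification of semistable wide subcategories for Dynkin preprojective algebras). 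There is also a bookkeeping point you gloss over: ``$\gamma$ fundamental in $R$'' corresponds to ``the transported root is simple in the standard rank two parabolic'' only if $w$ is chosen of minimal length in its coset modulo the rank two parabolic subgroup, so that positive roots of $R$ go to positive roots. Second, your identity $\Stab(B)=\bigcup\{\overline{F}:B_F=B\}$ silently assumes that every brick of dimension $\gamma$ is semistable on at least one $(n-1)$-dimensional cell, i.e.\ occurs as the label of some facet. This is true in Dynkin type (every brick labels some Hasse arrow of the finite lattice of torsion classes, e.g.\ by \cite{DIRRT}, and arrows correspond to facets of the Coxeter fan), but it must be argued: it is exactly the point that fails beyond finite type --- Section~\ref{RealBrickNotShard} of the paper exhibits a real brick whose stability domain has codimension greater than one --- and without it the map $B\mapsto\Stab(B)$ need not even take values in the set of shards, let alone be a bijection.
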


 \begin{remark}\label{NonSimplyLacedHistory}
   Analogues of the above results follow for the non-simply laced Dynkin
   preprojective algebras by combining results of \cite{AH+} and \cite{MizunoShards}.

   In more detail, let $A$ be a finite-dimensional algebra, which, for
   convenience, we assume is basic. Associated to $A$ there is a fan $\mathcal G_A$ called the $g$-fan. If $\mathcal G_A$ has only finitely many cones, then
  the map from chambers of the fan to torsion
classes, sending a chamber $C$ to $\mathcal T^\theta$ for $\theta$ in the
interior of the chamber, is a bijection, as shown by Brüstle, Smith, and
Treffinger \cite{BST}.

If $\Lambda$ is a Dynkin type preprojective algebra, then Aoki, Higashitani, Iyama, Kase and Mizuno \cite{AH+} show
that the $\mathcal G_\Lambda$ is the corresponding Coxeter fan. In particular it is finite, so the
considerations of the previous paragraph apply. This establishes
Theorem \ref{ADETorClassesAreW} for all Dynkin-type preprojective algebras.

Again, let $A$ be a basic finite-dimensional algebra as above, and
assume the $\mathcal G_A$ is finite. Mizuno \cite{MizunoShards} defines a notion of shards for $\mathcal G_A$; each shard is the closure
of a union of codimension-one cones of $\mathcal G_A$
all lying in one hyperplane, and every codimension-one cone lies in
exactly one shard. Mizuno shows further that there is a bijection between
bricks and shards, such that the stability domain for a brick is given by the corresponding shard.

Since the Coxeter fan originates from a hyperplane arrangement, Mizuno's
definition of shards restricts to the usual definition. As Mizuno points
out, this establishes Theorem \ref{ADEStabsAreShards} for
all Dynkin-type preprojective algebras.\end{remark}

 \subsection{Motivation for studying the infinite case} 
 Now, let $\Phi$ be the real roots of an infinite root system and $\Phi^+$ the positive real roots. 
 In this setting, for every element $w$ of the Coxeter group, there is a corresponding region of $V^{\ast} \setminus \bigcup_{\beta \in \Phi^+} \beta^{\perp}$: The union of the closures of these regions in $V^{\ast}$ form the so-called ``Tits cone", and these regions correspond to torsion classes with finitely many bricks (up to isomorphism).
 The other regions of $V^{\ast} \setminus \bigcup_{\beta \in \Phi^+} \beta^{\perp}$ do not correspond to elements of the Coxeter group.
 We would like to embed weak order on $W$ into a combinatorially tractable complete lattice, indexed by something like (though not necessarily the same as) the set of regions of  $V^{\ast} \setminus \bigcup_{\beta \in \Phi^+} \beta^{\perp}$.
 To this end, we want to study the torsion classes of $\Lambda$ for $\Lambda$ the preprojective algebra corresponding to $\Phi$.
 Here are some of the reasons we want to embed weak order into such a lattice:


First, many authors have described the combinatorics of cluster algebras in terms of the combinatorics of Coxeter groups.
(See, for example~\cite{YSystems, CA2, CambFan, RS, Affine, SpeyerThomas, FeliksonTumarkin,  BrodskyStump}.) This work is essentially complete for cluster algebras of Dynkin type, which correspond to finite Coxeter groups. In order to study cluster algebras of non-Dynkin type, it appears we will need to introduce some lattice containing weak order as a lower ideal. The lattice of torsion classes of $\Tor(\Lambda)$ is a reasonable candidate for such a lattice although, as we will discuss below, it is not exactly what we want. Thus, we want to understand $\Tor(\Lambda)$ when $\Lambda$ is of non-Dynkin type.

We note the following clues that this project is reasonable. First of all, to every cluster algebra and choice of initial cluster there is associated a $g$-vector fan. Each cone of the $g$-vector fan is simplicial. The normals to the  defining hyperplanes of this cone are called $c$-vectors. For acyclic cluster algebras, $c$-vectors are always real roots. Thus, it is natural to hope to define the $g$-vector fan as a coarsening of the hyperplane arrangement $V^{\ast} \setminus \bigcup_{\beta \in \Phi^+} \beta^{\perp}$, and thus to describe the clusters using some sort of equivalence relation on some lattice which is related to the regions of this hyperplane arrangement. Reading and Speyer have carried this project out for the cluster algebras of affine type. 

Garver and McConville~\cite{GM1, GM2} studied type $A$ cluster algebras with respect to initial quivers which are mutation equivalent to a type $A$ quiver but which need not themselves be type $A$ Dynkin quivers.
They categorified these cluster algebras using modules for a ``tiling algebra". Since tiling algebras are quotients of the path algebras for the quivers associated to these seeds, they are also quotients of the corresponding preprojective algebras.
Garcia and Garver~\cite{GG} make explicit connections between this work and semistable categories.
Combining their work with ours, one should be able to describe these $g$-vector fans as coarsenings of the Coxeter fans for the Coxeter groups of quivers which are mutation equivalent to a type $A$ quiver. 
 Garver and his collaborators also describe the lattice of clusters as a quotient of a lattice called the ``lattice of biclosed sets".
Garver, McConville and Mousavand~\cite{GMM} categorify the lattice of biclosed sets using an algebra $\Pi(A)$ which they describe as ``analogous to a preprojective algebra"~\cite[Section 6, first paragraph]{GMM}; we imagine that they could also work directly with the preprojective algebra, in which case the results of this paper would be useful.
We should acknowledge that, for the particular problems that these papers solve, the tiling algebra and the algebra $\Pi(A)$ are more convenient than the preprojective algebra; we hope that the merit of this paper is to show what sort of categorification might be powerful enough to handle a broader range of cluster algebras.


Santos, Stump and Welker~\cite{SSW} discovered the Grassmann-Tamari lattice, whose elements are cones in a certain fan.
McConville~\cite{McConville} proved that the Grassmann-Tamari lattice is a quotient of a larger lattice, the lattice of biclosed sets; many but not all of the elements in the lattice of biclosed sets are regions in a hyperplane arrangement. 
The normal vectors to that hyperplane arrangement are certain real roots in the root system of type $A_k \boxtimes A_{n-k}$ (this is the wild type Coxeter group whose Coxeter diagram is the product of the $A_k$ and $A_{n-k}$ diagrams); Palu, Pilaud and Plamondon \cite{PPP}, and Brüstle et al.~\cite{BruestleEtAl}, categorify the Grassmann-Tamari lattice using torsion classes for a certain gentle algebra which is, again, a quotient of the preprojective algebra of type $A_k \boxtimes A_{n-k}$; 
the categorification of biclosed sets of Garver, McConville and Mousavand~\cite{GMM}, using an algebra ``analogous to a preprojective algebra", applies to this situation as well. 
It would be particularly nice to understand these examples better because a large portion of the fan for the Grassmann-Tamari lattice coincides the $g$-vector fan for the cluster structure on the Grassmannian $G(k,n)$. 
(Specifically, facets of the Grassmann-Tamari lattice are indexed by maximal non-kissing collections of $k$-element subsets of $[n]$. The ``non-kissing" condition is implied by a stronger condition called ``weakly separated". 
Maximal weakly-separated collections correspond to those clusters for $G(k,n)$ in which all Pl\"ucker variables are cluster variables~\cite{OPS}.


Finally, Dyer~\cite{Dyer} has conjectured a description of an infinite lattice extending weak order in all cases.
Relating Dyer's conjectures to our approach strikes us as a challenging question; we will remark further on this in Remark~\ref{rem:Biclosed}.

In order to study all of these examples systematically, we want to build a larger lattice, and we view torsion classes as a reasonable route toward this goal.
Of course, the preprojective algebras are an important class of algebras, and it also seems natural to study their lattices of torsion classes, both for their intrinsic interest, and for a better understanding of the kind of phenomena which can arise in infinite lattices of torsion classes. 

\subsection{Difficulties of the infinite case} We now explain why thinking about the set of all torsion classes is definitely not what we want, as well as being, in a sense, too hard.
Let $k$ be an algebraically closed field and let $\Gamma$ be the Kronecker quiver, consisting of two vertices joined by two edges. 

The corresponding Coxeter group is the infinite dihedral group $\langle s_1, s_2 \ : \ s_1^2 = s_2^2 = e \rangle$. 
The positive real roots are $\{ m \alpha_1 + (m+1) \alpha_2 ,\ (m+1) \alpha_1 + m \alpha_2 \ : \ m \in \ZZ_{\geq 0} \}$. 
The hyperplanes $\alpha_1^{\perp}$ and $\alpha_2^{\perp}$ are the stability domains of the simple modules, and the simple modules are the only bricks of dimensions $\alpha_1$ and $\alpha_2$. 
For $m \geq 1$, there are two bricks each of the dimensions $m \alpha_1 + (m+1) \alpha_2$ and $(m+1) \alpha_1 + m \alpha_2$; we write them as $B(m \longleftarrow m+1)$, $B(m \longrightarrow m+1)$, $B(m+1 \longleftarrow m)$, $B(m+1 \longrightarrow m)$, where the nonzero maps go in the direction of the arrow. The two stability domains $\Stab(B(m \longleftarrow m \pm 1))$ and $\Stab(B(m \longrightarrow m \pm 1 ))$ are two rays pointing in opposite directions along the line $(m \alpha_1 + (m\pm 1) \alpha_2)^{\perp}$. 

There are two more families of nilpotent bricks, all of dimension vector $\alpha_1+\alpha_2$. Namely, for any $[c_1 : c_2] \in \PP^1(k)$, let $B(1 \overset{[c_1 : c_2]}{\longrightarrow} 1)$ be the representation where one map $k \to k$ is multiplication by $c_1$ and the other by $c_2$. For $[c_1:c_2]\ne [c_1':c_2']$, the corresponding bricks are non-isomorphic. 
Similarly, we can define a family $B(1 \overset{[c_1 : c_2]}{\longleftarrow} 1)$. 
(We will gloss over non-nilpotent bricks, since we don't regard them as the important difficulty here. For more details on nilpotency, see Section~\ref{preproj ssec gen}.)


Choose $\theta$ with $\langle \theta, \alpha_1 \rangle = -1$ and $\langle \theta, \alpha_2 \rangle = 1$. Then $\torsion^{\theta}$ contains the bricks $B(m+1 \longrightarrow m)$ for $m \geq 0$ and $B(1 \overset{[c_1 : c_2]}{\longrightarrow} 1)$ for all $[c_1 : c_2] \in \PP^1(k)$. The torsion class ${}^{\perp} \free^{\theta}$ contains the bricks $B(m+1 \longrightarrow m)$ for $m \geq 0$, but not the bricks $B(1 \overset{[c_1 : c_2]}{\longrightarrow} 1)$.

The interval of torsion classes $[ {}^{\perp} \free^{\theta}, \torsion^{\theta}]$ is then the boolean lattice of all subsets of $\PP^1(k)$. For any subset $\mathcal{C}$ whatsoever of $\PP^1(k)$, there is a torsion class with bricks 
\[\{ B(m \longrightarrow m+1) : m \geq 0 \} \cup \{ B(1 \overset{[c_1 : c_2]}{\longrightarrow} 1) : [c_1 : c_2] \in \mathcal{C} \} . \]  So the lattice of torsion classes has cardinality (at least) $2^{|k|}$; if $k = \CC$, this is $2^{2^{\aleph_0}}$. The authors are combinatorialists at heart, and we do not want to study such large lattices. Moreover, for all of the goals which motivate us, it appears that whether or not a torsion class contains bricks like $B(1 \overset{[c_1 : c_2]}{\longrightarrow} 1)$ is irrelevant.

To ignore the complexity introduced by these seemingly irrelevant bricks, we consider a particular subclass of bricks, which we term \newword{shard modules}, and consider two torsion classes to be equivalent if they contain the same shard modules. 
This paper contains a proposed definition of shard modules, and studies their properties. We now describe our main results.
Attempting to use this definition to study weak order and cluster algebras will be the project of future papers.

\subsection{Results}
Let $A$ be an $n \times n$ symmetrizable crystallographic Cartan matrix, meaning an $n \times n$ integer matrix with $A_{ii}=2$, with $A_{ij} \leq 0$ for $i\ne j$  and where there is a set of positive integers $d_i$ such that $d_i A_{ij} = d_j A_{ji}$. 
In the usual ways (see Section~\ref{sec Coxeter}), we define a Coxeter group $W$ and a reflection representation of $W$ on $V$ with $\Phi \subset V$ the set of real roots. We write $\alpha_i$ for the simple roots. 
The reflection group $W$ preserves a symmetric bilinear form $(-,-)$ on $V$, with respect to which $(\beta,\beta) > 0$ for every real root $\beta$.
We set $d_{\beta} = \tfrac{(\beta,\beta)}{2}$ and $\beta^{\vee} = d_{\beta}^{-1} \beta$; each $d_{\beta}$ lies in $\{ d_1, d_2, \ldots, d_n \}$.


We abbreviate $d_{ij} = \LCM(d_i, d_j)$ and $L = \LCM(d_1, d_2, \ldots, d_n)$. 
Let $\kappa(L)/\kappa$ be a field extension with Galois group cyclic of order $L$ and, for $d$ dividing $L$, let $\kappa(d)$ be the unique subextension with $[\kappa(d):\kappa] = d$. 
We define a preprojective algebra $\Lambda$ (see Sections~\ref{preproj ssec sym} and~\ref{preproj ssec gen}) whose representations are $\bigoplus U_i$ where $U_i$ is a $\kappa(d_i)$ vector space and we have $(-d_iA_{ij})/d_{ij}$ maps $\kappa(d_{ij})\otimes_{\kappa(d_i)}U_i \to U_j$ which are $\kappa(d_j)$ linear.
For a finite dimensional $\Lambda$-module $M$, we set $\dim M = \sum (\dim_{\kappa(d_i)} M_i ) \alpha_i$. 
We then define stability domains $\Stab(M)$ in $V^{\ast}$ as before. 

\begin{Definition}
We define a \newword{real brick} to be a brick whose dimension vector is a real root. 
\end{Definition}

There are equivalent ways to formulate the definition of a real brick without mentioning root systems:
{\renewcommand{\thethm}{\ref{RealBrickProperties}}
\begin{proposition} 
Let $B$ be a brick. The following are equivalent:
\begin{enumerate}
\item The vector $\dim B$ in $V$ is a real root.
\item We have  $(\dim B, \dim B)>0$.
\item The brick $B$ is rigid, meaning that $\Ext^1_{\Lambda}(B,B) =0$. 
\end{enumerate}
\end{proposition}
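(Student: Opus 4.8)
\textit{Proof proposal.}
The plan is to reduce the three equivalences to two facts about $\Lambda$ that will be established elsewhere in the paper. The first is the Crawley-Boevey-type homological identity: for all finite-dimensional $\Lambda$-modules $M$ and $N$,
\[
\dim_\kappa\Ext^1_\Lambda(M,N) = \dim_\kappa\Hom_\Lambda(M,N) + \dim_\kappa\Hom_\Lambda(N,M) - (\dim M, \dim N),
\]
which follows from the length-two bimodule resolution of $\Lambda$ together with the preprojective duality relating $\Ext^2_\Lambda$ to $\Hom_\Lambda$. The normalizations built into our conventions --- $\dim M = \sum(\dim_{\kappa(d_i)}M_i)\alpha_i$ and $(\alpha_i,\alpha_j) = d_iA_{ij}$ --- are exactly what makes this come out clean; one checks it on the simple modules, where it reads $\dim_\kappa\Ext^1_\Lambda(S_i,S_j) = -d_iA_{ij}$ for $i\neq j$, and $0$ for $i=j$ (the quiver of $\Lambda$ has no loops). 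The second fact is the Kac-type statement that the dimension vector of every indecomposable $\Lambda$-module is a positive root of $\Phi$, a positive root being real precisely when it has positive norm. Specializing the first identity to $M=N=B$ yields the key equation, which I will call $(\ast)$:
\[
\dim_\kappa\Ext^1_\Lambda(B,B) = 2\dim_\kappa\End_\Lambda(B) - (\dim B, \dim B).
\]

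I would then close the cycle $(1)\Rightarrow(3)\Rightarrow(2)\Rightarrow(1)$. For $(1)\Rightarrow(3)$: if $\dim B$ is a real root, then by the structure theorem for real bricks (a main result of this paper) $B$ is obtained from a simple module $S_j$ by a sequence of Baumann-Kamnitzer reflection functors --- or one argues directly by induction on height, using that a non-simple positive real root $\beta$ has $(\beta,\alpha_i)>0$ for some $i$, whence $s_i\beta$ is again a positive root of smaller height and the relevant reflection functor applies to $B$ (which is indecomposable of dimension $\neq\alpha_i$, hence has no $S_i$ summand); the base case $\dim B=\alpha_j$ forces $B=S_j$. This uses only that $\dim B$ is a real root, not condition $(3)$, so there is no circularity. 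Since the reflection functors restrict to equivalences on the relevant subcategories, they preserve endomorphism rings and self-extension groups; as $\End_\Lambda(S_j)=\kappa(d_j)$ and $\Ext^1_\Lambda(S_j,S_j)=0$, we get $\End_\Lambda(B)\cong\kappa(d_{\dim B})$ and in particular $\Ext^1_\Lambda(B,B)=0$. For $(3)\Rightarrow(2)$: if $\Ext^1_\Lambda(B,B)=0$ then $(\ast)$ gives $(\dim B,\dim B)=2\dim_\kappa\End_\Lambda(B)>0$, since $\End_\Lambda(B)$ is a nonzero $\kappa$-algebra. For $(2)\Rightarrow(1)$: a brick is indecomposable, so $\dim B$ is a positive root; since imaginary roots have non-positive norm, $(\dim B,\dim B)>0$ forces $\dim B$ to be real.

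The real content sits in the two imported ingredients, stated in the symmetrizable (species) generality: in the homological identity the only subtlety is correctly weighting dimensions by the division rings $\kappa(d_i)$, while the Kac-type classification and the reflection-functor description of real bricks are the substantive theorems. Given those, the proposition is the short cycle above; I expect the one point that will need genuine care in the write-up to be checking that the pertinent reflection functor really does apply to $B$ and restricts to an equivalence preserving $\Hom$ and $\Ext^1$, but that verification is part of the apparatus developed for the structure theorem in any case.
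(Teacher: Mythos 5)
Your plan shares two of the paper's three ingredients (the identity of Theorem~\ref{CB Identity} and an induction along reflection functors), but as written it has two genuine gaps. First, your $(2)\Rightarrow(1)$ is routed through a Kac-type theorem --- ``the dimension vector of every indecomposable $\Lambda$-module is a positive (real or imaginary) root'' --- which the paper neither proves nor cites, and which in the generality needed here (symmetrizable species, arbitrary finite-dimensional modules, not only nilpotent ones) is a heavyweight import that would itself require proof or a precise reference; recall the paper even has to prove the Crawley--Boevey identity from scratch in this setting. It is also unnecessary: the paper proves $(2)\Rightarrow(1)$ and $(2)\Rightarrow(3)$ simultaneously by exactly the induction you sketch for $(1)\Rightarrow(3)$, starting from $(\dim B,\dim B)>0$, choosing $i$ with $(\alpha_i,\dim B)>0$, invoking Proposition~\ref{CanReflectBricks} to reflect, and inducting on the height of $\dim B$. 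Second, your assertion that the reflection functors ``preserve endomorphism rings and self-extension groups'' because they restrict to equivalences between $\NoQuot_i$ and $\NoSub_i$ is unjustified for $\Ext^1$: these are equivalences of full subcategories, which transport $\Hom$, but $\Ext^1_\Lambda(B,B)$ is computed in the ambient module category, and nothing you have stated makes it invariant. The paper deliberately sidesteps this: it transports $\End$ through the equivalence, notes $(s_i\beta,s_i\beta)=(\beta,\beta)$, and then uses Theorem~\ref{CB Identity} --- your identity $(\ast)$ --- to conclude $\dim_\kappa\Ext^1_\Lambda(B,B)=\dim_\kappa\Ext^1_\Lambda(B',B')$. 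You have $(\ast)$ in hand, so the repair is one line, but the step as claimed is a gap, not a routine verification.

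A smaller correction: the parenthetical ``indecomposable of dimension $\neq\alpha_i$, hence has no $S_i$ summand'' is not the right hypothesis for applying a reflection functor. What is needed is $B\in\NoSub_i$ or $B\in\NoQuot_i$ (no $S_i$ submodule, resp.\ no $S_i$ quotient), and the absence of an $S_i$ direct summand does not give either; what does is the brick hypothesis, via Proposition~\ref{CanReflectBricks} ($B\onto S_i\into B$ would be a non-invertible nonzero endomorphism). Your observation that invoking Theorem~\ref{RealBrickRecursion} in $(1)\Rightarrow(3)$ is not circular is correct, since its proof does not use this proposition; but once the two gaps above are repaired in the way indicated, the cleanest organization is the paper's: prove $(1)\Rightarrow(2)$ and $(3)\Rightarrow(2)$ directly, and run a single induction under hypothesis $(2)$ yielding $(1)$ and $(3)$ together.
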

\addtocounter{thm}{-1}
}
We prove Proposition~\ref{RealBrickProperties} at the end of Section~\ref{sec:bricks}. Rigid bricks are also called \newword{stones} in some sources; see, for example,~\cite{KL}.

\begin{Definition}
We define a module $B$ to be a \newword{shard module} if $B$ is a real brick and $\Stab(B)$ is a polyhedral cone of dimension $n-1$.
\end{Definition}

Note that $\Stab(M)$ is contained in the hyperplane $(\dim M)^{\perp}$, so $n-1$ is the highest possible dimension.

\begin{remark}
It is not easy to give an example of a real brick which is not a shard module. 
However, such examples do exist! See Section~\ref{RealBrickNotShard}.
\end{remark}

Here is our main theorem:
{\renewcommand{\thethm}{\ref{StabsAreShards}}
\begin{Theorem} 
The stability domains of shard modules are precisely the shards, and each shard is the stability domain of precisely one isomorphism class of shard modules.
\end{Theorem}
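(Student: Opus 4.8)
The plan is to prove Theorem~\ref{StabsAreShards} by induction on a suitable statistic of $\dim B$ --- the \emph{depth}, i.e. the least $\ell(w)$ with $\dim B = w\alpha_i$ for some simple index $i$ --- using the Baumann--Kamnitzer reflection functors to peel an arbitrary shard module down to a simple module, while in parallel peeling an arbitrary shard of $\gamma^{\perp}$ down to a ``simple'' shard $\alpha_i^{\perp}$ via the action of $W$. The engine is the recursive formula for the stability domain of a real brick established earlier in the paper; the crux of the argument is that, step for step, it reproduces the recursion governing which rank two subsystems cut a given hyperplane. Corollary~\ref{CuttingBound} is what keeps the induction sensible in infinite type: it guarantees that inside each $\gamma^{\perp}$ the shard arrangement is locally finite, so that shards really are $(n-1)$-dimensional polyhedral cones and the three assertions of the theorem (the stability domain of a shard module is a shard; every shard arises this way; the correspondence is injective) are well posed.

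The base case is $B = S_i$. Checking the two submodules and two quotients of $S_i$ gives $\Stab(S_i) = \alpha_i^{\perp}$, which is all of the hyperplane and in particular $(n-1)$-dimensional, so $S_i$ is a shard module. The hyperplane $\alpha_i^{\perp}$ is a single shard: a rank two subsystem $R$ cutting $\alpha_i^{\perp}$ would contain $\alpha_i$ as a non-fundamental root, but $\alpha_i$ spans an extreme ray of the cone $\RR_{\ge 0}\Phi^{+}$ and is therefore extreme --- that is, fundamental --- in every rank two subsystem containing it. Finally, $S_i$ is the only nonzero $\Lambda$-module with dimension vector $\alpha_i$, hence the only such brick, which settles injectivity and surjectivity for $\gamma = \alpha_i$.

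For the inductive step, let $B$ be a shard module with $\dim B = \beta$ not simple. By the reflection-functor machinery underlying the statement that every real brick comes from a simple, together with Proposition~\ref{RealBrickProperties}, we may choose an index $j$ with $s_j\beta$ a positive real root of smaller depth such that $B' := \Sigma_j B$ is again a real brick with $\dim B' = s_j\beta$, and $s_j$ carries $(s_j\beta)^{\perp}\subset V^{\ast}$ isometrically onto $\beta^{\perp}$. The recursive formula shows that $\Stab(B)$ and $s_j\Stab(B')$ agree outside $\alpha_j^{\perp}$ and differ only through a correction term supported in $\alpha_j^{\perp}$; in particular $B'$ is again a shard module, so the inductive hypothesis applies to it. On the combinatorial side, $s_j$ maps the hyperplane arrangement to itself and carries the shards of $(s_j\beta)^{\perp}$ relative to the base chamber $D$ to the shards of $\beta^{\perp}$ relative to $s_jD$; since $D$ and $s_jD$ are separated by the single hyperplane $\alpha_j^{\perp}$, the positive systems $\Phi^{+}$ and $s_j\Phi^{+}$ differ only in $\pm\alpha_j$, so the shard arrangements in $\beta^{\perp}$ relative to $D$ and relative to $s_jD$ differ only through the rank two subsystem $\Phi\cap\Span(\alpha_j,\beta)$ --- exactly the subsystem whose failure to have $\beta$ fundamental is detected by the correction term above. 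Matching the two corrections then gives all three conclusions at the relevant depth: $\Stab(B)$ is a single shard of $\beta^{\perp}$; distinct shard modules of dimension $\beta$ have distinct stability domains (transport an equality of stability domains down the tower of $\Sigma_j$'s to the base case); and every shard of $\beta^{\perp}$ is realized (transport the shard down to a simple shard and rebuild the brick with reflection functors). Tracking dimensions through the same recursion shows that the full-dimensionality condition in the definition of a shard module is exactly what forces $\Stab(B)$ to fill out an entire shard rather than a proper face of one --- which is also where one reads off how a real brick can fail to be a shard module.

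The step I expect to be hardest is the analysis of the correction term near $\alpha_j^{\perp}$. The functor $\Sigma_j$ does not simply intertwine $\theta$-stability with the linear map $s_j$: it interacts delicately with the simple $S_j$ and with modules having $S_j$ in their top or socle, so $\Stab(B)$ may acquire or lose a facet relative to $s_j\Stab(B')$, and pinning down exactly when this happens --- and checking that it matches, on the nose, the change in which rank two subsystems cut $\beta^{\perp}$ as the base chamber moves from $D$ to $s_jD$ --- is the technical heart of the proof. Unlike the Dynkin situation behind Theorem~\ref{ADEStabsAreShards}, where finiteness of the $g$-fan is available, this matching must be carried out entirely locally inside a single hyperplane, which is precisely where Corollary~\ref{CuttingBound} is needed.
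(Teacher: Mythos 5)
Your overall route is the same as the paper's (induct along a positive expression, peel the brick down with reflection functors and the shard down with $\shard_i^{\pm}$, citing the earlier recursions), but the inductive step as you describe it contains a genuine error and omits the one point that actually remains to be proved once Theorems~\ref{RealBrickRecursion}, \ref{StabOfAShardModule} and \ref{ShardRecursionTheorem} are in hand. The error: it is not true that $\Stab(B)$ and $s_j\Stab(B')$ ``agree outside $\alpha_j^{\perp}$ and differ only through a correction term supported in $\alpha_j^{\perp}$.'' The recursive formula (Lemma~\ref{KeyStabilityReflectionLemma}, Proposition~\ref{StabOfAShardModuleIndStep}) gives $\Stab(\Sigma_j B') = s_j\bigl(\Stab(B')\cap\{\theta:\langle\theta,\alpha_j\rangle\ge 0\}\bigr) = s_j\Stab(B')\cap\{\theta:\langle\theta,\alpha_j\rangle\le 0\}$, so the discrepancy is the entire part of $s_j\Stab(B')$ strictly on the other side of $\alpha_j^{\perp}$, which is typically $(n-1)$-dimensional, not contained in the hyperplane. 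Already in $B_2$: $\Stab(S_2)=\alpha_2^{\perp}$ is a full line crossing $\alpha_1^{\perp}$, while $\Stab(\Sigma_1 S_2)$ is a single ray. This matters for your uniqueness argument: if $\Stab(B_1)=\Stab(B_2)=K$, applying the appropriate $\Sigma_j^{\mp}$ to both only gives that the two downstairs stability domains have the same intersection with a half-space, not that they are equal. To ``transport the equality down the tower'' you additionally need the inductive fact that the downstairs domains are shards, plus the observation that distinct shards of $(\beta')^{\perp}$ have disjoint interiors, so there is a unique shard $G$ with $\shard_j^{+}(G)=K$; this is exactly how the paper closes that step.

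The second gap is in surjectivity. ``Rebuild the brick with reflection functors'' presumes the functors are defined, but Theorem~\ref{RealBrickRecursion} only produces bricks when each partial image lies in the appropriate $\NoQuot_{i}$ or $\NoSub_{i}$, whereas Theorem~\ref{ShardRecursionTheorem} imposes no such condition; resolving this mismatch is precisely what the paper identifies as the remaining content of Theorem~\ref{StabsAreShards}. The fix is a dimension argument you never make: if the brick $B'$ realizing the smaller shard $K'$ had $S_j$ as a quotient, then $\Stab(B')\subseteq\{\theta:\langle\theta,\alpha_j\rangle\le 0\}$, so $\shard_j^{+}(K')$ would have dimension at most $n-2$, contradicting that $K$ is a shard; hence $B'\in\NoQuot_j$ and $\Sigma_j B'$ exists. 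Finally, note that the ``technical heart'' you defer at the end (matching the module-side and root-side recursions near $\alpha_j^{\perp}$) is already supplied by the earlier theorems you cite; what is genuinely left for this theorem is the pair of points above, and those are exactly what your write-up either misstates or leaves out.
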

\addtocounter{thm}{-1}
}

Our proof works by providing parallel recursive descriptions of shards and of shard modules. We first describe the recursion for shards.
For any positive root $\beta$, define a \newword{positive expression for $\beta$}  to be a formula $\beta = s_{i_r} \cdots s_{i_2} s_{i_1} \alpha_j$ where, for all $1 \leq k \leq r$, we have \[ s_{i_k} s_{i_{k-1}} \cdots s_{i_2} s_{i_1} \alpha_j - s_{i_{k-1}} \cdots s_{i_2} s_{i_1} \alpha_j  \in \RR_{>0} \alpha_{i_k}.\]
We will show (Lemma~\ref{ShardRecursion}) that every positive root has a positive expression.
 Let $K$ be a convex polyhedral cone in $V^{\ast}$ and define
\[ \shard_i^+(K):= s_i \left( K  \cap \{ x : \langle x, \alpha_i\rangle \geq 0 \} \right) \ \mbox{and}\  \shard_i^-(K):= s_i \left( K \cap \{ x : \langle x, \alpha_i\rangle \leq 0 \} \right) . \]
Note that we could also write 
\[ \shard_i^+(K):= s_i (K)  \cap \{ x : \langle x, \alpha_i\rangle \leq 0  \} \ \mbox{and}\  \shard_i^-(K):= s_i (K) \cap \{ x : \langle x, \alpha_i\rangle \geq 0 \} . \]

In Section~\ref{ShardBackgroundSection}, we prove:
{\renewcommand{\thethm}{\ref{ShardRecursionTheorem}}
\begin{Theorem} 
Let $\beta$ be any positive root and let $s_{i_r} \cdots s_{i_2} s_{i_1} \alpha_j$ be a positive	 expression for $\beta$. Then the set of shards of $\beta^{\perp}$ is the set of polyhedral cones of the form $\shard_{i_r}^{\pm_r} \cdots \shard_{i_2}^{\pm_2} \shard_{i_1}^{\pm_1} \left( \alpha_j^{\perp} \right)$ which are of dimension $n-1$, where the signs $\pm_1$, $\pm_2$, \dots, $\pm_r$ may be chosen independently.
\end{Theorem}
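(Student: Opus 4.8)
The plan is to induct on the length $r$ of the given positive expression $\beta = s_{i_r}\cdots s_{i_1}\alpha_j$. In the base case $r = 0$ we have $\beta = \alpha_j$ and the empty composition returns $\alpha_j^{\perp}$, so we only need to check that $\alpha_j^{\perp}$ has no shard walls, i.e.\ that no rank two subsystem cuts $\alpha_j^{\perp}$. This holds because $\alpha_j$ spans an extreme ray of $\sum_i \RR_{\geq 0}\alpha_i$, hence an extreme ray of $L \cap \sum_i \RR_{\geq 0}\alpha_i$ for any plane $L \ni \alpha_j$, so $\alpha_j$ is a fundamental root of every rank two subsystem $\Phi^{+}\cap L$ containing it.

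For the inductive step write $i = i_r$ and $\beta' = s_{i_{r-1}}\cdots s_{i_1}\alpha_j$, so $\beta = s_i\beta'$, and the defining inequality at step $r$ says $\beta - \beta' = c\alpha_i$ with $c > 0$, equivalently $(\alpha_i,\beta') < 0$. Every partial product of a positive expression is $\alpha_j$ plus positive multiples of simple roots, hence a positive real root; in particular $\beta' \in \Phi^{+}$, $\beta' \ne \alpha_i$ (else $\beta = -\alpha_i$), and $s_{i_{r-1}}\cdots s_{i_1}\alpha_j$ is a positive expression for $\beta'$, so the inductive hypothesis applies to $\beta'$. Note also that each $\shard_i^{\pm}$ is an intersection with a halfspace followed by a linear isomorphism, hence never increases dimension; so an $(n-1)$-dimensional value of $\shard_{i_r}^{\pm_r}\cdots\shard_{i_1}^{\pm_1}(\alpha_j^{\perp})$ forces $\shard_{i_{r-1}}^{\pm_{r-1}}\cdots\shard_{i_1}^{\pm_1}(\alpha_j^{\perp})$ to be $(n-1)$-dimensional too, hence a shard of $\beta'^{\perp}$ by induction. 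Thus the theorem for $\beta$ follows from the single-step statement: \emph{the shards of $\beta^{\perp}$ are exactly the $(n-1)$-dimensional members of $\{\shard_i^{+}(\tau'),\ \shard_i^{-}(\tau') : \tau' \text{ a shard of }\beta'^{\perp}\}$.}

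To prove this I would compare the shard arrangements in $\beta^{\perp}$ and $\beta'^{\perp}$ through the linear isomorphism $s_i$ (acting on $V^{\ast}$), which sends $\beta'^{\perp}$ onto $\beta^{\perp}$ and fixes $\alpha_i^{\perp}$. Set $R_0 = \Phi^{+}\cap\Span(\alpha_i,\beta)$; since $\Span(\alpha_i,\beta) = \Span(\alpha_i,\beta')$ and $\beta = \beta' + c\alpha_i$ with $c > 0$, the root $\beta$ is a strictly positive combination of the two distinct positive roots $\alpha_i,\beta' \in R_0$, so $\beta$ is not extreme, hence not fundamental, in $R_0$; thus $R_0$ cuts $\beta^{\perp}$ with wall $R_0^{\perp} = \alpha_i^{\perp}\cap\beta^{\perp} = \alpha_i^{\perp}\cap\beta'^{\perp}$. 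Any rank two subsystem cutting $\beta^{\perp}$ and containing $\alpha_i$ has span $\Span(\alpha_i,\beta)$ and hence equals $R_0$; and since $s_i$ permutes $\Phi^{+}\setminus\{\alpha_i\}$, it restricts to a bijection, preserving dimension and fundamentality, between rank two subsystems cutting $\beta^{\perp}$ that avoid $\alpha_i$ and those cutting $\beta'^{\perp}$ that avoid $\alpha_i$, with $R\mapsto s_iR$ and $R^{\perp}\mapsto s_i(R^{\perp})$. Using in addition that $s_i(R_0^{\perp}) = R_0^{\perp}$ and that $R_0$ is the only rank two subsystem containing both $\alpha_i$ and $\beta'$ (whether or not it cuts $\beta'^{\perp}$), these facts assemble into the identity: the set of shard walls in $\beta^{\perp}$ equals $\{R_0^{\perp}\} \cup s_i\big(\{\text{shard walls in }\beta'^{\perp}\}\big)$.

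It follows that the open regions of $\beta^{\perp}$ minus its walls are the $s_i$-images of the regions obtained by cutting each open shard of $\beta'^{\perp}$ along $\alpha_i^{\perp}$. Each open shard $C'$ of $\beta'^{\perp}$ is convex (it is a region of a hyperplane arrangement inside the hyperplane $\beta'^{\perp}$), so $C'$ meets $\alpha_i^{\perp}$ cleanly, splitting into at most the two convex pieces $C'\cap\{x:\langle x,\alpha_i\rangle > 0\}$ and $C'\cap\{x:\langle x,\alpha_i\rangle < 0\}$; when a piece is nonempty, convexity gives $\overline{C'\cap\{\langle x,\alpha_i\rangle > 0\}} = \overline{C'}\cap\{\langle x,\alpha_i\rangle\geq 0\}$, so its closure under $s_i$ is precisely $\shard_i^{+}(\overline{C'})$ (and likewise $\shard_i^{-}$), while an empty piece contributes nothing. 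Since all shards are $(n-1)$-dimensional, this is exactly the single-step statement, and the induction goes through. The step I expect to be the main obstacle is the identification of the shard walls in the inductive step: isolating the single exceptional subsystem $R_0$ through $\alpha_i$, handling uniformly the two cases of whether $R_0$ already cuts $\beta'^{\perp}$, and — in the symmetrizable/species setting — confirming that $\alpha_i$ and $\beta$ are linearly independent so that $R_0$ genuinely has rank two and ``$\alpha_i,\beta\in R$'' forces $R = R_0$. The convexity and closure bookkeeping, and the passage from the multi-step recursion to a single step, are routine by comparison.
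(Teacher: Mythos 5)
Your argument is correct, and its skeleton matches the paper's: both reduce the theorem to the single-step statement that the shards of $\beta^{\perp}$ are the $(n-1)$-dimensional cones $\shard_i^{\pm}(K')$ as $K'$ runs over shards of $(\beta')^{\perp}$ (the paper's Proposition~\ref{SimpleCutsKey}), both deduce that statement from the wall identity ``walls of $\beta^{\perp}$ $=$ $\{(\alpha_i^{\perp}\cap\beta^{\perp})\}\cup s_i(\text{walls of }(\beta')^{\perp})$'' by reflecting and cutting each region along $\alpha_i^{\perp}$, and both then iterate along the positive expression. Where you genuinely diverge is in how the wall identity is established. The paper obtains it as Corollary~\ref{CuttingRecursion}, which is extracted from Proposition~\ref{CuttingList}; that in turn rests on the combinatorial machinery of Section~2 (positive expressions as palindromic reduced words, Corollary~\ref{EverythingAboutPositiveExpressions}, and the inversion-set pairing of Proposition~\ref{PairInversions}). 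You instead prove the wall identity directly: $s_i$ permutes $\Phi^+\setminus\{\alpha_i\}$ and hence carries rank two subsystems avoiding $\alpha_i$ to rank two subsystems avoiding $\alpha_i$, preserving fundamentality (extremality in the two-dimensional cone), while the unique subsystem through $\alpha_i$ and $\beta$, namely $R_0=\Phi\cap\Span_{\RR}(\alpha_i,\beta)$, always cuts $\beta^{\perp}$ and has $s_i$-fixed wall, with both cases of whether $R_0$ cuts $(\beta')^{\perp}$ handled uniformly. This is more elementary and self-contained for the purposes of this one theorem (and your base case, that no rank two subsystem cuts $\alpha_j^{\perp}$ because a simple root spans an extreme ray, likewise avoids the reduced-word apparatus); note too that finiteness of the wall set, which the paper gets from Corollary~\ref{CuttingBound}, falls out of your induction. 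What your route does not provide is exactly that extra information -- the explicit bound of Corollary~\ref{CuttingBound} and the description of cutting subsystems via inversions -- which the paper uses elsewhere, so within the paper's architecture the two approaches are complementary rather than one subsuming the other.
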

\addtocounter{thm}{-1}
}
We give an example of a product  $\shard_{i_r}^{\pm_r} \cdots \shard_{i_2}^{\pm_2} \shard_{i_1}^{\pm_1} \left( \alpha_j^{\perp} \right)$ which has dimension less than $n-1$ in Example~\ref{D4Example}.

We now describe the recursion for shard modules. We define $\NoSub_i$ to be the full subcategory of $\Mod(\Lambda)$ of modules for which the simple module $S_i$ is not a submodule, and define $\NoQuot_i$ similarly to be the full subcategory of $\Mod(\Lambda)$ of modules for which $S_i$ is not a quotient. 
Baumann and Kamnitzer~\cite{BK} define, and we will review in Section~\ref{ReflectionFunctorsDefn}, mutually inverse isomorphisms of categories
\[ \Sigma_i : \NoQuot_i \to \NoSub_i \ \mbox{and}\  \Sigma_i^{-1} : \NoSub_i \to \NoQuot_i  .\] 

In Section~\ref{ProofOfRealBrickRecursion}, we prove the following recursive description of real brick modules:
{\renewcommand{\thethm}{\ref{RealBrickRecursion}}
\begin{Theorem} 
Let $\beta$ be a positive root and let $s_{i_r} \cdots s_{i_2} s_{i_1} \alpha_j$ be a positive expression for $\beta$. The bricks of dimension $\beta$ are precisely the modules of the form $\Sigma^{\pm_r}_{i_r} \cdots \Sigma^{\pm_2}_{i_2} \Sigma^{\pm_1}_{i_1} S_j$, where the signs must be chosen such that the expression is well-defined.
\end{Theorem}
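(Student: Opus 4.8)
The plan is an induction on the length $r$ of the chosen positive expression, running in parallel with the proof of Theorem~\ref{ShardRecursionTheorem}. The base case $r=0$ is $\beta=\alpha_j$: the only $\Lambda$-module of dimension $\alpha_j$ is $S_j$ (all structure maps are forced to vanish), and it is a brick, so both sides of the asserted equality equal $\{S_j\}$. For the inductive step write $i=i_r$ and $\gamma=s_i\beta=s_{i_{r-1}}\cdots s_{i_1}\alpha_j$; truncating the positive expression shows that $\gamma$ is a positive root carrying a positive expression of length $r-1$, so the statement holds for $\gamma$ by the inductive hypothesis. The positivity condition at step $r$ gives $\beta-\gamma\in\RR_{>0}\alpha_i$, so in particular $\beta\ne\alpha_i$. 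Throughout I would use two facts about the Baumann--Kamnitzer functors from Section~\ref{ReflectionFunctorsDefn}: that $\Sigma_i\colon\NoQuot_i\to\NoSub_i$ and $\Sigma_i^{-1}\colon\NoSub_i\to\NoQuot_i$ are mutually inverse isomorphisms of categories (hence carry bricks to bricks and preserve endomorphism rings), and that each of them multiplies dimension vectors by $s_i$ on its domain.

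The inclusion ``$\supseteq$'' is then formal. Fix signs for which $M:=\Sigma^{\pm_r}_{i_r}\cdots\Sigma^{\pm_1}_{i_1}S_j$ is defined, and read the word from the right. Each partial product lies in the domain of the next functor (by well-definedness of $M$), and that functor carries it, as an isomorphism of categories, to a brick whose dimension vector, by the dimension formula, is the matching partial product of the $s_{i_k}$ applied to $\alpha_j$. For the whole word this dimension vector is $s_{i_r}\cdots s_{i_1}\alpha_j=\beta$, so $M$ is a brick of dimension $\beta$; here positivity of the expression is not even needed, only the identity $\beta = s_{i_r}\cdots s_{i_1}\alpha_j$.

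For ``$\subseteq$'' the one substantive ingredient, which I expect to be the main point, is the dichotomy: \emph{a brick $B$ with $B\not\cong S_i$ lies in $\NoSub_i$ or in $\NoQuot_i$}. Indeed, if $S_i$ were simultaneously a submodule of $B$, via $\iota\colon S_i\hookrightarrow B$, and a quotient of $B$, via $q\colon B\twoheadrightarrow S_i$, then $q\iota\in\End_\Lambda(S_i)$ is either $0$ or invertible; in the invertible case $\iota(q\iota)^{-1}$ splits $q$ and $B\cong S_i\oplus\ker q$, forcing $B\cong S_i$ because a brick is indecomposable; in the case $q\iota=0$ the endomorphism $\iota q\in\End_\Lambda(B)$ satisfies $(\iota q)^2=0$, hence is $0$ since the division ring $\End_\Lambda(B)$ has no nonzero nilpotents, and then $q=0$ because $\iota$ is injective --- contradicting surjectivity of $q$. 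Granting this, let $B$ be any brick of dimension $\beta$; since $\beta\ne\alpha_i$ we have $B\not\cong S_i$, so $B\in\NoSub_i$ or $B\in\NoQuot_i$. If $B\in\NoSub_i$, then $\Sigma_i^{-1}B$ is a brick of dimension $s_i\beta=\gamma$ lying in $\NoQuot_i$; by induction $\Sigma_i^{-1}B=\Sigma^{\pm_{r-1}}_{i_{r-1}}\cdots\Sigma^{\pm_1}_{i_1}S_j$ for a well-defined choice of signs, and applying $\Sigma_i$ --- legitimate because $\Sigma_i^{-1}B\in\NoQuot_i=\Dom(\Sigma_i)$ --- yields $B=\Sigma_i\Sigma^{\pm_{r-1}}_{i_{r-1}}\cdots\Sigma^{\pm_1}_{i_1}S_j$. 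If instead $B\in\NoQuot_i$, the same argument with $\Sigma_i$ and $\Sigma_i^{-1}$ interchanged writes $B=\Sigma_i^{-1}\Sigma^{\pm_{r-1}}_{i_{r-1}}\cdots\Sigma^{\pm_1}_{i_1}S_j$; when both memberships hold both presentations arise, consistent with the statement. This closes the induction.

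The delicate point is thus this dichotomy: it is the one place where the brick hypothesis (not merely indecomposability) is essential, since an indecomposable module may well have $S_i$ as both a submodule and a quotient --- which forces $(\dim B)_i\ge 2$ --- and it is the module-theoretic shadow of the trivial geometric fact that a cone is the union of its intersections with the two closed half-spaces bounded by $\alpha_i^\perp$, so that a shard of $\gamma^\perp$ contributes one shard of $\beta^\perp$ (via $\shard_i^+$ or $\shard_i^-$) if it lies on one side of $\alpha_i^\perp$ and two if it straddles it. Everything else is bookkeeping with the domains $\NoSub_i$ and $\NoQuot_i$ of the reflection functors.
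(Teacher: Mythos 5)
Your proposal is correct and follows essentially the same route as the paper: induction on the length of the positive expression, with the forward inclusion coming from the reflection functors being equivalences that send bricks to bricks and act by $s_i$ on dimension vectors, and the converse resting on the dichotomy that a brick not isomorphic to $S_i$ lies in $\NoSub_i$ or $\NoQuot_i$ (the paper's Proposition~\ref{CanReflectBricks}). Your proof of that dichotomy (analyzing $q\iota\in\End(S_i)$ and using that $\End(B)$ has no nonzero nilpotents) is a minor variant of the paper's one-line argument that the composite $B\onto S_i\into B$ is a nonzero endomorphism, so the two proofs are essentially identical in substance.
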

\addtocounter{thm}{-1}
}
To be clear, the condition that $\Sigma^{\pm_r}_{i_r} \cdots \Sigma^{\pm_2}_{i_2} \Sigma^{\pm_1}_{i_1} S_j$ is well defined means that each partial product $\Sigma^{\pm_{q-1}}_{i_{q-1}} \cdots \Sigma^{\pm_2}_{i_2} \Sigma^{\pm_1}_{i_1} S_j$ lies in $\NoQuot_{i_{q}}$ or $\NoSub_{i_{q}}$, according to whether $\pm_{i_q} = +$ or $\pm_{i_q} = -$. 

We then describe the stability domains of the real brick modules in terms of this recursion, proved in Section~\ref{ProofOfStabOfAShardModule}.
{\renewcommand{\thethm}{\ref{StabOfAShardModule}}
\begin{Theorem} 
Let $\beta$ be a positive root, let $s_{i_r} \cdots s_{i_2} s_{i_1} \alpha_j$ be a positive expression for $\beta$ and let $\pm_1$, $\pm_2$, \dots, $\pm_r$ be a choice of signs such that $\Sigma^{\pm_r}_{i_r} \cdots \Sigma^{\pm_2}_{i_2} \Sigma^{\pm_1}_{i_1} S_j$ is defined. Then the brick $\Sigma^{\pm_r}_{i_r} \cdots \Sigma^{\pm_2}_{i_2} \Sigma^{\pm_1}_{i_1} S_j$ has stability domain $\shard_{i_r}^{\pm_r} \cdots \shard_{i_2}^{\pm_2} \shard_{i_1}^{\pm_1} \left( \alpha_j^{\perp} \right)$.
\end{Theorem}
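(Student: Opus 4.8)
The plan is to prove the stated equality by induction on the length $r$ of the positive expression, reducing the inductive step to a single statement describing how one reflection functor transforms a stability domain. The base case $r=0$ asserts that $\Stab(S_j)=\alpha_j^\perp$, which is immediate: the only submodules and quotients of $S_j$ are $0$ and $S_j$, so $S_j$ is $\theta$-semistable exactly when $\langle\theta,\alpha_j\rangle=0$. For the inductive step set $i=i_r$ and write $B'=\Sigma^{\pm_{r-1}}_{i_{r-1}}\cdots\Sigma^{\pm_1}_{i_1}S_j$, so that the brick of the theorem is $B=\Sigma^{\pm_r}_iB'$. The truncated expression is again a positive expression, so by the inductive hypothesis $\Stab(B')=\shard^{\pm_{r-1}}_{i_{r-1}}\cdots\shard^{\pm_1}_{i_1}(\alpha_j^\perp)$, and by Theorem~\ref{RealBrickRecursion} both $B$ and $B'$ are bricks. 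The argument below will only use that $B'\in\NoQuot_i$ (if $\pm_r=+$) or $B'\in\NoSub_i$ (if $\pm_r=-$), which is the hypothesis making $B$ well-defined, and that $s_i(\dim B')-\dim B'\in\RR_{>0}\alpha_i$, equivalently $(\dim B',\alpha_i)<0$, which is part of the positive-expression condition. Hence the theorem follows once we establish, for all $M\in\NoQuot_i$ with $(\dim M,\alpha_i)<0$, that $\Stab(\Sigma_i M)=\shard_i^+(\Stab M)$, together with the dual statement that for all $N\in\NoSub_i$ with $(\dim N,\alpha_i)<0$ we have $\Stab(\Sigma_i^{-1}N)=\shard_i^-(\Stab N)$; applying these with $M=B'$ or $N=B'$ according to $\pm_r$ closes the induction.

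To prove the single-functor statement I would use: (i) the standard torsion pair $(\NoQuot_i,\mathcal{S}_i)$, where $\mathcal{S}_i=\operatorname{Filt}(S_i)$ is the Serre subcategory of modules filtered by $S_i$, whose objects all have dimension vector in $\ZZ_{\geq0}\alpha_i$, together with the dual torsion pair $(\mathcal{S}_i,\NoSub_i)$; and (ii) the fact (see Section~\ref{ReflectionFunctorsDefn}) that $\Sigma_i$ sends a short exact sequence with all three terms in $\NoQuot_i$ to a short exact sequence with all three terms in $\NoSub_i$, acting on dimension vectors by $s_i$. Fix $\theta$ with $\langle\theta,\alpha_i\rangle\geq0$. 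Using $\langle\theta,\dim X\rangle=\langle s_i\theta,\dim\Sigma_i X\rangle$ (contragredience of the $s_i$-action together with $\dim\Sigma_i X=s_i\dim X$), the equality $\langle s_i\theta,\dim\Sigma_i M\rangle=0$ matches $\langle\theta,\dim M\rangle=0$. Next, because $\langle\theta,\alpha_i\rangle\geq0$ and, in the torsion decomposition $t(K)\subseteq K$, the quotient $K/t(K)\in\mathcal{S}_i$ contributes a nonnegative multiple of $\alpha_i$ to $\dim K$, the inequality $\langle\theta,\dim K\rangle\geq0$ holds for all submodules $K\subseteq M$ if and only if it holds for all $K\in\NoQuot_i$. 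Dually, since $\langle s_i\theta,\alpha_i\rangle=-\langle\theta,\alpha_i\rangle\leq0$, the inequality $\langle s_i\theta,\dim K'\rangle\geq0$ holds for all submodules $K'\subseteq\Sigma_i M$ if and only if it holds for all $K'$ with $\Sigma_i M/K'\in\NoSub_i$. Finally $K\mapsto\Sigma_i K$ is a bijection between these two classes of subobjects, with inverse $\Sigma_i^{-1}$ — this uses that $\NoQuot_i$ is closed under quotients, $\NoSub_i$ under subobjects, and the exactness property (ii) — and it intertwines $\langle\theta,-\rangle$ with $\langle s_i\theta,-\rangle$. Therefore $M$ is $\theta$-semistable if and only if $\Sigma_i M$ is $s_i\theta$-semistable; equivalently, $s_i$ carries $\Stab(M)\cap\{x:\langle x,\alpha_i\rangle\geq0\}$ onto $\Stab(\Sigma_i M)\cap\{x:\langle x,\alpha_i\rangle\leq0\}$.

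It remains to show $\Stab(\Sigma_i M)\subseteq\{x:\langle x,\alpha_i\rangle\leq0\}$, for then $\Stab(\Sigma_i M)=s_i\bigl(\Stab(M)\cap\{x:\langle x,\alpha_i\rangle\geq0\}\bigr)=\shard_i^+(\Stab M)$, as desired. This is where the hypothesis $(\dim M,\alpha_i)<0$ is used: it forces the $\alpha_i$-coefficient to increase strictly under $\Sigma_i$, and the explicit Baumann--Kamnitzer construction then realizes $S_i^{\oplus c}$ with $c\geq1$ as a quotient of $\Sigma_i M$; any $\theta$ with $\langle\theta,\alpha_i\rangle>0$ violates $\langle\theta,\dim(S_i^{\oplus c})\rangle\leq0$, so $\Sigma_i M$ is not $\theta$-semistable. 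The dual statement is proved symmetrically, completing the induction. I expect the main obstacle to be the middle paragraph: making precise that the subobjects of $M$ and of $\Sigma_i M$ which govern semistability are exactly those matched by $\Sigma_i$ — which rests on the exactness of $\Sigma_i$ on sequences internal to $\NoQuot_i$ and on the $\mathcal{S}_i$-decomposition of an arbitrary subobject — and verifying that the reductions on the two sides turn the respective semistability conditions into the identical system of inequalities.
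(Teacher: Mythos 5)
Your proof is correct, and it follows the paper's overall skeleton — induction along the positive expression, with the inductive step split into (a) the half-space identity $s_i\bigl(\Stab(B')\cap\{\langle-,\alpha_i\rangle\geq0\}\bigr)=\Stab(B)\cap\{\langle-,\alpha_i\rangle\leq0\}$ and (b) the containment $\Stab(B)\subseteq\{\langle-,\alpha_i\rangle\leq0\}$ — but you prove both ingredients by genuinely different means. For (a), the paper never invokes any exactness of $\Sigma_i$: it takes a single $\theta$-destabilizing submodule $K\into B$ (resp.\ destabilizing quotient $B'\onto Q'$), applies $\Sigma_i^{-1}$ (resp.\ $\Sigma_i$) to that one morphism, and uses the natural maps $\Sigma_i^{-1}X\to X$ and $X\to\Sigma_i X$, whose kernels and cokernels are powers of $S_i$, to show the \emph{image} of the transported map is destabilizing; the sign of $\langle\theta,\alpha_i\rangle$ enters only through the correction terms $k\langle\theta,\alpha_i\rangle$. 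You instead first reduce the semistability test to submodules lying in $\NoQuot_i$ (resp.\ quotients lying in $\NoSub_i$) via the torsion pairs $(\NoQuot_i,\operatorname{add}S_i)$ and $(\operatorname{add}S_i,\NoSub_i)$, and then biject these restricted classes using exactness of $\Sigma_i$ on short exact sequences internal to $\NoQuot_i$. That exactness is true, but be aware it is \emph{not} stated in the section you cite: it needs its own (routine) proof, e.g.\ a snake-lemma argument on the explicit construction, where the essential point is that $K_{i,\mathrm{in}}$ is surjective because the submodule $K$ itself lies in $\NoQuot_i$; likewise your torsion-pair claim uses the small observation that, since $\Gamma$ has no loops at $i$, every module with dimension vector in $\ZZ_{\geq0}\alpha_i$ is $S_i^{\oplus m}$. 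Your route buys a cleaner conceptual statement (a dimension-preserving bijection of the relevant subobjects), at the cost of these extra lemmas; the paper's route is more hands-on but self-contained. For (b), the paper applies its Proposition on $(\alpha_i,\dim B)>0$, which rests on the Euler-form identity of Theorem \ref{CB Identity}, to conclude $B\notin\NoQuot_i$; your argument that the cokernel of the canonical map $M\to\Sigma_iM$ is $S_i^{\oplus c}$ with $c\geq1$ (by the dimension count forced by $(\alpha_i^\vee,\dim M)<0$) is more elementary and avoids the Euler form entirely. Both versions are sound, so there is no gap — only the two auxiliary facts above to write out.
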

\addtocounter{thm}{-1}
}

Combining Theorems~\ref{RealBrickRecursion} and~\ref{StabOfAShardModule}, we immediately obtain
\begin{Theorem} \label{ShardModuleRecursion}
Let $\beta$ be a positive root and let $s_{i_r} \cdots s_{i_2} s_{i_1} \alpha_j$ be a positive expression for $\beta$.
The shard modules  of dimension $\beta$ are precisely those modules  $\Sigma^{\pm_r}_{i_r} \cdots \Sigma^{\pm_2}_{i_2} \Sigma^{\pm_1}_{i_1} S_j$  which are well defined and for which  $\shard_{i_r}^{\pm_r} \cdots \shard_{i_2}^{\pm_2} \shard_{i_1}^{\pm_1} \left( \alpha_j^{\perp} \right)$ has dimension $n-1$.
\end{Theorem}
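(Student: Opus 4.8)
The plan is to deduce this directly by chaining the two preceding theorems, since Theorem~\ref{ShardModuleRecursion} is essentially a bookkeeping statement and its mathematical content is already contained in Theorems~\ref{RealBrickRecursion} and~\ref{StabOfAShardModule}. First I would unwind the definition: a shard module of dimension $\beta$ is, by definition, a real brick $B$ with $\dim B = \beta$ and $\dim \Stab(B) = n-1$. Since $\beta$ is a positive root — hence a real root — every brick of dimension $\beta$ is automatically a real brick (this is immediate from the Definition of real brick, via the equivalence $(1)$ in Proposition~\ref{RealBrickProperties}). So the phrase ``real brick of dimension $\beta$'' may be replaced throughout by ``brick of dimension $\beta$''; this is the only place the hypothesis on $\beta$ is used.

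Next I would invoke Theorem~\ref{RealBrickRecursion} for the chosen positive expression $s_{i_r}\cdots s_{i_1}\alpha_j$ of $\beta$: it says that ``$B$ is a brick of dimension $\beta$'' is equivalent to ``$B = \Sigma^{\pm_r}_{i_r} \cdots \Sigma^{\pm_1}_{i_1} S_j$ for some choice of signs making the expression well-defined''. It remains to translate the condition $\dim\Stab(B) = n-1$ under this identification. For that I would apply Theorem~\ref{StabOfAShardModule}, which asserts that whenever $\Sigma^{\pm_r}_{i_r}\cdots\Sigma^{\pm_1}_{i_1}S_j$ is defined, its stability domain equals $\shard^{\pm_r}_{i_r}\cdots\shard^{\pm_1}_{i_1}\bigl(\alpha_j^\perp\bigr)$. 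Substituting, the condition $\dim\Stab(B) = n-1$ becomes $\dim\shard^{\pm_r}_{i_r}\cdots\shard^{\pm_1}_{i_1}\bigl(\alpha_j^\perp\bigr) = n-1$, which is exactly the claimed description.

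There is no genuine obstacle: the argument is a two-line substitution. The only point worth making explicit in the write-up is that the signs ranging over the ``well-defined'' choices in Theorem~\ref{RealBrickRecursion} are precisely the ones for which Theorem~\ref{StabOfAShardModule} applies, so the two recursions are indexed over the same set of sign sequences and combine without friction; this is already built into the hypotheses of Theorem~\ref{StabOfAShardModule} and needs no separate verification. (If desired, one could append the remark that, together with Theorem~\ref{ShardRecursionTheorem}, this shows the cones of dimension $n-1$ arising here are exactly the shards of $\beta^\perp$, which is a step toward Theorem~\ref{StabsAreShards}; but that is not required for the statement as phrased.)
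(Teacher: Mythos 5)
Your proposal is correct and is exactly how the paper handles it: the paper states that Theorem~\ref{ShardModuleRecursion} follows immediately by combining Theorems~\ref{RealBrickRecursion} and~\ref{StabOfAShardModule}, which is precisely your two-step substitution. Your added remark that a brick of dimension $\beta$ is automatically a real brick, and that the two recursions are indexed by the same well-defined sign sequences, is a harmless (and accurate) spelling-out of what the paper leaves implicit.
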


Theorems~\ref{ShardRecursionTheorem} and~\ref{ShardModuleRecursion} do most of the work to prove Theorem~\ref{StabsAreShards}. We finish the proof of Theorem~\ref{StabsAreShards} in section~\ref{ProofOfStabsAreShards}; in particular, we resolve the tension that the recursion of Theorem \ref{RealBrickRecursion} appears to be more restrictive than the 
recursion of Theorem \ref{ShardRecursionTheorem}, since the former includes the condition that the reflection functors be well-defined, which is absent in the latter.

\subsection{Acknowledgments}
The first author has been supported by NSF grants  DMS-1854225 and DMS-1855135.
 The second author has been supported by NSF grants DMS-1854225, DMS-1855135 and DMS-1600223.
He thanks the Auslander Lectures 2019 for their excellent hospitality and for the helpful feedback of their attendees. The third author was partially supported by the Canada Research Chairs program and an NSERC Discovery Grant.  
He gratefully acknowledges the hospitality of the Center for Advanced Studies, Oslo, which provided excellent working conditions for the completion of this paper. 
The authors also thank Nathan Reading for his many helpful comments on this project.

\section{Background on Coxeter groups and root systems} \label{sec Coxeter}

This section provides our notation and conventions for discussing Coxeter groups and root systems. 

\subsection{Cartan matrices, Coxeter groups, root systems}
All the material in this section is standard. For example, see~\cite[Chapter 5]{Humphreys}.

Let $d_1$, $d_2$, \dots, $d_n$ be positive integers. Let $A$ be an $n \times n$ integer matrix obeying
\[ \begin{array}{rcl} 
A_{ii} &=& 2 \\
d_i A_{ij} &=& d_j A_{ji} \\
A_{ij} &\leq& 0 \ \mbox{for}\ i \neq j . \\
\end{array} \]
Such a matrix is called a \newword{crystallographic symmetrizable Cartan matrix}.

\begin{remark} \label{nonCrystalRemark}
 A \newword{symmetrizable Cartan matrix}, without the adjective ``crystallographic", would impose that the $A_{ij}$ and $d_i$ were real numbers, rather than integers, with $A_{ij} A_{ji} \in \{ 4 \cos^2 \tfrac{\pi}{m} : m \in \ZZ,\ m \geq 2 \} \cup [4, \infty)$ and obeying the other conditions above.
The non-crystallographic symmetrizable Cartan matrices are not related to preprojective algebras. 
However, as much of the theory of shards has never been published for infinite Coxeter groups, we will point out how to do some of the foundational work for non-crystallographic symmetrizable Cartan matrices where it is not difficult to do so.
\end{remark}

Let $V$ be a vector space with basis $\alpha_1$, \dots, $\alpha_n$ and let $\alpha_i^{\vee} = d_i^{-1} \alpha_i$. We define a symmetric bilinear form $(-,-)$ on $V$ by $(\alpha_i, \alpha_j) = d_i A_{ij}$, so $(\alpha_i^{\vee}, \alpha_j) = A_{ij}$. We let $s_i$ be the orthogonal reflection negating $\alpha_i$, given by the formula:
\[ s_i(x) = x - (\alpha_i^{\vee}, x) \alpha_i. \]
So the Coxeter group $W$ acts by transformations which are orthogonal for the form $(-,-)$.
We also record explicitly the action of $s_i$ on the $\alpha_j$:
\[ s_i(\alpha_j) = \alpha_j - A_{ij} \alpha_i. \]

Let $W$ be the subgroup of $\GL(V)$ generated by the $s_i$. Then $W$ is a Coxeter group, generated by the $s_i$ with relations $(s_i s_j)^{m_{ij}} = 1$ where
\[ m_{ij} = \begin{cases} 1 & i=j \\ 2 & A_{ij} A_{ji} = 0 \\ 3 & A_{ij} A_{ji} = 1 \\ 4 & A_{ij} A_{ji} = 2 \\ 6 & A_{ij} A_{ji} = 3 \\ \infty & A_{ij} A_{ji} \geq 4,\ i \neq j \end{cases}. \]

The set of \newword{real roots} $\Phi$ is defined to be $\{ w \alpha_i : w \in W,\ 1 \leq i \leq n \}$. 
In this paper, we will often shorten this to \newword{roots}, as we consider no other kind of root.
  If $\beta$ is a root and we write $\beta = \sum c_i \alpha_i$, then either all the $c_i$ are $\geq 0$ or all the $c_i$ are $\leq 0$; we call $\beta$ a \newword{positive root} or \newword{negative root} accordingly and write $\beta \in \Phi^+$ or $\beta \in \Phi^-$. If $\beta$ is a root then $-\beta$ is also a root.

We state a lemma on roots for future use: 
\begin{Lemma} \label{GCD1}
Let $A$ be a symmetrizable crytallographic Cartan matrix and $\beta = \sum c_i \alpha_i$ a real root. Then $\mathrm{GCD}(c_1, c_2, \ldots, c_n) = 1$.
\end{Lemma}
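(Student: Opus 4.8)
The plan is to prove Lemma~\ref{GCD1} by induction on the length of a word expressing the root $\beta$, using the characterization $\Phi = \{ w\alpha_i : w \in W,\ 1 \le i \le n\}$. The base case is $\beta = \alpha_i$, where the coefficient vector is $(0,\dots,0,1,0,\dots,0)$ and the $\GCD$ is manifestly $1$. For the inductive step, suppose $\beta = s_j \gamma$ where $\gamma$ is a root with shorter expression, so by induction $\GCD$ of the coefficients of $\gamma = \sum c_i \alpha_i$ is $1$. Using the explicit formula $s_j(\alpha_i) = \alpha_i - A_{ji}\alpha_i$ — more precisely $s_j(\gamma) = \gamma - (\alpha_j^\vee,\gamma)\alpha_j = \sum_{i} c_i \alpha_i - \bigl(\sum_i c_i A_{ji}\bigr)\alpha_j$ — we see that applying $s_j$ changes only the $\alpha_j$-coefficient, replacing $c_j$ by $c_j - \sum_i c_i A_{ji}$ (which, since $A_{jj}=2$, equals $-c_j - \sum_{i\ne j} c_i A_{ji}$), and leaves all other coefficients $c_i$ untouched. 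Hence the set of coefficients other than the $j$-th is unchanged, and since $\GCD(c_1,\dots,c_n)=1$, in particular $\GCD$ of $\{c_i : i \ne j\}$ together with the new $j$-th coefficient still divides the old $c_j$ minus an integer combination of the $c_i$'s; any common divisor $d$ of the new coefficient vector divides all $c_i$ with $i \ne j$ and divides $c_j - \sum_i c_i A_{ji}$, hence divides $c_j$, hence divides the old $\GCD$, which is $1$. So the new $\GCD$ is $1$ as well.

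The one point requiring care is that the inductive framing needs a genuine decrease in complexity: every real root $\beta \ne \pm\alpha_i$ can be written as $s_j\gamma$ where $\gamma$ is a real root that is ``closer'' to a simple root. Concretely, if $\beta = w\alpha_i$ with $w = s_{j_1}\cdots s_{j_\ell}$ a reduced word and $\ell \ge 1$, then $\gamma := s_{j_1}\beta = s_{j_2}\cdots s_{j_\ell}\alpha_i$ is a real root expressed by a strictly shorter word, and $\beta = s_{j_1}\gamma$; so induction on $\ell = \ell(w)$ (minimized over all $w$ and $i$ with $\beta = w\alpha_i$) closes the argument. Alternatively one can induct on the height $\sum_i |c_i|$, using the standard fact that a positive non-simple root $\beta$ always has some $s_j\beta$ of strictly smaller height with still all coefficients of one sign; either bookkeeping works, and I would use whichever is cleanest given the conventions already set up in Section~\ref{sec Coxeter}.

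I do not expect a serious obstacle here: the computation $s_j(\sum c_i\alpha_i) = \sum c_i\alpha_i - (\sum_i c_i A_{ji})\alpha_j$ only modifies one coordinate by subtracting an integer linear combination of the coordinates, and $\GCD$ is invariant under such elementary operations (this is exactly the invariance of $\GCD$ under the column operations realizing $s_j$ as an integer matrix with determinant $\pm 1$ — indeed each $s_j$ acts on the coefficient lattice $\ZZ^n$ as an element of $\GL_n(\ZZ)$, and $\GL_n(\ZZ)$ preserves $\GCD$ of a vector since it preserves the sublattice generated by that vector... wait, that last phrasing is not quite right, so I would instead just say: $\GL_n(\ZZ)$ preserves primitivity of integer vectors, a vector being primitive iff its $\GCD$ is $1$ iff it is part of a $\ZZ$-basis, a property obviously invariant under $\GL_n(\ZZ)$). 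So the cleanest writeup is: the simple roots $\alpha_i$ are primitive vectors in the lattice $\bigoplus \ZZ\alpha_i$; each $s_j \in \GL_n(\ZZ)$ with respect to the basis $(\alpha_i)$; $\GL_n(\ZZ)$ carries primitive vectors to primitive vectors; hence every root $w\alpha_i$ is primitive, i.e.\ has $\GCD$ of coefficients equal to $1$.
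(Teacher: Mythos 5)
Your argument is correct and is essentially the paper's proof: the paper likewise observes that a simple reflection changes only the $j$-th coefficient by an integer combination of the others, hence preserves the GCD, and then reduces to the obvious case of a simple root. (Your $\GL_n(\ZZ)$/primitivity phrasing is just a repackaging of the same observation; note only the small typo $s_j(\alpha_i)=\alpha_i-A_{ji}\alpha_i$, which should read $\alpha_i-A_{ji}\alpha_j$.)
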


\begin{proof}
  Since $s_j \sum c_i \alpha_i = \sum_{i \neq j} c_i \alpha_i + \left( -c_j + \sum_{i \neq j} A_{ij} c_i \right) \alpha_j$, 
  we see that the simple reflections preserve the GCD. Thus, it is enough to check the claim for $\beta$ a simple root, in which case it is obvious.
\end{proof}

For any real root $\beta$, we put $d_{\beta} = (\beta, \beta)/2$ and put $\beta^{\vee} = d_{\beta}^{-1} \beta$.
\begin{prop} \label{prop:dbeta}
If $\beta$ is in the same orbit as $\alpha_i$ for the $W$ action on $\Phi$, then $d_{\beta} = d_i$. 
\end{prop}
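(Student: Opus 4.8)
The plan is to deduce this directly from the $W$-invariance of the symmetric bilinear form $(-,-)$. First I would record the base case: by the defining formula $(\alpha_i, \alpha_j) = d_i A_{ij}$ together with $A_{ii} = 2$, we have $(\alpha_i, \alpha_i) = 2 d_i$, so $d_{\alpha_i} = (\alpha_i,\alpha_i)/2 = d_i$. This is just unwinding the definition of $d_\beta$ at a simple root.

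Then, since $\beta$ lies in the same $W$-orbit as $\alpha_i$, we may write $\beta = w \alpha_i$ for some $w \in W$. Because $W$ acts on $V$ by transformations orthogonal for $(-,-)$ (the $s_i$ are orthogonal reflections, so the group they generate preserves the form), we get $(\beta, \beta) = (w\alpha_i, w\alpha_i) = (\alpha_i, \alpha_i) = 2 d_i$, and hence $d_\beta = (\beta,\beta)/2 = d_i$, as claimed. There is essentially no obstacle here: the only point to be careful about is that the statement hypothesizes $\beta$ is in the $W$-orbit of $\alpha_i$ (not merely that $d_\beta \in \{d_1,\dots,d_n\}$), which is exactly what lets us pull $\beta$ back to $\alpha_i$ by an orthogonal transformation. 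I would also remark in passing that this shows $\beta^\vee = d_\beta^{-1}\beta = d_i^{-1} w\alpha_i = w\alpha_i^\vee$, so the coroot notation is likewise compatible with the $W$-action, though this is not needed for the statement.
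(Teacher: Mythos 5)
Your proof is correct and is essentially the paper's own argument: both use that $(\alpha_i,\alpha_i)=2d_i$ and that $W$ preserves the bilinear form $(-,-)$, so $(\beta,\beta)=(\alpha_i,\alpha_i)$ whenever $\beta=w\alpha_i$. The extra remark about $\beta^\vee = w\alpha_i^\vee$ is fine but not needed.
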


\begin{proof}
The group $W$ preserves the inner product $(-,-)$, so, if $\beta = u \alpha_i$, then $d_{\beta} = (\beta, \beta)/2 = (\alpha_i, \alpha_i)/2 = d_i$.
\end{proof}

If $\beta = u \alpha_i$ then the \newword{reflection over $\beta^{\perp}$} is the element $t:=u s_i u^{-1}$ in $W$. This element is determined by $\beta$, and in turn the pair $\pm \beta$ is determined by $t$. The reflection $t$ acts on $V$ by
\[ t(x) = x - (\beta^{\vee}, x) \beta .\]
The set of all reflections in $W$ is denoted $T$.

Let $V^{\ast}$ be the dual vector space to $V$. We write $\langle -,-\rangle$ for the pairing between $V^{\ast}$ and $V$. Let $\omega_i$ be the dual basis to $\alpha_i$, so $\langle \omega_i, \alpha_j \rangle = \delta_{ij}$. We let $W$ act on $V^{\ast}$ by the dual action, so that $\langle w \eta, w \beta \rangle = \langle \eta, \beta \rangle$. 
The action on the $\omega$ basis is given by
\[ s_i(\omega_j) = \begin{cases} -\omega_i + \sum_{i \neq k} (-A_{ik}) \omega_k & i=j \\ \phantom{-} \omega_j & i \neq j \end{cases} . \]
For $\beta \in \Phi$, we write $\beta^{\perp}$ for the hyperplane $\{ x\in V^{\ast} : \langle x, \beta \rangle = 0 \}$. If $t$ is the reflection over $\beta$, then $\beta^{\perp}$ is the fixed plane for the $t$-action on $V^{\ast}$.

Let $w$ be in the Coxeter group $W$. A \newword{word} for $w$ is any sequence $(s_{i_1}, s_{i_2}, \ldots, s_{i_m})$ with $s_{i_1} s_{i_2} \cdots s_{i_m} = w$. 
Such a word is called \newword{reduced} if it is of minimal length among all such words; this minimal length is denoted $\ell(w)$.

A reflection $t \in T$ is called an \newword{inversion} of $w$ if $w^{-1} \beta_t$ is a negative root. The number of inversions of $w$ is $\ell(w)$ and, if $s_{i_1} s_{i_2} \cdots s_{i_{\ell}}$ is any reduced word for $w$, then the inversions of $w$ are $\{ s_{i_1}, s_{i_1} s_{i_2} s_{i_1}, s_{i_1} s_{i_2} s_{i_3} s_{i_2} s_{i_1}, \ldots, s_{i_1} s_{i_2} \cdots s_{i_{\ell}} \cdots s_{i_2} s_{i_1} \}$. We write $\inv(w)$ for the set of inversions of $w$.
It is often more convenient to work with $\inv_{\Phi}(w) := \{ \beta_u : u \in \inv(w) \}$. 

The \newword{(right) weak order} 
is the partial order on $W$ where $u \leq v$ if the following equivalent conditions hold:
\begin{itemize} 
\item[(1)] $\ell(v) = \ell(u) + \ell(u^{-1} v)$ 
\item[(2)] $\inv(u) \subseteq \inv(v)$ 
\item[(3)] there is a reduced word $s_{i_1} s_{i_2} \cdots s_{i_{\ell}}$ for $v$ such that $s_{i_1} s_{i_2} \cdots s_{i_k}$ is a reduced word for $u$ for some $k\leq \ell$. 
\end{itemize} Weak order is ranked by the length function $\ell$. If $u \covered v$ then $u = v s_i$ for some $i$ with $\ell(v) = \ell(u)+1$; in this case, we have $\inv(v) = \inv(u) \sqcup \{ u s_i u^{-1} \}$. We call $u s_i u^{-1}$ the \newword{cover reflection} of the cover $u \covered v$.

We write $D$ for the simplicial cone $\{ x \in V^{\ast} : \langle x, \alpha_i \rangle \geq 0 \}$ and $D^{\circ}$ for its interior $\{ x \in V^{\ast} : \langle x, \alpha_i \rangle > 0 \}$. The open cones $w D^{\circ}$ are disjoint, and the closed cones $w D$ are the maximal cones of a fan called the \newword{Coxeter fan}. The support of this fan, $\bigcup_{w \in W} w D$, is called the \newword{Tits cone}. The two cones $u D$ and $v D$ border along a codimension one face if only if $u \covers v$ or $u \covered v$. If the cover reflection of this cover is $t$, then $uD$ and $vD$ border along a portion of the hyperplane $\beta_t^{\perp}$.

Finally, we address finiteness conditions. The following conditions are all equivalent:
\begin{itemize}
\item[(1)] The group $W$ is finite. 
\item[(2)] The real root system $\Phi$ is finite. 
\item[(3)] The symmetric matrix $d_i A_{ij}$ is positive definite. 
\end{itemize}In this case we say that $W$, $\Phi$ and $A$ are \newword{of Dynkin type} or \newword{finite type}. We label the Dynkin type root systems by their standard names $A_n$, $B_n$, $C_n$, $D_n$, $E_6$, $E_7$, $E_8$, $F_4$ and $G_2$ in the usual way.

\subsection{The root poset and positive expressions}

Let $\beta$ be a real root and $s_i$ a simple reflection.
Then $s_i(\beta) - \beta \in \RR \alpha_i$. 
Define a partial order on $\Phi^+$ by the transitive closure of the relation that $\beta \succR s_i \beta$ if $\beta - s_i \beta \in \RR_{>0} \alpha_i$.
(This is clearly an antisymmetric relation, since a positive linear combination of the $\alpha_i$ cannot be zero.)
We will refer to this order as the \newword{root poset}. 
Basic references for the root poset are~\cite{BH}, \cite{HEriksson}, \cite{KEriksson}, \cite[Chapter 4.6]{BB} and~\cite{Stem}; we favor~\cite{Stem}.

In the introduction, we introduced the notion of a \newword{positive expression}: Given a positive root $\beta$, a \newword{positive expression} for $\beta$ is a factorization $\beta = s_{i_r} \cdots s_{i_2} s_{i_1} \alpha_j$ such that for all $1 \leq k \leq r$, we have $s_{i_k} s_{i_{k-1}} \cdots s_{i_2} s_{i_1} \alpha_j - s_{i_{k-1}} \cdots s_{i_2} s_{i_1} \alpha_j  \in \RR_{>0} \alpha_{i_k}$. Thus, $s_{i_r} \cdots s_{i_2} s_{i_1} \alpha_j$ is a positive expression if and only if $\alpha_j \precR s_{i_1} \alpha_j \precR s_{i_2} s_{i_1} \alpha_j \precR \cdots \precR s_{i_r} \cdots s_{i_2} s_{i_1} \alpha_j$.
Our purpose in discussing the root poset is to prove foundational results about positive expressions.

\begin{remark}
There is another relation which is sometimes called the ``root poset" and which we do not consider outside this remark.
That relation is defined by $\beta \preceq_{\text{other}} \gamma$ if, writing $\beta = \sum b_i \alpha_i$ and $\gamma = \sum c_i \alpha_i$, we have $b_i \leq c_i$ for all $i$. 
Our relation $\beta \preceqR \gamma$ implies $\beta \preceq_{\text{other}} \gamma$ but not vice versa. 
For example, in type $B_2$, let $\alpha_1$ be the  short root and $\alpha_2$ the long root. Then $\alpha_2 \prec_{\text{other}} \alpha_1 + \alpha_2 \prec_{\text{other}} 2 \alpha_1 + \alpha_2$. 
In $\precR$, the relation $\alpha_2 \precR 2 \alpha_1+\alpha_2$ still holds, but $\alpha_1+\alpha_2$ is incomparable to both of these.
\end{remark}

We have described $\preceqR$ geometrically in terms of roots, but it can also be described in terms of reflections as described by the following lemma:
\begin{lemma} \label{lem:CombinatorialPoset}
Let $t$ be a reflection and let $s_i$ be a simple reflection not equal to $t$. 
Then $\beta_t - s_i \beta_t \in \RR_{>0} \alpha_i$ if and only if $s_i$ is an inversion of $t$; in this case,  $s_i \beta_t = \beta_{s_i t s_i}$.
\end{lemma}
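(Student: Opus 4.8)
The plan is to reduce everything to the explicit reflection formulas recorded earlier in this section. Write $\beta = \beta_t$. The formula $s_i(x) = x - (\alpha_i^{\vee}, x)\alpha_i$ gives $\beta - s_i\beta = (\alpha_i^{\vee}, \beta)\,\alpha_i$. Since a nonzero scalar multiple of $\alpha_i$ lies in $\RR_{>0}\alpha_i$ precisely when that scalar is positive, the condition $\beta_t - s_i\beta_t \in \RR_{>0}\alpha_i$ is equivalent to the single inequality $(\alpha_i^{\vee}, \beta) > 0$.

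Next I would translate the inversion condition into the same inequality. By definition $s_i$ is an inversion of $t$ iff $t^{-1}\beta_{s_i} = t\alpha_i$ is a negative root (using $\beta_{s_i} = \alpha_i$ and $t^{-1} = t$). From $t(x) = x - (\beta^{\vee}, x)\beta$ we get $t\alpha_i = \alpha_i - (\beta^{\vee}, \alpha_i)\beta$, and $(\beta^{\vee}, \alpha_i) = d_{\beta}^{-1}(\beta,\alpha_i)$ and $(\alpha_i^{\vee},\beta) = d_i^{-1}(\alpha_i,\beta)$ are positive scalar multiples of one another, hence have the same sign. Now, since $t \neq s_i$ and reflections correspond bijectively to pairs $\pm\gamma$ of roots, $\beta \neq \alpha_i$; as $\beta$ is a positive root, it therefore has a strictly positive coefficient on some $\alpha_k$ with $k \neq i$. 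Consequently: if $(\alpha_i^{\vee},\beta) > 0$ then the $\alpha_k$-coefficient of $t\alpha_i = \alpha_i - (\beta^{\vee},\alpha_i)\beta$ is strictly negative, forcing $t\alpha_i \in \Phi^-$; and if $(\alpha_i^{\vee},\beta) \leq 0$ then every coefficient of $t\alpha_i$ is $\geq 0$, so $t\alpha_i \in \Phi^+$. Thus $s_i$ is an inversion of $t$ iff $(\alpha_i^{\vee},\beta) > 0$, which together with the previous paragraph gives the claimed equivalence.

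Finally, for the supplementary assertion, I would use that conjugating the reflection over $\beta$ by $s_i$ is the reflection over $s_i\beta$, so $s_i t s_i$ is the reflection whose associated pair of roots is $\pm s_i\beta$. The coefficients of $s_i\beta = \beta - (\alpha_i^{\vee},\beta)\alpha_i$ on the $\alpha_k$ with $k \neq i$ coincide with those of $\beta$, and at least one of them is strictly positive (again because $\beta \neq \alpha_i$), so $s_i\beta$ is a positive root and hence $\beta_{s_i t s_i} = s_i\beta$.

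I do not expect a serious obstacle: the argument is essentially a one-line computation plus bookkeeping. The only points needing care are the sign case-analysis for the root $t\alpha_i$ and the observation $\beta_t \neq \alpha_i$, which is exactly where the hypothesis $s_i \neq t$ is used.
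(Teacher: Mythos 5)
Your proof is correct and follows essentially the same route as the paper's: reduce the order condition to the sign of $c=(\alpha_i^{\vee},\beta_t)$ via the reflection formula, then compare with the sign of the coefficient of $t\alpha_i$ on a simple root $\alpha_k\neq\alpha_i$ where $\beta_t$ has a positive coefficient, using $t\neq s_i$ to guarantee such a $k$ exists. Your write-up merely spells out both directions of the sign analysis and the final identification $\beta_{s_i t s_i}=s_i\beta_t$ (which the paper leaves implicit), so there is nothing to fix.
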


\begin{proof}
Let $c = (\alpha_i^{\vee}, \beta_t )$. Then $s_i \beta_t = \beta_t - c \alpha_i$, so $\beta_t \precR s_i \beta_t$ if and only if $c>0$. But also, $t \alpha_i = \alpha_i - \tfrac{d_i}{d_\beta} c \beta_t$. Since $t \neq s_i$, the root $\beta_t$ has a nonnegative coefficient on some $\alpha_j$ which is not $\alpha_i$. Then the coefficient of $\alpha_j$ in $t \alpha_i$ has sign matching that of $c$, so $t \alpha_i$ is a negative root if and only if $c>0$.
\end{proof}

Thus, $\precR$ can also be described as a partial order on $T$ which is the transitive closure of $s_its_i \precR t$ if $s_i$ is an inversion of $t$ (for $t \neq s_i$).

We now summarize the main results on $\precR$:
\begin{theorem}[{\cite[Proposition 2.1]{Stem}}]\label{stem1}
The root poset is graded by $\ZZ_{\geq 0}$. The grading function is denoted $d$ and called \newword{depth}, and we have $d(\beta_t) = (\ell(t)-1)/2$.
 For any root $\beta$ and any simple reflection $s_i$, we have $d(\beta) = d(s_i \beta)$ if and only if $\beta = s_i \beta$. If equality does not hold, then $d(\beta) = d(s_i \beta) \pm 1$; in this case, $s_i \beta$ covers $\beta$ if $d(s_i \beta) = d(\beta)+1$ and $\beta$ covers $s_i \beta$ if $d(s_i \beta) = d(\beta)-1$.
The $\precR$ minimal elements of $\Phi^+$ are the simple roots, $\{ \alpha_1, \alpha_2, \ldots, \alpha_n \}$.
\end{theorem}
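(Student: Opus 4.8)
The plan is to translate the relation $\precR$ into the language of reflection lengths via Lemma~\ref{lem:CombinatorialPoset}, so that the statement becomes a bookkeeping claim about how $\ell(t)$ behaves under conjugation by a simple reflection. Two preliminary facts are needed. First, every reflection $t=us_iu^{-1}$ has $\ell(t)$ odd, since the sign character $W\to\{\pm1\}$, $s_j\mapsto -1$, sends $t$ to $-1$; hence $d(\beta_t):=(\ell(t)-1)/2$ is a well-defined non-negative integer, and it equals $0$ exactly when $t$ is a simple reflection, i.e.\ exactly when $\beta_t$ is a simple root. Second, a positive real root which is a scalar multiple of $\alpha_i$ must equal $\alpha_i$; this is immediate from Lemma~\ref{GCD1}, since real roots have coprime (here, non-negative integer) coordinates.

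The heart of the argument is the length computation: \emph{if $t$ is a reflection, $s_i$ a simple reflection, and $s_i\beta_t\neq\beta_t$ (equivalently $t\alpha_i\neq\alpha_i$, equivalently $(\alpha_i^\vee,\beta_t)\neq 0$), then $\ell(s_its_i)=\ell(t)-2$ when $s_i$ is an inversion of $t$, and $\ell(s_its_i)=\ell(t)+2$ otherwise.} Passing from $t$ to $s_its_i$ swaps the two cases and preserves the hypothesis $s_i\beta_t\neq\beta_t$ (using that $s_i$ is an inversion of $s_its_i$ iff it is not an inversion of $t$, which holds because $s_its_i$ has positive root $s_i\beta_t\neq\alpha_i$), so it is enough to handle the inversion case. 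There $t\alpha_i\in\Phi^-$, and since $t=t^{-1}$ this makes $s_i$ a left descent of $t$; choosing a reduced word for $t$ beginning with $s_i$ gives $\ell(s_it)=\ell(t)-1$. By the second preliminary fact, $t\alpha_i\neq-\alpha_i$ (else $\beta_t$ would be proportional to $\alpha_i$, forcing the excluded case $t=s_i$), so $t\alpha_i$ is a negative root distinct from $-\alpha_i$, whence $(s_it)\alpha_i=s_i(t\alpha_i)\in\Phi^-$; thus $s_i$ is a right descent of $s_it$ and $\ell(s_its_i)=\ell(s_it)-1=\ell(t)-2$.

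Granting this, the theorem follows quickly. By Lemma~\ref{lem:CombinatorialPoset} the defining relations of $\precR$ are precisely $s_i\beta_t\precR\beta_t$ for $s_i$ an inversion of $t\neq s_i$, and for each such pair the length computation gives $s_i\beta_t=\beta_{s_its_i}$ with $d(s_i\beta_t)=d(\beta_t)-1$. So $d$ increases by exactly $1$ along every defining relation; since $\precR$ is their transitive closure, $d$ is strictly order-preserving, which forces the defining relations to be exactly the cover relations and makes $d$ a grading with values in $\ZZ_{\geq0}$. A positive root is $\precR$-minimal iff no defining relation lies below it; a simple root $\alpha_i=\beta_{s_i}$ qualifies, while if $t$ is a non-simple reflection it has a descent $s_i$, and then $t\alpha_i\in\Phi^-\setminus\{-\alpha_i\}$ produces a strictly smaller root $s_i\beta_t\precR\beta_t$, so $\beta_t$ is not minimal. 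Hence the minimal elements are exactly the simple roots. Finally, for a positive root $\beta$ and simple $s_i$: if $(\alpha_i^\vee,\beta)=0$ then $s_i\beta=\beta$ and $d$ is trivially unchanged; otherwise $s_i\beta\neq\beta$ and, provided $s_i\beta$ is again positive, exactly one of $s_i\beta\precR\beta$ or $\beta\precR s_i\beta$ holds according to the sign of $(\alpha_i^\vee,\beta)$, so $d(s_i\beta)=d(\beta)\pm1$, with $s_i\beta$ covering $\beta$ precisely when $d(s_i\beta)=d(\beta)+1$.

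The main obstacle is the length computation, and within it the elimination of the ``neutral'' possibility that conjugation changes $\ell(t)$ by $0$ even though $s_i$ genuinely moves $\beta_t$; this is exactly where the crystallographic hypothesis is used, through the non-proportionality consequence of Lemma~\ref{GCD1}.
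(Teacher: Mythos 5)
Your proposal is correct, and it supplies something the paper does not: the paper gives no proof of Theorem~\ref{stem1} at all, citing it to Stembridge's Proposition~2.1. Your argument is close in spirit to Stembridge's, but organized differently: you take $d(\beta_t)=(\ell(t)-1)/2$ as the definition and reduce everything to the single length fact that conjugating $t$ by a simple reflection which moves $\beta_t$ changes $\ell(t)$ by exactly $\pm 2$, whereas Stembridge defines depth as the minimal length of a $w$ with $w\beta\in\Phi^-$, proves the analogue of the second paragraph of the theorem for that depth, and only then identifies it with the reflection-length formula. Your route buys a short, self-contained derivation of the grading from Lemma~\ref{lem:CombinatorialPoset} plus standard descent bookkeeping, and the deduction ``generating relations shift $d$ by exactly $1$, hence they are precisely the covers and $d$ is a grading'' is carried out correctly. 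Two small points deserve attention. First, your italicized length claim as stated does not actually exclude $t=s_i$ (note $s_i\beta_{s_i}=-\alpha_i\neq\beta_{s_i}$, yet $\ell(s_is_is_i)=\ell(s_i)$), although your parenthetical ``the excluded case $t=s_i$'' shows you intend that hypothesis and every application of the claim (Lemma~\ref{lem:CombinatorialPoset} has $t\neq s_i$; in the last paragraph $s_i\beta\in\Phi^+$ forces $\beta\neq\alpha_i$) satisfies it, so this is a wording fix, not a gap. Second, your closing remark overstates the role of the crystallographic hypothesis: the fact that a positive root proportional to $\alpha_i$ must equal $\alpha_i$ does not require Lemma~\ref{GCD1}. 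If $\beta_t\in\RR_{>0}\alpha_i$ then $t$ fixes $\alpha_i^{\perp}$ pointwise and negates $\alpha_i$, so $t=s_i$ in $\GL(V)$ and hence in $W$ by faithfulness of the reflection representation, which is all your argument needs; this keeps the proof valid in the non-crystallographic symmetrizable setting of Remark~\ref{nonCrystalRemark}, matching the generality in which Stembridge's result is quoted.
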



\begin{cor} \label{EverythingAboutPositiveExpressions}
  Let $t$ be a reflection and let $\beta_t$ be the corresponding positive root. 
  Let $s_{i_k} s_{i_{k-1}} \cdots s_{i_1} s_j s_{i_1} s_{i_2} \cdots s_{i_k} = t$. 
  Then the following are equivalent: 
  \begin{enumerate}
\item $s_{i_k} \cdots s_{i_2} s_{i_1} \alpha_j$ is a positive expression for $\beta_t$ \label{condPos}
\item $\alpha_j \precR s_{i_1} \alpha_j \precR s_{i_2} s_{i_1} \alpha_j \cdots \precR s_{i_k} \cdots s_{i_2} s_{i_1} \alpha_j$ in $\precR$.  \label{condChain}
\item $\alpha_j \precR s_{i_1} \alpha_j \precR s_{i_2} s_{i_1} \alpha_j \cdots \precR s_{i_k} \cdots s_{i_2} s_{i_1} \alpha_j $ in $\precR$ and every comparison in this chain is a cover. \label{condSatChain}
\item We have $k = d(\beta_t)$. \label{condDepth}
\item We have $\ell(t) = 2k+1$. \label{condLength}
\item The word $t = s_{i_k} s_{i_{k-1}} \cdots s_{i_1} s_j s_{i_1} \cdots s_{i_{k-1}} s_{i_k}$ is reduced.  \label{condPalindrome}
\end{enumerate}
\end{cor}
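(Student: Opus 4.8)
The plan is to thread all six conditions through the intermediate roots $\gamma_m := s_{i_m} s_{i_{m-1}} \cdots s_{i_1}\alpha_j$ for $0 \le m \le k$ (so $\gamma_0 = \alpha_j$) and the intermediate reflections $t_m := s_{i_m}\cdots s_{i_1} s_j s_{i_1}\cdots s_{i_m}$ (so $t_k = t$), using that $t_m$ is the reflection over $(\gamma_m)^{\perp}$, that $t_m = s_{i_m} t_{m-1} s_{i_m}$, and that the depth function $d$ of Theorem~\ref{stem1} is graded. Three of the equivalences are essentially formal: \eqref{condPos}$\Leftrightarrow$\eqref{condChain} is the definition of a positive expression unwound, since $\gamma_m - \gamma_{m-1}\in\RR_{>0}\alpha_{i_m}$ (with $\gamma_m = s_{i_m}\gamma_{m-1}$) is literally the relation $\gamma_{m-1}\precR\gamma_m$, and by induction it forces each $\gamma_m$ — in particular $\gamma_k$ — to be a positive root, whence $\gamma_k = \beta_t$. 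Likewise \eqref{condDepth}$\Leftrightarrow$\eqref{condLength} is immediate from $d(\beta_t) = (\ell(t)-1)/2$, and \eqref{condLength}$\Leftrightarrow$\eqref{condPalindrome} is immediate from the definition of reduced word, since $s_{i_k}\cdots s_{i_1} s_j s_{i_1}\cdots s_{i_k}$ is an expression for $t$ of length exactly $2k+1$. Finally \eqref{condChain}$\Leftrightarrow$\eqref{condSatChain} in general, because $\gamma_m$ and $\gamma_{m-1}$ differ by a single simple reflection, so by Theorem~\ref{stem1} any relation $\gamma_{m-1}\precR\gamma_m$ in the graded poset $\precR$ must raise depth by exactly $1$ and hence is a cover (and in the degenerate case both conditions are vacuously false).

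The substantive step is linking the chain conditions to the depth equality $k = d(\beta_t)$. Suppose first that all the $\gamma_m$ are positive roots; then $\gamma_k = \beta_t$, and by Theorem~\ref{stem1} each step satisfies $d(\gamma_m) = d(\gamma_{m-1})$ (exactly when $\gamma_m = \gamma_{m-1}$) or $d(\gamma_m) = d(\gamma_{m-1})\pm 1$, with the value $+1$ occurring precisely when $\gamma_m$ covers $\gamma_{m-1}$. Telescoping gives $d(\beta_t) = d(\gamma_k)-d(\gamma_0) = \sum_{m=1}^k \bigl(d(\gamma_m)-d(\gamma_{m-1})\bigr) \le k$, with equality if and only if every summand is $+1$, i.e. if and only if every step is an upward cover, i.e. if and only if \eqref{condSatChain} holds. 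This establishes \eqref{condSatChain}$\Leftrightarrow$\eqref{condDepth} in the all-positive case, and combined with the formal equivalences above it closes the loop whenever no $\gamma_m$ is negative.

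It remains to dispose of the degenerate case in which some $\gamma_m$ is a negative root, and I expect this to be the main obstacle. Let $m_0$ be minimal with $\gamma_{m_0}$ negative; then $\gamma_{m_0-1}$ is a positive root sent to a negative root by the simple reflection $s_{i_{m_0}}$, and the only such positive root is $\alpha_{i_{m_0}}$ itself, so $\gamma_{m_0-1} = \alpha_{i_{m_0}}$. Hence $t_{m_0-1}$ is the reflection over $\alpha_{i_{m_0}}^{\perp}$, namely $s_{i_{m_0}}$, so $s_{i_{m_0}}\cdots s_{i_1} s_j s_{i_1}\cdots s_{i_{m_0}} = s_{i_{m_0}} t_{m_0-1} s_{i_{m_0}} = s_{i_{m_0}}$; substituting this into the middle of the palindrome shows $t = s_{i_k}\cdots s_{i_{m_0+1}}\, s_{i_{m_0}}\, s_{i_{m_0+1}}\cdots s_{i_k}$, an expression of length $2(k-m_0)+1 < 2k+1$. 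Therefore $\ell(t) < 2k+1$ and $d(\beta_t) = (\ell(t)-1)/2 < k$, so \eqref{condDepth}, \eqref{condLength}, \eqref{condPalindrome} all fail; and \eqref{condChain}, \eqref{condSatChain}, \eqref{condPos} all fail as well, since $\gamma_{m_0}\notin\Phi^+$ and so the relation $\gamma_{m_0-1}\precR\gamma_{m_0}$ cannot even be stated. Thus in the degenerate case all six conditions are false, which completes the equivalence. (One could instead avoid the case split by using Lemma~\ref{lem:CombinatorialPoset} to track when the $t_m$ gain or lose an inversion at $s_{i_m}$, but the depth bookkeeping above seems the cleanest route.)
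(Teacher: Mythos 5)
Your proposal is correct, and the formal equivalences (\ref{condPos})$\Leftrightarrow$(\ref{condChain})$\Leftrightarrow$(\ref{condSatChain}) and (\ref{condDepth})$\Leftrightarrow$(\ref{condLength})$\Leftrightarrow$(\ref{condPalindrome}) are handled exactly as in the paper, via Theorem~\ref{stem1} and $d(\beta_t)=(\ell(t)-1)/2$. Where you diverge is the substantive implication from the depth/length conditions back to the chain condition, i.e.\ the treatment of possibly negative intermediate roots $\gamma_m = s_{i_m}\cdots s_{i_1}\alpha_j$. The paper handles this by invoking Stembridge's extension of the root poset to all of $\Phi$, graded by $\ZZ$ with $-\beta_t$ at depth $(-\ell(t)-1)/2$, and then bounding the depth of $\gamma_k$ by the number of steps $k$; under condition (\ref{condDepth}) this forces $\gamma_k=\beta_t$ and forces every step to be an upward cover. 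You instead avoid the extended poset entirely: you do the telescoping depth count inside $\Phi^+$ when all $\gamma_m$ are positive, and in the degenerate case you let $m_0$ be the first index with $\gamma_{m_0}$ negative, use the standard fact that the only positive root negated by $s_{i_{m_0}}$ is $\alpha_{i_{m_0}}$ to conclude $\gamma_{m_0-1}=\alpha_{i_{m_0}}$, hence that the middle of the palindrome collapses to $s_{i_{m_0}}$ and $\ell(t)\leq 2(k-m_0)+1<2k+1$, so that all six conditions fail simultaneously. Both arguments are sound; the paper's route is shorter once one accepts the extended grading from \cite{Stem}, while yours is more self-contained and makes the failure mode explicit (the palindromic word literally shortens), at the cost of a case split. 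One small point to make airtight in your degenerate case: condition (\ref{condPos}) fails there not because the relation ``cannot be stated'' but because $\gamma_{m_0}-\gamma_{m_0-1}=-2\alpha_{i_{m_0}}\in\RR_{<0}\alpha_{i_{m_0}}$, which directly violates the defining inequality of a positive expression; as you note, a positive root plus a positive multiple of a simple root stays positive, so the first failure of positivity of the $\gamma_m$ is indeed forced to occur in this way.
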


\begin{proof}
Note that, from the hypothesis $s_{i_k} s_{i_{k-1}} \cdots s_{i_1} s_j s_{i_1} s_{i_2} \cdots s_{i_k} = t$, we have $s_{i_k} \cdots s_{i_2} s_{i_1} \alpha_j = \pm \beta_t$. Under each condition,  the sign will be $+$; sometimes this will be obvious and sometimes it will take a small argument.

If Condition~\ref{condChain} holds, then in particular $s_{i_k} \cdots s_{i_2} s_{i_1} \alpha_j$ is a positive root, so $s_{i_k} \cdots s_{i_2} s_{i_1} \alpha_j = \beta_t$.
By definition, Conditions~\ref{condPos} and~\ref{condChain} are then equivalent.
From Theorem~\ref{stem1}, if $\gamma \precR s_i \gamma$ then this relation is a cover, showing that Conditions~\ref{condChain} and~\ref{condSatChain} are equivalent.

Assuming Condition~\ref{condSatChain}, we must have $\beta_t = s_{i_k} \cdots s_{i_2} s_{i_1} \alpha_j$, and the depth of $s_{i_k} \cdots s_{i_2} s_{i_1} \alpha_j$ must be the length of the chain, namely $k$.

We now show that Condition~\ref{condDepth} implies~\ref{condSatChain}. 
It is convenient (as Stembridge does) to extend the definition of the root poset to negative roots. This extended poset is now graded by $\ZZ$, with $-\beta_t$ at depth $(-\ell(t)-1)/2$; see \cite[Proposition 2.1]{Stem}. With this extended definition, we can say that, in the sequence of roots $(\alpha_j, \ s_{i_1} \alpha_j,\  s_{i_2} s_{i_1} \alpha_j,\  \cdots,\  s_{i_k} \cdots s_{i_2} s_{i_1} \alpha_j)$, each root is either equal to its neighbor or adjacent to its neighbor in the Hasse diagram of $\precR$, so $1-k \leq d(s_{i_k} \cdots s_{i_2} s_{i_1} \alpha_j) \leq k$. So, assuming Condition~\ref{condDepth}, we must have  $s_{i_k} \cdots s_{i_2} s_{i_1} \alpha_j = \beta_t$ and every step in the sequence must be a cover, showing Condition~\ref{condSatChain}.

We have $d(\beta_t) = (\ell(t)-1)/2$, so the equivalence of~\ref{condDepth} and~\ref{condLength} is immediate. The equivalence of conditions~\ref{condLength} and~\ref{condPalindrome} is also immediate.
\end{proof}

Since the only  $\precR$ minimal elements of $\Phi^+$ are the simple roots, $\{ \alpha_1, \alpha_2, \ldots, \alpha_n \}$, this in particular means
\begin{cor} \label{ShardRecursion}
Any positive root has a positive expression. 
\end{cor}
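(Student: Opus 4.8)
The plan is to deduce Corollary~\ref{ShardRecursion} from Theorem~\ref{stem1} by a short induction on the depth $d(\beta)$ of the positive root $\beta$. The base case is depth $0$: here Theorem~\ref{stem1} identifies the depth-$0$ positive roots as exactly the simple roots, so $\beta = \alpha_j$ for some $j$, and the trivial factorization $\beta = \alpha_j$ (the case $r = 0$) is a positive expression, the defining condition being vacuous.

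For the inductive step, I would take $\beta$ with $d(\beta) = m > 0$. Since the $\precR$-minimal positive roots are the simple roots and those have depth $0$, this $\beta$ is not $\precR$-minimal, so it lies strictly above some positive root in $\precR$. Unwinding the fact that $\precR$ is the transitive closure of the relations ``$s_i \delta \precR \delta$ when $\delta - s_i \delta \in \RR_{>0}\alpha_i$'', this produces a simple reflection $s_i$ with $s_i \beta \neq \beta$ and $\beta - s_i \beta \in \RR_{>0}\alpha_i$; by Theorem~\ref{stem1} such a relation is a cover, so $d(s_i \beta) = m - 1$. Setting $\beta' := s_i \beta$ and invoking the inductive hypothesis, I get a positive expression $\beta' = s_{i_{m-1}} \cdots s_{i_1} \alpha_j$, and then I would check that $\beta = s_i \beta' = s_i s_{i_{m-1}} \cdots s_{i_1} \alpha_j$ is again a positive expression: the defining chain conditions for the first $m-1$ reflections hold by hypothesis, and the new top condition $\beta - \beta' \in \RR_{>0}\alpha_i$ is exactly how $i$ was chosen. (This also shows that one can always take $r = d(\beta)$ reflections, consistent with the depth count in Corollary~\ref{EverythingAboutPositiveExpressions}.)

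The one place to be careful — and the only real ``obstacle'' — is the step from ``$\beta$ is not $\precR$-minimal'' to ``a single simple reflection strictly lowers $\beta$'', since a priori a covering relation in a transitively-closed order need not be one of the generating relations. This is handled entirely by Theorem~\ref{stem1}: it asserts that $\precR$ is graded by $\ZZ_{\geq 0}$ and that every generating relation $s_i \delta \precR \delta$ (with $s_i \delta \neq \delta$) is itself a cover, so any generating chain witnessing $\gamma \precR \beta$ descends one depth at a time and its top step is precisely the cover we need. Granting Stembridge's theorem, everything else is routine bookkeeping.
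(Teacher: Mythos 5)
Your proof is correct and follows essentially the same route as the paper, which derives the corollary directly from Theorem~\ref{stem1}: since the root poset is graded with the simple roots as its only minimal elements, one descends from $\beta$ one cover (equivalently, one simple reflection) at a time. Your induction on depth simply spells out the descent that the paper treats as immediate.
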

Corollary~\ref{ShardRecursion} is important to us because we describe bricks of dimension vector $\beta$ recursively, with the recursion taking place along a positive expression for $\beta$.

Given a reflection $t$ and a simple reflection $s_j$, Stembridge defines $x$ to be an \newword{agent of $t$ for $s_j$} if $t = x s_j x^{-1}$ and $\ell(t) = 2 \ell(x)+1$. 
In this vocabulary, we can restate Condition~\ref{condPalindrome} from Corollary~\ref{EverythingAboutPositiveExpressions}:

\begin{cor}
Let $t$ be a reflection and let $\beta_t$ be the corresponding positive root. Then $s_{i_k} \cdots s_{i_2} s_{i_1} \alpha_j$ is a positive expression for $\beta_t$ if and only if $x:=s_{i_k} \cdots s_{i_2} s_{i_1}$ is an agent of $t$ for $s_j$ and $s_{i_k} \cdots s_{i_2} s_{i_1}$ is a reduced word for $x$. In this case, $s_{i_k} \cdots s_{i_2} s_{i_1} s_j s_{i_1} s_{i_2} \cdots s_{i_k}$ is a reduced word for $t$. This is a bijection between positive expressions for $\beta_t$ and palindromic reduced words for $t$. 
\end{cor}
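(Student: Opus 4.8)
The plan is to obtain this corollary as essentially a repackaging of Corollary~\ref{EverythingAboutPositiveExpressions}: the chain of equivalences proved there already contains all the substance, and what remains is to translate between the language of positive expressions and the language of agents and palindromic reduced words. The one preliminary I would record is that the hypothesis of Corollary~\ref{EverythingAboutPositiveExpressions} --- namely that $s_{i_k} s_{i_{k-1}} \cdots s_{i_1} s_j s_{i_1} \cdots s_{i_k} = t$ --- is automatically available in both directions of the claimed equivalence. Indeed, if $s_{i_k} \cdots s_{i_1} \alpha_j$ is a positive expression for $\beta_t$, then, writing $x = s_{i_k} \cdots s_{i_1}$, we have $x \alpha_j = \beta_t$, so $x s_j x^{-1}$ is the reflection fixing $(x\alpha_j)^{\perp} = \beta_t^{\perp}$, which is $t$ by definition of $\beta_t$; and if instead $x$ is assumed to be an agent of $t$ for $s_j$, then $t = x s_j x^{-1}$ holds by definition of ``agent''. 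So in either case Corollary~\ref{EverythingAboutPositiveExpressions} applies.

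For the forward implication I would argue as follows. If $s_{i_k} \cdots s_{i_1} \alpha_j$ is a positive expression for $\beta_t$ (Condition~\ref{condPos}), then Condition~\ref{condPalindrome} gives that $s_{i_k} \cdots s_{i_1} s_j s_{i_1} \cdots s_{i_k}$ is a reduced word for $t$; since any subword of a reduced word is reduced, its length-$k$ prefix $s_{i_k} \cdots s_{i_1}$ is a reduced word for $x$, so $\ell(x) = k$. Combined with Condition~\ref{condLength}, $\ell(t) = 2k+1 = 2\ell(x)+1$, which together with $t = x s_j x^{-1}$ says precisely that $x$ is an agent of $t$ for $s_j$, and we have just seen the given word for $x$ is reduced. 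Conversely, if $x = s_{i_k} \cdots s_{i_1}$ is an agent of $t$ for $s_j$ (so $\ell(t) = 2\ell(x)+1$) and $s_{i_k} \cdots s_{i_1}$ is a reduced word for $x$ (so $\ell(x) = k$), then $\ell(t) = 2k+1$, which is Condition~\ref{condLength}, hence Condition~\ref{condPos}, i.e.\ a positive expression for $\beta_t$. The ``in this case'' assertion that $s_{i_k} \cdots s_{i_1} s_j s_{i_1} \cdots s_{i_k}$ is a reduced word for $t$ is then exactly Condition~\ref{condPalindrome}.

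Finally, for the bijection statement I would note that a positive expression for $\beta_t$ is the same datum as the index sequence $(i_k, \ldots, i_1, j)$, and that the map sending it to the word $s_{i_k} \cdots s_{i_1} s_j s_{i_1} \cdots s_{i_k}$ and the map extracting the first $k$ letters together with the middle letter are visibly mutually inverse; by the equivalence just proved these maps land in, and cover, the set of palindromic reduced words for $t$. I would also remark that, since $\ell(t)$ is odd for every reflection $t$ (equivalently $d(\beta_t) = (\ell(t)-1)/2 \in \ZZ_{\geq 0}$ by Theorem~\ref{stem1}), an even-length palindromic word represents the identity and hence cannot be reduced for $t$, so every palindromic reduced word for $t$ has odd length $2k+1$ with middle letter in position $k+1$ and is genuinely of the displayed form. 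There is no real obstacle in any of this; the only step needing a word of justification is the passage from ``the palindrome is reduced'' to ``its length-$k$ prefix is reduced'', which is the standard fact that subwords of reduced words are reduced.
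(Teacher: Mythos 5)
Your proposal is correct and follows the same route the paper intends: the paper offers no separate proof, presenting this corollary as a direct restatement of Condition~(6) of Corollary~\ref{EverythingAboutPositiveExpressions} in the vocabulary of agents, which is exactly how you derive it. Your additional checks (that the hypothesis $x s_j x^{-1}=t$ is available in both directions, that prefixes of reduced words are reduced, and that palindromic reduced words for a reflection have odd length) are just the routine details the paper leaves implicit.
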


We now discuss the relation between positive expressions for $\beta_t$ and inversions of $t$.
Let $t$ be a reflection. Recall that $\inv_{\Phi}(t)$ is the set of positive roots $\gamma$ such that $t\gamma = \gamma - (\beta_t^{\vee}, \gamma) \beta_t$ is a negative root. 
This implies:
\begin{cor}
The map $\gamma \mapsto - t \gamma$ is an involution on the set $\inv_{\Phi}(t)$; the only fixed point of this involution is $\beta_t$. 
\end{cor}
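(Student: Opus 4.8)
The plan is to prove both assertions by direct computation, relying only on the facts that $t$ is an involution of $\Phi$ preserving the partition $\Phi = \Phi^+ \sqcup \Phi^-$ up to sign, together with the explicit reflection formula $t(x) = x - (\beta_t^{\vee}, x) \beta_t$ and Lemma~\ref{GCD1}.

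First I would record the easy point that $\gamma \mapsto -t\gamma$ is an involution on all of $\Phi$: it sends roots to roots since $t \in W$ and $-\Phi = \Phi$, and $-t(-t\gamma) = t^2 \gamma = \gamma$ because $t^2 = e$. The only substantive claim is that this map carries $\inv_{\Phi}(t)$ into itself. So I would take $\gamma \in \inv_{\Phi}(t)$, i.e. $\gamma \in \Phi^+$ with $t\gamma \in \Phi^-$, and set $\delta := -t\gamma$. Then $\delta \in \Phi^+$ as the negative of a negative root, and $t\delta = -t^2\gamma = -\gamma \in \Phi^-$, so $\delta \in \inv_{\Phi}(t)$. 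This is a one-line verification and I anticipate no obstacle here.

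For the fixed points, I would observe that $-t\gamma = \gamma$ is equivalent to $t\gamma = -\gamma$. Substituting into the reflection formula gives $\gamma - (\beta_t^{\vee}, \gamma)\beta_t = -\gamma$, hence $2\gamma = (\beta_t^{\vee},\gamma)\beta_t$, so $\gamma$ is a scalar multiple of $\beta_t$. Writing $\gamma = c\beta_t$ and comparing coordinates in the basis $\alpha_1,\dots,\alpha_n$ (where both roots have integer coefficients) shows $c \in \QQ$, and then Lemma~\ref{GCD1}, applied to the two real roots $\gamma$ and $\beta_t$ whose coefficient vectors each have GCD $1$, forces $c = \pm 1$; positivity of $\gamma$ and $\beta_t$ then yields $c = 1$, i.e. $\gamma = \beta_t$. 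Conversely $\beta_t \in \inv_{\Phi}(t)$, since $\beta_t \in \Phi^+$ and $t\beta_t = -\beta_t \in \Phi^-$, and it is plainly a fixed point: $-t\beta_t = -(-\beta_t) = \beta_t$. The only step demanding a moment's care is the rationality-plus-GCD argument that pins the scalar $c$ to $\pm 1$; the rest is immediate.
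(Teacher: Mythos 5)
Your proof is correct and follows essentially the same route as the paper: show $-t\gamma$ is a positive root sent by $t$ to the negative root $-\gamma$, note the map squares to the identity, and then observe that a fixed point must be a positive root proportional to $\beta_t$, hence equal to $\beta_t$. The only difference is that you spell out the last step via Lemma~\ref{GCD1} where the paper simply asserts that the only positive root proportional to $\beta_t$ is $\beta_t$ itself; this is a harmless elaboration, not a different argument.
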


\begin{proof}
By the definition of an inversion, if $\gamma \in \inv_{\Phi}(t)$, then $-t \gamma$ is a positive root. We have $t(-t \gamma) = - \gamma$, which is a negative root, so $\gamma$ is also in $\inv_{\Phi}(t)$ in this situation. This shows that $\gamma \mapsto - t \gamma$ maps $\inv_{\Phi}(t)$ to itself. We clearly have $-t(-t \gamma) = t^2 \gamma = \gamma$, so this is an involution. 
Let $\gamma$ be fixed by this involution. We know that $t \gamma = \gamma + c \beta_t$ for some scalar $c$, so we have $- \gamma + c \beta_t = \gamma$ or $\gamma = (c/2) \beta_t$. The only positive root proportional to $\beta_t$ is $\beta_t$, so this only happens for $\gamma = \beta_t$. 
\end{proof}

\begin{prop} \label{PairInversions}
Let $t$ be a reflection, let $\beta_t$ be the corresponding positive root, and let $s_{i_r} s_{i_{r-1}} \cdots s_{i_2} s_{i_1} \alpha_j$ be a positive expression for $\beta_t$. 
Set $\delta_k = s_{i_r} \cdots s_{i_{k+1}} \alpha_k$.
 Let $\gamma$ be an element of $\inv_{\Phi}(t)$ other than $\beta_t$. Then exactly one of $\gamma$ and $- t \gamma$ occurs in the list $\{ \delta_1, \delta_2, \ldots, \delta_r \}$. 
\end{prop}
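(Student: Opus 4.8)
The plan is to read off both $\inv_{\Phi}(t)$ and the list $\{\delta_1,\ldots,\delta_r\}$ from the palindromic reduced word for $t$. By Corollary~\ref{EverythingAboutPositiveExpressions} (taking $r$ for the $k$ there), the factorization $t = s_{i_r} s_{i_{r-1}} \cdots s_{i_1} s_j s_{i_1} \cdots s_{i_{r-1}} s_{i_r}$ is a reduced word, necessarily of length $\ell := 2r+1$. Writing it as $s_{a_1} s_{a_2} \cdots s_{a_{\ell}}$, the palindrome condition reads $a_m = a_{\ell+1-m}$ for all $m$, with $a_{r+1} = j$. I would then use the standard description $\inv_{\Phi}(t) = \{\gamma_1, \ldots, \gamma_{\ell}\}$ with $\gamma_m := s_{a_1} s_{a_2} \cdots s_{a_{m-1}} \alpha_{a_m}$ (these $\ell$ roots being pairwise distinct, as $\#\inv_{\Phi}(t) = \ell(t) = \ell$), and observe, by unwinding definitions, that $\gamma_{r+1} = s_{i_r} \cdots s_{i_1}\alpha_j = \beta_t$ is the central term of the list while $\gamma_{r+1-k} = s_{i_r} \cdots s_{i_{k+1}}\alpha_{i_k} = \delta_k$ for $1\le k\le r$. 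In other words $\{\delta_1,\ldots,\delta_r\} = \{\gamma_1,\ldots,\gamma_r\}$ is exactly the first half of the inversion sequence, and $\beta_t$ is its middle term.

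The heart of the argument is to match this with the involution $\gamma \mapsto -t\gamma$ of $\inv_{\Phi}(t)$ supplied by the Corollary immediately preceding this Proposition (whose only fixed point is $\beta_t$). I would prove that it acts on indices by $m \mapsto \ell+1-m$, i.e. $t\gamma_m = -\gamma_{\ell+1-m}$ for all $m$. The computation is short: since $t$ is an involution, $t\gamma_m = t^{-1}\gamma_m = s_{a_{\ell}} s_{a_{\ell-1}} \cdots s_{a_{m+1}} s_{a_m}\alpha_{a_m} = -\,s_{a_{\ell}} s_{a_{\ell-1}} \cdots s_{a_{m+1}}\alpha_{a_m}$, and then the palindrome reindexing $a_p \mapsto a_{\ell+1-p}$ rewrites the right-hand side as $-\,s_{a_1} s_{a_2} \cdots s_{a_{\ell-m}}\alpha_{a_{\ell+1-m}} = -\gamma_{\ell+1-m}$. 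Hence $\{\gamma, -t\gamma\} = \{\gamma_m, \gamma_{2r+2-m}\}$ whenever $\gamma = \gamma_m$.

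The conclusion is then pure counting: given $\gamma \in \inv_{\Phi}(t)$ with $\gamma\ne\beta_t$, write $\gamma = \gamma_m$; since $\beta_t = \gamma_{r+1}$ and the $\gamma$'s are distinct, $m\ne r+1$, so the two indices $m$ and $2r+2-m$ sum to $2r+2$ with neither equal to $r+1$ — hence exactly one of them lies in $\{1,\ldots,r\}$ and the other in $\{r+2,\ldots,2r+1\}$. Because $\{\delta_1,\ldots,\delta_r\} = \{\gamma_1,\ldots,\gamma_r\}$, this says precisely that exactly one of $\gamma$ and $-t\gamma$ occurs in the list $\{\delta_1,\ldots,\delta_r\}$.

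The only place where care is needed is the index bookkeeping in the displayed computation of $t\gamma_m$ — in particular the palindrome rewrite $s_{a_{\ell}} \cdots s_{a_{m+1}} = s_{a_1} \cdots s_{a_{\ell-m}}$ together with $\alpha_{a_m} = \alpha_{a_{\ell+1-m}}$; once this is pinned down, everything else follows formally from the distinctness of the inversions and the symmetry of the list about its middle term, so I do not anticipate a genuine obstacle.
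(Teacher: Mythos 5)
Your proof is correct and follows essentially the same route as the paper: both arguments use Corollary~\ref{EverythingAboutPositiveExpressions} to see that the palindromic word $s_{i_r}\cdots s_{i_1}s_js_{i_1}\cdots s_{i_r}$ is reduced, read off $\inv_{\Phi}(t)$ as the standard inversion sequence of that word, identify the first half with $\{\delta_1,\ldots,\delta_r\}$ and the middle term with $\beta_t$, and show the second half consists of the roots $-t\delta_k$. Your palindrome-reindexing computation of $t\gamma_m=-\gamma_{\ell+1-m}$ is just a repackaging of the paper's direct manipulation $x s_j s_{i_1}\cdots s_{i_{k-1}}\alpha_{i_k}=-t\delta_k$, so there is nothing to add.
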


\begin{proof}
We abbreviate $x = s_{i_r} s_{i_{r-1}} \cdots s_{i_2} s_{i_1}$.
Since $s_{i_r} s_{i_{r-1}} \cdots s_{i_2} s_{i_1} \alpha_j$ is a positive expression for $\beta_t$, we know that $s_{i_r} s_{i_{r-1}} \cdots s_{i_1} s_j s_{i_1} \cdots s_{i_{r-1}} s_{i_r}$ is a reduced word for $t$. 
Thus, the $2r+1$ inversions of $t$ are associated to the $r$ positive roots of the form $ s_{i_r} \cdots s_{i_{k+1}} \alpha_k = \delta_k$, to the positive root $s_{i_r} s_{i_{r-1}} \cdots s_{i_2} s_{i_1} \alpha_j = \beta_t$, and to the $r$ positive roots $s_{i_r} s_{i_{r-1}} \cdots s_{i_2} s_{i_1} s_j s_{i_1} s_{i_2} \cdots s_{i_{k-1}} \alpha_k$. In the last case, we have
\begin{multline*} s_{i_r} s_{i_{r-1}} \cdots s_{i_2} s_{i_1} s_j s_{i_1} s_{i_2} \cdots s_{i_{k-1}} \alpha_k = x s_j s_{i_1} s_{i_2} \cdots s_{i_{k-1}} \alpha_k = x s_j x^{-1} s_{i_r} \cdots s_{i_k} \alpha_k = \\
-  x s_j x^{-1} s_{i_r} \cdots s_{i_{k+1}} \alpha_k = - t \delta_k .  
\end{multline*}
In short, the elements of $\inv_{\Phi}(t)$ are $\{ \delta_r, \delta_{r-1}, \ldots, \delta_1, \beta_t, -t \delta_1, \ldots, -t \delta_{r-1}, -t \delta_r \}$ so exactly one of $\{ \gamma, - t \gamma \}$ occurs in the list $\{ \delta_1, \delta_2, \ldots, \delta_r \}$ and the other occurs in the list $\{ -t\delta_1, -t\delta_2, \ldots, -t \delta_r \}$. 
\end{proof}

\begin{remark}
In many cases, given $t$ and $s_j$, there is a unique agent of $t$ for $s_j$. See Theorem~2.6 of~\cite{Stem} for conditions under which this holds; in particular, it holds whenever the Coxeter diagram of $W$ is a forest, hence in all Dynkin types  and in all affine types except for $\widetilde{A}$. However, this is not always the case: In type $\widetilde{A}_2$, both $(s_2 s_3 s_1 s_2 s_3) \alpha_1$ and $(s_3 s_2 s_1 s_3 s_2) \alpha_1$ are positive expressions for $2 \alpha_1 + 3 \alpha_2 + 3 \alpha_3$, but $s_2 s_3 s_1 s_2 s_3 \neq s_3 s_2 s_1 s_3 s_2$.
\end{remark}

\subsection{Rank two subsystems and cutting}

We define a subset $R$ of $\Phi$ to be a \newword{rank two subsystem} if $\Span_{\RR} R$ is two dimensional and $R = \Phi \cap \Span_{\RR} R$. 
We write $R^+ = R \cap \Phi^+$ and $R^- = R \cap \Phi^-$. There are always two roots $\beta_1$ and $\beta_2$ in $R^+$ such that all roots in $R^+$ can be written as $c_1 \beta_1+c_2 \beta_2$ for $c_1$, $c_2 \geq 0$; these are called the \newword{fundamental roots} in $R$.
(See~\cite[Section 2.4]{RS} for a presentation very close to this paper's perspective; see~\cite{Deodhar} and~\cite{Dyer} for early references.)
%

\begin{Remark} \label{CombinatorialRankTwo}
We will always discuss rank two sub-root systems using the geometric language above. However, bijecting $\Phi^+$ with $T$, it is possible to define them as subsets of $T$ instead. 
Namely, $R \subset T$ corresponds to a rank two subsystem if (1) the subgroup $\langle R \rangle$ generated by $R$ is dihedral (it may be the infinite dihedral group) and (2) there is not a larger subset $R' \supsetneq R$ such that $\langle R' \rangle$ is dihedral (again, including the  infinite dihedral group). 
If $R \subset T$ corresponds to a rank two subsystem and $p \in R$, then $p$ corresponds to a fundamental root if and only if $\inv(p) \cap R = \{ p \}$. 
\end{Remark}

Let $\beta$ be a positive root and let $R$ be a rank two subsystem containing it. We will say that $R$ \newword{cuts} $\beta^{\perp}$ if $\beta$ is not a fundamental root of $R$.

\begin{prop} \label{CuttingList}
Let $t$ be a reflection, let $\beta_t$ be the corresponding positive root, and let $s_{i_r} \cdots s_{i_2} s_{i_1} \alpha_j$ be a positive expression for $\beta_t$. Then the following sets of rank two subsystems are equal:
\begin{enumerate}
\item The set of rank two subsystems $R$ such that $R$ cuts $\beta_t^{\perp}$.
\item The set of rank two subsystems of the form $\Phi \cap \Span_{\RR}(\beta_t, \gamma)$ where $\gamma$ is an element of $\inv_{\Phi}(t)$ other than $\beta_t$.
\item The set of rank two subsystems of the form $\Phi \cap \Span_{\RR}(\beta_t, \delta_k)$ where $\delta_k =s_{i_r} \cdots s_{i_{k+1}} \alpha_k$.
\end{enumerate}
If $x$ is any agent of $t$ for $s_j$, this is also the same as the set of rank two subsystems of the form $\Phi \cap \Span_{\RR}(\beta_t, \delta)$ for $\delta \in \inv(x)$. 
\end{prop}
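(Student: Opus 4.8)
The plan is to deduce everything from the explicit list of inversions in Proposition~\ref{PairInversions} together with the combinatorial description of fundamental roots in Remark~\ref{CombinatorialRankTwo}. First I would prove that (2) and (3) describe the same set. By Proposition~\ref{PairInversions} we have $\inv_\Phi(t) = \{\delta_r,\dots,\delta_1,\beta_t,-t\delta_1,\dots,-t\delta_r\}$, so $\inv_\Phi(t)\setminus\{\beta_t\} = \{\delta_1,\dots,\delta_r\}\cup\{-t\delta_1,\dots,-t\delta_r\}$. For any $\gamma\in\inv_\Phi(t)\setminus\{\beta_t\}$ we have $-t\gamma = -\gamma + (\beta_t^{\vee},\gamma)\beta_t$, so $-t\gamma\in\Span_\RR(\beta_t,\gamma)$ and symmetrically $\gamma\in\Span_\RR(\beta_t,-t\gamma)$; hence $\Span_\RR(\beta_t,\gamma) = \Span_\RR(\beta_t,-t\gamma)$. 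Therefore, as $\gamma$ runs over $\inv_\Phi(t)\setminus\{\beta_t\}$, the spaces $\Span_\RR(\beta_t,\gamma)$ are exactly the spaces $\Span_\RR(\beta_t,\delta_k)$, and (2)$\,=\,$(3). Along the way one notes that each such $\gamma$ is not proportional to $\beta_t$ (the only positive root proportional to $\beta_t$ is $\beta_t$ itself), so these spans are genuinely two-dimensional and $\Phi\cap\Span_\RR(\beta_t,\gamma)$ is genuinely a rank two subsystem.

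Next I would prove (1)$\,=\,$(2) by rephrasing ``cuts'' through Remark~\ref{CombinatorialRankTwo}. If $R$ is a rank two subsystem containing $\beta_t$, then, viewing $R$ as a set of reflections and noting that $t\in\inv(t)\cap R$ always, the root $\beta_t$ is fundamental in $R$ iff $\inv(t)\cap R = \{t\}$; equivalently, $R$ cuts $\beta_t^{\perp}$ iff there is a root $\gamma\in R^+$ with $\gamma\in\inv_\Phi(t)$ and $\gamma\neq\beta_t$. Given such a $\gamma$, the linearly independent roots $\beta_t,\gamma\in R$ span $\Span_\RR R$, so $R = \Phi\cap\Span_\RR(\beta_t,\gamma)$ and $R$ lies in (2). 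Conversely, if $R = \Phi\cap\Span_\RR(\beta_t,\gamma)$ with $\gamma\in\inv_\Phi(t)\setminus\{\beta_t\}$, then both $\beta_t$ and $\gamma$ lie in $R^+\cap\inv_\Phi(t)$, so $\beta_t$ is not fundamental in $R$ and $R$ cuts $\beta_t^{\perp}$. This gives (1)$\,=\,$(2)$\,=\,$(3).

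For the last assertion, let $x$ be any agent of $t$ for $s_j$, and fix a reduced word $s_{i_k}\cdots s_{i_1}$ for $x$, so $k=\ell(x)$. Then $t = s_{i_k}\cdots s_{i_1}s_j s_{i_1}\cdots s_{i_k}$ with $\ell(t) = 2k+1$, so by Corollary~\ref{EverythingAboutPositiveExpressions} (Condition~\ref{condPalindrome} implies Condition~\ref{condPos}) the word $s_{i_k}\cdots s_{i_1}\alpha_j$ is a positive expression for $\beta_t$. A direct computation of the inversions of the reduced word $s_{i_k}\cdots s_{i_1}$ shows $\inv_\Phi(x) = \{\delta_1,\dots,\delta_k\}$, where $\delta_m = s_{i_k}\cdots s_{i_{m+1}}\alpha_{i_m}$; hence $\{\Phi\cap\Span_\RR(\beta_t,\delta):\delta\in\inv_\Phi(x)\}$ coincides with set (3) formed from this positive expression. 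Since (1)$\,=\,$(3) has already been proved for every positive expression, and set (1) does not depend on the positive expression chosen, this set equals set (1); as $\inv_\Phi(x)$ depends only on $x$, not on the reduced word, the conclusion holds for every agent $x$. (Here ``$\delta\in\inv(x)$'' in the statement should be read as $\delta$ ranging over the positive roots $\beta_u$ for $u\in\inv(x)$, i.e.\ over $\inv_\Phi(x)$.)

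The genuine content is entirely contained in Proposition~\ref{PairInversions}; the remaining work is bookkeeping, and the one place to be careful is the agent claim, where one must invoke the fact (via Corollary~\ref{EverythingAboutPositiveExpressions}) that agents of $t$ for $s_j$ are exactly the elements arising, through their reduced words, from positive expressions for $\beta_t$. A minor point worth checking explicitly is that every object named in (2), (3), and the last set really is a rank two subsystem, which follows from the linear independence of $\beta_t$ and the auxiliary root.
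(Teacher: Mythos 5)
Your argument is correct, and its real engine is the same as the paper's: Proposition~\ref{PairInversions} giving $\inv_{\Phi}(t)=\{\delta_1,\dots,\delta_r,\beta_t,-t\delta_1,\dots,-t\delta_r\}$, together with $\Span_{\RR}(\beta_t,\gamma)=\Span_{\RR}(\beta_t,-t\gamma)$. Where you genuinely diverge is in translating ``$R$ cuts $\beta_t^{\perp}$'' into an inversion condition. The paper proves the cycle $(1)\Rightarrow(2)\Rightarrow(3)\Rightarrow(1)$ with two short direct arguments: for $(1)\Rightarrow(2)$ it writes $\beta_t=c_1\beta_1+c_2\beta_2$ in the fundamental roots of $R$ and applies $t$ to conclude that $\beta_1$ or $\beta_2$ is an inversion of $t$; for $(3)\Rightarrow(1)$ it observes that $\beta_t$ is a positive combination of the positive roots $\delta_k$ and $-t\delta_k$, hence not fundamental. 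You instead obtain both implications at once from the criterion in Remark~\ref{CombinatorialRankTwo} that $\beta_t$ is fundamental in $R$ exactly when $\inv(t)\cap R=\{t\}$. That is legitimate, since the remark precedes the proposition in the paper, but note that the remark is stated there without proof; its two directions are precisely the two small computations the paper carries out inside this proof, so your route relocates rather than removes that work, and a self-contained write-up should include them. Your handling of the final assertion about agents matches the paper's (which disposes of it in one sentence as a restatement of condition (3)): you make explicit the use of Corollary~\ref{EverythingAboutPositiveExpressions} to turn a reduced word for an agent into a positive expression, the standard description of $\inv_{\Phi}(x)$ along a reduced word as the roots $\delta_m$, and the fact that set (1) is independent of the positive expression chosen --- all of which is the right bookkeeping, just spelled out more fully than in the paper.
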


\begin{proof}
The last sentence is a restatement of the third numbered condition, since $x$ is an agent of $t$ for $s_j$ if and only if $s_{i_r} \cdots s_{i_2} s_{i_1} \alpha_j$ is a positive expression for $\beta_t$ with $s_{i_r} \cdots s_{i_2} s_{i_1}$ a reduced word for $x$.
So we check equivalence of the numbered conditions.

First, suppose that $R$ cuts $\beta_t^{\perp}$. Let $\beta_1$ and $\beta_2$ be the fundamental roots of $R$ and let $\beta_t = c_1 \beta_1 + c_2 \beta_2$.
We have $- \beta_t = t (\beta_t) = c_1 t \beta_1 + c_2 t \beta_2$, so at least one of $t \beta_1$ and $t \beta_2$ is a negative root, and thus, either $\beta_1$ or $\beta_2$ is an inversion of $t$, without loss of generality say $\beta_1$. Since we assumed that $R$ cuts $\beta_t^{\perp}$, we have $\beta_t \neq \beta_1$. Then $R = \Phi \cap \Span_{\RR}(\beta_1, \beta_t)$. 

Now, suppose that $R$ is of the form $\Phi \cap \Span_{\RR}(\beta_t, \gamma)$ where $\gamma$ is an element of $\inv_{\Phi}(t)$ other than $\beta_t$. Then we also have $R = \Phi \cap \Span_{\RR}(\beta_t, -t\gamma)$.
By Proposition~\ref{PairInversions}, either $\gamma$ or $-t \gamma$ occurs among the $\delta_k$.

Finally, suppose that $R$ is of the form $\Phi \cap \Span_{\RR}(\beta_t, \delta_k)$. Then $\delta_k$ and $- t \delta_k$ are both inversions of $t$ and, in particular are both positive roots. 
The root $\beta_t$ is a positive linear combination of $\delta_k$ and $-t \delta_k$, so $\beta_t$ is not fundamental in $R$.
\end{proof}

\begin{cor} \label{CuttingBound}
Given a positive root $\beta_t$, the number of rank two subsystems cutting $\beta_t^{\perp}$ is at most $(\ell(t)-1)/2$. In particular, it is finite.
\end{cor}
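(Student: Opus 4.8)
The plan is to read the bound directly off Proposition~\ref{CuttingList}. By Corollary~\ref{ShardRecursion}, the positive root $\beta_t$ has a positive expression $s_{i_r} \cdots s_{i_2} s_{i_1} \alpha_j$, and by Condition~\ref{condDepth} of Corollary~\ref{EverythingAboutPositiveExpressions} the length $r$ of such an expression equals $d(\beta_t) = (\ell(t)-1)/2$. Proposition~\ref{CuttingList} identifies the rank two subsystems cutting $\beta_t^{\perp}$ with those of the form $\Phi \cap \Span_{\RR}(\beta_t, \delta_k)$ for $k = 1, \ldots, r$, where $\delta_k = s_{i_r} \cdots s_{i_{k+1}} \alpha_k$. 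This is a list of $r$ subsystems, so the number of distinct rank two subsystems cutting $\beta_t^{\perp}$ is at most $r = (\ell(t)-1)/2$.

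First I would invoke Corollary~\ref{ShardRecursion} to fix a positive expression $s_{i_r}\cdots s_{i_1}\alpha_j$ for $\beta_t$; then I would note that, by Corollary~\ref{EverythingAboutPositiveExpressions} (equivalence of conditions~\ref{condPos} and~\ref{condDepth}), we have $r = d(\beta_t)$, and by Theorem~\ref{stem1}, $d(\beta_t) = (\ell(t)-1)/2$. Finally I would apply the equivalence of conditions (1) and (3) in Proposition~\ref{CuttingList}: every rank two subsystem cutting $\beta_t^{\perp}$ appears as $\Phi \cap \Span_{\RR}(\beta_t, \delta_k)$ for some $k \in \{1, \ldots, r\}$, hence there are at most $r$ of them. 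Finiteness is then immediate since $\ell(t)$ is a finite integer.

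There is essentially no obstacle here: the corollary is a bookkeeping consequence of results already proved. The only point worth a word of care is that the map $k \mapsto \Phi \cap \Span_{\RR}(\beta_t, \delta_k)$ need not be injective, so the count is genuinely an upper bound rather than an equality; but since we only claim ``at most'', this costs nothing.

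\begin{proof}
By Corollary~\ref{ShardRecursion}, $\beta_t$ has a positive expression $s_{i_r} \cdots s_{i_2} s_{i_1} \alpha_j$. By the equivalence of conditions~\ref{condPos} and~\ref{condDepth} in Corollary~\ref{EverythingAboutPositiveExpressions}, we have $r = d(\beta_t)$, and by Theorem~\ref{stem1}, $d(\beta_t) = (\ell(t)-1)/2$. By Proposition~\ref{CuttingList}, every rank two subsystem cutting $\beta_t^{\perp}$ is of the form $\Phi \cap \Span_{\RR}(\beta_t, \delta_k)$ with $\delta_k = s_{i_r} \cdots s_{i_{k+1}} \alpha_k$ for some $k \in \{1, 2, \ldots, r\}$. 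Hence the number of such subsystems is at most $r = (\ell(t)-1)/2$, which is a finite integer.
\end{proof}
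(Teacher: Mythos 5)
Your argument is correct and is essentially the same as the paper's: fix a positive expression for $\beta_t$ (of length $r = d(\beta_t) = (\ell(t)-1)/2$) and apply criterion (3) of Proposition~\ref{CuttingList} to bound the number of cutting rank two subsystems by $r$, noting that repetitions among the $\Span_{\RR}(\beta_t,\delta_k)$ only make the count smaller. Nothing is missing.
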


\begin{proof}
In the notation of the previous proposition, $k = (\ell(t)-1)/2$. So $k$ is the number of roots $\{ \delta_1, \delta_2, \ldots, \delta_k \}$, and at most $k$ rank two subsystems cut $\beta_t^{\perp}$. (The number may be less than $k$ because we may have $\Span_{\RR}(\beta_t, \delta_i) = \Span_{\RR}(\beta_t, \delta_j)$ for $i \neq j$.)
\end{proof}

\begin{cor} \label{CuttingRecursion}
Let $\beta \precR s_i \beta$ be a cover in the root poset. Then the set of rank two subsystems cutting $(s_i \beta)^{\perp}$ is the union of
\begin{enumerate}
\item The set of rank two subsystems of the form $s_i R$, where $R$ cuts $\beta^{\perp}$ and
\item The rank two subsystem $\Span_{\RR}(\alpha_i, \beta) \cap \Phi$. 
\end{enumerate}
\end{cor}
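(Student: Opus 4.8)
The plan is to transport Proposition~\ref{CuttingList} along the action of $s_i$. By Corollary~\ref{ShardRecursion}, fix a positive expression $\beta = s_{i_r}\cdots s_{i_2}s_{i_1}\alpha_j$, and let $t$ be the reflection with $\beta_t = \beta$. The hypothesis that $\beta\precR s_i\beta$ is a cover says exactly that $s_i\beta - \beta \in \RR_{>0}\alpha_i$; hence, setting $i_{r+1} := i$, the word $s_{i_{r+1}}s_{i_r}\cdots s_{i_1}\alpha_j$ satisfies the defining conditions of a positive expression for $s_i\beta$ — the first $r$ of them are inherited from the chosen expression for $\beta$, and the last one is precisely the cover hypothesis. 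Let $t'$ be the reflection with $\beta_{t'} = s_i\beta$ (so $t' = s_i t s_i$).

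Next I would apply the third description in Proposition~\ref{CuttingList} to the positive expression $s_{i_{r+1}}\cdots s_{i_1}\alpha_j$ of $s_i\beta$. It gives that the rank two subsystems cutting $(s_i\beta)^\perp$ are precisely $\Phi\cap\Span_{\RR}(s_i\beta,\delta'_k)$ for $1\le k\le r+1$, where $\delta'_k = s_{i_{r+1}}\cdots s_{i_{k+1}}\alpha_{i_k}$. For $k=r+1$ the product is empty, so $\delta'_{r+1} = \alpha_i$, and since $s_i\beta \equiv \beta \pmod{\RR\alpha_i}$ we get $\Phi\cap\Span_{\RR}(s_i\beta,\alpha_i) = \Phi\cap\Span_{\RR}(\beta,\alpha_i)$, which is subsystem (2); it is genuinely two-dimensional because the cover hypothesis forces $\beta\ne\alpha_i$, hence $\beta\notin\RR\alpha_i$. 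For $1\le k\le r$ we have $\delta'_k = s_i\delta_k$, where $\delta_k := s_{i_r}\cdots s_{i_{k+1}}\alpha_{i_k}$ is the root attached to the original expression for $\beta$; as $s_i$ is a linear automorphism of $V$ preserving $\Phi$,
\[ \Phi\cap\Span_{\RR}(s_i\beta, s_i\delta_k) = s_i\bigl( \Phi\cap\Span_{\RR}(\beta,\delta_k) \bigr) . \]
By Proposition~\ref{CuttingList} applied to the expression for $\beta$ itself, the sets $\Phi\cap\Span_{\RR}(\beta,\delta_k)$ for $1\le k\le r$ are exactly the rank two subsystems cutting $\beta^\perp$; so the terms with $k\le r$ contribute exactly $\{ s_i R : R \text{ cuts } \beta^\perp \}$, which is family (1). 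Taking the union over all $1\le k\le r+1$ yields the statement.

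The argument has essentially no obstacle beyond this bookkeeping; the content is simply that extending a positive expression by one simple reflection $s_i$ conjugates the old cutting subsystems by $s_i$ and introduces precisely one new one, namely $\Span_{\RR}(\alpha_i,\beta)\cap\Phi$. The one point to be careful about is verifying that prepending $s_i$ to a positive expression for $\beta$ really produces a positive expression for $s_i\beta$ — this is exactly where, and the only place where, the hypothesis $\beta\precR s_i\beta$ is used. (One could equivalently run the argument through agents: if $x$ is an agent of $t$ for $s_j$, then $s_i x$ is an agent of $t'$ for $s_j$ with $\inv_{\Phi}(s_i x) = \{\alpha_i\}\sqcup s_i\,\inv_{\Phi}(x)$, and the agent description in Proposition~\ref{CuttingList} finishes the proof in the same way.)
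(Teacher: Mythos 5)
Your proposal is correct and follows essentially the same route as the paper's proof: extend a positive expression for $\beta$ by $s_i$ (which the cover hypothesis justifies), apply criterion (3) of Proposition~\ref{CuttingList} to both expressions, and observe that the new $\delta$'s are the $s_i\delta_k$ together with $\alpha_i$, with $\Span_{\RR}(s_i\beta,\alpha_i)=\Span_{\RR}(\beta,\alpha_i)$. Your extra checks (that prepending $s_i$ yields a positive expression, and that $\Span_{\RR}(\alpha_i,\beta)$ is genuinely two-dimensional) are just slightly more explicit versions of steps the paper leaves implicit.
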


\begin{proof}
Let $s_{i_{r-1}} \cdots s_{i_2} s_{i_1} \alpha_j$ be a positive expression for $\beta$.
Then  $s_i s_{i_{r-1}} \cdots s_{i_2} s_{i_1} \alpha_j$ is a positive expression for $s_i \beta$.
We use criterion~(3) from Proposition~\ref{CuttingList}.
Let  $\delta_k =s_{i_{r-1}} \cdots s_{i_{k+1}} \alpha_k$. 
Criterion~(3) shows that $\beta^{\perp}$ is cut by those subsystems of the form $\Phi \cap \Span_{\RR}(\beta, \delta_k)$, and that $(s_i \beta)^{\perp}$ is cut by subsystems of the form $\Phi \cap \Span_{\RR}(s_i \beta, s_i \delta_k)$, and also by $\Phi \cap \Span_{\RR}(s_i \beta, \alpha_i)$. We have $\Span_{\RR}(s_i \beta, s_i \delta_k) = s_i \Span_{\RR}(\beta, \delta_k)$ and  $\Span_{\RR}(s_i \beta, \alpha_i) = \Span_{\RR}(\beta, \alpha_i)$, as desired.
\end{proof}

\begin{remark} \label{CombinatorialCuttingSet}
Let $W$ be a Coxeter group and let $t$ be a reflection in $W$. Then the set of rank two subsystems $R$ cutting $\beta_t$, when considered as subsets of $T$ (using Remark~\ref{CombinatorialRankTwo}) depends only on $W$ and $t$ and not on the choice of Cartan matrix. This is because the notion of $x$ being an agent of $t$ for $s_j$ can be defined without the Cartan matrix (it is equivalent to $t = x s_j x^{-1}$ with $\ell(t) = 2 \ell(x) +1$), the notion of being an inversion of $x$ can be defined without the Cartan matrix, and the notion of the rank two subsytem spanned by two reflections can be defined without the Cartan matrix (see Remark~\ref{CombinatorialRankTwo}). 
\end{remark}

\section{Shards}\label{ShardBackgroundSection}

Let $\beta$ be a positive root and let $R$ be a rank two subsystem containing $\beta$.
Let $\beta^{\perp}$ and $R^{\perp}$ be the $n-1$ dimensional and $n-2$ dimensional subspaces in $V^{\ast}$ orthogonal to $\beta$ and $R$ respectively, so $R^{\perp}$ is a hyperplane in $\beta^{\perp}$. 
For a fixed $\beta$, there are only finitely many rank two subsystems that cut $\beta$ (Corollary~\ref{CuttingBound}); the corresponding linear spaces $R^{\perp}$ are thus an arrangement of finitely many hyperplanes in $\beta^{\perp}$.
We'll call this hyperplane arrangement the \newword{shard arrangement} in $\beta^{\perp}$, and we call the (closed) regions of the hyperplane arrangement complement \newword{shards of $\beta^{\perp}$}.

\begin{eg}
Figure~\ref{shardsfig} depicts the six shards of the $B_2$ reflection arrangement. 
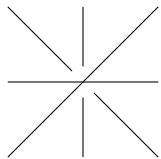
\begin{figure}[h]
  \begin{tikzpicture}
    \draw (-1,1)--(1,-1);
    \draw (0,-1)--(0,1);
    \draw[color=white,fill=white] (0,0) circle (0.2); 
    \draw (-1,0)--(1,0);
    \draw (-1,-1)--(1,1);
  \end{tikzpicture}
  \caption{\label{shardsfig}The six shards of the $B_2$ reflection arrangement}
  \end{figure}
\end{eg}

\begin{remark}
In contrast to Lemma~\ref{lem:CombinatorialPoset} and Remarks~\ref{CombinatorialRankTwo} and~\ref{CombinatorialCuttingSet}, the shards really do depend on the choice of Cartan matrix, not solely on the Coxeter group. See Section~\ref{ShardsSeeCartan} for further discussion.
\end{remark}

\begin{remark}
Shards were introduced by Nathan Reading~\cite{ReadingShard}.
There are two unimportant differences between our terminology and that of Reading. 
First, Reading has a standing hypothesis that $W$ is finite, which we do not need. 
Second, Reading says that $\beta_1$ and $\beta_2$ cut $\gamma^{\perp}$ where $\beta_1$ and $\beta_2$ are the fundamental roots of $R$, rather than saying that $R$ cuts $\gamma^{\perp}$. We believe that the focus on $R$ rather than $\beta_1$, $\beta_2$ is clarifying.
\end{remark}

We now define a very different, recursive description of shards. 
Let $K$ be a convex polyhedral cone in $V^{\ast}$. Recall from the introduction that we define
\[ \shard_i^+(K):= s_i \left( K  \cap \{ x : \langle x, \alpha_i\rangle \geq 0 \} \right) \ \mbox{and}\  \shard_i^-(K):= s_i \left( K \cap \{ x : \langle x, \alpha_i\rangle \leq 0 \}  \right). \]


\begin{prop} \label{SimpleCutsKey}
Let $\beta \succR \beta'$ be a cover in the root poset, with $\beta = s_i \beta'$. Then the set of shards of $\beta^{\perp}$ is the set of cones $\shard^+_i(K')$ and $\shard_i^-(K')$ which are of dimension $n-1$, where $K'$ ranges over shards of  $(\beta')^{\perp}$.
\end{prop}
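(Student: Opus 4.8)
The plan is to translate the geometric statement about shards into the combinatorial language of rank two subsystems cutting a hyperplane, and then invoke Corollary~\ref{CuttingRecursion}, which already performs exactly this recursion at the level of cutting sets. So the first step is to set up the dictionary: a shard of $\beta^{\perp}$ is, by definition, a closed region of $\beta^\perp$ with respect to the arrangement of hyperplanes $\{ R^\perp : R \text{ cuts } \beta^\perp \}$, and likewise for $(\beta')^\perp$. Since $\beta = s_i \beta'$ with $\beta \succR \beta'$ a cover, Corollary~\ref{CuttingRecursion} tells us the rank two subsystems cutting $\beta^\perp$ are exactly $\{ s_i R : R \text{ cuts } (\beta')^\perp \} \cup \{ \Span_\RR(\alpha_i, \beta') \cap \Phi \}$. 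I would record that $R^\perp$ for the extra subsystem $\Span_\RR(\alpha_i,\beta')\cap\Phi$ is (on $\beta^\perp$) precisely the trace of the hyperplane $\alpha_i^\perp$ — indeed $s_i\beta' = \beta$ so $\Span(\alpha_i,\beta') = \Span(\alpha_i,\beta)$, hence the perpendicular codimension-two space lies in $\alpha_i^\perp \cap \beta^\perp$.

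The second step is to understand the maps $\shard_i^\pm$ geometrically. Using the alternative formula from the introduction, $\shard_i^+(K') = s_i(K') \cap \{x : \langle x,\alpha_i\rangle \le 0\}$ and $\shard_i^-(K') = s_i(K') \cap \{x : \langle x,\alpha_i\rangle \ge 0\}$. Since $s_i$ is linear and fixes $\beta$ (because $s_i\beta' = \beta$ forces... wait, rather: $s_i$ maps $(\beta')^\perp$ to $\beta^\perp$ because $\langle s_i x, \beta\rangle = \langle s_i x, s_i \beta'\rangle = \langle x,\beta'\rangle$), the operator $s_i$ carries the arrangement in $(\beta')^\perp$ cut by the $R^\perp$ to the arrangement in $\beta^\perp$ cut by the $(s_iR)^\perp$. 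Thus $s_i$ sends the shards of $(\beta')^\perp$ bijectively to the regions of $\beta^\perp$ with respect to the \emph{sub}-arrangement $\{ (s_iR)^\perp \}$, i.e. the full shard arrangement of $\beta^\perp$ with the one extra hyperplane $\alpha_i^\perp \cap \beta^\perp$ deleted. Intersecting with the two closed half-spaces $\{\langle x,\alpha_i\rangle \le 0\}$ and $\{\langle x,\alpha_i\rangle \ge 0\}$ then refines each such region along exactly that missing hyperplane.

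The third step assembles these observations. A shard of $\beta^\perp$ is a maximal region for the arrangement $\{(s_iR)^\perp\} \cup \{\alpha_i^\perp\cap\beta^\perp\}$; every such region is obtained from a unique region $s_i K'$ of the coarser arrangement $\{(s_iR)^\perp\}$ by intersecting with one of the two closed half-spaces bounded by $\alpha_i^\perp$ — and this intersection is full-dimensional (dimension $n-1$) precisely when the region $s_iK'$ is not entirely contained in one of the open half-spaces, equivalently when the intersection is not a lower-dimensional face. Conversely $\shard_i^+(K')$ and $\shard_i^-(K')$ are always intersections of a region of the coarse arrangement with a closed half-space, hence are either genuine shards of $\beta^\perp$ or are of dimension $< n-1$; restricting to those of dimension $n-1$ gives exactly the shards, with no duplication since $K' \mapsto s_iK'$ is a bijection and the sign records which side of $\alpha_i^\perp$ we sit on. This yields the claimed equality of sets.

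The main obstacle I anticipate is the bookkeeping around when $\shard_i^\pm(K')$ is full-dimensional versus degenerate, and ensuring the correspondence is a genuine bijection rather than merely a surjection — in particular ruling out that two distinct shards $K'$ give the same shard of $\beta^\perp$ after applying $s_i$ and cutting, and handling the boundary case where $s_iK'$ already lies in $\alpha_i^\perp$ (so one of $\shard_i^\pm(K')$ is degenerate and the other equals $s_iK'$ but lies in a hyperplane of the arrangement, hence is not a shard). All of this is handled cleanly once one observes that the shards of $\beta^\perp$ that happen to lie inside $\alpha_i^\perp$ cannot occur — a region of a hyperplane arrangement is never contained in one of the arrangement's own hyperplanes — so every shard of $\beta^\perp$ meets the open half-space on at least one side of $\alpha_i^\perp$, which is what forces the dimension count to come out right. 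I would also double-check the degenerate-product phenomenon flagged in Example~\ref{D4Example} is consistent with this: it corresponds exactly to a $K'$ for which one of $\shard_i^\pm(K')$ drops dimension, which the statement explicitly discards.
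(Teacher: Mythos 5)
Your argument is correct and is essentially the paper's own proof: both rest on Corollary~\ref{CuttingRecursion} together with the observation that $s_i$ carries the shard arrangement in $(\beta')^{\perp}$ onto the shard arrangement in $\beta^{\perp}$ with the single hyperplane $\alpha_i^{\perp}\cap\beta^{\perp}$ removed, so that cutting the images $s_i(K')$ by the two closed half-spaces of $\alpha_i$ and discarding the cones of dimension less than $n-1$ yields exactly the shards of $\beta^{\perp}$. The extra bookkeeping you flag (injectivity of $K'\mapsto s_iK'$ and the degenerate cases) is consistent with, but not needed for, the set-level equality asserted in the proposition.
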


\begin{proof}
We have $(s_i R)^{\perp} = s_i (R^{\perp})$.
Thus, by Corollary~\ref{CuttingRecursion}, if $H'_1$, $H'_2$, \dots, $H'_k$ are the hyperplanes forming the shard arrangement in $(\beta')^{\perp}$, then the hyperplanes forming the shard arrangement in $\beta^{\perp}$ are $s_i H'_1$, $s_i H'_2$, \dots, $s_i H'_k$ and $\alpha_i^{\perp} \cap \beta^{\perp}$.
The regions of the hyperplane arrangement formed by  $s_i H'_1$, $s_i H'_2$, \dots, $s_i H'_k$ are simply the images under $s_i$ of the regions of the shard hyperplane arrangement in $\beta'$.
Adding in  $\alpha_i^{\perp} \cap \beta^{\perp}$ cuts each of these regions by intersecting it with the half planes where $\langle -,\alpha_i\rangle$ is positive or negative. Since shards are $(n-1)$-dimensional cones by definition,
the shards of $\beta^{\perp}$ are the $(n-1)$-dimensional cones of the form $s_i \left( K'  \right)  \cap \{ x :  \pm \langle x, \alpha_i\rangle \geq 0 \}$, where $K'$ runs over shards of $\beta'$ and we consider both signs.
This is the same as  $s_i \left( K'    \cap \{ x :  \mp \langle x, \alpha_i\rangle \geq 0 \} \right) = \shard^{\mp}(K')$.
\end{proof}

The statement of Proposition~\ref{SimpleCutsKey} requires that the shards $\shard^{\pm}_i(K')$   be of dimension $n-1$, so we pause to give an example where this fails:

\begin{eg}
\label{D4Example}
Let us consider type $D_4$, with Cartan matrix
\[ \begin{bmatrix}
2&-1&-1&-1 \\
-1 & 2 & 0 & 0 \\
-1 & 0 & 2 & 0 \\
-1 & 0 & 0 & 2 \\ 
\end{bmatrix}. \]
Put $\beta' = s_2 s_3 s_4 \alpha_1 = \alpha_1+\alpha_2+\alpha_3 +\alpha_4$ and $\beta = s_1 \beta' = 2 \alpha_1 + \alpha_2 + \alpha_3 + \alpha_4$.  
The root $\beta'$ is cut by the three subsystems $\{ \alpha_i, \beta', \beta' - \alpha_i \}$ for $i \in \{ 2,3,4 \}$. These define three hyperplanes in the three dimensional space $(\beta')^{\perp}$, cutting $(\beta')^{\perp}$ into eight shards. 
Explicitly, $(\beta')^{\perp} = \{ \sum_{i=1}^4 c_i \omega_i \in V^{\ast} : \sum_{i=1}^4 c_i=0 \}$, and the shards correspond to the eight choices of signs for $(c_2, c_3, c_4)$. 

The six cones where $c_2$, $c_3$ and $c_4$ do not all have the same sign are cut in half by $\alpha_1^{\perp}$, so the reflections of these halves each contribute shards of $\beta$. The two cones where $c_2$, $c_3$ and $c_4$ do have the same sign lie entirely to one side of $\alpha_1^{\perp}$, so they only contribute one shard of $\beta$. To put it another way, for each of these two cones, applying one of $\sigma_1^\pm$ yields the reflection of the shard, while applying the other yields $\{0\}$.
There are, in total, fourteen shards with normal vector $\beta$. 
%
\end{eg}

Applying Proposition~\ref{SimpleCutsKey} repeatedly, we obtain Theorem~\ref{ShardRecursionTheorem} from the Introduction, which we restate here:

\begin{Theorem} \label{ShardRecursionTheorem}
Let $\beta$ be any positive root and let $s_{i_r} \cdots s_{i_2} s_{i_1} \alpha_j$ be a positive expression for $\beta$. Then the set of shards of $\beta^{\perp}$ is the set of polyhedral cones of the form $\shard_{i_r}^{\pm_r} \cdots \shard_{i_2}^{\pm_2} \shard_{i_1}^{\pm_1} \left( \alpha_j^{\perp} \right)$ which are of dimension $n-1$, where the signs $\pm_1$, $\pm_2$, \dots, $\pm_r$ may be chosen independently.
\end{Theorem}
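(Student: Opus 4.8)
The plan is to induct on the length $r$ of the positive expression, with Proposition~\ref{SimpleCutsKey} carrying all of the geometric content and the remainder being bookkeeping about positive expressions and about dimensions.

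For the base case $r=0$ we have $\beta=\alpha_j$, so the reflection $t$ associated to $\beta$ is $s_j$ with $\ell(t)=1$, and Corollary~\ref{CuttingBound} gives that at most $(\ell(t)-1)/2=0$ rank two subsystems cut $\alpha_j^{\perp}$. Hence the shard arrangement in $\alpha_j^{\perp}$ is empty, so $\alpha_j^{\perp}$ is itself the unique shard of $\alpha_j^{\perp}$, which matches the empty product $\alpha_j^{\perp}$ (of dimension $n-1$).

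For the inductive step, given a positive expression $s_{i_r}\cdots s_{i_1}\alpha_j$ for $\beta$, set $\beta'=s_{i_{r-1}}\cdots s_{i_1}\alpha_j$. Writing the positive expression as a saturated chain $\alpha_j\precR s_{i_1}\alpha_j\precR\cdots\precR\beta$ via Corollary~\ref{EverythingAboutPositiveExpressions} and truncating its last step shows that $s_{i_{r-1}}\cdots s_{i_1}\alpha_j$ is a positive expression for $\beta'$, and that $\beta\succR\beta'$ is a cover in the root poset with $\beta=s_{i_r}\beta'$. By the induction hypothesis, the shards of $(\beta')^{\perp}$ are exactly the $(n-1)$-dimensional cones $\shard_{i_{r-1}}^{\pm_{r-1}}\cdots\shard_{i_1}^{\pm_1}(\alpha_j^{\perp})$. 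Applying Proposition~\ref{SimpleCutsKey} with $i=i_r$, the shards of $\beta^{\perp}$ are exactly the $(n-1)$-dimensional cones among $\shard_{i_r}^{+}(K')$ and $\shard_{i_r}^{-}(K')$ as $K'$ ranges over shards of $(\beta')^{\perp}$; that is, exactly the $(n-1)$-dimensional cones $\shard_{i_r}^{\pm_r}\cdots\shard_{i_1}^{\pm_1}(\alpha_j^{\perp})$ whose length-$(r-1)$ truncation $\shard_{i_{r-1}}^{\pm_{r-1}}\cdots\shard_{i_1}^{\pm_1}(\alpha_j^{\perp})$ is already $(n-1)$-dimensional.

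The one point I would be most careful about — and the only real subtlety — is reconciling the two quantifications over dimension: the theorem asks for \emph{all} $(n-1)$-dimensional products $\shard_{i_r}^{\pm_r}\cdots\shard_{i_1}^{\pm_1}(\alpha_j^{\perp})$, whereas the induction produces directly only those whose $(r-1)$-fold truncation is $(n-1)$-dimensional. These agree because $\shard_i^{\pm}(K)\subseteq s_i(K)$, so $\dim\shard_i^{\pm}(K)\le\dim K$: the dimension can only decrease as successive operators are applied, so if the full product has dimension $n-1$ then so does every partial product, and so does $\alpha_j^{\perp}$ itself. With this observation the two descriptions coincide and the induction closes. (A partial product genuinely can collapse to smaller dimension, as Example~\ref{D4Example} shows; the content here is merely that this cannot happen beneath a full-dimensional product.)
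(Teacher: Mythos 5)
Your proof is correct and is essentially the paper's own argument: the paper's proof of Theorem~\ref{ShardRecursionTheorem} consists precisely of applying Proposition~\ref{SimpleCutsKey} repeatedly along the positive expression, exactly as you do. Your explicit dimension bookkeeping — that $\shard_i^{\pm}(K)\subseteq s_i(K)$ forces $\dim\shard_i^{\pm}(K)\leq\dim K$, so an $(n-1)$-dimensional full product has $(n-1)$-dimensional partial products — correctly fills in the one step the paper leaves implicit.
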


\section{Background from representation theory}

\subsection{The preprojective algebra, symmetric case} \label{preproj ssec sym} 

In this section, we explain the standard construction of the preprojective algebra associated to a symmetric Cartan matrix; we will discuss symmetrizable Cartan matrices in the next section. 
Thus, in this section,  let $d_1 = d_2 = \cdots = d_n=1$, so the Cartan matrix $A$ is symmetric. Let $\Gamma$ be the quiver with vertices $1$, $2$, \dots, $n$ and $-A_{ij}$ arrows $i \to j$. We have $-A_{ij} = - A_{ji}$, and we choose a bijection $a \mapsto a^{\ast}$ between the arrows $i \to j$ and those from $j \to i$, with $(a^{\ast})^{\ast} = a$. We write $i \overset{a}{\longrightarrow} j$ or $j \overset{a}{\longleftarrow} i$ to indicate that $a$ is the label of an arrow from $i$ to $j$. 

Fix a ground field $\kappa$, and we write $\kappa \Gamma$ for the path algebra of $\gamma$ over $\kappa$. 
 Our convention for the order of multiplication in the path algebra is that, if $k \overset{b}{\longleftarrow} j$ and $j \overset{a}{\longleftarrow} i$ then $ba$ is the path $k \overset{b}{\longleftarrow} j \overset{a}{\longleftarrow} i$.

Let $\sgn$ be any map from the set of arrows to $\{\pm 1\}$ such that $\sgn(a^\ast) = -\sgn(a)$ for all $a$. Different choices will produce isomorphic algebras.

We define the \newword{preprojective algebra} $\Lambda$ to be the quotient of $\kappa \Gamma$ by the relations
\[ \sum_j \sum_{i \overset{a}{\longrightarrow} j} \sgn(a)a^\ast a = 0\ \mbox{for} \ 1 \leq i \leq n. \]
A module over the preprojective algebra is thus a collection of $\kappa$-vector spaces $M_i$ for $1 \leq i \leq n$ and, for each arrow $i \overset{a}{\longrightarrow} j$, a map $a : M_i \to M_j$ obeying the above relations.

%

An easy way to satisfy the preprojective relation is to ensure that, for each pair $i \neq j$, either all the maps $M_i \to M_j$ are zero or vice versa. In that case, $M$ can be thought of as a module over a path algebra for some orientation of $\Gamma$. 

\subsection{The preprojective algebra, symmetrizable case} \label{preproj ssec gen}  It is not difficult to extend the definition of the preprojective algebra to the case where $A$ is symmetrizable, although it is harder to find references for this case. We explain how it should be done. Our approach follows a much more general approach of Kulshammer~\cite{Kulshammer}.
The reader who does not want to think about this may always take all the $d_i$ to be $1$ and all the $\kappa(d_i)$ to be $\kappa$, at the expense of only considering symmetric Cartan matrices.

Let $d_1$, \dots, $d_n$ be positive integers and let $A_{ij}$ be a crystallographic symmetrizable Cartan matrix with $d_i A_{ij} = d_j A_{ji}$. 
Put $d_{ij} = \LCM(d_i, d_j)$ and let $L = \LCM(d_1, \ldots, d_n)$. Let $\kappa(L)/\kappa$ be an Galois extension of fields with Galois group cyclic of order $L$. For $d$ dividing $L$, let $\kappa(d)$ be the unique degree $d$ extension of $\kappa$ within $\kappa(L)$. 
For example, we could take $\kappa = \FF_p$ and $\kappa(L) = \FF_{p^L}$ or, if $L=2$, we could take $\kappa = \RR$ and $\kappa(2) = \CC$.

Let $\Gamma$ be the quiver with vertices $1,\ldots, n$ and $-d_i A_{ij} / d_{ij}$ arrows $i\to j$. Note that this quantity is symmetrical in $i$ and $j$, so we can again choose a bijection $a\mapsto a^\ast$ between arrows $i\to j$ and $j\to i$ with $(a^\ast)^\ast = a$. Let $E(j\ot i)$ be a $\kappa(d_{ij})$-vector space with a basis given by arrows $i\to j$, and define a $\kappa(d_{ij})$-bilinear pairing $E(j\ot i)\times E(i\ot j)\to \kappa(d_{ij})$ such that $a\mapsto a^\ast$ sends each basis to its dual. 

$E(j\ot i)$ is also a $\kappa(d_i)$-vector space and a $\kappa(d_j)$-vector space, by the inclusions of $\kappa(d_i)$ and $\kappa(d_j)$ into $\kappa(d_{ij})$. So by composing with the field trace maps $\tr_{\kappa(d_i)}:\kappa(d_{ij})\to \kappa(d_i)$ and $\tr_{\kappa(d_j)}:\kappa(d_{ij})\to \kappa(d_j)$, we get $\kappa(d_i)$- and $\kappa(d_j)$-bilinear pairings, respectively. This data determines a \newword{dualizable species of algebras} associated to $Q$ in the sense of \cite[Definitions 2.2 and 4.4]{Kulshammer}.

We define the \newword{path algebra} to be 
\[ \bigoplus E(i_N\ot i_{N-1}) \otimes_{\kappa(d_{i_{N-1}}}) E(i_{N-1}\ot i_{N-2}) \otimes_{\kappa(d_{i_{N-2}})} \otimes \cdots \otimes_{\kappa(d_{i_1})} E(i_{1}\ot i_0)  \]
where the direct sum is over all sequences $i_0, i_1, \cdots, i_N$ with adjacent elements distinct. Here a length zero path is still considered to have a given start and end point (which are equal) and the summand corresponding to the length zero path from $i$ to $i$ is $\kappa(d_i)$; we denote the path by $e_i$. Multiplication is defined in the obvious way by concatenating tensors. 
Further, a representation of the path algebra is a collection $V_i$, where $V_i$ is a $\kappa(d_i)$-vector space, with $\kappa(d_j)$-linear maps $V_{j\ot i}:E(j\ot i) \otimes_{\kappa(d_i)} V_i\longrightarrow V_j$.

Now for any pair $i\neq j$, let $b^{ji}_1,\ldots, b^{ji}_p$ be a $\kappa(d_j)$-basis of $\kappa(d_{ij})$, and let $(b^{ji}_1)^\ast,\ldots, (b^{ji}_p)^\ast$ be the dual basis under the trace pairing $(b,b')\mapsto \tr_{\kappa(d_j)}(bb')$. (See \cite[Section VI.5]{Lang} for a quick introduction to the trace pairing.)

Finally, define a map $\sgn$ from pairs of vertices $i, j$ to $\{\pm 1\}$ such that $\sgn(i,j) = -\sgn(j,i)$. 

Then the \newword{preprojective algebra}, $\Lambda$, is the quotient of the path algebra by the relations
\[ \sum_j \sgn(i, j)\sum_{i \overset{a}{\longrightarrow} j}\sum_{k} (b^{ji}_k)^\ast a^{\ast} \otimes a b^{ji}_k = 0 \ \mbox{for} \ 1 \leq i \leq n. \]
This relation does not depend on the above choice of basis \cite[Corollary 4.2]{Kulshammer}, and different choices of $\sgn$ will produce isomorphic algebras \cite[Lemma 4.9]{Kulshammer}.

We define a representation to be \newword{finite dimensional} if all the $M_i$ are finite dimensional over $\kappa$. 
If $A$ is of Dynkin type, then $\Lambda$ is a finite dimensional algebra over $\kappa$ \cite{DR} so all finitely generated $\Lambda$-modules are finite dimensional, but this does not hold for $A$ not of Dynkin type.
For a finite dimensional $\Lambda$-module $M$, we put
\[ \dim M = \sum (\dim_{\kappa(d_i)} M_i) \alpha_i . \]

We define a representation to be \newword{nilpotent} if there is a positive integer $N$ such that, for any path $i_0 \overset{a_1}{\longrightarrow} i_1 \overset{a_2}{\longrightarrow} i_2 \longrightarrow \cdots \longrightarrow i_{N-1} \overset{a_N}{\longrightarrow} i_N$ through $\Gamma$, the induced map $M_{i_0} \to M_{i_N}$ is zero. 
If $A$ is of Dynkin type, then all $\Lambda$-modules are nilpotent.


\begin{eg} \label{eg:B2ExampleStarts}
We consider the $B_2$ Cartan matrix $\begin{sbm} \phantom{-}2&-2 \\ -1& \phantom{-}2 \end{sbm}$ with $d_1 = 1$ and $d_2 = 2$. We put $\kappa(1) = \RR$ and $\kappa(2) = \CC$. So a $\Lambda$ module consists of a real vector space $M_1$, a complex vector space $M_2$, a $\CC$-linear map $\CC\otimes_\RR M_1\to M_2$ (which we can identify with an $\RR$-linear map $f:M_1\to M_2$) and a $\RR$-linear map $\CC\otimes_\CC M_2\to M_1$ (which we can identify with an $\RR$-linear map $g:M_2\to M_1$). These maps must satisfy the relations $gf = 0$ and $\frac{i}{2}fgi - \frac12fg = 0$ (the latter is equivalent to $-ifg = fg i$).
\end{eg}

The algebra $\Lambda$ satisfies a useful property relating it back to the combinatorics of the Cartan matrix.

\begin{theorem}\label{CB Identity}
  Let $M$ and $N$ be left $\Lambda$-modules, which are finite dimensional over our ground field $\kappa$. Then
\[
\dim_\kappa(\Hom_\Lambda(M, N)) - \dim_\kappa(\Ext^1_\Lambda(M, N)) + \dim_\kappa(\Hom_\Lambda(N, M)) = (\dim M, \dim N)
\]
\end{theorem}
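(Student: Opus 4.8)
# Proof Proposal for Theorem~\ref{CB Identity}

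The plan is to reduce this Euler-form identity to the analogous statement for path algebras of the underlying species, and then to exploit the known projective resolution of a preprojective-algebra module in terms of the species data. First I would set up the right notion: the preprojective algebra $\Lambda$ is built from a dualizable species of algebras in the sense of Kulshammer, so every $\Lambda$-module $M$ has a functorial two-step presentation
\[
0 \to \bigoplus_i \Lambda e_i \otimes_{\kappa(d_i)} M_i \xrightarrow{\ \partial\ } \bigoplus_{i \overset{a}{\to} j} \Lambda e_j \otimes_{\kappa(d_j)} \big(E(j \ot i) \otimes_{\kappa(d_i)} M_i\big) \to \cdots
\]
coming from the Crawley-Boevey–style "double" of a species resolution. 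The cleanest path is to follow Crawley-Boevey's argument (as in his work on deformed preprojective algebras, adapted to the symmetrizable/species setting): there is a canonical exact sequence of $\Lambda$-bimodules, of length related to the doubled quiver and the preprojective relation, whose terms are explicit sums of $\Lambda e_j \otimes_{\kappa(d_j)} (\text{something}) \otimes_{\kappa(d_i)} e_i \Lambda$. Applying $\Hom_\Lambda(-, N)$ after tensoring with $M$ gives a complex computing the $\Ext$-groups, and taking the alternating sum of $\kappa$-dimensions of its terms produces exactly the right-hand side $(\dim M, \dim N)$ once one checks that the bilinear form $(\alpha_i, \alpha_j) = d_i A_{ij}$ matches the combinatorics of the arrows $E(j \ot i)$ and the trace-pairing multiplicities.

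The key steps, in order: (1) Record the standard $\Lambda$-bimodule resolution — for the symmetric case this is the well-known sequence $0 \to \Lambda \otimes_S \Lambda \to \Lambda \otimes_S (E \oplus E^*) \otimes_S \Lambda \to \Lambda \otimes_S \Lambda \to \Lambda \to 0$ with $S = \prod \kappa e_i$ the semisimple base; for the symmetrizable case, replace $S$-tensor products by the appropriate $\kappa(d_i)$-balanced tensor products over the species, citing Kulshammer for the existence and exactness. (2) Tensor this bimodule resolution with $M$ over $\Lambda$ on the right to get a complex of projective left $\Lambda$-modules resolving $M$; because $\Lambda$ has infinite global dimension in general this resolution need not be finite, but the relevant one here is the \emph{self-dual} length-3 complex and it suffices to compute $\Hom$ and $\Ext^1$ from it together with the symmetry. (3) Apply $\Hom_\Lambda(-, N)$ and identify the terms: $\Hom_\Lambda(\Lambda e_i \otimes_{\kappa(d_i)} M_i, N) \cong \Hom_{\kappa(d_i)}(M_i, N_i)$, which has $\kappa$-dimension $d_i (\dim_{\kappa(d_i)} M_i)(\dim_{\kappa(d_i)} N_i)$, and similarly the middle term contributes the arrow-count $-d_i A_{ij}/d_{ij}$ times a $\kappa(d_{ij})$-dimension, i.e. $-d_i A_{ij} (\dim_{\kappa(d_i)} M_i)(\dim_{\kappa(d_j)} N_j)$ after unwinding the trace pairings. (4) Take the alternating sum and observe that the symmetry of the complex (it is isomorphic to its own $\kappa$-dual with $M$ and $N$ swapped, shifted by the relevant degree) forces the terms $\Hom_\Lambda(M,N)$, $\Ext^1_\Lambda(M,N)$, $\Hom_\Lambda(N,M)$ to appear exactly as on the left-hand side, with higher $\Ext$ terms cancelling against each other by the self-duality. (5) Collect coefficients: the alternating sum is $\sum_i 2 d_i \, m_i n_i - \sum_{i \neq j} (-d_i A_{ij}) m_i n_j$ where $m_i = \dim_{\kappa(d_i)} M_i$, and since $(\alpha_i,\alpha_i) = 2 d_i$ and $(\alpha_i,\alpha_j) = d_i A_{ij}$ for $i \neq j$, this is precisely $(\dim M, \dim N) = \sum_{i,j} m_i n_j (\alpha_i, \alpha_j)$.

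The main obstacle I expect is Step (4): making the self-duality argument rigorous without assuming finite global dimension. Concretely, one must be careful that $\Lambda$ is \emph{not} necessarily finite-dimensional (outside Dynkin type), so $\Ext^{\geq 2}$ need not vanish and the "resolution" above is genuinely periodic/infinite. The right way around this is Crawley-Boevey's observation that the relevant bimodule complex is exact \emph{as a complex of bimodules} regardless of finiteness, so tensoring with the finite-dimensional $M$ and applying $\Hom(-,N)$ still yields a complex whose homology in degrees $0$ and $1$ is $\Hom_\Lambda(M,N)$ and $\Ext^1_\Lambda(M,N)$; the degree-$2$ homology is then identified, via the $S$-duality of the bimodule complex (which swaps the roles of $M$ and $N$ and reverses arrows, using $a \mapsto a^*$ and the trace pairing), with $\Hom_\Lambda(N,M)$. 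This duality is exactly where the symmetrizability hypothesis $d_i A_{ij} = d_j A_{ji}$ and the perfectness of the trace pairings $E(j\ot i) \times E(i \ot j) \to \kappa(d_{ij})$ get used, so I would isolate it as a lemma: the length-three $\Lambda$-bimodule complex underlying $\Lambda$ is isomorphic to its own $\kappa$-linear dual, reversed. Granting that lemma (which follows from Kulshammer's framework together with the explicit form of the preprojective relation), the rest is the bookkeeping of Steps (3) and (5).
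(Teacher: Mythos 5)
Your proposal follows essentially the same route as the paper's appendix: Kulshammer's three-term projective $\Lambda$-bimodule resolution, tensored with $M$ and then hit with $\Hom_\Lambda(-,N)$, with the resulting complex shown to be isomorphic to the $\kappa$-dual of the corresponding complex with $M$ and $N$ swapped, so that the alternating sum of the terms' $\kappa$-dimensions equals both the left-hand side and $(\dim M,\dim N)$. Two cosmetic caveats: you should not place a leading $0$ on the bimodule complex (it is only the start of a resolution and is not left exact in Dynkin type, though this is never used), and the genuine extra work in the symmetrizable case is exactly the trace-pairing bookkeeping (the paper's lemma that $\sum_k b_k^{\ast} b_k = 1$ for a trace-dual pair of bases) needed to make your self-duality lemma rigorous.
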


This theorem is well-known in the symmetric case \cite[Lemma 1]{Crawley-Boevey} but has not been documented in this model of the symmetrizable case. Appendix~\ref{appendix} outlines how to adapt a proof in the symmetric case to this setting.


\subsection{Stability}

Let $M$ be a $\Lambda$-module. Let $\theta \in V^{\ast}$. Following \cite{King}, we say that $M$ is $\theta$-\newword{semistable} if $\langle \theta,  \dim M \rangle =0$ and, for any subrepresentation $N$ of $M$, we have $\langle \theta, \dim N \rangle \geq 0$. We say that $M$ is $\theta$-\newword{stable} if the latter inequality is strict for $N \neq 0$, $M$.  We remark that $\theta$-semistability of $M$ is equivalent to asking that  $\langle \theta, \dim Q \rangle \leq 0$ for all quotient representations $Q$ of $M$.

We write $\Stab(M)$ for the set of $\theta\in V^{\ast}$ for which $M$ is semistable and $\Stab^{\circ}(M)$ for those $\theta$ for which $M$ is stable. Since there are only finitely many possible dimension vectors of subrepresentations of $M$, we see that $\Stab(M)$ is a closed polyhedral cone in $(\dim M)^{\perp}$. $\Stab^{\circ}(M)$ is either empty (if $M$ contains a subrepresentation with dimension vector proportional to $\dim(M)$) or is the  interior of $\Stab(M)$ within $(\dim M)^{\perp}$ (otherwise). 
In particular, if $\Stab(M)$ has dimension less than $n-1$, then $\Stab^{\circ}(M)$ is empty.

\begin{remark} \label{QuotientRemark1}
Let $M$ be a $\Lambda$-module and suppose the $\Lambda$-action on $M$ factors through a quotient algebra $R$. Then being an $R$-submodule of $M$ is the same as being a $\Lambda$-submodule, so the notions of stability and semi-stability are not affected by passing to such a quotient. 
\end{remark}

\subsection{Reflection functors} \label{ReflectionFunctorsDefn}

Fix a vertex $i$ of $\Gamma$, and let $S_i$ be the module assigning the space $\kappa(d_i)$ to vertex $i$ and $0$ to all other vertices. Let $\NoSub_i$ be the full subcategory of the category of $\Lambda$-modules consisting of those modules for which $S_i$ is not a submodule and let $\NoQuot_i$ be the full subcategory where $S_i$ is not a quotient. 
%

Baumann and Kamnitzer~\cite{BK} introduce, in the symmetric case, mutually inverse equivalences of categories
\[ \Sigma_i : \NoQuot_i \to \NoSub_i \qquad \Sigma_i^{-1} : \NoSub_i \to \NoQuot_i  \] 
and K\"ulshammer~\cite{Kulshammer} generalizes them to a setting including our symmetrizable case. We also have canonical morphisms $\Sigma_i^{-1}(M) \to M$ and $M \to \Sigma_i(M)$.  
We now describe these functors.

\begin{remark}
Baumann--Kamnitzer and K\"ulshammer define $\Sigma_i$ and $\Sigma_i^{-1}$ on all of $\Mod(\Lambda)$ and then prove their restrictions to $\NoSub_i$ and $\NoQuot_i$ are inverse. We will adopt the different convention that $\Sigma_i(M)$ is only defined for $M \in \NoQuot_i$ and $\Sigma_i^{-1}(M)$ for $M \in \NoSub_i$.
\end{remark}

For $M \in \Mod(\Lambda)$, define $M_{\partial i}$ to be the $\kappa(d_i)$-vector space
\[ M_{\partial i} := \bigoplus_{j \neq i} E(i\ot j) \otimes_{\kappa(d_j)} M_j.\]
We note that
\[ \dim_{\kappa(d_i)} M_{\partial i} = \sum_{j \neq i} (-A_{ij}) \dim_{\kappa(d_j)} M_j. \]

We have a $\kappa(d_i)$-linear map $M_{i, \text{in}}: M_{\partial i}\to M_i$ which, in the $j$th summand, is the map $\sgn(i, j)M_{i\ot j}: E(i\ot j)\otimes_{\kappa(d_j)} M_j\to M_i$.

Additionally, there is a natural isomorphism
\[
\Hom_{\kappa(d_j)}(E(j\ot i)\otimes_{\kappa(d_i)} M_i, M_j) \cong \Hom_{\kappa(d_i)}(M_i, E(i\ot j) \otimes_{\kappa(d_j)} M_j) .
\]


Given $f:E(j\ot i)\otimes_{\kappa(d_i)} M_i\to M_j$, let $f^\vee: M_i\to E(i\ot j)\otimes_{\kappa(d_j)} M_j$ be the corresponding map under this isomorphism; given $g:M_i\to E(i\ot j)\otimes_{\kappa(d_j)} M_j$, let $g^\wedge: E(j\ot i)\otimes_{\kappa(d_i)} M_i\to M_j$ be the corresponding map on the other side.
The reader who only cares about the symmetric case 
can think of all tensor products as being over the same ground field.

Then we additionally have a $\kappa(d_i)$-linear map $M_{i, \text{out}}: M_i\to M_{\partial i}$ which, in the $j$th summand, is the map $M_{j\ot i}^\vee: M_i\to E(i\ot j)\otimes_{\kappa(d_j)} M_j$.
For $M$ to satisfy the relation defining the preprojective algebra is equivalent to $M_{i,\text{in}}\circ M_{i, \text{out}} = 0$ (\cite{Kulshammer}, Proposition 4.12). The condition that $M\in \NoSub_i$ (respectively $\NoQuot_i$) is equivalent to saying that $M_{i, \text{out}}$ (respectively $M_{i, \text{in}}$) is injective (respectively surjective).


Let $M \in \NoQuot_i$.
We define $\Sigma_i(M)$ as follows: $\Sigma_i(M)_i = \Ker(M_{i, \text{in}})$ and $\Sigma(M)_j = M_j$ for $j \neq i$. 
Since the composition $M_{i, \text{in}}\circ M_{i, \text{out}}$ is $0$,  we have a factorization indicated by the vertical arrow below, and the maps in and out of $\Sigma_i(M)_i$ are given by the diagonal arrows. 
\[\begin{tikzcd}
& \Sigma_i(M)_i \arrow[hook]{dr}{\Sigma_i(M)_{i, \text{out}}} \arrow[equal]{r}& \Ker(M_{i, \text{in}}) & \\
M_{\partial i} \arrow[->>]{r}[swap]{M_{i, \text{in}}} \ar{ur}{\Sigma_i(M)_{i, \text{in}}}  & M_i \ar{r}[swap]{M_{i, \text{out}}} \arrow[dashed]{u} & M_{\partial i} \arrow[->>]{r}[swap]{M_{i, \text{in}}}  & M_i \\
\end{tikzcd}\]
For $j$, $k \neq i$, the map(s) $\Sigma_i(M)_j \to \Sigma_i(M)_k$ are the same as the map(s) $M_j \to M_k$.

The natural map $M \to \Sigma_i(M)$ is given by the vertical arrow at vertex $i$ and the identity at all other vertices.

Similarly, if $M \in \NoSub_i$, then we define $\Sigma_i^{-1}(M)_i = \CoKer(M_{i, \text{out}})$ and $\Sigma(M)_j = M_j$ for $j \neq i$. 
For $j$, $k \neq i$, the map(s) $\Sigma_i(M)_j \to \Sigma_i(M)_k$ are the same as the map(s) $M_j \to M_k$; the maps to and from $\Sigma_i^{-1}(M)_i$ are defined by the commutativity of the diagram
\[ \begin{tikzcd}
M_i \arrow[hook]{r}{M_{i, \text{out}}} & M_{\partial i} \arrow{r}{M_{i, \text{in}}} \arrow[two heads]{dr}[swap]{\Sigma^-_i(M)_{i, \text{in}}} & M_i \arrow[hook]{r}{M_{i, \text{out}}} & M_{\partial i} \\
& \CoKer(M_{i, \text{out}}) \arrow[equal]{r} & \Sigma_i^{-1}(M)_i \arrow{ur}[swap]{\Sigma^-_i(M)_{i, \text{out}}} \arrow[dashed]{u} & \\
\end{tikzcd} \]
The natural map $\Sigma_i^{-1}(M) \to M$ is given by the vertical arrow at vertex $i$ and the identity at all other vertices.

We refer to the functors $\Sigma_1$, $\Sigma_2$, \dots, $\Sigma_n$, $\Sigma^{-1}_1$, $\Sigma^{-1}_2$, \dots, $\Sigma^{-1}_n$ collectively as \newword{reflection functors}. This name is explained by the following result:

\begin{prop} \label{ReflectionsReflect}
Let $M$ be a finite dimensional $\Lambda$ module. Whenever the left hand sides of the following equalities are defined, the equality holds:
\[ \dim \Sigma_i(M) = s_i (\dim M) \qquad \dim \Sigma_i^{-1}(M) = s_i (\dim M). \]
\end{prop}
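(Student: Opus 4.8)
## Proof plan for Proposition~\ref{ReflectionsReflect}

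The plan is to compute the dimension vector of $\Sigma_i(M)$ directly from the construction. By definition, $\Sigma_i(M)_j = M_j$ for all $j \neq i$, so the two dimension vectors agree in every coordinate except the $i$th. It therefore suffices to check that $\dim_{\kappa(d_i)} \Sigma_i(M)_i = (\dim M)_i - A_{ij}\,(\dim M)_j$ summed appropriately — more precisely, that it equals the $i$th coordinate of $s_i(\dim M)$, which by the formula $s_i(\alpha_j) = \alpha_j - A_{ij}\alpha_i$ works out to $\sum_{j\neq i}(-A_{ij})\dim_{\kappa(d_j)}M_j - \dim_{\kappa(d_i)}M_i$.

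Here are the steps in order. First, recall that $\Sigma_i(M)$ is only defined when $M \in \NoQuot_i$, which the excerpt tells us is equivalent to $M_{i,\text{in}} \colon M_{\partial i} \to M_i$ being surjective. Second, by construction $\Sigma_i(M)_i = \Ker(M_{i,\text{in}})$, so from the short exact sequence
\[ 0 \to \Ker(M_{i,\text{in}}) \to M_{\partial i} \xrightarrow{\ M_{i,\text{in}}\ } M_i \to 0 \]
of $\kappa(d_i)$-vector spaces (exact on the right by surjectivity) we get
\[ \dim_{\kappa(d_i)} \Sigma_i(M)_i = \dim_{\kappa(d_i)} M_{\partial i} - \dim_{\kappa(d_i)} M_i. \]
Third, invoke the formula already recorded in Section~\ref{ReflectionFunctorsDefn},
\[ \dim_{\kappa(d_i)} M_{\partial i} = \sum_{j\neq i} (-A_{ij})\,\dim_{\kappa(d_j)} M_j, \]
so that $\dim_{\kappa(d_i)}\Sigma_i(M)_i = \sum_{j\neq i}(-A_{ij})\dim_{\kappa(d_j)}M_j - \dim_{\kappa(d_i)}M_i$. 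Fourth, assemble: writing $\dim M = \sum_k (\dim_{\kappa(d_k)}M_k)\alpha_k$ and applying $s_i$ termwise using $s_i(\alpha_k) = \alpha_k - A_{ik}\alpha_i$ (and $s_i(\alpha_i) = -\alpha_i$), the coefficient of $\alpha_i$ in $s_i(\dim M)$ is exactly the quantity just computed, while the coefficient of $\alpha_k$ for $k\neq i$ is unchanged; this matches $\dim \Sigma_i(M)$ coordinate by coordinate. Finally, the statement for $\Sigma_i^{-1}$ is entirely parallel: it is defined when $M_{i,\text{out}}$ is injective, one has $\Sigma_i^{-1}(M)_i = \CoKer(M_{i,\text{out}})$, the same short exact sequence (now exact on the left) gives $\dim_{\kappa(d_i)}\Sigma_i^{-1}(M)_i = \dim_{\kappa(d_i)}M_{\partial i} - \dim_{\kappa(d_i)}M_i$, and the rest is identical. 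Alternatively, since $\Sigma_i$ and $\Sigma_i^{-1}$ are mutually inverse and $s_i$ is an involution, the $\Sigma_i^{-1}$ case follows formally from the $\Sigma_i$ case.

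There is essentially no obstacle here: the only point requiring care is the bookkeeping over the different base fields $\kappa(d_j)$, i.e.\ making sure that "$\dim$" consistently means $\dim_{\kappa(d_i)}$ at vertex $i$, and that the displayed formula for $\dim_{\kappa(d_i)} M_{\partial i}$ (which already builds in the field-degree corrections via $E(i \ot j)$ being a $\kappa(d_{ij})$-space viewed appropriately) is used correctly. Once that is granted, the computation is a one-line exactness argument plus matching against the explicit matrix formula for $s_i$ on the basis of simple roots.
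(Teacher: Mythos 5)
Your proof is correct and follows essentially the same route as the paper: use that $M_{i,\mathrm{in}}$ is surjective for $M\in\NoQuot_i$ to get $\dim_{\kappa(d_i)}\Sigma_i(M)_i=\dim_{\kappa(d_i)}M_{\partial i}-\dim_{\kappa(d_i)}M_i$, plug in the recorded formula for $\dim_{\kappa(d_i)}M_{\partial i}$, and compare with $s_i(\dim M)$, handling $\Sigma_i^{-1}$ analogously. The only cosmetic difference is that you match coefficients via $s_i(\alpha_k)=\alpha_k-A_{ik}\alpha_i$ while the paper writes the same computation as $\dim M-(\alpha_i^{\vee},\dim M)\alpha_i$.
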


\begin{proof}
We do the case of $\Sigma_i(M)$, the case of $\Sigma_i^{-1}(M)$ is analogous.
Since $M\in \NoQuot_i$, $M_{\partial i} \to M_i$ is surjective, and we have 
\[ \dim_{\kappa(d_i)}  \Sigma_i(M)_i = \dim_{\kappa(d_i)} M_{\partial i} - \dim_{\kappa(d_i)} M_i = \sum_{j \neq i} -A_{ij} \dim_{\kappa(d_j)} M_j - \dim_{\kappa(d_i)} M_i  \]
and, for $j \neq i$,
\[ \dim_{\kappa(d_j)} \Sigma_i(M)_j = \dim_{\kappa(d_j)}  M_j . \]
Thus
\[ \dim \Sigma_i(M) = \dim M - ( \alpha_i^{\vee}, \dim M ) \alpha_i = s_i(\dim M).  \qedhere \]
\end{proof}


\subsection{Bricks} \label{sec:bricks}

Let $B$ be a finite dimensional $\Lambda$-module. Then $B$ is defined to be a \newword{brick} if every nonzero endomorphism of $B$ is invertible. So $\End(B)$ is a division algebra.

\begin{eg} \label{eg:B2ExampleContinues}
Continuing Example~\ref{eg:B2ExampleStarts}, we list the bricks of $\Lambda$ and their dimension vectors:
\[ \begin{array}{|ccc|r@{}c@{}r|}
\hline
 M_1 && M_2 & \dim M&& \\
\hline
 \RR && 0 & \alpha_1 &&\\
 0 && \CC & && \alpha_2 \\
 \RR & \overset{1}{\longrightarrow} & \CC & \alpha_1 &+& \alpha_2 \\
 \RR & \overset{ \mathrm{Re}}{\longleftarrow} & \CC & \alpha_1 &+& \alpha_2 \\
 \CC & \overset{1}{\longrightarrow} & \CC & 2 \alpha_1 &+& \alpha_2 \\
 \CC & \overset{1}{\longleftarrow} & \CC & 2\alpha_1 &+& \alpha_2 \\
\hline
\end{array} \]
The reader can check that the dimension vectors of these representations are precisely the real roots, and the bricks of a given dimension $\beta$ are in bijection with the shards of $\beta^\perp$, as shown in Figure \ref{shardsfig}.
\end{eg}

\begin{remark}\label{QuotientRemark2}
If the $\Lambda$ action on $B$ factors through a quotient algebra $R$ of $\Lambda$, then $\End_{\Lambda}(B) = \End_R(B)$, so $B$ is a brick when considered as a $\Lambda$-module if and only if it is when considered as an $R$-module.
\end{remark}

\begin{remark}
In many cases, when $B$ is a brick, $\End(B)$ is a field. For example, if $\kappa$ is algebraically closed, then the only division algebra which is finite dimensional over $\kappa$ is $\kappa$, so $\End(B) = \kappa$ for any brick $B$. If $\kappa$ has trivial Brauer group (for example, if $\kappa$ is finite), then all division algebras which are finite dimensional over $\kappa$ are fields. We will prove in Corollary~\ref{CorOfMT1} that, for our primary focus, the shard modules, we always have $\End(B) \cong \kappa(d_i)$ for some $i$.
\end{remark}

\begin{remark}
In general, it is possible to obtain non-commutative division algebras.
This occurs in affine type $\tilde{C}_2$, with the representation $\CC^2 \from \RR^4 \to \CC^2$ where the left map is $(x_1, x_2, x_3, x_4) \mapsto (x_1+i x_2, x_3-i x_4)$ and the right map is $(x_1, x_2, x_3, x_4) \mapsto (x_1+i x_3, x_2+i x_4)$.
The endomorphism ring of this example is the ring of quaternions $\HH$. (Proof sketch: Think of the middle vector space as $\HH$, with $(x_1, x_2, x_3, x_4)$ being the quaternion $x_1+x_2I+x_3J+x_4K$. Then multiplication by $i$ in the left and the right positions corresponds to $q \mapsto qI$ and $q \mapsto qJ$. An endomorphism of this quiver representation is an element of $\text{End}_{\RR}(\HH)$ which commutes with both of these; such endomorphisms are of the form $q \mapsto (a+bI+cJ+dK)q$.)  We have defined this brick as a representation of the $\widetilde C_2$ path algebra. We can lift it to a representation of the corresponding preprojective algebra by letting the reverse arrows act by zero. As discussed in Remark \ref{QuotientRemark2}, the lifted representation is still a brick, but, in agreement with Corollary \ref{CorOfMT1}, it is not a real brick, as $(\dim B,\dim B)=0$.
\end{remark}

We record some basic observations:



\begin{prop} \label{CanReflectBricks}
Let $B$ be a brick, $i$ a vertex of $\Lambda$ and assume $B \not \cong S_i$. Then $B$ is in at least one of $\NoSub_i$ and $\NoQuot_i$.
\end{prop}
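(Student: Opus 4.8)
The claim is that if $B$ is a brick with $B \not\cong S_i$, then $S_i$ fails to be a submodule of $B$ or fails to be a quotient of $B$ (or both). The natural approach is to argue by contradiction: suppose $S_i$ embeds into $B$ and also surjects onto $B$, and build a nonzero non-invertible endomorphism of $B$, contradicting brickhood. So I would fix a submodule inclusion $\iota \colon S_i \hookrightarrow B$ and a quotient map $\pi \colon B \twoheadrightarrow S_i$, and consider the composite $\phi := \iota \circ \pi \colon B \to B$. This $\phi$ is a nonzero endomorphism precisely when $\pi$ is nonzero (which it is, being surjective onto $S_i \neq 0$) and $\iota$ is nonzero (likewise). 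Since $B$ is a brick, any nonzero endomorphism is invertible, so $\phi$ would be an isomorphism. But $\phi$ factors through $S_i$, hence $\operatorname{rank} \phi \le \dim_{\kappa} S_i = d_i$, so invertibility forces $\dim_\kappa B = d_i$, i.e. $\dim B = \alpha_i$ in the root lattice; and then $B$, being a brick of dimension vector $\alpha_i$, must be $S_i$ itself. (Concretely: $B_i$ is one-dimensional over $\kappa(d_i)$ and all other $B_j$ vanish, so all arrows act as zero and $B \cong S_i$.) This contradicts the hypothesis $B \not\cong S_i$, completing the argument.

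The one point requiring a touch of care is the very last step, identifying a brick of dimension vector $\alpha_i$ with $S_i$. In the symmetrizable setting a module with $\dim M = \alpha_i$ has $M_i$ a one-dimensional $\kappa(d_i)$-vector space and $M_j = 0$ for $j \ne i$; every arrow into or out of vertex $i$ therefore has zero target or zero source, so the only module structure is the one making $M$ the simple module $S_i$. Hence such an $M$ is automatically $\cong S_i$ (it is trivially a brick), and there is nothing further to check. Alternatively one avoids dimension vectors entirely: $\phi = \iota\pi$ factors through the simple $S_i$, so $\operatorname{Im}\phi \cong S_i$ as a submodule of $B$; if $\phi$ is an isomorphism then $B = \operatorname{Im}\phi \cong S_i$ directly.

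I do not expect a genuine obstacle here — the statement is elementary and the contradiction is essentially forced — so ``the main obstacle'' is really just bookkeeping: making sure that the composite $\iota\circ\pi$ is literally an endomorphism of $B$ (it is, since $S_i \hookrightarrow B$ lands in $B$ and $B \twoheadrightarrow S_i$ starts from $B$) and that it is nonzero (it is, as the composite of two nonzero maps one of which is surjective onto the target of the embedding). One should also note that the contrapositive formulation — ``$B \cong S_i$ whenever $S_i$ is both a sub and a quotient of a brick $B$'' — is exactly what the argument proves, and the statement of the proposition is the contrapositive of that. The proof is thus three or four lines once set up.
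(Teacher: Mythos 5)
Your argument is correct and is essentially the paper's proof: both compose the surjection $B \onto S_i$ with the inclusion $S_i \into B$ to obtain a nonzero endomorphism, which by brickhood must be an isomorphism, forcing $B \cong S_i$. The extra bookkeeping you supply (nonvanishing of the composite, identification of the image with $S_i$) is fine but not a departure from the paper's route.
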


\begin{proof}
We prove the contrapositive. Suppose there is an inclusion $S_i \into B$ and a surjection $B \onto S_i$. Then the composition $B \onto S_i \into B$ is a non-zero endomorphism of $B$. So either $B$ is not a brick, or this composition is an isomorphism; in the latter case, $B \cong S_i$.
\end{proof}

\begin{prop} \label{alphaCuts}
If $(\alpha_i, \dim M) > 0$ then $M$ is in at most one of $\NoSub_i$ and $\NoQuot_i$.
\end{prop}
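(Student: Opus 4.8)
The statement to prove is: if $(\alpha_i, \dim M) > 0$ then $M$ cannot lie in both $\NoSub_i$ and $\NoQuot_i$. Equivalently, if $M$ lies in both, then $(\alpha_i, \dim M) \le 0$.

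The plan is to work with the two maps $M_{i,\mathrm{out}} : M_i \to M_{\partial i}$ and $M_{i,\mathrm{in}} : M_{\partial i} \to M_i$ attached to the vertex $i$, and to recall the dictionary established in Section~\ref{ReflectionFunctorsDefn}: $M \in \NoSub_i$ is equivalent to $M_{i,\mathrm{out}}$ being injective, and $M \in \NoQuot_i$ is equivalent to $M_{i,\mathrm{in}}$ being surjective. Suppose $M$ lies in both subcategories. Then $M_{i,\mathrm{out}}$ is injective, so $\dim_{\kappa(d_i)} M_i \le \dim_{\kappa(d_i)} M_{\partial i}$, and $M_{i,\mathrm{in}}$ is surjective, so again $\dim_{\kappa(d_i)} M_i \le \dim_{\kappa(d_i)} M_{\partial i}$ (this second inequality is actually the same one, but it is the surjectivity version that will interact with the relation). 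Actually the key point is to use \emph{both} at once together with the preprojective relation $M_{i,\mathrm{in}} \circ M_{i,\mathrm{out}} = 0$: the image of $M_{i,\mathrm{out}}$ is contained in the kernel of $M_{i,\mathrm{in}}$, so
\[ \dim_{\kappa(d_i)} M_i = \dim_{\kappa(d_i)} \Im(M_{i,\mathrm{out}}) \le \dim_{\kappa(d_i)} \Ker(M_{i,\mathrm{in}}) = \dim_{\kappa(d_i)} M_{\partial i} - \dim_{\kappa(d_i)} M_i, \]
where the first equality uses injectivity of $M_{i,\mathrm{out}}$ and the last uses surjectivity of $M_{i,\mathrm{in}}$. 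Rearranging gives $2 \dim_{\kappa(d_i)} M_i \le \dim_{\kappa(d_i)} M_{\partial i}$.

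Now I would translate this into the bilinear form. Recall from Section~\ref{ReflectionFunctorsDefn} that $\dim_{\kappa(d_i)} M_{\partial i} = \sum_{j \neq i} (-A_{ij}) \dim_{\kappa(d_j)} M_j$. Writing $\dim M = \sum_j m_j \alpha_j$ with $m_j = \dim_{\kappa(d_j)} M_j$, we have $(\alpha_i^\vee, \dim M) = \sum_j A_{ij} m_j = 2 m_i + \sum_{j \neq i} A_{ij} m_j = 2 m_i - \dim_{\kappa(d_i)} M_{\partial i}$. The displayed inequality $2 m_i \le \dim_{\kappa(d_i)} M_{\partial i}$ therefore says exactly $(\alpha_i^\vee, \dim M) \le 0$, hence $(\alpha_i, \dim M) = d_i (\alpha_i^\vee, \dim M) \le 0$. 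This is the contrapositive of the claim, so the proposition follows.

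The argument is essentially routine once the correspondence between membership in $\NoSub_i / \NoQuot_i$ and the injectivity/surjectivity of $M_{i,\mathrm{out}} / M_{i,\mathrm{in}}$ is invoked; there is no real obstacle. The one point that needs a little care is keeping the field-of-definition bookkeeping straight — all the relevant vector spaces ($M_i$, $M_{\partial i}$, $\Ker M_{i,\mathrm{in}}$, $\Im M_{i,\mathrm{out}}$) are $\kappa(d_i)$-vector spaces and the maps are $\kappa(d_i)$-linear, so dimension counts over $\kappa(d_i)$ behave as expected, and the final conversion between $(\alpha_i, -)$ and $(\alpha_i^\vee, -)$ is just multiplication by the positive scalar $d_i$.
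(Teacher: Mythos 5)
Your proof is correct: the dictionary you invoke ($M \in \NoSub_i$ iff $M_{i,\mathrm{out}}$ injective, $M \in \NoQuot_i$ iff $M_{i,\mathrm{in}}$ surjective), the relation $M_{i,\mathrm{in}} \circ M_{i,\mathrm{out}} = 0$, and the formula $\dim_{\kappa(d_i)} M_{\partial i} = \sum_{j \neq i} (-A_{ij}) \dim_{\kappa(d_j)} M_j$ are all stated in Section~\ref{ReflectionFunctorsDefn}, and the dimension count plus the translation $(\alpha_i^{\vee}, \alpha_j) = A_{ij}$, $(\alpha_i,-) = d_i(\alpha_i^{\vee},-)$ is carried out correctly, giving the contrapositive. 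However, your route differs from the paper's. The paper applies Theorem~\ref{CB Identity} to the pair $(S_i, M)$: positivity of $(\alpha_i, \dim M)$ forces one of $\Hom(S_i, M)$ or $\Hom(M, S_i)$ to be nonzero, and since $S_i$ is simple a nonzero map from $S_i$ is an injection and a nonzero map to $S_i$ is a surjection, so $M$ fails to be in $\NoSub_i$ or $\NoQuot_i$. That argument is a two-line consequence of the Euler-form identity, whose symmetrizable-case proof occupies the appendix; your argument is more elementary and self-contained, needing only the vertex-level linear algebra. The two are closely related: $\Hom(S_i,M) \cong \Ker M_{i,\mathrm{out}}$, $\Hom(M,S_i) \cong \CoKer M_{i,\mathrm{in}}$, and $\Ext^1(S_i,M)$ is computed by $\Ker M_{i,\mathrm{in}} / \Im M_{i,\mathrm{out}}$, so your dimension count is essentially an inequality form of the specialization of Theorem~\ref{CB Identity} to $(S_i, M)$, re-derived by hand; the paper's version buys brevity and makes the role of the simplicity of $S_i$ transparent, while yours avoids the homological machinery altogether.
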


\begin{proof}
By Theorem \ref{CB Identity},
\[ 2 (\alpha_i, \dim M) = \dim_{\kappa} \Hom(S_i, M) - \dim_{\kappa} \Ext^1(S_i, M) + \dim_{\kappa} \Hom(M, S_i) . \]
So, if $(\alpha_i, \dim M)>0$ then at least one of $\Hom(S_i, M)$ and $\Hom(M, S_i)$ is nonzero. Since $S_i$ is simple, any nonzero map from $S_i$ is an injection and any nonzero map to $S_i$ is a surjection.
\end{proof}

  Proposition~\ref{CanReflectBricks} tells us that we can often apply reflection functors to bricks.
  We now consider interactions between bricks and reflection functors.
Since $\Sigma_i^{\pm}$ is an equivalence of categories, we have
\[ \End(B) \cong \End(\Sigma_i(B)) \ \mbox{and} \  \End(B) \cong \End(\Sigma^{-1}_i(B)) \]
whenever these are defined. Thus
\begin{prop} \label{ReflectBrick}
The reflection functors preserve the property of being a brick.
\end{prop}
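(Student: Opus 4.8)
The plan is to deduce this immediately from the fact, recorded just above the statement, that $\Sigma_i$ and $\Sigma_i^{-1}$ are mutually inverse equivalences between $\NoQuot_i$ and $\NoSub_i$, together with the displayed isomorphisms $\End(B) \cong \End(\Sigma_i(B))$ and $\End(B) \cong \End(\Sigma_i^{-1}(B))$, valid whenever the left-hand sides are defined. Recall that a finite-dimensional module $B$ is a brick precisely when $B \neq 0$ and $\End_\Lambda(B)$ is a division ring. So suppose $B$ is a brick and $\Sigma_i(B)$ is defined, i.e. $B \in \NoQuot_i$; the case of $\Sigma_i^{-1}$ is identical.

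First I would observe that the equivalence $\Sigma_i$ sends the nonzero object $B$ to a nonzero object: the zero module is characterized categorically as the object $Z$ with $\Hom(Z,Z) = 0$, and an equivalence preserves this property, so $\Sigma_i(B) \neq 0$. Next, applying the functor $\Sigma_i$ to endomorphisms gives a ring homomorphism $\End_\Lambda(B) \to \End_\Lambda(\Sigma_i(B))$, which is bijective because $\Sigma_i$ is an equivalence (its quasi-inverse $\Sigma_i^{-1}$ provides the inverse map, up to the natural isomorphisms); this is precisely the isomorphism displayed before the statement. Since being a division ring is preserved under ring isomorphism, $\End_\Lambda(\Sigma_i(B))$ is a division ring, and therefore $\Sigma_i(B)$ is a brick.

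There is essentially no obstacle here. The only point that merits an explicit line is the verification that the equivalence preserves nonvanishing, so that the image is genuinely a brick and not the zero module; everything else is the formal statement that an equivalence of categories induces ring isomorphisms on endomorphism rings.
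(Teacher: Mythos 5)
Your argument is correct and is essentially the paper's: the paper deduces the proposition directly from the isomorphisms $\End(B) \cong \End(\Sigma_i(B))$ and $\End(B) \cong \End(\Sigma_i^{-1}(B))$ coming from the fact that $\Sigma_i^{\pm}$ are equivalences between $\NoQuot_i$ and $\NoSub_i$. Your additional remark that an equivalence sends nonzero objects to nonzero objects is a fine (and harmless) extra detail that the paper leaves implicit.
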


Combining Propositions~\ref{ReflectionsReflect} and~\ref{ReflectBrick} we deduce:
\begin{prop} \label{ReflectRealBrick}
The reflection functors preserve the property of being a real brick.
\end{prop}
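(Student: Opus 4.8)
The plan is to obtain this as a formal consequence of Propositions~\ref{ReflectBrick} and~\ref{ReflectionsReflect}, together with the fact that the set of real roots $\Phi$ is stable under the $W$-action. Recall that by definition $\Phi = \{ w \alpha_i : w \in W,\ 1 \leq i \leq n\}$, so $s_i \beta \in \Phi$ whenever $\beta \in \Phi$.

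First I would fix a real brick $B$ and a vertex $i$, and suppose that $\Sigma_i(B)$ is defined, i.e.\ that $B \in \NoQuot_i$. By Proposition~\ref{ReflectBrick}, $\Sigma_i(B)$ is again a brick. By Proposition~\ref{ReflectionsReflect}, $\dim \Sigma_i(B) = s_i(\dim B)$, and since $\dim B$ is a real root, $s_i(\dim B)$ is again a real root. Hence $\Sigma_i(B)$ is a brick with real-root dimension vector, i.e.\ a real brick. The case of $\Sigma_i^{-1}(B)$ for $B \in \NoSub_i$ is word-for-word the same.

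I do not expect any substantive obstacle: the proposition is immediate once the two cited results are in hand, and the statement is to be read (exactly as Proposition~\ref{ReflectBrick} is) as applying whenever the relevant functor is defined on $B$. If one prefers, the argument can instead be run through the rigidity characterization of Proposition~\ref{RealBrickProperties}: since $\Sigma_i^{\pm}$ is an equivalence of categories, $\Ext^1_\Lambda(\Sigma_i^{\pm}(B), \Sigma_i^{\pm}(B)) \cong \Ext^1_\Lambda(B,B) = 0$, so the reflection functors preserve rigid bricks, and combined with Proposition~\ref{ReflectBrick} this again yields the claim. The dimension-vector version above is the cleaner write-up, and is the one I would record.
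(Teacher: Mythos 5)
Your proof is correct and is exactly the paper's argument: the paper deduces Proposition~\ref{ReflectRealBrick} immediately by combining Proposition~\ref{ReflectBrick} (bricks are preserved) with Proposition~\ref{ReflectionsReflect} (dimension vectors transform by $s_i$, which preserves real roots). Your write-up just makes the $W$-stability of $\Phi$ explicit, which is fine and requires no further justification.
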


This is a good time to prove Proposition~\ref{RealBrickProperties}, which gives alternative ways to think about the condition that $\dim B$ is a real root.
We restate  Proposition~\ref{RealBrickProperties} for the reader's convenience.

\begin{proposition} \label{RealBrickProperties}
Let $B$ be a brick. The following are equivalent:
\begin{enumerate}
\item The vector $\dim B$ in $V$ is a real root. 
\item We have  $(\dim B, \dim B)>0$.
\item The brick $B$ is rigid, meaning that $\Ext^1_{\Lambda}(B,B) = 0$. 
\end{enumerate}
\end{proposition}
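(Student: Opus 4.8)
The plan is to deduce everything from the Crawley--Boevey-type identity of Theorem~\ref{CB Identity} together with an induction using the reflection functors of Section~\ref{ReflectionFunctorsDefn}. Applying Theorem~\ref{CB Identity} with $M = N = B$ gives
\[ (\dim B, \dim B) = 2 \dim_\kappa \End_\Lambda(B) - \dim_\kappa \Ext^1_\Lambda(B,B) . \]
Since $B$ is a brick, $\End_\Lambda(B)$ is a division algebra over $\kappa$, so $\dim_\kappa \End_\Lambda(B) \geq 1$. The implication $(3)\Rightarrow(2)$ is then immediate: if $\Ext^1_\Lambda(B,B)=0$ then $(\dim B,\dim B) = 2\dim_\kappa\End_\Lambda(B) \geq 2 > 0$. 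The implication $(1)\Rightarrow(2)$ is also immediate, since if $\dim B = w\alpha_i$ for some $w \in W$ then $W$-invariance of $(-,-)$ gives $(\dim B,\dim B) = (\alpha_i,\alpha_i) = 2d_i > 0$.

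The heart of the argument is $(2)\Rightarrow(1)$, together with the refinement that $B$ is obtained from a simple module by reflection functors. Write $\dim B = \sum_i c_i \alpha_i$ and call $h(B) := \sum_i c_i$ its height (a positive integer, since $B\neq 0$). I will show by induction on $h(B)$ that a brick $B$ with $(\dim B,\dim B)>0$ is isomorphic to $\Sigma_{j_1}^{\pm 1}\cdots \Sigma_{j_m}^{\pm 1}(S_k)$ for some vertex $k$ and some composite of reflection functors, well-defined step by step. If $\dim B$ is a simple root $\alpha_k$, then $B = S_k$ and we are done. Otherwise, from $(\dim B,\dim B) = \sum_i c_i(\alpha_i,\dim B) > 0$ and $c_i\geq 0$ we obtain a vertex $k$ with $(\alpha_k,\dim B) > 0$; since $\dim B \neq \alpha_k$ we have $B\not\cong S_k$, so Proposition~\ref{CanReflectBricks} places $B$ in $\NoSub_k$ or $\NoQuot_k$, and we let $B'$ be $\Sigma_k^{-1}(B)$ or $\Sigma_k(B)$ accordingly. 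By Proposition~\ref{ReflectBrick}, $B'$ is a brick, and by Proposition~\ref{ReflectionsReflect}, $\dim B' = s_k(\dim B) = \dim B - (\alpha_k^{\vee},\dim B)\,\alpha_k$. Here $(\alpha_k^{\vee},\dim B) = \sum_j A_{kj}c_j = (\alpha_k,\dim B)/d_k$ is a \emph{positive} integer, so $\dim B'$ is a nonnegative integer vector (as it must be, being the dimension vector of $B'$) with $h(B') = h(B) - (\alpha_k^{\vee},\dim B) < h(B)$; and $(\dim B',\dim B') = (\dim B,\dim B) > 0$ by $W$-invariance. The inductive hypothesis now applies to $B'$.

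Granting this, $\dim B = s_{j_1}\cdots s_{j_m}\alpha_k$ is a real root, proving $(1)$. For $(3)$, observe that both terms on the right of $\dim_\kappa\Ext^1_\Lambda(B,B) = 2\dim_\kappa\End_\Lambda(B) - (\dim B,\dim B)$ are unchanged when $B$ is replaced by $\Sigma_k^{\pm 1}(B)$: reflection functors are equivalences of categories, hence preserve $\End$ up to isomorphism, and they act on dimension vectors via $W$, which preserves $(-,-)$. Therefore $\dim_\kappa\Ext^1_\Lambda(B,B) = \dim_\kappa\Ext^1_\Lambda(S_k,S_k)$, and this is $0$: the only $\Lambda$-module with dimension vector $2\alpha_k$ is $S_k\oplus S_k$ — every arrow of $\Gamma$ joins distinct vertices, so on such a module every arrow acts as $0$ — whence every self-extension of $S_k$ splits.

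The step I expect to require the most care is the inductive step of $(2)\Rightarrow(1)$: one must be sure that, away from the base case, a reflection functor really can be applied (this is exactly Proposition~\ref{CanReflectBricks}, using $B\not\cong S_k$, which follows from $\dim B$ not being simple) and that the height strictly drops, which hinges on $(\alpha_k^{\vee},\dim B)$ being a positive integer. Everything else — $W$-invariance of the form, invariance of endomorphism rings under the functors, and the vanishing of $\Ext^1_\Lambda(S_k,S_k)$ — is routine.
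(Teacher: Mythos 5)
Your proposal is correct and follows essentially the same route as the paper: the easy implications via the identity of Theorem~\ref{CB Identity}, and then an induction on the height of $\dim B$ using Propositions~\ref{CanReflectBricks}, \ref{ReflectBrick} and \ref{ReflectionsReflect} to reduce to a simple module, which establishes $(1)$ and $(3)$ simultaneously. The only (cosmetic) differences are that you phrase the vanishing of $\Ext^1$ as invariance of $2\dim_\kappa\End_\Lambda(B)-(\dim B,\dim B)$ along the whole chain rather than as a step of the induction, and that you spell out why $\Ext^1_\Lambda(S_k,S_k)=0$, which the paper treats as clear.
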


\begin{proof}
The implication $(1) \implies (2)$ is immediate, since we have $(\beta, \beta) > 0$ for any real root $\beta$.

The implication $(3) \implies (2)$ is almost as quick: By Theorem \ref{CB Identity}, we have $(\dim B, \dim B) = \dim_{\kappa} \End(B) - \dim_{\kappa} \Ext^1_{\Lambda}(B,B) + \dim_{\kappa} \End(B)$. Under hypothesis~$(3)$, the right hand side is $2 \dim_{\kappa} \End(B) > 0$.

We now show that $(2)$ implies $(1)$ and $(3)$. Put $\beta = \dim B = \sum c_j \alpha_j$, so $c_j \geq 0$. 
We will prove these statements by induction on $\sum c_j$. In the base case, $\sum c_j =1$, we know that $\beta$ is a simple root, so $B$ is a simple module and $(1)$ and $(3)$ are clear.
Since we are assuming $(\beta, \beta)>0$, we have $\sum c_j ( \alpha_j, \beta) > 0$. So there is some index $i$ for which $(\alpha_i, \beta) > 0$, and we fix $i$ to refer to such an index. 

Since $B$ is a brick, we have either $B \in \NoSub_i$ or $B \in \NoQuot_i$, by Proposition \ref{CanReflectBricks}. We treat the case that $B \in \NoSub_i$, and the other case is similar. Put $B' = \Sigma_i^{-1}(B)$ and $\beta' = s_i \beta$. Then $B'$ is a brick of dimension $\beta'$. Note that $(\beta', \beta') = (\beta, \beta) > 0$. Also, $\beta' = \beta - (\alpha_i^{\vee}, \beta) \alpha_i$ and $(\alpha_i^{\vee}, \beta)>0$ so, by induction, $\beta'$ is a real root and $\Ext^1_{\Lambda}(B', B')=0$.

Then $\beta = s_i \beta'$ is a real root as well. Since $\Sigma_i^{\pm}$ is an equivalence of categories, we have $\End(B) \cong \End(B')$. 
Since $(\beta, \beta) = 2 \dim_{\kappa} \End(B) - \dim_{\kappa} \Ext^1(B,B)$ and $(\beta', \beta') = 2 \dim_{\kappa} \End(B') - \dim_{\kappa} \Ext^1(B',B')$, the inductive fact that $\Ext^1(B', B')=0$ implies $\Ext^1(B,B)=0$.
\end{proof}

%

\section{Proofs of the main theorems}

\subsection{Proof of Theorem~\ref{RealBrickRecursion}} \label{ProofOfRealBrickRecursion}
We now prove Theorem~\ref{RealBrickRecursion} from the Introduction, which we restate for the reader's convenience:
\begin{Theorem}\label{RealBrickRecursion}
Let $\beta$ be a positive root and let $s_{i_r} \cdots s_{i_2} s_{i_1} \alpha_j$ be a positive expression for $\beta$. The bricks of dimension $\beta$ are precisely the modules of the form $\Sigma^{\pm_r}_{i_r} \cdots \Sigma^{\pm_2}_{i_2} \Sigma^{\pm_1}_{i_1} S_j$, where the signs must be chosen such that the expression is well-defined.
\end{Theorem}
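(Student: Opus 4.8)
The plan is to prove Theorem~\ref{RealBrickRecursion} by induction on $r$, the length of the positive expression. For the base case $r=0$, the root is a simple root $\alpha_j$ and we must show the only brick of dimension $\alpha_j$ is $S_j$; this follows because any module of dimension vector $\alpha_j$ has $M_j$ one-dimensional over $\kappa(d_j)$ and all other components zero, so it is forced to equal $S_j$. For the inductive step, write $\beta = s_{i_r}\beta'$ where $\beta' = s_{i_{r-1}}\cdots s_{i_1}\alpha_j$ and, by definition of positive expression, $\beta' \precR \beta$ is a cover with $\beta - \beta' \in \RR_{>0}\alpha_{i_r}$. Equivalently (by the formula $s_i(x) = x - (\alpha_i^\vee,x)\alpha_i$), we have $(\alpha_{i_r}^\vee, \beta') > 0$, hence $(\alpha_{i_r}, \beta') > 0$.

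The heart of the argument is the following claim: the map $B' \mapsto \Sigma_{i_r}(B')$ (resp. $\Sigma_{i_r}^{-1}(B')$) gives a bijection between $\{$bricks of dimension $\beta'$ lying in $\NoQuot_{i_r}\}$ (resp. $\NoSub_{i_r}$) and $\{$bricks of dimension $\beta$ lying in $\NoSub_{i_r}\}$ (resp. $\NoQuot_{i_r}$), and moreover \emph{every} brick of dimension $\beta$ lies in exactly one of $\NoSub_{i_r}$, $\NoQuot_{i_r}$. The first half is immediate from the fact that $\Sigma_{i_r}^{\pm}$ are mutually inverse equivalences of categories (Proposition~\ref{ReflectBrick}) together with $\dim\Sigma_{i_r}^{\pm}(B') = s_{i_r}(\dim B') = \beta$ (Proposition~\ref{ReflectionsReflect}). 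For the second half: since $(\alpha_{i_r},\beta) = (\alpha_{i_r}, s_{i_r}\beta') = -(\alpha_{i_r},\beta') < 0$, we need to rule out $B \cong S_{i_r}$ (impossible, since $\dim B = \beta \neq \alpha_{i_r}$ as $\beta'$ is not simple... more carefully, $\beta$ has $\sum c_k \geq 2$); then Proposition~\ref{CanReflectBricks} puts $B$ in at least one of $\NoSub_{i_r}$, $\NoQuot_{i_r}$. To see it is in at most one when it is a brick of dimension $\beta$ — here I would use that $(\alpha_{i_r}, \dim B) = (\alpha_{i_r}, \beta) \neq 0$; but Proposition~\ref{alphaCuts} only handles the case $(\alpha_{i_r},\dim M)>0$. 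Since $(\alpha_{i_r},\beta) < 0$, I instead argue on $B' = \Sigma_{i_r}^{\pm}(B)$: if $B$ were in both $\NoSub_{i_r}$ and $\NoQuot_{i_r}$ then both $\Sigma_{i_r}(B)$ and $\Sigma_{i_r}^{-1}(B)$ would be bricks of dimension $\beta'$, but $\Sigma_{i_r}$ lands in $\NoSub_{i_r}$ and $\Sigma_{i_r}^{-1}$ lands in $\NoQuot_{i_r}$, and applying the roles with $(\alpha_{i_r},\beta')>0$ and Proposition~\ref{alphaCuts} to $B'$ gives the contradiction — so at most one of the two reflected modules can exist. Combining, $B$ lies in exactly one of the two subcategories, and is the image under the corresponding reflection functor of a unique brick $B'$ of dimension $\beta'$.

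Putting the pieces together: by induction, the bricks of dimension $\beta'$ are exactly the well-defined products $\Sigma^{\pm_{r-1}}_{i_{r-1}}\cdots\Sigma^{\pm_1}_{i_1}S_j$. Applying $\Sigma_{i_r}$ to those among them lying in $\NoQuot_{i_r}$ and $\Sigma_{i_r}^{-1}$ to those lying in $\NoSub_{i_r}$ produces, by the claim, exactly the bricks of dimension $\beta$; and "lying in $\NoQuot_{i_r}$" (resp. $\NoSub_{i_r}$) is precisely the condition that the product $\Sigma^{+}_{i_r}\Sigma^{\pm_{r-1}}_{i_{r-1}}\cdots$ (resp. $\Sigma^{-}_{i_r}\cdots$) is well-defined, by the convention recalled after the statement of the theorem. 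This matches the asserted description. The main obstacle I anticipate is the bookkeeping around the "exactly one of $\NoSub_{i_r}$, $\NoQuot_{i_r}$" dichotomy, since Proposition~\ref{alphaCuts} is stated only for $(\alpha_i,\dim M)>0$ and here the relevant sign on $\beta$ is negative; the fix is to transport the question through the reflection functor to $\beta'$, where the pairing with $\alpha_{i_r}$ is positive and Proposition~\ref{alphaCuts} applies directly. A minor secondary point is verifying that $\beta$, being a non-simple positive root, genuinely cannot equal any simple root $\alpha_{i_r}$, which follows from Corollary~\ref{ShardRecursion}/Theorem~\ref{stem1} since $\beta' \precR \beta$ strictly, so $d(\beta) \geq 1$ and $\beta$ is not $\precR$-minimal.
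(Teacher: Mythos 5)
Your proposal follows the same route as the paper's own proof: induct on the length of the positive expression, use Proposition~\ref{CanReflectBricks} (together with the observation that $\beta$ is not simple, so $B\not\cong S_{i_r}$) to place a brick $B$ of dimension $\beta$ in at least one of $\NoSub_{i_r}$, $\NoQuot_{i_r}$, transport it by the appropriate reflection functor to a brick of dimension $\beta'$, apply the inductive hypothesis, and note that membership in $\NoQuot_{i_r}$ (resp.\ $\NoSub_{i_r}$) is exactly the well-definedness condition for appending $\Sigma^{+}_{i_r}$ (resp.\ $\Sigma^{-}_{i_r}$). That core argument is correct.

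One correction, though it only affects a superfluous part of your write-up: your sign analysis is reversed. From $\beta-\beta'\in\RR_{>0}\alpha_{i_r}$ and $\beta=s_{i_r}\beta'=\beta'-(\alpha_{i_r}^{\vee},\beta')\alpha_{i_r}$ one gets $(\alpha_{i_r}^{\vee},\beta')<0$, hence $(\alpha_{i_r},\beta)=-(\alpha_{i_r},\beta')>0$, the opposite of what you assert. Consequently your detour for the ``at most one'' half of the dichotomy --- transporting to $B'$ on the grounds that Proposition~\ref{alphaCuts} does not apply to $B$ --- is misdirected: with the correct signs Proposition~\ref{alphaCuts} applies directly to $B$ and does \emph{not} apply to $B'$, and the contradiction you sketch on the $B'$ side (that $\Sigma_{i_r}(B)\in\NoSub_{i_r}$ and $\Sigma_{i_r}^{-1}(B)\in\NoQuot_{i_r}$ somehow clash with Proposition~\ref{alphaCuts}) does not actually go through as stated. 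Fortunately none of this is needed: the theorem does not claim the sign choice is unique, so, as in the paper, only the ``at least one'' statement from Proposition~\ref{CanReflectBricks} is required. Dropping (or correcting) the ``exactly one'' claim leaves a proof that matches the paper's.
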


\begin{proof}
By Proposition~\ref{ReflectRealBrick}, any module of the form $\Sigma_{i_r}^{\pm_r} \cdots \Sigma_{i_2}^{\pm_2} \Sigma_{i_1}^{\pm_1} (S_j)$ is a real brick and, by Proposition~\ref{ReflectionsReflect}, has dimension $s_{i_r}  \cdots s_{i_2} s_{i_1} (\alpha_j) = \beta$. So all modules of this form are real bricks of the correct dimension, and our task is to prove the converse.

Our proof is by induction on $r$. In the base case $r=0$, we have $\beta = \alpha_j$, so the only module of dimension $\alpha_j$ is $S_j$, which is easily seen to be a brick. 

We now consider the inductive case $r>0$. We abbreviate $\beta' =  s_{i_{r-1}}  \cdots s_{i_2} s_{i_1} (\alpha_j)$ and $i = i_r$. So $\beta = s_i \beta'$
and $\beta \succR \beta'$. 

Let $B$ be a brick of dimension $\beta$. By Proposition~\ref{CanReflectBricks}, we know that $B$ is in at least one of $\NoSub_i$ and $\NoQuot_i$. We'll treat the case that $B \in \NoSub_i$; the other case is similar. Put $B' = \Sigma_i^{-1}(B)$. Then $B'$ is, by Propositions~\ref{ReflectRealBrick} and~\ref{ReflectionsReflect}, a brick of dimension $\beta'$. By induction, $B'$ is of the form $\Sigma_{i_2}^{\pm_2} \cdots \Sigma_{i_t}^{\pm_t} (S_j)$, and then $B = \Sigma_i(B) = \Sigma_{i_1} \Sigma_{i_2}^{\pm_2} \cdots \Sigma_{i_t}^{\pm_t} (S_j)$.
\end{proof}

%
%
\begin{cor} \label{CorOfMT1}
Let $B$ be a real brick with dimension $\beta$ and recall that $d_{\beta} = \beta/\beta^{\vee} = (\beta, \beta)/2$. Then $\End(B) \cong \kappa(d_{\beta})$. 
\end{cor}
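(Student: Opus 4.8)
The plan is to reduce to the case of a simple module by running the recursion of Theorem~\ref{RealBrickRecursion} backwards. First I would use Corollary~\ref{ShardRecursion} to choose a positive expression $\beta = s_{i_r} \cdots s_{i_2} s_{i_1} \alpha_j$, and then Theorem~\ref{RealBrickRecursion} to write $B \cong \Sigma^{\pm_r}_{i_r} \cdots \Sigma^{\pm_2}_{i_2} \Sigma^{\pm_1}_{i_1} S_j$ for some choice of signs making the expression well-defined. Since each $\Sigma^{\pm}_{i_k}$ is an equivalence of categories (between $\NoQuot_{i_k}$ and $\NoSub_{i_k}$, as recalled in Section~\ref{ReflectionFunctorsDefn}), it induces a ring isomorphism on endomorphism rings, and hence $\End_{\Lambda}(B) \cong \End_{\Lambda}(S_j)$.

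Next I would compute $\End_{\Lambda}(S_j)$. The module $S_j$ has $\kappa(d_j)$ at vertex $j$ and $0$ elsewhere, with all arrows acting by zero. A $\Lambda$-endomorphism of $S_j$ restricts at vertex $j$ to a map commuting with the action of $e_j \Lambda e_j$, which contains $\kappa(d_j)$ acting by scalars; thus this restriction is $\kappa(d_j)$-linear, and the arrow conditions impose nothing further. Hence $\End_{\Lambda}(S_j) \cong \End_{\kappa(d_j)}(\kappa(d_j)) = \kappa(d_j)$.

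Finally I would identify $d_j$ with $d_\beta$. Writing $w = s_{i_r} \cdots s_{i_1} \in W$, we have $\beta = w \alpha_j$, so $\beta$ lies in the same $W$-orbit as $\alpha_j$; by Proposition~\ref{prop:dbeta} this gives $d_\beta = d_j$. Stringing the three steps together yields $\End_{\Lambda}(B) \cong \kappa(d_j) = \kappa(d_\beta)$, as claimed. There is no genuinely hard step here, since the substance is already contained in Theorem~\ref{RealBrickRecursion} and Proposition~\ref{prop:dbeta}; the only points deserving a moment's care are that a category equivalence acts as a \emph{ring} isomorphism on $\End$ (immediate from functoriality), and that the answer is independent of the chosen positive expression and of $j$ — which it must be, since $d_\beta$ depends only on $\beta$, and every admissible choice of $j$ satisfies $d_j = d_\beta$.
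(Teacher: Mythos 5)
Your proposal is correct and follows essentially the same route as the paper: write $B$ as a composite of reflection functors applied to $S_j$ via Theorem~\ref{RealBrickRecursion}, use that these are category equivalences to get $\End(B) \cong \End(S_j) \cong \kappa(d_j)$, and conclude $d_j = d_\beta$ from Proposition~\ref{prop:dbeta}. The only difference is that you spell out the computation of $\End(S_j)$, which the paper simply asserts.
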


In particular, non-commutative division rings do not appear as endomorphism rings of real bricks.

\begin{proof}
Let
\[ B \cong \Sigma_{i_1}^{\pm} \Sigma_{i_2}^{\pm} \cdots \Sigma_{i_t}^{\pm} (S_j) \]
Since the $\Sigma_i^{\pm}$ are equivalences of categories (between $\NoQuot_i$ and $\NoSub_i$), we have $\End(B) \cong \End(S_j) \cong \kappa(d_j)$.
We just need to check that $d_{\beta} = d_j$. Indeed, we have $\beta = s_{i_1} s_{i_2} \cdots s_{i_t} \alpha_j$ so the result follows from Proposition~\ref{prop:dbeta}.
\end{proof}

%
%
\subsection{Proof of Theorem~\ref{StabOfAShardModule}} \label{ProofOfStabOfAShardModule}

We now prove Theorem~\ref{StabOfAShardModule}, which we restate for the reader's convenience:
\begin{Theorem}\label{StabOfAShardModule}
Let $\beta$ be a positive root, let $s_{i_r} \cdots s_{i_2} s_{i_1} \alpha_j$ be a positive expression for $\beta$ and let $\pm_1$, $\pm_2$, \dots, $\pm_r$ be a choice of signs such that $\Sigma^{\pm_r}_{i_r} \cdots \Sigma^{\pm_2}_{i_2} \Sigma^{\pm_1}_{i_1} S_j$ is defined. Then the brick $\Sigma^{\pm_r}_{i_r} \cdots \Sigma^{\pm_2}_{i_2} \Sigma^{\pm_1}_{i_1} S_j$ has stability domain $\shard_{i_r}^{\pm_r} \cdots \shard_{i_2}^{\pm_2} \shard_{i_1}^{\pm_1} \left( \alpha_j^{\perp} \right)$.
\end{Theorem}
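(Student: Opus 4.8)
The plan is to induct on $r$. The base case $r=0$ is immediate: $B=S_j$ has only $0$ and $S_j$ as subrepresentations, so $S_j$ is $\theta$-semistable precisely when $\langle\theta,\alpha_j\rangle=0$, i.e. $\Stab(S_j)=\alpha_j^\perp$. For the inductive step I would set $i=i_r$ and $B'=\Sigma^{\pm_{r-1}}_{i_{r-1}}\cdots\Sigma^{\pm_1}_{i_1}S_j$, so that $B=\Sigma_i^{\pm_r}(B')$ and, by induction, $\Stab(B')=\shard^{\pm_{r-1}}_{i_{r-1}}\cdots\shard^{\pm_1}_{i_1}(\alpha_j^\perp)$. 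Everything then reduces to the following comparison, which I will call the Key Lemma: if $M\in\NoQuot_i$ and $\theta\in V^\ast$ has $\langle\theta,\alpha_i\rangle\ge 0$, then $M$ is $\theta$-semistable if and only if $\Sigma_i(M)$ is $s_i\theta$-semistable; dually, if $M\in\NoSub_i$ and $\langle\theta,\alpha_i\rangle\le 0$, then $M$ is $\theta$-semistable if and only if $\Sigma_i^{-1}(M)$ is $s_i\theta$-semistable. The dual statement follows from the first by applying it to $\Sigma_i^{-1}(M)\in\NoQuot_i$, so only the first needs proof.

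Granting the Key Lemma, here is how I would finish in the case $\pm_r=+$ (the case $\pm_r=-$ being the mirror image, with submodules/quotients and $\NoSub_i$/$\NoQuot_i$ interchanged and $\shard_i^-$ in place of $\shard_i^+$). Because $s_{i_r}\cdots s_{i_1}\alpha_j$ is a positive expression, $\dim B-\dim B'\in\RR_{>0}\alpha_i$, hence $(\alpha_i,\dim B)>0$; since $B=\Sigma_i(B')\in\NoSub_i$, Proposition~\ref{alphaCuts} forces $B\notin\NoQuot_i$, i.e. $S_i$ is a quotient of $B$. Consequently every $\theta\in\Stab(B)$ satisfies $\langle\theta,\alpha_i\rangle=\langle\theta,\dim S_i\rangle\le 0$; writing $\theta=s_i\theta'$ with $\langle\theta',\alpha_i\rangle\ge 0$, the Key Lemma (applied to $B'\in\NoQuot_i$) says $B'$ is $\theta'$-semistable, so $\theta\in s_i\big(\Stab(B')\cap\{\langle\,\cdot\,,\alpha_i\rangle\ge 0\}\big)=\shard_i^+(\Stab(B'))$. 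Conversely, if $\theta'\in\Stab(B')$ with $\langle\theta',\alpha_i\rangle\ge 0$, the Key Lemma gives that $\Sigma_i(B')=B$ is $s_i\theta'$-semistable, so $s_i\theta'\in\Stab(B)$. Thus $\Stab(B)=\shard_i^+(\Stab(B'))$, completing the induction.

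For the Key Lemma itself the plan is a direct comparison of subrepresentation lattices. The modules $M$ and $\Sigma_i(M)$ have the same space at each vertex $j\ne i$ and the same map for each arrow not incident to $i$; thus a subrepresentation of either is the data of a family of subspaces $N_j\subseteq M_j$ $(j\ne i)$, closed under all the maps not involving $i$, together with a subspace at vertex $i$. Writing $N_{\partial i}=\bigoplus_{j\ne i}E(i\ot j)\otimes_{\kappa(d_j)}N_j\subseteq M_{\partial i}$, a subrepresentation of $M$ requires $M_{i,\text{in}}(N_{\partial i})\subseteq N_i\subseteq M_{i,\text{out}}^{-1}(N_{\partial i})$, while (reading off the defining diagram, in which the vertex-$i$ space is $\Ker M_{i,\text{in}}$ and the incoming map is the corestriction of $M_{i,\text{out}}\circ M_{i,\text{in}}$) a subrepresentation of $\Sigma_i(M)$ requires $M_{i,\text{out}}M_{i,\text{in}}(N_{\partial i})\subseteq N'_i\subseteq N_{\partial i}\cap\Ker M_{i,\text{in}}$. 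Both intervals are nonempty for exactly the same families $(N_j)_{j\ne i}$, namely those with $M_{i,\text{out}}M_{i,\text{in}}(N_{\partial i})\subseteq N_{\partial i}$. Now $\langle\theta,\dim N\rangle=\langle\theta,\sum_{j\ne i}(\dim N_j)\alpha_j\rangle+(\dim N_i)\langle\theta,\alpha_i\rangle$, and using $\dim\Sigma_i(M)=s_i\dim M$ together with $(\alpha_i^\vee,\sum_{j\ne i}(\dim N_j)\alpha_j)=-\dim N_{\partial i}$ one gets $\langle s_i\theta,\dim N'\rangle=\langle\theta,\sum_{j\ne i}(\dim N_j)\alpha_j\rangle+(\dim N_{\partial i}-\dim N'_i)\langle\theta,\alpha_i\rangle$. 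When $\langle\theta,\alpha_i\rangle\ge 0$, for a fixed family $(N_j)_{j\ne i}$ the first quantity is minimized at $N_i=M_{i,\text{in}}(N_{\partial i})$ and the second at $N'_i=N_{\partial i}\cap\Ker M_{i,\text{in}}$; since $\dim N_{\partial i}-\dim(N_{\partial i}\cap\Ker M_{i,\text{in}})=\dim M_{i,\text{in}}(N_{\partial i})$ by rank--nullity, both minima equal $\langle\theta,\sum_{j\ne i}(\dim N_j)\alpha_j\rangle+\dim\!\big(M_{i,\text{in}}(N_{\partial i})\big)\langle\theta,\alpha_i\rangle$. Since also $\langle\theta,\dim M\rangle=\langle s_i\theta,\dim\Sigma_i(M)\rangle$, the two semistability conditions are literally identical, which proves the Key Lemma.

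I expect the main obstacle to be the bookkeeping inside the Key Lemma: pinning down the structure maps of $\Sigma_i(M)$ at vertex $i$ precisely enough to extract the description of its subrepresentations, and keeping straight that the two minimizations run in opposite directions (because $s_i\alpha_i=-\alpha_i$) yet arrive at the same value. By contrast, the outer scaffolding --- feeding the positive-expression/root-poset inequality into Proposition~\ref{alphaCuts}, identifying $S_i$ as a quotient (resp.\ submodule) of $B$, and reducing the $\NoSub_i$ half of the Key Lemma to the $\NoQuot_i$ half --- is routine.
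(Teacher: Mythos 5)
Your proposal is correct, and its outer skeleton is the same as the paper's: the same induction on $r$, the same use of Proposition~\ref{alphaCuts} (via $(\alpha_i,\beta)>0$ from the positive expression) to conclude that $S_i$ is a quotient (resp.\ submodule) of $B$, hence $\Stab(B)\subseteq\{\theta:\langle\theta,\alpha_i\rangle\le 0\}$ as in Proposition~\ref{StabOfAShardModuleContainment}, and your ``Key Lemma'' is exactly the paper's Lemma~\ref{KeyStabilityReflectionLemma} (stated for an arbitrary $M\in\NoQuot_i$ rather than a brick, which is harmless since neither proof uses brickness). Where you genuinely diverge is in the proof of that lemma. The paper argues functorially: given a $\theta$-destabilizing submodule $K\subseteq B$ it applies $\Sigma_i^{-1}$ to the inclusion, uses that the canonical map $\Sigma_i^{-1}(X)\to X$ has kernel and cokernel a power of $S_i$ to identify the image $I\subseteq B'$ up to a multiple of $\alpha_i$, and computes $\langle\theta',\dim I\rangle<0$; the converse is handled dually by pushing a destabilizing quotient of $B'$ through $\Sigma_i$. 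You instead compare the subrepresentation lattices of $M$ and $\Sigma_i(M)$ directly: both are governed by the same away-from-$i$ data $(N_j)_{j\ne i}$, the admissible vertex-$i$ subspaces form the intervals $[\,M_{i,\text{in}}(N_{\partial i}),\,M_{i,\text{out}}^{-1}(N_{\partial i})\,]$ and $[\,M_{i,\text{out}}M_{i,\text{in}}(N_{\partial i}),\,N_{\partial i}\cap\Ker M_{i,\text{in}}\,]$, and rank--nullity makes the fiberwise minima of $\langle\theta,\dim N\rangle$ and $\langle s_i\theta,\dim N'\rangle$ agree when $\langle\theta,\alpha_i\rangle\ge 0$, giving the equivalence of the two semistability conditions in one symmetric stroke (no passage between submodules and quotients). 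Your route is more elementary and self-contained; its cost is the bookkeeping you flagged, namely pinning down the structure maps of $\Sigma_i(M)$ at vertex $i$ and, in the symmetrizable species setting, checking that the adjunction $f\leftrightarrow f^{\vee}$ defining $M_{i,\text{out}}$ converts closure of $(N_j)$ under the arrows out of $i$ into $M_{i,\text{out}}(N_i)\subseteq N_{\partial i}$ (this works with the trace-pairing dual bases, e.g.\ chosen inside $\kappa(d_i)$ as in the appendix). The paper's route is shorter given the categorical machinery already established, since it never unpacks $\Sigma_i(M)$ explicitly. Both arguments are valid.
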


This will follow by an immediate recursion once we prove the following:
\begin{prop}\label{StabOfAShardModuleIndStep}
Let $\beta$ and $\beta'$ be positive roots with $\beta = s_i \beta'$ and $\beta - \beta' \in \RR_{>0} \alpha_i$. Let $B'$ be a brick of dimension $\beta'$ and let $B = \Sigma_i^{\pm} B'$, where we assume that $\Sigma_i^{\pm} B'$ is well-defined. Then $\Stab(B) = \shard_i^{\pm} \Stab(B')$.
\end{prop}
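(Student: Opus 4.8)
The plan is to prove the two signs separately; the construction is self-dual under exchanging ``submodule'' with ``quotient'' and $\NoSub_i$ with $\NoQuot_i$, so it suffices to spell out the $+$ case, where $B'\in\NoQuot_i$, $B=\Sigma_i(B')$, and we must show $\Stab(B)=\shard_i^+(\Stab(B'))$. Write $\beta=\dim B$, $\beta'=\dim B'$; by Proposition~\ref{ReflectionsReflect} we have $\beta=s_i\beta'$, so $\theta\in\beta^\perp$ iff $s_i\theta\in(\beta')^\perp$, and if submodules $N\subseteq B$ and $N'\subseteq B'$ satisfy $\dim N'=s_i\dim N$ then $\langle\theta,\dim N\rangle=\langle s_i\theta,\dim N'\rangle$ since $W$ preserves the pairing. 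The hypothesis $\beta-\beta'\in\RR_{>0}\alpha_i$ combined with $s_i\beta'=\beta'-(\alpha_i^\vee,\beta')\alpha_i$ forces $(\alpha_i,\beta')<0$, hence $(\alpha_i,\beta)=-(\alpha_i,\beta')>0$; this inequality is the only place the covering hypothesis is used, and it is what selects $\shard_i^+$ over $\shard_i^-$.

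The first step is to locate $\Stab(B)$ relative to $\alpha_i^\perp$. Since $(\alpha_i,\dim B)>0$, Proposition~\ref{alphaCuts} says $B$ lies in at most one of $\NoSub_i,\NoQuot_i$; as $B=\Sigma_i(B')\in\NoSub_i$, we get $B\notin\NoQuot_i$, so $S_i$ is a quotient of $B$, whence $\langle\theta,\alpha_i\rangle=\langle\theta,\dim S_i\rangle\le0$ for every $\theta\in\Stab(B)$. Thus $\Stab(B)\subseteq\{\langle\cdot,\alpha_i\rangle\le0\}$. Since
\[ \shard_i^+(\Stab(B'))=s_i\bigl(\Stab(B')\cap\{\langle\cdot,\alpha_i\rangle\ge0\}\bigr)=\{\theta:s_i\theta\in\Stab(B'),\ \langle\theta,\alpha_i\rangle\le0\}, \]
the proposition reduces to the claim that, for $\theta\in\beta^\perp$ with $\langle\theta,\alpha_i\rangle\le0$,
\[ B\text{ is }\theta\text{-semistable}\iff B'\text{ is }(s_i\theta)\text{-semistable}. \]

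To prove this I would pass subrepresentations through the reflection functor. The subcategory $\NoSub_i$ is closed under submodules, so every submodule of $B$ lies in $\NoSub_i$, and the equivalence $\Sigma_i^{-1}\colon\NoSub_i\to\NoQuot_i$ therefore induces a bijection between the submodules of $B$ and the submodules of $B'=\Sigma_i^{-1}(B)$ that lie in $\NoQuot_i$, with dimension vectors on the two sides related by $s_i$ (Proposition~\ref{ReflectionsReflect}). Consequently $B$ is $\theta$-semistable iff $\langle s_i\theta,\dim N'\rangle\ge0$ for every submodule $N'\subseteq B'$ lying in $\NoQuot_i$, while $B'$ is $(s_i\theta)$-semistable iff this holds for \emph{every} submodule $N'$ of $B'$; one implication is then immediate. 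For the converse, given an arbitrary submodule $N'\subseteq B'$, replace it by the submodule $\tilde N'\subseteq B'$ which agrees with $N'$ at every vertex $\ne i$ and whose value at $i$ is the image of the incoming map $\bigoplus_{j\ne i}E(i\ot j)\otimes_{\kappa(d_j)}N'_j\to N'_i$; then $\tilde N'\in\NoQuot_i$ and $\dim N'=\dim\tilde N'+c\,\alpha_i$ with $c\ge0$, so, using $\langle s_i\theta,\alpha_i\rangle=-\langle\theta,\alpha_i\rangle\ge0$ and $\langle s_i\theta,\dim\tilde N'\rangle\ge0$, we conclude $\langle s_i\theta,\dim N'\rangle\ge0$. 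This proves the equivalence, hence $\Stab(B)=\shard_i^+(\Stab(B'))$. The $-$ case is the mirror image: there $B=\Sigma_i^{-1}(B')\in\NoQuot_i$, the inequality $(\alpha_i,\dim B)>0$ now forces $S_i$ to be a submodule of $B$ and $\Stab(B)\subseteq\{\langle\cdot,\alpha_i\rangle\ge0\}$, $\NoQuot_i$ is closed under quotients, $\Sigma_i$ carries quotients of $B$ bijectively onto $\NoSub_i$-quotients of $B'$, and an arbitrary quotient of $B'$ is enlarged at vertex $i$ to a $\NoSub_i$-quotient.

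The step I expect to demand the most care is the subrepresentation correspondence of the third paragraph: verifying that $\tilde N'$ is genuinely a subrepresentation (closure under all the arrows and under the preprojective relations at vertex $i$), that the equivalence $\Sigma_i^{-1}$ really does induce a bijection of subobjects here — valid for any equivalence, but worth a word since $\NoSub_i$ and $\NoQuot_i$ are full subcategories closed only under sub- respectively quotient-objects, not abelian subcategories — and that Proposition~\ref{ReflectionsReflect} applies to each submodule in sight. Everything else, namely the sign reduction through Proposition~\ref{alphaCuts} and the final assembly, is routine.
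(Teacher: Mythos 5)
Your overall frame agrees with the paper's: you pin $\Stab(B)$ in the half-space $\{\langle\cdot,\alpha_i\rangle\le0\}$ via Proposition~\ref{alphaCuts} (this is the paper's Proposition~\ref{StabOfAShardModuleContainment}), and you reduce to showing that for $\theta\in\beta^\perp$ with $\langle\theta,\alpha_i\rangle\le0$, $B$ is $\theta$-semistable iff $B'$ is $(s_i\theta)$-semistable (the paper's Lemma~\ref{KeyStabilityReflectionLemma}). The gap is exactly at the step you flagged: the equivalence $\Sigma_i^{-1}\colon\NoSub_i\to\NoQuot_i$ does \emph{not} induce a bijection between submodules of $B$ and submodules of $B'$ lying in $\NoQuot_i$ with dimension vectors related by $s_i$. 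An equivalence does biject categorical subobjects, but a monomorphism in the full subcategory $\NoQuot_i$ need not be an injective module map: its kernel can be a nonzero power of $S_i$, which admits no nonzero map from any object of $\NoQuot_i$ and is therefore invisible inside that subcategory. Concretely, in type $A_2$ with $i=1$, take $B'=S_2$ and $B=\Sigma_1(S_2)$, the brick of dimension $\alpha_1+\alpha_2$ lying in $\NoSub_1$ (so $\beta'=\alpha_2$, $\beta=\alpha_1+\alpha_2$, and the covering hypothesis holds). The submodule $S_2\subset B$ is sent by $\Sigma_1^{-1}$ to the other brick of dimension $\alpha_1+\alpha_2$, whose induced map to $B'=S_2$ is a non-injective projection; and $B'$ has no submodule of dimension $s_1\alpha_2=\alpha_1+\alpha_2$ at all. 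So the asserted dimension bookkeeping is false, and with it your claimed equivalence of the two semistability conditions, which rests entirely on it.

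What is missing is precisely the computation the paper does: given a $\theta$-destabilizing submodule $K\subseteq B$, apply $\Sigma_i^{-1}$ to $K\into B$, observe that the kernel of $\Sigma_i^{-1}(K)\to B'$ is of the form $S_i^{\oplus k}$, and pass to the \emph{image} $I$, of dimension $s_i(\dim K)-k\alpha_i$, which lies in $\NoQuot_i$; then $\langle s_i\theta,\dim I\rangle=\langle\theta,\dim K\rangle+k\langle\theta,\alpha_i\rangle<0$, the sign hypothesis $\langle\theta,\alpha_i\rangle\le0$ being what makes the correction term harmless (the paper handles the reverse implication dually, via quotients of $B'$, where the correction again has the favorable sign). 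Your scheme can be repaired: for the direction from $\NoQuot_i$-submodules of $B'$ to submodules of $B$, the map $\Sigma_i(N')\to B$ coming from an inclusion $N'\into B'$ of objects of $\NoQuot_i$ \emph{is} injective (it is the given inclusion at every vertex $j\ne i$ and an inclusion of kernels at vertex $i$), so $\Sigma_i(N')$ is a genuine submodule of $B$ of dimension $s_i\dim N'$; combined with your $\tilde N'$ argument, which is correct, this yields one implication. But the other implication requires the image-plus-$S_i^{\oplus k}$-kernel analysis above, which your proposal never carries out. So: sign reduction and the $\tilde N'$ construction fine; the central bijection claim is wrong, and repairing it is the real content of the paper's proof.
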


We will cover the case where $\pm$ is $+$; the other sign is similar.
This comes down to proving the following two results:

\begin{prop}\label{StabOfAShardModuleContainment}
Let $\beta \succR s_i \beta'$ be a cover in the root poset. Let $B'$ be a brick of dimension $\beta'$ in $\NoQuot_i$ and let $B = \Sigma_i B'$. Then
\[
\Stab(B) \subset \{\theta : \langle\theta, \alpha_i\rangle \leq 0\}
\] 
\end{prop}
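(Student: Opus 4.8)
The plan is to unwind the definitions of $\Sigma_i$ and of $\Stab$, and to relate subrepresentations of $B = \Sigma_i B'$ with subrepresentations of $B'$ via the natural map $B' \to B$ (recall $B' \in \NoQuot_i$, so $\Sigma_i B'$ is defined and there is a canonical morphism $B' \to \Sigma_i B'$). Since $\Sigma_i$ only changes the space at vertex $i$ — replacing $B'_i$ with $\Ker(B'_{i,\mathrm{in}}) \subseteq B'_{\partial i}$ — the combinatorial skeleton of the module is essentially preserved away from $i$, so a subrepresentation $N \subseteq B$ should restrict to something close to a subrepresentation of $B'$, differing only in the dimension at vertex $i$. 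First I would fix $\theta \in \Stab(B)$; since $\Stab(B) \subseteq \beta^\perp$ we have $\langle \theta, \beta\rangle = 0$, so it suffices to produce a single subrepresentation $N \subseteq B$ with $\langle \theta, \dim N\rangle \geq 0$ whose dimension vector forces $\langle \theta, \alpha_i\rangle \leq 0$. The natural candidate is the submodule $N$ of $B$ supported away from vertex $i$ — i.e. $N_i = 0$ and $N_j = B_j = B'_j$ for $j \neq i$ — but this is a subrepresentation only if all arrows out of the $j \neq i$ part landing in vertex $i$ vanish on it; that need not hold. Instead I would take $N$ to be the subrepresentation of $B$ \emph{generated} by $\bigoplus_{j \neq i} B_j$, equivalently the submodule with $N_j = B_j$ for all $j \neq i$ and $N_i = \Im(B_{i,\mathrm{in}}|_{N_{\partial i}}) = \Im(B_{i,\mathrm{in}})$ — but by construction of $\Sigma_i$, the map $B_{i,\mathrm{in}} : B_{\partial i} \to B_i = \Ker(B'_{i,\mathrm{in}})$ is \emph{surjective} (that is exactly the factorization in the $\Sigma_i$ diagram, since $B_{\partial i} = B'_{\partial i}$ and $\Im(B'_{i,\mathrm{out}}) \subseteq \Ker(B'_{i,\mathrm{in}})$ and... let me be careful), so $N_i = B_i$ and $N = B$, which is useless.

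The correct submodule to use is the \emph{kernel of the natural map $B \to$ (something)}, or rather: consider the submodule $N$ of $B$ with $N_i = B_i$ and $N_j = 0$ for $j\neq i$. This is a subrepresentation precisely because the out-maps $B_i \to B_j$ for $j \neq i$ are forced to be zero? No — they are not zero in general. The honest approach: let $N \subseteq B$ be the submodule $\Sigma_i S_i$-type piece, i.e. take $N$ to be the image in $B$ of the submodule $S_i^{\oplus m} \subseteq B'$ where... Actually the clean statement is: $B = \Sigma_i B'$ sits in a short exact sequence $0 \to B' \to B \to S_i^{\oplus m} \to 0$ or $0 \to S_i^{\oplus m} \to B' \to B \to 0$ depending on conventions; this is a standard property of Baumann–Kamnitzer reflection functors (the cone of $B' \to \Sigma_i B'$ is a sum of shifted copies of $S_i$). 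I would first establish this exact sequence — or at least extract from the $\Sigma_i$ diagram the submodule $N \subseteq B$ concentrated at vertex $i$ with $\dim N = m\alpha_i$, where $m = \dim_{\kappa(d_i)} B_i - \dim_{\kappa(d_i)} B'_i = \dim_{\kappa(d_i)}\Ker(B'_{i,\mathrm{in}}) - \dim_{\kappa(d_i)}B'_i$, a nonnegative integer since $B'_{i,\mathrm{in}}$ is surjective ($B' \in \NoQuot_i$). The submodule $N$ is the copy of $\Ker(B'_{i,\mathrm{in}}) \supseteq \Ker$ that dies: concretely $N_i$ is the kernel of the surjection $B_i = \Ker(B'_{i,\mathrm{in}}) \twoheadrightarrow B'_i / \Im(B'_{i,\mathrm{out}})$... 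I would sort out the exact module-theoretic description, but the upshot is a submodule $N \hookrightarrow B$ with $\dim N = m\alpha_i$ for some $m \geq 0$, and moreover $m > 0$ unless $B = B'$ at vertex $i$, which happens only in degenerate cases.

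Granting such an $N$ with $\dim N = m\alpha_i$, $m \geq 1$ (the case $m = 0$ meaning $\langle\theta,\alpha_i\rangle$ is unconstrained but then $\dim B = \dim B'$ contradicts $\beta - \beta' \in \RR_{>0}\alpha_i$, so in fact $m \geq 1$ always), semistability of $B$ gives $\langle \theta, \dim N\rangle = m\langle\theta,\alpha_i\rangle \geq 0$, hence $\langle\theta,\alpha_i\rangle \geq 0$. But wait — that is the \emph{wrong sign} for the claimed inclusion $\Stab(B) \subseteq \{\langle\theta,\alpha_i\rangle \leq 0\}$, so I must instead exhibit $N$ as a \emph{quotient} of $B$ with $\dim N = m\alpha_i$: then semistability forces $\langle\theta,\dim N\rangle \leq 0$, i.e. $\langle\theta,\alpha_i\rangle \leq 0$, as desired. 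So the real content is: $S_i^{\oplus m}$ is a quotient of $\Sigma_i B'$ with $m \geq 1$. This is visible from the $\Sigma_i$ construction: $\Sigma_i(M)_i = \Ker(M_{i,\mathrm{in}})$, and the natural map $M \to \Sigma_i(M)$ — I would instead look at the structure of $\Sigma_i(M)$ directly: the out-maps from vertex $i$ of $\Sigma_i(M)$ are components of $\Sigma_i(M)_{i,\mathrm{out}} : \Ker(M_{i,\mathrm{in}}) \hookrightarrow M_{\partial i}$, which is the inclusion, hence injective but its image need not be a direct summand; the part of $\Sigma_i(M)_i$ not generated by in-maps and not seen by out-maps forms the $S_i^{\oplus m}$ quotient. \textbf{The main obstacle} is precisely pinning down this quotient $S_i^{\oplus m}$ cleanly and verifying $m \geq 1$ — equivalently, invoking or re-deriving the exact sequence $B' \to \Sigma_i B' \to S_i^{\oplus m} \to 0$ from Baumann–Kamnitzer/Külshammer in the symmetrizable setting, and checking $m = \langle \alpha_i^\vee, \beta'\rangle = \dim_{\kappa(d_i)}(\beta - \beta')/d_i > 0$, which holds since $\beta - \beta' \in \RR_{>0}\alpha_i$. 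Everything else — that $\theta \in \Stab(B)$ lies in $\beta^\perp$, and that $m\langle\theta,\alpha_i\rangle \leq 0 \Rightarrow \langle\theta,\alpha_i\rangle \leq 0$ — is immediate.
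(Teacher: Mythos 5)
Your final plan is correct, but it takes a genuinely different route from the paper. The paper's proof is a two-line parity argument: since $\beta = s_i\beta'$ with $\beta - \beta' \in \RR_{>0}\alpha_i$, one has $(\alpha_i, \beta) > 0$, so Proposition~\ref{alphaCuts} (a consequence of the Euler-form identity of Theorem~\ref{CB Identity}) says $B$ lies in at most one of $\NoSub_i$ and $\NoQuot_i$; as $B$ is in the image of $\Sigma_i$ it lies in $\NoSub_i$, hence not in $\NoQuot_i$, so $S_i$ is a quotient of $B$ and every $\theta \in \Stab(B)$ satisfies $\langle\theta,\alpha_i\rangle \leq 0$. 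You reach the same key intermediate statement ($S_i$ is a quotient of $B$) structurally rather than homologically: the canonical map $B' \to \Sigma_i B'$ has kernel and cokernel of the form $S_i^{\oplus c}$ and $S_i^{\oplus m}$ (a fact about the Baumann--Kamnitzer functors which the paper itself recalls and uses in the proof of Lemma~\ref{KeyStabilityReflectionLemma}), and a dimension count shows $m \geq 1$. Your count needs two small repairs: the correct identity is $m - c = (\alpha_i^{\vee}, \beta) = -(\alpha_i^{\vee},\beta') > 0$ (your formula $m = \langle\alpha_i^{\vee},\beta'\rangle$ has the wrong sign and ignores the kernel term, which is nonzero exactly when $B' \notin \NoSub_i$); alternatively, $m = \dim_{\kappa(d_i)}\Ker(B'_{i,\mathrm{in}}) - \dim_{\kappa(d_i)}\Im(B'_{i,\mathrm{out}}) \geq \dim_{\kappa(d_i)} B'_{\partial i} - 2\dim_{\kappa(d_i)} B'_i = -(\alpha_i^{\vee},\beta') > 0$, using surjectivity of $B'_{i,\mathrm{in}}$. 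With that fixed, your argument is complete; it trades the paper's reliance on Theorem~\ref{CB Identity} for reliance on the kernel/cokernel description of the reflection functors, which is arguably more self-contained within the $\Sigma_i$ construction but longer to set up, while the paper's route is shorter given that Proposition~\ref{alphaCuts} is already in place. (Your early detours through submodules of $B$ are harmless, since you correctly recognize that the needed destabilizing object must be a quotient, not a submodule, to get the sign $\leq 0$.)
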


\begin{proof}
Because $\beta = s_i\beta'$, we also know $\beta' = s_i\beta = \beta - (\alpha_i^\vee, \beta)\alpha_i$. By assumption, $(\alpha_i^\vee, \beta) > 0$, and so $(\alpha_i, \beta) > 0$. By Proposition~\ref{alphaCuts}, $B$ is in at most one of $\NoSub_i$ and $\NoQuot_i$. Since $B$ is an output of $\Sigma_i$, it lies in $\NoSub_i$, so it cannot be in $\NoQuot_i$. Thus $B$ has $S_i$ as a quotient, and so for any $\theta$ such that $B$ is $\theta$-semistable, $\langle\theta, \alpha_i\rangle \leq 0$. 
\end{proof}

\begin{lem} \label{KeyStabilityReflectionLemma}
Let $\beta$ and $\beta'$ be positive roots with $\beta = s_i\beta'$. Let $B'$ be a brick of dimension $\beta'$ in $\NoQuot_i$, and let $B = \Sigma_i B'$. Then
\[
\sigma^+_i\Stab(B') := s_i(\Stab(B')\cap \{\theta : \langle\theta, \alpha_i\rangle\geq 0\}) = \Stab(B)\cap \{\theta : \langle\theta, \alpha_i\rangle \leq 0\}
\]
\end{lem}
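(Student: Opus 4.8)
The plan is to reduce the asserted set equality to a semistability comparison and then transport submodules and quotient modules across the category equivalence $\Sigma_i$. Since $s_i$ is a linear involution of $V^{\ast}$ interchanging the two closed half-spaces $\{\theta:\langle\theta,\alpha_i\rangle\geq 0\}$ and $\{\theta:\langle\theta,\alpha_i\rangle\leq 0\}$, applying $s_i$ to both sides shows the claimed identity is equivalent to
\[ \Stab(B')\cap\{\theta:\langle\theta,\alpha_i\rangle\geq 0\} = s_i\Stab(B)\cap\{\theta:\langle\theta,\alpha_i\rangle\geq 0\}. \]
So I would fix $\theta$ with $\langle\theta,\alpha_i\rangle\geq 0$ and prove $\theta\in\Stab(B')$ if and only if $s_i\theta\in\Stab(B)$. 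Because $\dim B=s_i\dim B'$ (Proposition~\ref{ReflectionsReflect}) and the pairing is $W$-invariant, $\langle s_i\theta,\dim B\rangle=\langle\theta,\dim B'\rangle$, so the two codimension-one conditions $\langle\theta,\dim B'\rangle=0$ and $\langle s_i\theta,\dim B\rangle=0$ coincide, and it remains to match the inequalities ranging over submodules.

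The structural facts I would record first: since $B'\in\NoQuot_i$ and $\NoQuot_i$ is closed under quotients, every quotient module of $B'$ again lies in $\NoQuot_i$; dually, every submodule of $B=\Sigma_iB'\in\NoSub_i$ lies in $\NoSub_i$. Since $S_i$ is a brick with $\Ext^1_\Lambda(S_i,S_i)=0$ (Theorem~\ref{CB Identity}), both $(\NoQuot_i,\operatorname{add}S_i)$ and $(\operatorname{add}S_i,\NoSub_i)$ are torsion pairs, where $\operatorname{add}S_i$ denotes direct sums of copies of $S_i$. Consequently a morphism in the (non-abelian) subcategory $\NoSub_i$ is a monomorphism exactly when it is injective in $\Mod(\Lambda)$, and is an epimorphism exactly when its $\Mod(\Lambda)$-cokernel lies in $\operatorname{add}S_i$; dually for $\NoQuot_i$. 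Since $\Sigma_i$ and $\Sigma_i^{-1}$ are mutually inverse equivalences between $\NoQuot_i$ and $\NoSub_i$, they preserve monos and epis, so: $\Sigma_i$ sends a quotient map $B'\twoheadrightarrow Q$ to a morphism $B\to\Sigma_iQ$ whose cokernel lies in $\operatorname{add}S_i$, and $\Sigma_i^{-1}$ sends a submodule inclusion $V\hookrightarrow B$ to a morphism $\Sigma_i^{-1}V\to B'$ whose kernel lies in $\operatorname{add}S_i$. Together with $\dim\Sigma_i^{\pm1}(M)=s_i(\dim M)$, this is all the bookkeeping the argument needs.

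For the implication $\theta\in\Stab(B')\Rightarrow s_i\theta\in\Stab(B)$: take a submodule $V\subseteq B$; it lies in $\NoSub_i$, the kernel of $\Sigma_i^{-1}V\to B'$ has dimension vector $m\alpha_i$ for some $m\geq 0$, and its image $W\subseteq B'$ satisfies $\dim W=s_i\dim V-m\alpha_i$; hence $\langle s_i\theta,\dim V\rangle=\langle\theta,s_i\dim V\rangle=\langle\theta,\dim W\rangle+m\langle\theta,\alpha_i\rangle\geq 0$, using that $W$ is a submodule of the $\theta$-semistable $B'$ and $\langle\theta,\alpha_i\rangle\geq 0$. For the converse $s_i\theta\in\Stab(B)\Rightarrow\theta\in\Stab(B')$: take a submodule $U\subseteq B'$, put $Q=B'/U\in\NoQuot_i$ and $V=\Ker(B\to\Sigma_iQ)\subseteq B$; then $\dim V=s_i\dim U+m\alpha_i$ with $m\geq 0$ the $\operatorname{add}S_i$-length of $\CoKer(B\to\Sigma_iQ)$, so $\dim U=s_i\dim V+m\alpha_i$ and $\langle\theta,\dim U\rangle=\langle s_i\theta,\dim V\rangle+m\langle\theta,\alpha_i\rangle\geq 0$, using that $V$ is a submodule of the $s_i\theta$-semistable $B$. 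Combined with the already-matched codimension-one conditions, this gives both inclusions.

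The main obstacle is the middle step: correctly describing how $\Sigma_i$ and $\Sigma_i^{-1}$ interact with monomorphisms and epimorphisms in the subcategories $\NoQuot_i$ and $\NoSub_i$, and observing that the resulting error terms are always of the form $m\alpha_i$ with a sign that is killed favourably by the hypothesis $\langle\theta,\alpha_i\rangle\geq 0$. Once that asymmetry is pinned down, everything else is elementary manipulation of dimension vectors and of the $W$-invariant pairing.
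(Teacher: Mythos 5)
Your argument is correct and is essentially the proof in the paper: after the same reduction to comparing semistability of $B$ at $s_i\theta$ with that of $B'$ at $\theta$ on the half-space $\langle\theta,\alpha_i\rangle\geq 0$, you transport submodules and quotient modules across $\Sigma_i^{\pm 1}$ and absorb error terms of dimension $m\alpha_i$ using the sign of $\langle\theta,\alpha_i\rangle$, exactly as the paper's two destabilizing-module transfers do. The only difference is in how the error-term claim is justified: you obtain the $\operatorname{add}S_i$ kernels and cokernels abstractly from the torsion pairs $(\NoQuot_i,\operatorname{add}S_i)$ and $(\operatorname{add}S_i,\NoSub_i)$ together with preservation of subcategory monos and epis under the equivalence, whereas the paper gets the same statements by a short diagram chase with the canonical morphisms $\Sigma_i^{-1}X\to X$ and $X\to\Sigma_i X$, whose kernels and cokernels are powers of $S_i$.
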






Given a module $M$, a submodule $K$ and a weight $\theta \in (\dim M)^{\perp}$, we say that $K$ is \newword{$\theta$-destabilizing} if $\langle \theta, \dim K \rangle < 0$. 
So $M$ is $\theta$-semistable if and only if it has no $\theta$-destabilizing submodule. 
Similarly, we say that a quotient module $Q$ of $M$ is \newword{$\theta$-destabilizing} if $\langle \theta, \dim Q \rangle >0$, and it is likewise true that $M$ is $\theta$-semistable if and only if it has no $\theta$-destabilizing quotient.


\begin{proof}[Proof of Lemma \ref{KeyStabilityReflectionLemma}]




Let $\theta$ be any weight with $\langle \theta, \alpha_i\rangle \leq 0$ and $\langle \theta, \beta\rangle = 0$, and let $\theta' = s_i\theta$. Then the desired conclusion is that the module $B$ is $\theta$-semistable if and only if $B'$ is $\theta'$-semistable.

First, suppose that $B$ has a $\theta$-destabilizing submodule $K$, so $\langle \theta, \dim K\rangle < 0$.
We will show that $B'$ has a $\theta'$-destabilizing submodule. 
Since $B \in \NoSub_i$, we also have $K \in \NoSub_i$. So we may apply the functor $\Sigma_i^-$ to the inclusion $K \into B$, producing a new module $K' := \Sigma_i^{-1} K$ and a map $K' \to B'$. 
Recall that, for any $X \in \NoSub_i$, we have a natural transformation $\Sigma_i^{-1}(X) \to X$ with kernel and cokernel of the form $S_i^{\oplus a}$. Define $A$ to be the kernel of $K' \to B'$ and let $I$ be the image. So we have a commutative diagram with exact rows:
\[ \begin{tikzcd}
&& A \arrow[hook]{d} \arrow[hook, dashed]{dl} &&& \\
0 \arrow{r} & S_i^{\oplus a} \arrow{r} & K' \arrow{r} \arrow[two heads]{d}  & K \arrow{r} \arrow[hook]{dd} & S_i^{\oplus b} \arrow{r} & 0 \\
&& I \arrow[hook]{d} &&& \\
0 \arrow{r} & S_i^{\oplus c} \arrow{r} & B' \arrow{r} & B \arrow{r} & S_i^{\oplus d} \arrow{r} & 0 
\end{tikzcd} \]

Now, the composite $A \into K' \to B' \to B$ is $0$, since $A$ is defined as the kernel of $K' \to B'$, so the composite $A \to K' \to K \into B$ is likewise zero. As $K$ injects into $B$, this means the composite $A \to K' \to K$ is $0$, so the map $A \to K'$ factors through the kernel of $K \to K'$, as shown by the dashed arrow.  So $A$ is of the form $S_i^k$ for some $0 \leq k \leq a$. We deduce that $\dim I = \dim K' - k \alpha_i = s_i (\dim K) - k \alpha_i$. 
Then
\[ \langle \theta', \dim I \rangle = \langle s_i(\theta), s_i(\dim K) - k \alpha_i \rangle = \langle \theta, \dim K \rangle + k \langle \theta, \alpha_i \rangle < 0 . \]
So $I$ is $\theta'$-destabilizing.

We now dualize the argument to show the converse half. 
Suppose that $Q'$ is a $\theta'$-destabilizing quotient of $B'$, so $\langle \theta', \dim Q' \rangle > 0$.
Since $B' \in \NoQuot_i$, we also have $Q' \in \NoQuot_i$ and may apply $\Sigma_i$ to the surjection $B' \onto Q'$, producing a new module $Q := \Sigma_i(Q)$ and a map $B \to Q$. Let $I$ be the image of this map and let $C$ be the cokernel.

\[ \begin{tikzcd}
0 \arrow{r} & S_i^{\oplus c} \arrow{r} & B' \arrow{r} \arrow[two heads]{dd} & B \arrow{r} \arrow[two heads]{d}  & S_i^{\oplus d} \arrow{r} & 0 \\
&&& I \arrow[hook]{d} && \\
0 \arrow{r} & S_i^{\oplus e} \arrow{r} & Q' \arrow{r} & Q \arrow{r}  \arrow[two heads]{d}  & S_i^{\oplus f} \arrow{r} \arrow[dashed, two heads]{dl} & 0  \\
&&& C && \\
\end{tikzcd} \]
%
The composite $B' \to B \to Q \to C$ is $0$, so $B' \onto Q' \to Q \to C$ is zero.
Since $B'$ surjects on $Q'$, this means that $Q' \to Q \to C$ is zero, so $C$ is a quotient of the cokernel $S_i^{\oplus f}$, as shown by the dashed arrow.

Thus, $C$ is of the form $S_i^{\oplus \ell}$ for some $0 \leq \ell \leq f$. We compute that $\dim I = \dim Q - \ell \alpha_i$.
So 
\[ \langle \theta, \dim I \rangle = \langle \theta, s_i (\dim Q') - \ell \alpha_i \rangle = \langle \theta', \dim Q' \rangle - \ell \langle \theta, \alpha_i \rangle >0\]
and $I$ is $\theta$-destabilizing, as desired. \end{proof}

\begin{proof}[Proof of Proposition \ref{StabOfAShardModuleIndStep}]
By Proposition \ref{StabOfAShardModuleContainment}, under the assumptions of Proposition \ref{StabOfAShardModuleIndStep}, the right side of the equation in Lemma \ref{KeyStabilityReflectionLemma} is just $\Stab(B)$.
\end{proof}



\subsection{Proof of Theorem~\ref{StabsAreShards}} \label{ProofOfStabsAreShards}

We now fill in the final steps of the proof of Theorem~\ref{StabsAreShards}, which we restate for the reader's convenience.
\begin{Theorem}\label{StabsAreShards}
The stability domains of shard modules are precisely the shards, and each shard is the stability domain of precisely one isomorphism class of shard modules.
\end{Theorem}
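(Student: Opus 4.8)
The plan is to combine the two recursive descriptions—Theorem~\ref{ShardRecursionTheorem} for shards and Theorem~\ref{ShardModuleRecursion} for shard modules—both of which are organized along a positive expression $s_{i_r}\cdots s_{i_1}\alpha_j$ for a fixed positive root $\beta$. By Corollary~\ref{ShardRecursion}, every positive root admits such an expression, so it suffices to fix $\beta$, fix a positive expression, and compare the two recursions wall-by-wall inside $\beta^\perp$. Theorem~\ref{ShardRecursionTheorem} says the shards of $\beta^\perp$ are exactly the cones $\shard_{i_r}^{\pm_r}\cdots\shard_{i_1}^{\pm_1}(\alpha_j^\perp)$ of dimension $n-1$, over all independent sign choices; Theorem~\ref{StabOfAShardModule} says that whenever $\Sigma_{i_r}^{\pm_r}\cdots\Sigma_{i_1}^{\pm_1}S_j$ is well-defined, its stability domain is precisely $\shard_{i_r}^{\pm_r}\cdots\shard_{i_1}^{\pm_1}(\alpha_j^\perp)$. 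So the only gap is the discrepancy flagged at the end of the introduction: the module recursion carries the extra well-definedness constraint on the reflection functors, whereas the shard recursion allows all sign sequences freely. I must show these two constraints cut out the same set of $(n-1)$-dimensional cones, and that distinct well-defined sign sequences producing $(n-1)$-dimensional cones give non-isomorphic modules with distinct stability domains.

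The key step, which I expect to be the main obstacle, is the following claim: if a sign sequence $(\pm_1,\dots,\pm_r)$ yields a cone $K := \shard_{i_r}^{\pm_r}\cdots\shard_{i_1}^{\pm_1}(\alpha_j^\perp)$ of full dimension $n-1$, then the module expression $\Sigma_{i_r}^{\pm_r}\cdots\Sigma_{i_1}^{\pm_1}S_j$ is automatically well-defined. I would prove this by induction on $r$, tracking the partial products in parallel. Suppose inductively that $K' := \shard_{i_{r-1}}^{\pm_{r-1}}\cdots\shard_{i_1}^{\pm_1}(\alpha_j^\perp)$ is a shard of $(\beta')^\perp$, equal to $\Stab(B')$ for the well-defined brick $B' := \Sigma_{i_{r-1}}^{\pm_{r-1}}\cdots\Sigma_{i_1}^{\pm_1}S_j$ of dimension $\beta'$, with $\beta = s_i\beta'$ and $\beta \succR \beta'$ a cover (here $i=i_r$). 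If $\dim K = n-1$, I need $B'\in\NoQuot_i$ when $\pm_r = +$, or $B'\in\NoSub_i$ when $\pm_r = -$. The point is that since $\beta = s_i\beta'$ with $(\alpha_i^\vee,\beta)>0$, Proposition~\ref{alphaCuts} says $B'$ lies in \emph{at most} one of $\NoSub_i,\NoQuot_i$; and Proposition~\ref{CanReflectBricks} (using $B'\not\cong S_i$, which holds because $\dim B' = \beta'\ne\alpha_i$ as $\beta'\prec\beta$ and $\beta\ne\alpha_i$) says $B'$ lies in \emph{at least} one of them. So $B'$ lies in exactly one, say $\NoQuot_i$, and then only $\Sigma_i(B') = \Sigma_i^+(B')$ is defined. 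I then argue that applying $\shard_i^-$ to $K' = \Stab(B')$ yields a cone of dimension less than $n-1$: by the ``other formula'' $\shard_i^-(K') = s_i(K')\cap\{\langle x,\alpha_i\rangle\ge 0\}$, and one checks that $\Stab(B') = K'$ lies on the side $\langle\theta,\alpha_i\rangle\ge 0$ (equivalently $s_i K'$ lies on $\langle x,\alpha_i\rangle\le 0$) whenever $B'\in\NoQuot_i$—because $B'\notin\NoSub_i$ forces $S_i\hookrightarrow B'$, hence $\langle\theta,\dim S_i\rangle = \langle\theta,\alpha_i\rangle\ge 0$ for any $\theta\in\Stab(B')$. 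Thus $\shard_i^-(K') \subseteq \beta^\perp\cap\alpha_i^\perp$, which has dimension $n-2$. Symmetrically for the $\NoSub_i$ case. Hence a full-dimensional cone $K$ can only arise from the sign $\pm_r$ that matches the side $B'$ is on, which is exactly the sign making $\Sigma_i^{\pm_r}B'$ defined. This closes the induction and shows the module recursion and the shard recursion pick out the same cones.

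With the claim in hand, the theorem follows quickly. Given a shard $\sigma$ of $\beta^\perp$, Theorem~\ref{ShardRecursionTheorem} provides signs $(\pm_1,\dots,\pm_r)$ with $\sigma = \shard_{i_r}^{\pm_r}\cdots\shard_{i_1}^{\pm_1}(\alpha_j^\perp)$ of dimension $n-1$; by the claim the module $B := \Sigma_{i_r}^{\pm_r}\cdots\Sigma_{i_1}^{\pm_1}S_j$ is defined, is a real brick of dimension $\beta$ by Theorem~\ref{RealBrickRecursion}, has $\Stab(B) = \sigma$ of dimension $n-1$ by Theorem~\ref{StabOfAShardModule}, hence is a shard module. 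Conversely, if $B$ is a shard module of dimension $\beta$, Theorem~\ref{RealBrickRecursion} writes it as $\Sigma_{i_r}^{\pm_r}\cdots\Sigma_{i_1}^{\pm_1}S_j$ for some well-defined signs, and $\Stab(B)$ equals the corresponding cone, which has dimension $n-1$ precisely because $B$ is a shard module; by Theorem~\ref{ShardRecursionTheorem} this cone is a shard. This establishes that stability domains of shard modules are exactly the shards, and that the assignment $B\mapsto\Stab(B)$ is surjective onto shards. For injectivity (``precisely one isomorphism class''), suppose two shard modules $B_1, B_2$ of dimension $\beta$ have $\Stab(B_1) = \Stab(B_2) = \sigma$. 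Running the recursion along a single fixed positive expression for $\beta$: by the claim, at each stage the sign is forced by which of $\NoSub_{i_k}/\NoQuot_{i_k}$ the current partial brick lies in, and the chain of cones $\alpha_j^\perp, \shard_{i_1}^{\pm_1}\alpha_j^\perp,\dots$ is likewise determined by requiring all intermediate cones to have the right dimension with the final cone equal to $\sigma$—in particular the terminal sign is determined by which side of $\alpha_{i_r}^\perp$ the cone $\sigma$ lies on, and descending induction (peeling off $\Sigma_{i_r}^{\pm_r}$, i.e. applying the inverse reflection functor, noting $\shard_{i_r}^{\pm_r}$ restricted to full-dimensional output is invertible via $s_{i_r}$) recovers the same sign sequence for $B_1$ and $B_2$. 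Hence $B_1\cong B_2$. This completes the proof.
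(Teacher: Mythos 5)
Your overall architecture is the same as the paper's: run the two recursions in parallel, show that an $(n-1)$-dimensional cone forces the relevant reflection functor to be defined, and prove uniqueness by peeling off one reflection at a time. However, there is a concrete error at the heart of your key claim. You invoke Proposition~\ref{alphaCuts} to conclude that the partial brick $B'$ lies in \emph{at most} one of $\NoSub_i$, $\NoQuot_i$, citing $(\alpha_i^\vee,\beta)>0$. But Proposition~\ref{alphaCuts} requires $(\alpha_i,\dim B')>0$, and $\dim B'=\beta'$, for which the inequality goes the other way: from $\beta=s_i\beta'$ and $\beta-\beta'\in\RR_{>0}\alpha_i$ one gets $(\alpha_i,\beta')<0$, while it is $(\alpha_i,\beta)$ that is positive --- which is why the paper applies that proposition to $B=\Sigma_i^{\pm}B'$, not to $B'$. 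Moreover the conclusion you draw is genuinely false: when $(\alpha_i,\beta')<0$, Theorem~\ref{CB Identity} only forces $\Ext^1(S_i,B')\neq 0$, and $B'$ may lie in both categories. Example~\ref{D4Example} exhibits exactly this: six of the eight shards of $(\beta')^\perp$ are cut in half by $\alpha_1^\perp$, both halves are shards of $\beta^\perp$, and the corresponding bricks $B'$ have $S_1$ neither as a submodule nor as a quotient, so both $\Sigma_1^{+}B'$ and $\Sigma_1^{-}B'$ are defined. Consequently your assertion that ``the sign is forced by which of $\NoSub_{i_k}/\NoQuot_{i_k}$ the current partial brick lies in'' is wrong.

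The error is localized, and the correct mechanism is already in your text. For the key claim you only need the contrapositive you state in passing: if $\Sigma_i^{+}B'$ (say) is undefined, then $S_i$ is a quotient of $B'$, so every $\theta\in\Stab(B')=K'$ satisfies $\langle\theta,\alpha_i\rangle\leq 0$, whence $\shard_i^{+}(K')\subseteq\beta^\perp\cap\alpha_i^\perp$ has dimension at most $n-2$; and in the case you wrongly excluded, both functors are defined and there is nothing to prove. This is precisely the paper's existence argument. Likewise, in the uniqueness step the sign cannot be read off from module membership; it is determined, as you also note, by which side of $\alpha_{i_r}^\perp$ the $(n-1)$-dimensional cone $\sigma$ lies on (it cannot be contained in $\alpha_{i_r}^\perp$ since $\beta$ is not proportional to $\alpha_{i_r}$), after which $K'$ is recovered as the unique shard of $(\beta')^\perp$ containing the full-dimensional cone $s_{i_r}\sigma$, and the induction proceeds --- essentially the paper's argument, which applies $\Sigma_{i_r}^{\mp}$ directly to an arbitrary brick $B$ with $\Stab(B)=\sigma$. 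With these repairs your proof coincides with the one in the paper.
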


We have already done all the work to show that the stability domain of a shard module is always a shard. 
Let $B$ be a shard module. Then, in particular, $B$ is a real brick so, by Theorem~\ref{RealBrickRecursion}, $B$ is of the form $\Sigma^{\pm_r}_{i_r} \cdots \Sigma^{\pm_2}_{i_2} \Sigma^{\pm_1}_{i_1} S_j$. By Theorem~\ref{StabOfAShardModule}, we have $\Stab(B) = \shard_{i_r}^{\pm_r} \cdots \shard_{i_2}^{\pm_2} \shard_{i_1}^{\pm_1} \left( \alpha_j^{\perp} \right)$ and, by the definition of a shard module, this has dimension $n-1$.  By Theorem~\ref{ShardRecursionTheorem}, this is a shard.

We now address the converse: Starting with a shard $K$, we want to show that there is a shard module $B$ with $\Stab(B) = K$ and that $B$ is unique up to isomorphism.
We prove existence first, and then uniqueness.

Write the shard $K$ as $\shard_{i_r}^{\pm_r} \cdots \shard_{i_2}^{\pm_2} \shard_{i_1}^{\pm_1} \left( \alpha_j^{\perp} \right)$ where $s_{i_r} \cdots s_{i_2} s_{i_1} \alpha_j$ is a positive expression for $\beta$. 
We will show by induction on $r$ that there is a brick with stability domain $K$. For the base case, $r=0$, note $\Stab(S_j) = \alpha_j^{\perp}$.
For the inductive case, abbreviate $\shard_{i_{r-1}}^{\pm_{r-1}} \cdots \shard_{i_2}^{\pm_2} \shard_{i_1}^{\pm_1} \left( \alpha_j^{\perp} \right)$ to $K'$, and let $B'$ be a brick with this stability domain. 
We'll analyze the case that $\pm_r = +$ (the other sign is analogous) and we abbreviate $i_r$ to $i$. 

If we can show that $B' \in \NoQuot_{i}$, then $\Sigma_{i} (B')$ will be a brick with the required properties. So, suppose for the sake of contradiction that $S_{i}$ is a quotient of $B'$. Then $K' = \Stab(B') \subset \{ \phi : \langle \phi, \alpha_{i} \rangle \leq 0 \}$.  But then $\shard_{i}^{+}(K')$ has dimension at most $n-2$, contradicting that $\shard_{i}^{+}(K')$ is supposed to be a shard. This contradiction concludes the induction.

We now prove uniqueness. Once again, we induct on $r$. The base case is obvious: $S_j$ is the only module with dimension $\alpha_j$, so in particular it is the only brick of dimension $\alpha_j$ with stability domain $\alpha_j^{\perp}$.

Let $K = \shard_{i_r}^{\pm_r} \cdots \shard_{i_2}^{\pm_2} \shard_{i_1}^{\pm_1} \left( \alpha_j^{\perp} \right)$ as above. Let $K' = \shard_{i_{r-1}}^{\pm_{r-1}} \cdots \shard_{i_2}^{\pm_2} \shard_{i_1}^{\pm_1} \left( \alpha_j^{\perp} \right)$. By induction, there is a unique brick $B'$ with stability domain $K'$. Let $B$ be a brick with stability domain $K$; we must show that $B \cong \Sigma_{i_r}^{\pm_r}(B')$. Once again, we analyze the case that $\pm_r = +$ and
abbreviate $i_r$ to $i$.

Since $K = \shard_i(K')$, we have $K \subseteq \{ \theta : \langle \theta, \alpha_i \rangle \leq 0 \}$. Because $K$ is $(n-1)$-dimensional, it contains points $\theta$ with $\langle \theta, \alpha_i \rangle < 0$. Thus $B \in \NoSub_i$, so $\Sigma_i^{-1}(B)$ is well defined. Let $B''$ be $\Sigma_i^{-1}(B)$. We know that $B''$ is a real brick, since it is the image of a real brick under $\Sigma_i^{-1}$. Let $K'' = \Stab(B'')$.  We have $B = \Sigma_i(B'')$ and thus  $K = \shard_i(K'')$. Thus, $K''$ must have dimension $n-1$ and must be a shard. There is only one shard $G$ with $\shard_i(G) = K$, namely, the shard $K'$. So $K'' = K'$ and, by induction, $B'' \cong B'$. Then $B \cong \Sigma_i(B')$, as desired.

\section{Shadows of the lattice of torsion classes}
As described in the introduction, part of our motivation for writing this paper is to introduce a complete lattice, extending weak order on $W$, which might be combinatorially more tractable than the lattice of all torsion classes.
To begin with, though, we consider several very general approaches to producing smaller lattices from the lattice of torsion classes, before specializing to the case that motivates us. 

Let $A$ be an algebra, not necessarily finite-dimensional. We are going to work with the category of finite-dimensional $A$-modules on which the Jacobson radical of
$A$ acts nilpotently, for which we will write $\Mod_A$.

Let $\Sc$ be a set of isomorphism classes of indecomposable modules in $A$-mod. We define a \newword{type 1T shadow} to be a subset of $\Sc$ that is of the form $T\cap \Sc$, for a torsion class $T$. We define a \newword{type 1F shadow} to be a subset of $\Sc$ that is of the form $F\cap \Sc$, for a torsion-free class $F$. We refer to $\Sc$ as the \newword{screen set}; in the metaphor which is guiding our nomenclature, we are thinking of $\Sc$ as providing the screen onto which the shadows of torsion or torsion-free classes are being cast. The type 1T shadow was already considered by \cite{GMM}.

We can combine types 1T and 1F by defining a \newword{type 2 shadow} to be an ordered pair $(X,Y)$ such that there is a torsion pair $(T,F)$ with $X=T\cap \Sc$, and $Y=F\cap \Sc$. We order the type 2 shadows by $(X,Y)\leq (X',Y')$ if $X\subseteq X'$ and $Y\supseteq Y'$.

Finally, we define a \newword{type 3 shadow} to be an ordered pair $(X,Y)$ of subsets of $\Sc$ such that $X=\{M\in \Sc \mid \Hom(M,N)=0 \ \forall N\in Y\}$ and
$Y=\{N\in \Sc \mid \Hom(M,N)=0 \ \forall M\in X\}$. Again, we order type 3 shadows by $(X,Y)\leq (X',Y')$ if $X\subseteq X'$ and $Y\supseteq Y'$.

We denote these posets as $\cS^\Sc_{1T}$, $\cS^\Sc_{1F}$, $\cS^\Sc_2$ and $\cS^\Sc_3$. We have maps of posets as shown in Figure~\ref{PosetMaps}.
We now describe the maps and prove that they are maps of posets:

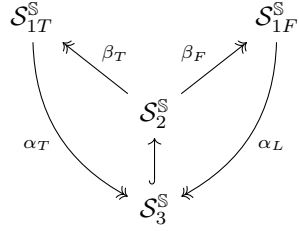
\begin{figure}
 \begin{tikzcd}
\cS^\Sc_{1T} \arrow[bend right=30, two heads]{ddr}[swap]{\alpha_T} && \cS^\Sc_{1F}  \arrow[bend left=30, two heads]{ddl}{\alpha_L}\\
& \cS^\Sc_2 \arrow[two heads]{ul}[swap]{\beta_T}  \arrow[two heads]{ur}{\beta_F} & \\
& \cS^\Sc_3 \arrow[hook]{u} & \\
\end{tikzcd}
\caption{Maps between the posets of shadows} \label{PosetMaps}
\end{figure}

\begin{DP}
  Let $X$ be a type 1T shadow. Let $Y = X^{\perp} \cap \Sc$ and let $\overline{X} = {}^{\perp} \! Y \cap \Sc$.
    Define $\alpha_T(X) = (\overline{X}, Y)$. Then $\alpha_T$ is a map of posets from $\cS^\Sc_{1T}$ to $\cS^\Sc_3$.
\end{DP}

\begin{proof}
  We defined $\overline{X}$ to be $^{\perp} \! Y  \cap \Sc$, so to show that $(\overline{X}, Y) \in \cS_3$, we must simply show that $Y = \overline{X}^{\perp}\! \cap \Sc$. This follows from unwinding definitions. \end{proof}

We define $\alpha_F$ dually, and the analogous result is similar.
We will show later (Corollary~\ref{AlphaSurj}) that $\alpha_T$ and $\alpha_F$  are surjections.

\begin{Proposition}
Every type $3$ shard shadow is also a type $2$ shard shadow. Thus, the identity map $(X,Y) \mapsto (X,Y)$ identifies $\cS^\Sc_3$ with an induced poset of $\cS^\Sc_2$.
\end{Proposition}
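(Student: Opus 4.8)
The plan is to exhibit, for each type 3 shadow $(X,Y)$, an explicit torsion pair witnessing that $(X,Y)$ is a type 2 shadow. Regard the modules in $X\subseteq\Sc$ simply as objects of $\Mod_A$ and set $F:=X^{\perp}$ and $T:={}^{\perp}(X^{\perp})={}^{\perp}F$. Since $C^{\perp}$ is always a torsion-free class, ${}^{\perp}C$ is always a torsion class, and $\torsion\mapsto\torsion^{\perp}$, $\free\mapsto{}^{\perp}\free$ are mutually inverse bijections, the pair $(T,F)$ is automatically a torsion pair. So the entire content is to verify $Y=F\cap\Sc$ and $X=T\cap\Sc$.

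The first equality is immediate by unwinding definitions: $F\cap\Sc=\{N\in\Sc:\Hom(M,N)=0\text{ for all }M\in X\}$, which is exactly the second defining equation of a type 3 shadow. For $X=T\cap\Sc$ I would prove both inclusions. If $M\in X$, then $M\in\Sc$ and, by the very definition of $X^{\perp}$, we have $\Hom(M,N)=0$ for every $N\in X^{\perp}=F$; hence $M\in{}^{\perp}F=T$, so $X\subseteq T\cap\Sc$. Conversely, $Y=F\cap\Sc\subseteq F$, so any $M\in T\cap\Sc=\{M\in\Sc:\Hom(M,N)=0\ \forall N\in F\}$ in particular satisfies $\Hom(M,N)=0$ for all $N\in Y$, and hence lies in $X$ by the first defining equation of a type 3 shadow; thus $T\cap\Sc\subseteq X$. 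This shows $(X,Y)$ is a type 2 shadow.

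The remaining assertion, that the identity map realizes $\cS^{\Sc}_3$ as an induced subposet of $\cS^{\Sc}_2$, is formal: both posets carry the same order, namely $(X,Y)\le(X',Y')$ if and only if $X\subseteq X'$ and $Y\supseteq Y'$, so the inclusion is simultaneously order-preserving and order-reflecting.

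I do not expect a genuine obstacle here. The only point requiring a word of care is that $\Sc$ consists of indecomposables while $X$ is merely a subset of $\Sc$, so ``$X^{\perp}$'' must be read as the perpendicular taken with respect to all modules appearing in $X$; but since $\Hom(M_1\oplus M_2,N)=\Hom(M_1,N)\oplus\Hom(M_2,N)$, enlarging $X$ to the full additive subcategory it generates does not change $X^{\perp}$, so this is harmless.
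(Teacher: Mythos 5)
Your argument is correct and is essentially the same as the paper's: take $F = X^{\perp}$, $T = {}^{\perp}F$, observe $Y = F\cap\Sc$ by definition, and prove $X = T\cap\Sc$ by the two inclusions $X \subseteq {}^{\perp}(X^{\perp})\cap\Sc$ and ${}^{\perp}F\cap\Sc \subseteq {}^{\perp}Y\cap\Sc = X$ (using $Y \subseteq F$), with the order statement being formal since both posets carry the same order. No gaps; the closing remark about additive closures is harmless but not needed.
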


\begin{proof}
Let $(X,Y)$ be a type $3$ shard shadow. Define $F = X^{\perp}$ and $T = {}^{\perp}F$, so $(T,F)$ is a torsion pair. We must show that $X = T \cap \Sc$ and $Y = F \cap \Sc$. The definition of a type 3 shard shadow is that $Y = X^{\perp}\! \cap \Sc$, so we have $Y = F \cap \Sc$. 
It remains to check that $X=T \cap \Sc$; we put $X'= T \cap \Sc$. By definition, $X' = {\big(} {}^{\perp}   F  {\big)} \cap \Sc = {\big(} {}^{\perp} (X^{\perp}) {\big)} \cap \Sc$, which clearly contains $X$. But, on the other hand, $F \supset Y$ so ${\big(} {}^{\perp}   F {\big)} \cap \Sc \subseteq {\big(} {}^{\perp}  Y \cap \Sc {\big)} \cap \Sc$  and the right hand side is $X$ by the definition of a Type $3$ shard shadow. So we have shown that $X \subseteq X'$ and $X' \subseteq X$, and we have verified that $X=X'$ as desired.

We have shown that the identity map is an injection from $\cS_3$ to $\cS_2$, and the partial orders on the two sets are defined in the same way, so the proposition is proven.
\end{proof}

\begin{DP}
Let $\beta_T$ and $\beta_F$ be the projection maps sending $(X, Y) \in \cS_2$ to $X$ and to $Y$ respectively. These maps are surjections of posets.
\end{DP}

\begin{proof}
This is immediate from the definitions.
\end{proof}

\begin{prop}
For $(X,Y) \in \cS^\Sc_3$, we have the relations $\alpha_T(\beta_T((X,Y))) = (X,Y)$ and  $\alpha_F(\beta_F((X,Y))) = (X,Y)$.
\end{prop}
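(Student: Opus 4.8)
The plan is to prove both equalities by directly unwinding the definitions of $\alpha_T$, $\beta_T$ (and their $F$-analogues) together with the two defining conditions of a type 3 shadow. The first point to record is that, by the preceding proposition identifying $\cS^\Sc_3$ with an induced subposet of $\cS^\Sc_2$, a type 3 shadow $(X,Y)$ is in particular a type 2 shadow, so $\beta_T$ is defined on it and $\beta_T((X,Y)) = X$; and since $(X,Y) \in \cS^\Sc_2$, the set $X$ equals $T \cap \Sc$ for some torsion class $T$, hence is a type 1T shadow, so $\alpha_T(X)$ is defined. This is the only place where one must be a little careful, since $\alpha_T$ and $\beta_T$ are a priori only defined on type 1T and type 2 shadows respectively; invoking the previous proposition makes the composite $\alpha_T \circ \beta_T$ meaningful on all of $\cS^\Sc_3$.

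Next I would just compute. By definition of $\alpha_T$, we have $\alpha_T(X) = (\overline{X}, Y')$ where $Y' = X^{\perp} \cap \Sc$ and $\overline{X} = {}^{\perp} Y' \cap \Sc$. Unwinding the definition of $C^{\perp}$, the set $X^{\perp} \cap \Sc$ is exactly $\{N \in \Sc : \Hom(M, N) = 0 \ \forall M \in X\}$, which is $Y$ by the second defining condition of a type 3 shadow; hence $Y' = Y$. Similarly ${}^{\perp} Y \cap \Sc = \{M \in \Sc : \Hom(M, N) = 0 \ \forall N \in Y\}$, which is $X$ by the first defining condition of a type 3 shadow; hence $\overline{X} = X$. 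Therefore $\alpha_T(\beta_T((X,Y))) = \alpha_T(X) = (X, Y)$. Note that the two equalities $Y' = Y$ and $\overline{X} = X$ use the two separate conditions in the definition of a type 3 shadow; neither alone suffices.

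The $F$-statement is proved in the same way with all arrows reversed: $\beta_F((X,Y)) = Y$, which is a type 1F shadow, and (dually to $\alpha_T$) $\alpha_F(Y) = \bigl({}^{\perp} Y \cap \Sc,\ ({}^{\perp} Y \cap \Sc)^{\perp} \cap \Sc\bigr)$. The first coordinate is $\{M \in \Sc : \Hom(M, N) = 0 \ \forall N \in Y\} = X$ by the first type 3 condition, and then the second coordinate is $X^{\perp} \cap \Sc = Y$ by the second type 3 condition. Hence $\alpha_F(\beta_F((X,Y))) = (X, Y)$. There is no genuine obstacle in this argument: the entire content is bookkeeping with $(-)^{\perp}$ and ${}^{\perp}(-)$, and the only subtlety is the one noted above about the domains of $\alpha_T$ and $\beta_T$.
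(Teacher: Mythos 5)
Your proof is correct and follows essentially the same route as the paper's: identify $\beta_T((X,Y))=X$, unwind the definition of $\alpha_T$, and use the two defining conditions of a type 3 shadow to recover $Y$ and then $X$ (with the $F$-case handled dually). The only difference is that you spell out the domain bookkeeping (that $X$ is indeed a type 1T shadow) and write out the $F$-half explicitly, which the paper leaves as "parallel"; this is fine and adds nothing that needs changing.
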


\begin{proof}
We check the first claim; the second is parallel. We have $\beta_T((X,Y)) = X$. Let $\alpha_T(X) = (X', Y')$. The definition of $\alpha_T$ gives $Y' = X^{\perp} \cap \Sc$ and the definition of a type 3 shard shadow gives $Y = X^{\perp} \cap \Sc$, so $Y = Y'$. Also, the definition of $\alpha_T$ gives $X' = {}^{\perp}(Y') \cap \Sc$ and the definition of a type 3 shard shadow gives $X = {}^{\perp}(Y) \cap \Sc$. So the equality $Y = Y'$ implies $X = X'$.
\end{proof}

\begin{cor} \label{AlphaSurj}
The maps $\alpha_T$ and $\alpha_F$ are surjections; the restrictions of $\beta_T$ and $\beta_F$ to $\cS^\Sc_3$ (sitting inside $\cS^\Sc_2$) are injections.
\end{cor}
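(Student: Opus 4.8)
The plan is to read everything off the preceding proposition, which says exactly that $\alpha_T \circ \beta_T$ and $\alpha_F \circ \beta_F$ restrict to the identity map on $\cS^\Sc_3$ (using the identification of $\cS^\Sc_3$ with an induced subposet of $\cS^\Sc_2$, so that $\beta_T$ and $\beta_F$ may be applied to its elements). From this, surjectivity of $\alpha_T$ is immediate: given any $(X,Y) \in \cS^\Sc_3$, the proposition gives $(X,Y) = \alpha_T\big(\beta_T((X,Y))\big)$, exhibiting $(X,Y)$ as a value of $\alpha_T$; hence $\alpha_T$ maps onto $\cS^\Sc_3$, and the identical argument handles $\alpha_F$. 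Dually, injectivity of the restriction of $\beta_T$ to $\cS^\Sc_3$ follows because this restriction has $\alpha_T$ as a left inverse: if $\beta_T((X,Y)) = \beta_T((X',Y'))$ with both pairs in $\cS^\Sc_3$, apply $\alpha_T$ and invoke the proposition to conclude $(X,Y) = (X',Y')$; likewise for the restriction of $\beta_F$.

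There is no genuine obstacle here: the mathematical content lives entirely in the previous proposition and its dual, and the deduction is the standard fact that a map with a right inverse is onto while a map with a left inverse is one-to-one. The only points requiring care are formal ones, namely making sure the composites are typed correctly (that $\beta_T$ is being applied to the copy of $\cS^\Sc_3$ sitting inside $\cS^\Sc_2$, and that $\alpha_T$ genuinely takes values in $\cS^\Sc_3$ rather than merely in $\cS^\Sc_2$), and noting that ``surjection'' and ``injection'' are here being asserted for the underlying maps of sets, which is all that is needed since $\alpha_T$, $\alpha_F$, $\beta_T$, $\beta_F$ are already known to be maps of posets.
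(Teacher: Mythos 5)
Your argument is correct and is exactly the paper's: the previous proposition says $\alpha_T$ (resp.\ $\alpha_F$) is a left inverse of $\beta_T$ (resp.\ $\beta_F$) restricted to $\cS^\Sc_3$, and surjectivity/injectivity follow formally, just as you spell out.
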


\begin{proof}
This is immediate, since we have just shown that $\alpha_T$ is a left inverse of $\beta_T$, restricted to $\cS^\Sc_3$, and likewise for $\alpha_F$ and $\beta_F$.
\end{proof}

\begin{cor} For a particular choice of $\Sc$, if 
the containment $\cS^\Sc_3 \subseteq \cS^\Sc_2$ is equality, then all the maps in Figure~\ref{PosetMaps} are equalities.
\end{cor}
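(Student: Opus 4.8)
The plan is to read off the corollary from the relations $\alpha_T \circ \beta_T = \mathrm{id}$ and $\alpha_F \circ \beta_F = \mathrm{id}$ established in the proposition immediately preceding Corollary~\ref{AlphaSurj}, now invoked on all of $\cS^\Sc_2$ rather than merely on $\cS^\Sc_3$. Throughout, ``equality'' of the maps in Figure~\ref{PosetMaps} should be read as ``isomorphism of posets'': the maps $\beta_T$ and $\beta_F$, sending $(X,Y)$ to $X$ and to $Y$, are genuine projections and so cannot literally be identity maps, but under the hypothesis they become mutually inverse to $\alpha_T$ and $\alpha_F$, which is the strongest sense in which such maps can ``coincide''.

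First I would dispose of the inclusion $\cS^\Sc_3 \hookrightarrow \cS^\Sc_2$: by hypothesis it is the identity on underlying sets, and since $\cS^\Sc_3$ carries the partial order induced from $\cS^\Sc_2$ (as recorded in the proof that $\cS^\Sc_3$ sits inside $\cS^\Sc_2$), this inclusion is already an isomorphism of posets.

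Next I would treat $\beta_T$ and $\alpha_T$ together. By the proposition immediately preceding Corollary~\ref{AlphaSurj}, we have $\alpha_T(\beta_T((X,Y))) = (X,Y)$ for all $(X,Y) \in \cS^\Sc_3$; the hypothesis $\cS^\Sc_3 = \cS^\Sc_2$ upgrades this to $\alpha_T \circ \beta_T = \mathrm{id}_{\cS^\Sc_2}$, so $\beta_T$ is injective. It is also surjective, being the projection $\cS^\Sc_2 \to \cS^\Sc_{1T}$ introduced in the relevant Definition/Proposition. Hence $\beta_T$ is a bijection whose set-theoretic inverse must be $\alpha_T$; since both $\beta_T$ and $\alpha_T$ are maps of posets, each is an isomorphism of posets. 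The identical argument, using $\alpha_F \circ \beta_F = \mathrm{id}$ and the surjectivity of $\beta_F$, shows that $\beta_F$ and $\alpha_F$ are isomorphisms of posets. This accounts for all five arrows in the figure.

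There is no real obstacle here; the only points requiring a moment's care are the interpretation of ``equality'' discussed above, and the bookkeeping observation that a left inverse of a bijection is automatically a two-sided inverse, which is what promotes $\alpha_T$ (resp.\ $\alpha_F$) from a left inverse of $\beta_T$ (resp.\ $\beta_F$) to the inverse poset isomorphism.
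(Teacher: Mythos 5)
Your proposal is correct and follows essentially the same route as the paper: use the relation $\alpha_T\circ\beta_T=\mathrm{id}$ (and its $F$-analogue) from the preceding proposition, which under the hypothesis $\cS^\Sc_3=\cS^\Sc_2$ makes the already-surjective projections $\beta_T$, $\beta_F$ bijective with inverses $\alpha_T$, $\alpha_F$. Your added remarks on interpreting ``equality'' as poset isomorphism and on the inclusion $\cS^\Sc_3\hookrightarrow\cS^\Sc_2$ are harmless clarifications of the same argument.
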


\begin{proof}
The map $\beta_T$ is surjective by definition, and we have just shown that the restriction of $\beta_T$ to $\cS_3$ is injective. Thus, if $\cS_3 = \cS_2$, then $\beta_T$ is bijective, and its left inverse $\alpha_T$ must also be a bijection. The same applies to $\alpha_F$ and $\beta_F$.
\end{proof}

We pause to consider a simple example.

\begin{example} \label{A2ex} Let $\Lambda$ be the path algebra of an $A_2$ quiver. In Figure \ref{A2}, we have drawn, first, its the poset of five torsion pairs. Each torsion pair is represented by a small schematic copy of the Auslander--Reiten quiver of the $A_2$ module category, with the torsion part drawn in blue, and the torsion-free part drawn in orange.

\begin{figure}
\begin{tikzpicture}
  \node [circle,draw,inner sep=.5mm](a) at (0,0) {\Ta};
  \node [circle,draw,inner sep =.5mm](b) at (-1,1) {\Tb};
  \node [circle,draw,inner sep=.5mm](c) at (-1,2) {\Tc};
  \node [circle,draw,inner sep=.5mm](d) at (0,3) {\Td};
  \node [circle,draw,inner sep=.5mm](e) at (1,1.5) {\Te};
  \draw[dotted] (a)--(b)--(c)--(d);
  \draw [dotted] (a)--(e)--(d);
\end{tikzpicture}
\qquad
\begin{tikzpicture}[halo/.style={line join=round,
 double,line cap=round,double distance=#1,shorten >=-#1/2,shorten <=-#1/2},
 halo/.default=8mm]
  \node [circle,draw,inner sep=.5mm](a) at (0,0) {\Ta};
  \node [circle,draw,inner sep =.5mm](b) at (-1,1) {\Tb};
  \node [circle,draw,inner sep=.5mm](c) at (-1,2) {\Tc};
  \node [circle,draw,inner sep=.5mm](d) at (0,3) {\Td};
  \node [circle,draw,inner sep=.5mm](e) at (1,1.5) {\Te};
  \draw[dotted] (a)--(b)--(c)--(d);
  \draw [dotted] (a)--(e)--(d);
  \begin{scope}[on background layer]
    \draw[halo] (a)--(e);
    \draw[halo] (c)--(d);
    \end{scope}
\end{tikzpicture}
\qquad
\begin{tikzpicture}[halo/.style={line join=round,
 double,line cap=round,double distance=#1,shorten >=-#1/2,shorten <=-#1/2},
 halo/.default=8mm]
  \node [circle,draw,inner sep=.5mm](a) at (0,0) {\Ta};
  \node [circle,draw,inner sep =.5mm](b) at (-1,1) {\Tb};
  \node [circle,draw,inner sep=.5mm](c) at (-1,2) {\Tc};
  \node [circle,draw,inner sep=.5mm](d) at (0,3) {\Td};
  \node [circle,draw,inner sep=.5mm](e) at (1,1.5) {\Te};
  \draw[dotted] (a)--(b)--(c)--(d);
  \draw [dotted] (a)--(e)--(d);
  \begin{scope}[on background layer]
    \draw[halo] (c)--(d);
    \end{scope}
\end{tikzpicture}
\caption{\label{A2}The lattice of torsion classes of the $A_2$ path algebra, together with, first, the identifications that are made in $\cS^\Sc_{1T}$, and second, the identifications that are made in $\cS^\Sc_{1F}$ and $\cS^\Sc_{2}$}\end{figure}

Let $\Sc$ consist of the two indecomposable projectives. (In the Auslander--Reiten quiver, these are represented by the circles at the left and top.)
To the right of the poset of torsion classes, we have redrawn the same figure, with torsion classes which become identified in $\cS^\Sc_{1T}$ encircled. 
The next figure shows which pairs of torsion classes become identified with respect to $\cS^\Sc_{1F}$ and $\cS^\Sc_{2}$.

Of the equivalence classes of $\cS^\Sc_{2}$, there is exactly one which is not in the image of the inclusion from $\cS^\Sc_3$; it is the righthand one. 
\end{example}

Our goal is to study lattices, not posets, and more specifically complete lattices. We recall that a partially ordered set $L$ is a \newword{complete lattice} if, for any subset $\cX$ of $L$, there is a (necessarily unique) element $\bigjoin \cX$ which is a lower bound for all upper bounds of $\cX$ and a (necessarily unique)  element $\bigmeet \cX$ which is an upper bound for all lower bounds of $\cX$.

The collection of torsion classes is a lattice, where $\bigmeet \cX = \bigcap_{T \in \cX} T$ and $\bigjoin \cX = {}^{\perp} \left( \bigcap_{T \in \cX} T^{\perp} \right)$. 
We write $\Tor(\Lambda)$ for the lattice of torsion classes of $\Lambda$.


\begin{prop} \label{ShardShadowLatticeProps}
The partially ordered sets $\cS^\Sc_{1T}$, $\cS^\Sc_{1F}$ and $\cS^\Sc_3$ are complete lattices. The map $\pi_T(X) = X \cap \Sc$ from $\Tor(\Lambda)$ to $\cS^\Sc_{1T}$ obeys $\pi_T\left( \bigmeet \cT \right) = \bigmeet \pi_T(\cT)$ and the map $\pi_F(X) = X^{\perp} \cap \Sc$ from $\Tor(\Lambda)$ to $\cS_{1F}^\Sc$ obeys $\pi_F\left( \bigjoin \cT \right) = \bigjoin \pi_F(\cT)$.
\end{prop}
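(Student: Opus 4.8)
The plan is to realize each of the three posets as a \emph{closure system} (Moore family) inside the power set of $\Sc$ — such a family is automatically a complete lattice — and then to verify the two compatibility identities by direct computation.

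I would first record the elementary fact that if $\mathcal{C}$ is a collection of subsets of a set $S$ which contains $S$ and is closed under arbitrary intersections, then $(\mathcal{C},\subseteq)$ is a complete lattice with $\bigmeet \mathcal{D} = \bigcap_{C\in\mathcal{D}}C$ and $\bigjoin\mathcal{D} = \bigcap\{C\in\mathcal{C} : C\supseteq \bigcup_{D\in\mathcal{D}}D\}$; since the order-dual of a complete lattice is a complete lattice, $(\mathcal{C},\supseteq)$ is one too, with $\bigmeet$ and $\bigjoin$ interchanged. Applying this to $\cS^\Sc_{1T}$: arbitrary intersections of torsion classes are torsion classes, and $(\bigcap_i T_i)\cap\Sc = \bigcap_i(T_i\cap\Sc)$, so a family of type 1T shadows has an intersection that is again a type 1T shadow; and $\Sc = \Mod_A\cap\Sc$ since $\Mod_A$ is a torsion class. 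Hence $\cS^\Sc_{1T}$ is a complete lattice whose meet is ordinary intersection. The identical argument, with torsion free classes in place of torsion classes and using that $\cS^\Sc_{1F}$ carries the reverse-inclusion order (which is what makes $\beta_F$ and $\pi_F$ order-preserving), shows $\cS^\Sc_{1F}$ is a complete lattice whose \emph{join} is ordinary intersection.

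For $\cS^\Sc_3$ I would use the antitone Galois connection on subsets of $\Sc$ given by $X\mapsto X^{\perp}\cap\Sc$ and $Y\mapsto {}^{\perp}Y\cap\Sc$; by definition, $\cS^\Sc_3$ is precisely its set of closed pairs. Since $X\subseteq X'$ forces $X^{\perp}\cap\Sc \supseteq (X')^{\perp}\cap\Sc$, the first projection $(X,Y)\mapsto X$ is an order isomorphism of $\cS^\Sc_3$ onto the set of closed subsets $\{X\subseteq\Sc : X = {}^{\perp}(X^{\perp}\cap\Sc)\cap\Sc\}$ ordered by inclusion, and that set is a closure system: $\Sc = {}^{\perp}(\emptyset)\cap\Sc$ is closed, and ${}^{\perp}(\bigcup_i Y_i)\cap\Sc = \bigcap_i({}^{\perp}Y_i\cap\Sc)$ shows closed sets are stable under intersection. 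So $\cS^\Sc_3$ is a complete lattice. (Alternatively, one can observe that $\cS^\Sc_3$ is a retract of the complete lattice $\cS^\Sc_{1T}$: by Corollary~\ref{AlphaSurj} and the proposition preceding it, $\alpha_T$ is an order-preserving surjection $\cS^\Sc_{1T}\onto\cS^\Sc_3$ with order-preserving section $\beta_T|_{\cS^\Sc_3}$, and for such a retraction $\bigjoin S = \alpha_T(\bigjoin\beta_T(S))$ and dually, so completeness is inherited.)

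The two compatibility statements are then immediate. For $\pi_T$: in $\Tor(\Lambda)$ one has $\bigmeet\cT = \bigcap_{T\in\cT}T$, so $\pi_T(\bigmeet\cT) = (\bigcap_{T\in\cT}T)\cap\Sc = \bigcap_{T\in\cT}(T\cap\Sc) = \bigcap_{T\in\cT}\pi_T(T)$, and by the second paragraph this intersection is exactly $\bigmeet\pi_T(\cT)$ computed in $\cS^\Sc_{1T}$. For $\pi_F$: in $\Tor(\Lambda)$ one has $\bigjoin\cT = {}^{\perp}(\bigcap_{T\in\cT}T^{\perp})$; since $\bigcap_{T\in\cT}T^{\perp}$ is a torsion free class and $T\mapsto T^{\perp}$, $F\mapsto{}^{\perp}F$ are mutually inverse, $(\bigjoin\cT)^{\perp} = \bigcap_{T\in\cT}T^{\perp}$, whence $\pi_F(\bigjoin\cT) = (\bigcap_{T\in\cT}T^{\perp})\cap\Sc = \bigcap_{T\in\cT}(T^{\perp}\cap\Sc) = \bigcap_{T\in\cT}\pi_F(T)$, which by the second paragraph is $\bigjoin\pi_F(\cT)$ computed in $\cS^\Sc_{1F}$. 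The only real friction I anticipate is bookkeeping: keeping straight that $\cS^\Sc_{1F}$ is ordered by reverse inclusion — so that "join $=$ intersection" there — and checking the closure-system axioms for $\cS^\Sc_3$. Neither point is deep, but the order directions are easy to flip by accident.
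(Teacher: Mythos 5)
Your proposal is correct and takes essentially the same route as the paper: meets in $\cS^\Sc_{1T}$ (and, under the reverse-inclusion order you correctly identify, joins in $\cS^\Sc_{1F}$) are plain intersections because arbitrary intersections of torsion and torsion-free classes are again such, completeness of $\cS^\Sc_3$ is the standard fact about the closed pairs of a Galois connection/polarity (which the paper handles by citing Birkhoff's theorem where you verify the closure-system axioms directly), and the two compatibility identities are the same one-line computations. The only differences are presentational: you package the argument via Moore families and write out the $\pi_F$ case, which the paper dismisses as ``similar''.
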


\begin{proof}
The fact that $\cS^\Sc_3$ is a complete lattice is straightforward; if $\Sha$ is any set and $\to$ any binary relation on $\Sc$, we define $\Pairs(\to)$ to be set of ordered pairs $(U,V)$ where $U = \{ u \in \Sc : u \not\to v \ \forall v \in V \}$ and $V = \{ v \in \Sc : u \not\to v \ \forall u \in U \}$. For $(U_1, V_1)$ and $(U_2, V_2) \in \Pairs(\to)$, we have $U_1 \subseteq U_2$ if and only if $V_1 \supseteq V_2$ and we define a partial order on $\Pairs(\to)$ by saying that $(U_1, V_1) \leq (U_2, V_2)$ in this situation. This will always be a complete lattice, where $\bigjoin \cX = (A, A^{\perp})$ for $A = \bigcap_{(U,V) \in \cX} U$ and $\bigmeet \cX = ({}^{\perp} B, B)$ where $B = \bigcap_{(U,V) \in \cX} V$. 
See, for example, \cite[Theorem V.19]{Birkhoff}.

We now show that $\cS^\Sc_{1T}$ is a complete lattice. Let $\cX \subseteq \cS_{1T}$. Set $A = \bigcap_{X \in \cX} X$. We claim that $A \in \cS^\Sc_{1T}$; if this is true, then $A$ is clearly the greatest lower bound for $\cX$. To show that $A \in \cS^\Sc_{1T}$, for each $X \in \cX$, choose a torsion class $T_X$ with $X = \Sc \cap T_X$. Then $A = \bigcap_{X \in \cX} X = \bigcap_{X \in \cX} ( \Sc \cap T_X ) = \Sc \cap \bigcap_{X \in \cX} T_X$. But $\bigcap_{X \in \cX} T_X$ is a torsion class as well, so the equation $A =  \Sc \cap \bigcap_{X \in \cX} T_X$ shows that $A \in \cS^\Sc_1$. We have now shown that all subsets of $\cS^\Sc_{1T}$ have greatest lower bounds. We then use a standard trick: For $\cX \subseteq \cS^\Sc_{1T}$, let $\cY$ be the set of all upper bounds of $\cX$. The greatest lower bound of $\cY$ will also be the least upper bound of $\cX$.

The argument of the above paragraph also shows that $\pi_T\left( \bigmeet \cT \right) = \bigmeet \pi_T(\cT)$: Let $\cT \subseteq \Tor(\Lambda)$ and let $\cX = \pi_T(\cT) := \{ T \cap \Sc : T \in \cT \}$. Then $\bigmeet \pi_T(\cT) = \bigcap_{X \in \pi_T(\cT)} X = \bigcap_{T \in \cT} ( \Sc \cap T) = \Sc \cap \bigcap_{T \in \cT} T = \Sc \cap \bigmeet \cT = \pi_T \left( \bigmeet \cT \right)$.

This proves the claims regarding $\cS^\Sc_{1T}$; the claims regarding $\cS^\Sc_{1F}$ are similar.
\end{proof}

We cannot prove that $\cS^\Sc_2$ is necessarily a lattice.

A map $\pi : L_1 \to L_2$ from one complete lattice to another is a \newword{complete lattice quotient} if $\pi$ is surjective and we have $\pi\left( \bigmeet X \right) = \bigmeet \pi(X)$ and $\pi\left( \bigjoin X \right) = \bigjoin \pi(X)$ for all subsets $X$ of $L_1$.
In the generality which we are considering, the maps from the lattice of torsion classes to $\cS_{1T}$, $\cS_{1F}$ or $\cS_3$ need not be complete lattice quotients, as Example \ref{A2ex} already shows.

We now explain the connection to the problem that has been motivating us. For $\Lambda$ a preprojective algebra, we would like to let $\Sc$ be the shard modules. In Dynkin type, the shard modules coincide with the bricks, and thus all the shadows with respect to $\Sc$ coincide with the lattice of torsion classes, and thus with (right) weak order on the corresponding Coxeter group $W$. If our preprojective algebra is not of Dynkin type, our hope is that $\cS^\Sc_{1T}$,
$\cS^\Sc_{1F}$, $\cS^\Sc_{2}$, and $\cS^\Sc_{3}$ still all coincide, and are a complete lattice quotient of the lattice of torsion classes.

\begin{remark}
In the generality considered throughout this section, $\Tor(\Lambda)$ is a completely semidistributive lattice, so any lattice which we can prove to be a complete lattice quotient of $\Tor(\Lambda)$ will be a completely semidistributive lattice \cite[Theorem 3.1(a)]{DIRRT}.
\end{remark}

\begin{remark} \label{rem:Biclosed}
Matthew Dyer~\cite{DyerConjectures} has made a series of fascinating conjectures about extending the weak order on $W$ to a partial order on the set of ``biclosed" sets of roots.
These can be thought of as a partition of the positive real roots into two disjoint sets, $\Phi^+ = X \sqcup Y$, where both $X$ and $Y$ satisfy a ``closure" condition. 
If we were to try to associate a biclosed set to a torsion pair $(T,F)$, the natural thing to do would be to consider $(\dim T, \dim F) := (\{ \dim B : B \in T \}, \{ \dim B : B \in F \})$, where $B$ ranges over shard modules (or possibly real brick modules). This construction will associate a pair of subsets of $\Phi^+$ to any type $2$ or type $3$ shard shadow.

There is, however, an immediate problem: It is not clear that $\Phi^+ = (\dim T) \sqcup (\dim F)$. Indeed, it is neither clear that $(\dim T) \cup (\dim F) = \Phi^+$, nor that $(\dim T) \cap (\dim F) = \emptyset$. 
We have $(\dim T) \cap (\dim F) = \emptyset$ for all torsion pairs $(T,F)$ if and only if the following conjecture holds:
\begin{conj}
Let $\beta$ be a positive real root and let $B_1$ and $B_2$ be two shard modules of dimension $\beta$. Then $\Hom(B_1, B_2) \neq 0$. 
\end{conj}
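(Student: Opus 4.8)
The plan is to prove the conjecture by induction on the length $r$ of a positive expression $s_{i_r}\cdots s_{i_1}\alpha_j$ for $\beta$, running in parallel with the recursive description of shard modules from Theorems~\ref{RealBrickRecursion}, \ref{StabOfAShardModule} and~\ref{ShardModuleRecursion}. In the base case $r=0$ we have $\beta=\alpha_j$, the only module of that dimension is $S_j$, and $\Hom(S_j,S_j)=\kappa(d_j)\neq0$. For the inductive step, fix a positive expression for $\beta$ ending in $s_i$, so $i=i_r$, $\beta=s_i\beta'$ with $\beta\succR\beta'$ a cover, and $(\alpha_i,\beta)>0$. Since $\beta\neq\alpha_i$, any shard module $B$ of dimension $\beta$ is a brick not isomorphic to $S_i$, so by Propositions~\ref{CanReflectBricks} and~\ref{alphaCuts} it lies in exactly one of $\NoSub_i$ and $\NoQuot_i$; and $B=\Sigma_i(C)$ or $B=\Sigma_i^{-1}(C)$ for a unique shard module $C$ of dimension $\beta'$ --- indeed $\Stab(B)=\shard_i^{\pm}(\Stab(C))$ by Theorem~\ref{StabOfAShardModule}, and the passage from an $(n-1)$-dimensional $\Stab(B)$ back to $(\beta')^{\perp}$ forces $\Stab(C)$ to be $(n-1)$-dimensional. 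So the ordered pair $(B_1,B_2)$ falls into one of four cases according to membership in $\NoSub_i$ versus $\NoQuot_i$.

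If $B_1,B_2$ both lie in $\NoSub_i$, then $\Hom(B_1,B_2)$ is identified with $\Hom(\Sigma_i^{-1}B_1,\Sigma_i^{-1}B_2)$ because $\Sigma_i^{-1}\colon\NoSub_i\to\NoQuot_i$ is an equivalence, and the latter is nonzero by the inductive hypothesis applied to the shard modules $\Sigma_i^{-1}B_1$, $\Sigma_i^{-1}B_2$ of dimension $\beta'$; the case $B_1,B_2\in\NoQuot_i$ is identical using $\Sigma_i$. If $B_1\in\NoSub_i$ and $B_2\in\NoQuot_i$, then $B_1\notin\NoQuot_i$ and $B_2\notin\NoSub_i$ by Proposition~\ref{alphaCuts}, so $S_i$ is a quotient of $B_1$ and a submodule of $B_2$, and the composite $B_1\onto S_i\into B_2$ is a nonzero element of $\Hom(B_1,B_2)$. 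This leaves only the case $B_1\in\NoQuot_i$, $B_2\in\NoSub_i$.

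For this last case the same composite trick produces only a nonzero map $B_2\to B_1$, not the one we want, so it is the crux of the argument. It is, however, not a genuine obstruction unless it recurs for \emph{every} admissible last reflection: if for some $i$ with $(\alpha_i,\beta)>0$ the modules $B_1$ and $B_2$ lie on the same side, the previous paragraph finishes. So assume $B_1\in\NoQuot_i$ and $B_2\in\NoSub_i$ for all $i$ with $(\alpha_i,\beta)>0$. By Proposition~\ref{StabOfAShardModuleIndStep}, $\Stab(B_1)=\shard_i^{-}(\Stab(\Sigma_iB_1))\subseteq\{\theta:\langle\theta,\alpha_i\rangle\geq0\}$ and dually $\Stab(B_2)\subseteq\{\theta:\langle\theta,\alpha_i\rangle\leq0\}$ for every such $i$; hence $\Stab(B_1)$ is contained in $\beta^{\perp}\cap\bigcap_{(\alpha_i,\beta)>0}\{\langle\,\cdot\,,\alpha_i\rangle\geq0\}$ and $\Stab(B_2)$ in the opposite cone. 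In finite type this intersection often collapses to $\{0\}$ (for instance whenever the support of $\beta$ is contained in $\{i:(\alpha_i,\beta)>0\}$), contradicting that $\Stab(B_1)$ is $(n-1)$-dimensional, so there the case is vacuous and the conjecture holds --- the difficulty is entirely in infinite type. Setting $C_1=\Sigma_i(B_1)$ and $C_2=\Sigma_i^{-1}(B_2)$, which are shard modules of dimension $\beta'$ with $C_1\in\NoSub_i$ and $C_2\in\NoQuot_i$, and using the canonical maps $\Sigma_i^{-1}C_1\to C_1$ and $C_2\to\Sigma_iC_2$, it would suffice to find a nonzero $\varphi\in\Hom(C_1,C_2)$ --- which exists by the inductive hypothesis --- for which the composite $\Sigma_i^{-1}C_1\to C_1\xrightarrow{\ \varphi\ }C_2\to\Sigma_iC_2$ is nonzero; unwinding the definitions of $\Sigma_i^{\pm1}$ in Section~\ref{ReflectionFunctorsDefn}, this composite is nonzero exactly when the image of $\varphi$ is not supported solely at the vertex $i$.

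The main obstacle is thus to rule out the degenerate situation in which every nonzero map $C_1\to C_2$ factors through a direct sum of copies of $S_i$. I see three plausible routes. One is to strengthen the inductive hypothesis so that it asserts the existence of a nonzero map between two shard modules of a given dimension whose image is not a direct sum of simple modules, and to check that this extra property is preserved under the reflection functors. A second is to leverage the rigid geometric constraint just obtained --- that $\Stab(B_1)$ and $\Stab(B_2)$ occupy opposite chamber-like regions of $\beta^{\perp}$ --- to localize the problem to the wall $\alpha_i^{\perp}\cap\beta^{\perp}$ and reduce it to an explicit rank-two computation. A third is to combine the recursion with a self-duality of $\Lambda$, though a naive use of duality merely interchanges the bad case with itself and so does not immediately help. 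I expect this degenerate configuration to be the real content of the conjecture, and the reason it remains open.
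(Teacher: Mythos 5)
This statement is not a theorem of the paper: it appears in Remark~\ref{rem:Biclosed} as an open conjecture, with no proof given, so there is no argument of the authors to compare yours against, and your proposal --- as you candidly say yourself --- does not close it either. The gap is exactly the one you name: the case in which, for \emph{every} $i$ with $(\alpha_i,\beta)>0$, one has $B_1\in\NoQuot_i$ and $B_2\in\NoSub_i$, where your reduction requires a nonzero $\varphi\in\Hom(\Sigma_i B_1,\Sigma_i^{-1}B_2)$ whose image is not a direct sum of copies of $S_i$, and none of your three suggested routes is carried out (nor is any of them obviously viable). So the verdict must be: a partial reduction, not a proof; the conjecture remains open.

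That said, the steps you do carry out are sound and genuinely locate the difficulty. The induction on the depth of $\beta$, the fact that a shard module of dimension $\beta\neq\alpha_i$ with $(\alpha_i,\beta)>0$ lies in exactly one of $\NoSub_i$ and $\NoQuot_i$ (Propositions~\ref{CanReflectBricks} and~\ref{alphaCuts}), the equivalence-of-categories argument when $B_1$ and $B_2$ lie on the same side, the composite $B_1\onto S_i\into B_2$ in the favourable mixed case, and the observation that the bad case is only an obstruction if it occurs for every admissible last reflection --- all of this is correct, as is your translation of the bad case into the statement that the composite $\Sigma_i^{-1}C_1\to C_1\xrightarrow{\varphi}C_2\to\Sigma_iC_2$ vanishes precisely when the image of $\varphi$ is supported at vertex $i$. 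One concrete caution about your second proposed route: the paper itself, immediately after stating the conjecture, warns against trying to deduce $\Hom(B_1,B_2)\neq0$ from the relative position of $\Stab(B_1)$ and $\Stab(B_2)$, pointing to Remark~\ref{WhyBadBrickIsBad}, which exhibits a shard module $B'$ with $\Hom(S_1,B')=0$ even though $\langle\alpha_1,-\rangle$ is nonnegative on all of $\Stab(B')$; so stability-domain configurations alone do not force Hom nonvanishing, and your first route (a strengthened inductive hypothesis preserved by the reflection functors) is the one that carries the real, still unproved, content.
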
 
It is tempting to make some more general conjecture about a condition on $\Stab(B_1)$ and $\Stab(B_2)$ which would imply $\Hom(B_1, B_2) \neq 0 $. 
We warn the tempted reader to consult Remark~\ref{WhyBadBrickIsBad} for an example of a brick $B'$ such that $\Hom(S_1, B')=0$, even though $\langle \alpha_1,- \rangle$ is nonnegative on $\Stab(B')$.
\end{remark}

\section{Counterexamples}

In this section, we collect some counter-examples to natural conjectures.

\subsection{Shards can depend on the choice of Cartan matrix}  \label{ShardsSeeCartan}
Often, constructions which are described in terms of the reflection arrangement of a Coxeter group can be shown to depend only on the Coxeter group, and not on the choice of Cartan matrix. 
To be more precise, fix a Coxeter group $W$ with simple generators $s_i$. Let $A$ and $A'$ be two Cartan matrices for $W$. This means that, for $m_{ij}<\infty$, we have $A_{ij} A_{ji} = A'_{ij} A'_{ji} = 4 \cos^2 \tfrac{\pi}{m_{ij}}$ but, for $m_{ij} = \infty$, all we know is that $A_{ij} A_{ji}$ and $A'_{ij} A'_{ji}$ are both $\geq 4$. It is often the case that constructions which are originally defined in terms of the Cartan matrix, and the corresponding reflection representation and root system, in fact depend only on the Coxeter group $W$.  See Lemma~\ref{lem:CombinatorialPoset} and Remarks~\ref{CombinatorialRankTwo} and~\ref{CombinatorialCuttingSet}.

In this section, we will show that is not the case for shards. To be precise, let $W$ be a Coxeter group and let $t$ be a reflection in $W$. 
Let $\{ R_1, R_2, \ldots, R_k \}$ be the list of rank two subsystems cutting $\beta_t^{\perp}$. This list depends on $W$ and $t$ and not on the choice of Cartan matrix (Remark~\ref{CombinatorialCuttingSet}).
However, we will show that the combinatorics of this hyperplane arrangement \textbf{does} depend on the choice of Cartan matrix.

Our example relies on the following trick:
Let $W$ be a Coxeter group with simple generators $s_1$, $s_2$, \dots, $s_n$ and a symmetric Cartan matrix $A$. 
Embed $W$ into a larger Coxeter group $\hW$  with one additional simple generator $s_{n+1}$, such that $s_i s_{n+1}$ has infinite order  for $1 \leq i \leq n$. Embed $A$ into a larger Cartan matrix $\hA$ with $\hA_{ij} = A_{ij}$ for $1 \leq i,j \leq n$ and $\hA_{i(n+1)} = \hA_{(n+1)i}  \geq 2$. 
We write $V$ for the reflection representation of $W$ and $\hV$ for the reflection representation of $\hW$, of dimensions $n$ and $n+1$ respectively.

Let $u$ be any element of $W$ and let $s_{i_r} s_{i_{r-1}} \cdots s_{i_2} s_{i_1}$ be a reduced word for $u$.
Put $\beta = u \alpha_{n+1}$.
\begin{prop} \label{InvToShard}
With the above notation, $s_{i_r} s_{i_{r-1}} \cdots s_{i_2} s_{i_1} \alpha_{n+1}$ is a positive expression for $\beta$. The shard hyperplane arrangement, in the $n$-dimensional space $\beta^{\perp}$, is linearly isomorphic to the hyperplane arrangement $\bigcup_{\delta \in \inv_{\Phi}(u)} \delta^{\perp}$ in the $n$-dimensional space $V^{\ast}$.
\end{prop}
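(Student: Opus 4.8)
The plan is to verify the two claims of Proposition~\ref{InvToShard} separately, using the recursive description of shards from Theorem~\ref{ShardRecursionTheorem} together with the structure of inversions in the enlarged Coxeter group $\hW$. First I would check that $s_{i_r} \cdots s_{i_1} \alpha_{n+1}$ is a positive expression for $\beta = u\alpha_{n+1}$. The key observation is that because $s_i s_{n+1}$ has infinite order for every $i \le n$, the root $\alpha_{n+1}$ behaves very rigidly: for any $w \in W$ and any $i \in \{1,\dots,n\}$, one computes $s_i(w^{-1}\alpha_{n+1}) - w^{-1}\alpha_{n+1} \in \RR_{\ge 0}\alpha_i$ is automatic (the coefficient of $\alpha_{n+1}$ never changes sign and stays positive, so the root stays positive, and depth increases at each step of a reduced word). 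More precisely, I would use Corollary~\ref{EverythingAboutPositiveExpressions}: it suffices to show that $t := u s_{n+1} u^{-1}$ has $\ell(t) = 2r+1$ where $r = \ell(u)$, equivalently that $s_{i_r}\cdots s_{i_1} s_{n+1} s_{i_1}\cdots s_{i_r}$ is a reduced word. This is a standard fact about ``long elements conjugating a simple reflection that is far from $W$'': since no $s_i$ ($i\le n$) commutes with $s_{n+1}$ and more strongly $m_{i,n+1}=\infty$, the reduced word cannot be shortened. I expect this to follow cleanly from the exchange condition or from tracking the coefficient of $\alpha_{n+1}$.

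**The shard arrangement.** For the second claim, I would apply Theorem~\ref{ShardRecursionTheorem} to the positive expression $s_{i_r}\cdots s_{i_1}\alpha_{n+1}$: the shards of $\beta^\perp$ are the $n$-dimensional cones $\shard_{i_r}^{\pm_r}\cdots \shard_{i_1}^{\pm_1}(\alpha_{n+1}^\perp)$. Now here is the main point. In $\hV^*$, which is $(n+1)$-dimensional, the hyperplane $\alpha_{n+1}^\perp$ is $n$-dimensional, and it is naturally identified with $V^*$: indeed $\alpha_{n+1}^\perp = \{x \in \hV^* : \langle x,\alpha_{n+1}\rangle = 0\}$, and the dual basis vectors $\omega_1,\dots,\omega_n$ (restricted) give coordinates on it, with $\omega_{n+1}|_{\alpha_{n+1}^\perp}$ redundant. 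Under this identification, I would show by induction on $k$ that $\shard_{i_k}^{\pm_k}\cdots\shard_{i_1}^{\pm_1}(\alpha_{n+1}^\perp)$, intersected appropriately, matches up with a region of the hyperplane arrangement $\bigcup_{\ell \le k} (s_{i_k}\cdots s_{i_{\ell+1}}\alpha_{i_\ell})^\perp$ inside $V^*$. The operator $\shard_i^{\pm}$ applied to a cone inside $\alpha_{n+1}^\perp$ slices by $\alpha_i^\perp$ (which cuts $\alpha_{n+1}^\perp$ in a genuine hyperplane, since $\alpha_i \not\in \RR\alpha_{n+1}$) and then applies $s_i$; since $s_i$ fixes $\alpha_{n+1}$, it preserves $\alpha_{n+1}^\perp \cong V^*$, and its action there is exactly the action of $s_i$ on the reflection representation $V^*$ of $W$. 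So the recursion for shards of $\beta^\perp$ becomes, verbatim, the recursion that builds up the regions of $\bigcup_{\delta\in\inv_\Phi(u)}\delta^\perp$ by repeatedly reflecting and slicing—this is precisely the description of chambers cut out by the inversion hyperplanes of $u$.

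**Linear isomorphism.** To conclude, I would make the identification $\alpha_{n+1}^\perp \cong V^*$ explicit as a linear isomorphism $\iota$, and show $\iota$ carries the shard hyperplanes $R_j^\perp \cap \alpha_{n+1}^\perp$ (for $R_j$ cutting $\beta^\perp$) onto the inversion hyperplanes $\delta^\perp$ ($\delta \in \inv_\Phi(u)$). By Proposition~\ref{CuttingList}(3), the rank two subsystems cutting $\beta_t^\perp$ are $\Phi \cap \Span_\RR(\beta_t,\delta_k)$ with $\delta_k = s_{i_r}\cdots s_{i_{k+1}}\alpha_{i_k}$, and these $\delta_k$ are exactly the inversions of $u$ (they lie in $V$, not involving $\alpha_{n+1}$). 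For such $R$, the space $R^\perp = \Span(\beta_t,\delta_k)^\perp$ inside $\alpha_{n+1}^\perp$ is just $\delta_k^\perp \cap \alpha_{n+1}^\perp$ (since $\beta_t^\perp \supseteq \alpha_{n+1}^\perp$ is automatic—wait, it is $\beta^\perp$ we are inside, so I must check $\alpha_{n+1}^\perp$ is not literally $\beta^\perp$ unless $u = e$; rather $R^\perp \cap \beta^\perp = \delta_k^\perp \cap \beta^\perp$, and under $\iota$ this becomes $\delta_k^\perp$ in $V^*$). The remaining work is bookkeeping with these identifications.

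**Main obstacle.** The step I expect to be most delicate is rigorously matching the iterated-slicing recursion for shards with the inversion-hyperplane arrangement, i.e.\ confirming that passing to the $n$-dimensional ``slice'' $\alpha_{n+1}^\perp$ interacts correctly with the operators $\shard_i^\pm$—in particular that no dimension is lost (each slice by $\alpha_i^\perp$ genuinely cuts, because $m_{i,n+1} = \infty$ forces $\alpha_i, \alpha_{n+1}$ to be linearly independent and the relevant reflected roots $\delta_k$ to be pairwise distinct and non-proportional to $\beta_t$). This is where the hypothesis $\hA_{i(n+1)} \ge 2$ (rather than just $\ge 1$, i.e.\ $m_{i,n+1} = \infty$ rather than finite) is used, and I would want to isolate a short lemma guaranteeing that the recursion never produces a lower-dimensional cone in this situation, so that Theorem~\ref{ShardRecursionTheorem}'s dimension restriction is vacuous here and the shards are in clean bijection with chambers of the inversion arrangement.
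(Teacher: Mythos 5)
Your outline for the first claim is essentially the paper's argument (reduce to showing the palindrome $s_{i_r}\cdots s_{i_1}s_{n+1}s_{i_1}\cdots s_{i_r}$ is reduced, which holds because $m_{i,n+1}=\infty$ prevents any braid move from touching the central $s_{n+1}$), although your parenthetical shortcut is false as stated: it is not true that $s_i(w\alpha_{n+1})-w\alpha_{n+1}\in\RR_{\geq 0}\alpha_i$ for \emph{every} $w\in W$ and $i\leq n$ (take $w=s_i$), and ``the root stays positive'' does not imply ``depth increases,'' so you really do need the reducedness argument you defer to.

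The second part has a genuine gap. Your induction hinges on the claim that ``$s_i$ fixes $\alpha_{n+1}$, hence preserves $\alpha_{n+1}^{\perp}\cong V^{\ast}$.'' This is false: by construction $\hA_{i(n+1)}\leq -2$, so $s_i\alpha_{n+1}=\alpha_{n+1}-\hA_{i(n+1)}\alpha_i\neq\alpha_{n+1}$, and $s_i$ does not preserve $\alpha_{n+1}^{\perp}$ in $\hV^{\ast}$. Consequently the intermediate cones in the $\shard_{i_k}^{\pm}$ recursion do not stay inside $\alpha_{n+1}^{\perp}$ at all --- after $k$ steps they lie in $(s_{i_k}\cdots s_{i_1}\alpha_{n+1})^{\perp}$, a moving hyperplane --- so the ``verbatim'' matching with a slicing recursion inside $V^{\ast}$ does not go through as written; relatedly, in your last paragraph you conflate $\alpha_{n+1}^{\perp}$ with $\beta^{\perp}$ and never produce the isomorphism that is actually needed, namely $\beta^{\perp}\xrightarrow{\ \sim\ }V^{\ast}$. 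The paper's route is shorter and avoids the recursion entirely: by Proposition~\ref{CuttingList} (with agent $u$), the shard hyperplanes in $\beta^{\perp}$ are exactly $\delta^{\perp}\cap\beta^{\perp}$ for $\delta\in\inv_{\Phi}(u)$, and the $W$-equivariant projection $\hV^{\ast}\to V^{\ast}$ dual to $V\subset\hV$ (kernel $V^{\perp}$) restricts to a linear isomorphism on $\beta^{\perp}$ precisely because $\beta\notin V$, carrying $\delta^{\perp}\cap\beta^{\perp}$ to $\delta^{\perp}\subset V^{\ast}$. Your approach could be repaired along these lines (the projection intertwines $\shard_i^{\pm}$ in $\hV^{\ast}$ with the analogous operation in $V^{\ast}$), but one more of your claims must then be dropped: the hoped-for lemma that the dimension restriction in Theorem~\ref{ShardRecursionTheorem} is vacuous here is false --- for $W$ of type $A_2$ and $u=s_1s_2s_1$ the inversion arrangement has six chambers while there are $2^3$ sign sequences --- and the hypothesis $m_{i,n+1}=\infty$ is not used for any such dimension count; it is used only to make the palindromic word reduced in the first claim.
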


\begin{proof}
We begin by noting that $s_{i_r} s_{i_{r-1}} \cdots s_{i_2} s_{i_1} \alpha_{n+1}$ is a positive expression for $\beta$. 
By Corollary~\ref{EverythingAboutPositiveExpressions}, it is equivalent to see that $s_{i_r} s_{i_{r-1}} \cdots s_{i_2} s_{i_1} s_{n+1} s_{i_1} s_{i_2} \cdots s_{i_{r-1}} s_{i_r}$ is reduced. 
Since $s_i s_{n+1}$ has infinite order  for $1 \leq i \leq n$, no braid moves can be applied to this word that will affect the $s_{n+1}$ in the center, and this will remain true after any braid moves are applied to the left and right of the central $s_{n+1}$. 
Since $s_{i_r} s_{i_{r-1}} \cdots s_{i_2} s_{i_1}$ is assumed reduced, this means no sequence of braid moves can put us in a position to reduce the length of $s_{i_r} s_{i_{r-1}} \cdots s_{i_2} s_{i_1} s_{n+1} s_{i_1} s_{i_2} \cdots s_{i_{r-1}} s_{i_r}$.

Thus, by Corollary~\ref{EverythingAboutPositiveExpressions}, the hyperplanes in the shard hyperplane arrangement are $\{ \delta^{\perp} \cap \beta^{\perp} : \delta \in \inv_{\Phi}(u) \}$. 
We now need to relate this hyperplane arrangement, in $\beta^{\perp}$, to the hyperplane arrangement $\{ \delta^{\perp} : \delta \in \inv_{\Phi}(u) \}$ in $V^{\ast}$. 
The bridge between these two is the arrangement $\{ \delta^{\perp} : \delta \in \inv_{\Phi}(u) \}$ in $\hV^{\ast}$. The canonical inclusion
of $V$ into $\hV$ gives rise to a projection from $\hV^\ast$ to $V^\ast$. For $\delta$ a root of $W$, $\delta^\perp$ in $\hV^\ast$ is the pullback of $\delta^\perp$ in $V^\ast$. The kernel of the projection from $\hV^\ast$ to $V^\ast$ is the line
$V^\perp$. Since $\beta\not\in V$, $V^\perp\cap \beta^\perp=\{0\}$, so intersecting with $\beta^\perp$ removes the one-dimensional lineality factor.
\end{proof}

The use of Proposition~\ref{InvToShard} is that we can use computations with inversions in a rank $n$ Coxeter group to produce examples of shards in a rank $n+1$ Coxeter group. 

We now turn to our example:
Consider the rank $4$ Coxeter group where $m_{12}=m_{13} = m_{14}=0$ and $m_{14} = m_{24} = m_{34} = \infty$. Symmetric Cartan matrices for this group are thus of the form
\[ \begin{sbm} 2 & 0 & 0 & - x \\ 0&2&0 & - y \\ 0&0&2 & - z \\ -x & -y & - z & 2 \\ \end{sbm}  \]
for $x$, $y$, $z \geq 2$.
Let $w = s_1 s_2 s_4 s_1 s_3 s_4 s_2 s_3$, whose inversions correspond to the following roots in the simple root basis: 
\[ \gamma_1= \begin{sbm} 1 \\ 0 \\ 0 \\ 0\end{sbm} ,\   \gamma_2=\begin{sbm} 0 \\ 1 \\ 0 \\ 0\end{sbm} ,\   \gamma_3=\begin{sbm} x \\ y \\ 0 \\ 1\end{sbm} ,\  \gamma_4= \begin{sbm} -1 + x^2 \\ x y \\ 0 \\ x\end{sbm} ,\  \gamma_5= \begin{sbm} x z \\  y z \\ 1 \\ z\end{sbm} ,\]
  \[   \gamma_6=\begin{sbm} -2 x + x^3 + x z^2 \\ -y + x^2 y + y z^2 \\ z \\ -1 + x^2 + 
  z^2\end{sbm} ,\   \gamma_7=\begin{sbm} -x y + x^3 y + x y z^2 \\ -1 + x^2 y^2 + y^2 z^2 \\ y z \\ 
 x^2 y + y z^2\end{sbm} ,\  \gamma_8= \begin{sbm} -3 x z + x^3 z + x z^3 \\ -2 y z + x^2 y z + 
  y z^3 \\ -1 + z^2 \\ -2 z + x^2 z + z^3 \end{sbm} \]
The determinant $\det(\gamma_1, \gamma_2, \gamma_7, \gamma_8)$ is $y (x+z)(x-z)$.
  Thus, if $x=z$, the hyperplanes $\gamma_1^{\perp}$, $\gamma_2^{\perp}$, $\gamma_7^{\perp}$ and $\gamma_8^{\perp}$ intersect in a line, whereas they are transverse if $x \neq z$.

Moreover, even if we restrict to $x \neq z$, the hyperplane arrangements for $x<z$ and $x>z$ will be topologically different. The four lines $\gamma_1^{\perp} \cap \gamma_2^{\perp} \cap \gamma_3^{\perp}$, $\gamma_1^{\perp} \cap \gamma_2^{\perp} \cap \gamma_5^{\perp}$, $\gamma_1^{\perp} \cap \gamma_2^{\perp} \cap \gamma_7^{\perp}$ and $\gamma_1^{\perp} \cap \gamma_2^{\perp} \cap \gamma_8^{\perp}$ all lie in the plane $\gamma_1^{\perp} \cap \gamma_2^{\perp}$. Their circular order depends on whether the cross ratio
\[ \frac{ \det(\gamma_1, \gamma_2, \gamma_3, \gamma_5) \det(\gamma_1, \gamma_2, \gamma_7, \gamma_8) }{ \det(\gamma_1, \gamma_2, \gamma_3, \gamma_8) \det(\gamma_1, \gamma_2, \gamma_5, \gamma_7) } \]
lies in $(-\infty, 0)$, $(0,1)$ or $(1, \infty)$. We compute that this cross ratio is $\tfrac{y (x+z)(x-z)}{(z^2-1) x^2 y^2}$, whose sign depends on whether $x$ is $<z$ or $>z$.

This example is inspired by a simpler example due to Dyer and Wang~\cite{DyerWang}. 
They produce a simpler example of a rank four Coxeter group in which the hyperplane arrangement determined by a finite subset of the roots depends on the entries in the Cartan matrix. 
We have had to adjust their example to make sure that all of the roots involved were inversions of a single element in the Coxeter group.

\subsection{A real brick that is not a shard module} \label{RealBrickNotShard}

In this section, we will give an example of a rank $6$ preprojective algebra (of wild type) and a real brick $B$ for this preprojective algebra whose stability domain has dimension $2$.
We use a symmetric Cartan matrix, so all our vector spaces are over the same field.

Our preprojective algebra corresponds to the quiver
\[ \xymatrix@R=0.1 in@C=0.2 in{
1 \ar@{->}[dr] &&& \\
2 \ar@{->}[r] & 4 \ar@{->}[r] & 5 \ar@{->}[r] & 6 \\
3 \ar@{<-}[ur] &&& \\
} \]
The purpose of the arrows in this diagram is to convey the $\sgn$ function which is used to define the preprojective relation, with $i \overset{a}{\leftarrow} j$ meaning that $\sgn(a)=1$ and $\sgn(a^{\ast}) = -1$.
Thus, if $i \overset{a}{\rightarrow} j \overset{b}{\rightarrow} k$, then $a a^{\ast}$ and $b^{\ast} b$ appear with opposite signs in the preprojective relation. As a convenience to the reader, we have chosen an orientation such that this is the case whenever two nonzero terms appear in a preprojective relation for our module.

Our real brick $B$ has dimension vector $(3,3,2,4,2,1)$. 
We write $B_i$ for the component of $B$ on vertex $i$.

\begin{figure}[h]
\[ \xymatrix@R=0.15 in@C=0.15 in{
&&&4 \ar@{->}[dr]  \ar@{->}[dl] \ar@{->}[d] & \\
2 \ar@{->}[dr] &&1&2 \ar@{->}[d]  &3  \ar@{->}[dl]  \\
&4 \ar@{->}[d] \ar@{->}[dr] &&4 \ar@{->}[dr]  \ar@{->}[d] & \\
2 \ar@{->}[drrr] &1 \ar@{->}[drr]  &3 \ar@{->}[dr]  & 1 \ar@{->}[d] &5\ar@{->}[d]  \ar@{->}[dl] \\
&&&4 \ar@{->}[dr]  &6 \ar@{->}[d] \\
&&&&5 \\
} \]
\caption{A real brick which is not a shard module} \label{BadBrick}
\end{figure}
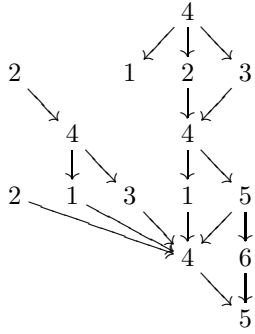

We depict the structure of the module in Figure~\ref{BadBrick}.
Here is how to understand the figure: $B_i$ is the free vector space on the vertices labeled $i$ in the figure. 
For each arrow $i \to j$ in the doubled quiver, the map $B_i \to B_j$ is given by the sum over arrows in the figure whose source is labeled $i$ and whose target is labeled $j$; each such arrow maps the corresponding basis element of $B_i$ to the corresponding basis element of $B_j$.
We first check that $B$ is a representation of the preprojective algebra. It is sufficient to start at each basis element, follow all the paths of length two from it which lead back to a basis element at the same vertex, and see that they sum to zero. The only length two paths that lead to from a vertex to itself at all are along the sides of the four quadrilateral faces in the figure. For example, between the first and third row, we have a path $4 \to 2 \to 4$ and a path $4 \to 3 \to 4$. Because this portion of the quiver is oriented $2 \to 4 \to 3$, these paths cancel. The computation is analogous for the other three quadrilateral faces. 

We next check that $\dim B$ is a real root. Indeed, it can be obtained by applying reflections to a simple root:
\[
\dim B = s_1s_2s_4s_3s_5s_4s_1s_2s_4s_5(\alpha_6)
\]

We now provide a direct verification that $B$ is a brick.
Let $\phi : B \to B$ be an endomorphism. Let $x \in B_4$ be the basis element corresponding to the $4$ in the top row of the diagram, and let $y$ and $z$ be the basis elements of $B_2$ corresponding to the $2$'s at the left ends of the second and fourth rows of the diagram. We will represent elements of the path algebra by recording paths through the doubled quiver: For example, $4 \leftarrow 1 \leftarrow 4$ is the path which goes from $4$ to $1$ to $4$. 

Note that $(5 \leftarrow 4)(x)=0$ and $(4 \leftarrow 1 \leftarrow 4)(x)=0$. 
Therefore, $(5 \leftarrow 4)(\phi(x))=0$ and $(4 \leftarrow 1 \leftarrow 4)(\phi(x))=0$. This implies that $\phi(x)$ is a scalar multiple of $x$, say $\phi(x) = \lambda x$, since the intersection of the kernels of $5 \leftarrow 4$ and $4 \leftarrow 1 \leftarrow 4$ with $B_4$ is spanned by $x$.
Similarly, $(5 \leftarrow 4 \leftarrow 2)(y)=0$, implying that $(5 \leftarrow 4 \leftarrow 2)(\phi(y))=0$, so $\phi(y) = \mu y$ for some scalar $\mu$.
Also, $(1 \leftarrow 4 \leftarrow 2)(z) = 0$ implying that $(1 \leftarrow 4 \leftarrow 2)(\phi(z)) = 0$, so $\phi(z) = \nu z$ for some scalar $\nu$.

Let $w \in B_5$ be the vector corresponding to the bottom $5$. We have $w=(5 \leftarrow 4 \leftarrow 2)(z)$, so $\phi(w) = (5 \leftarrow 4 \leftarrow 2)(\phi(z))  = \nu (5 \leftarrow 4 \leftarrow 2)(z) = \nu w$. Similarly, $w= (5 \leftarrow 6 \leftarrow 5 \leftarrow 4 \leftarrow 3 \leftarrow 4)(x)$ and $w= (5 \leftarrow 4 \leftarrow 3 \leftarrow 4 \leftarrow 2)(y)$, so $\phi(w) = \lambda w$ and $\phi(w) = \mu w$. This shows that $\lambda=\mu=\nu$. Since $x$, $y$ and $z$ generate $B$, the entire endomorphism $\phi$ must be multiplication by the scalar $\lambda$. We have shown that every $\phi \in \End(B)$ is a scalar, as desired.

We now verify that $\dim \Stab(B) \leq 2$. To this end, we will need to know that $B$ has submodules with the dimension vectors $(1,2,1,2,1,1)$, $(1,2,1,2,1,0)$, $(3,1,2,4,2,1)$ and $(1,0,0,0,0,0)$. These submodules correspond to the boxed elements in Figure~\ref{BadBrickSubs}.
\begin{figure}[ht]
\[ \xymatrix@R=0.1 in@C=0.1 in{
&&&4 \ar@{->}[dr]  \ar@{->}[dl] \ar@{->}[d] & \\
\fbox{2} \ar@{->}[dr] &&1&2 \ar@{->}[d]  &3  \ar@{->}[dl]  \\
&\fbox{4} \ar@{->}[d] \ar@{->}[dr] &&4 \ar@{->}[dr]  \ar@{->}[d] & \\
\fbox{2} \ar@{->}[drrr] &\fbox{1} \ar@{->}[drr]  &\fbox{3} \ar@{->}[dr]  & 1 \ar@{->}[d] &5\ar@{->}[d]  \ar@{->}[dl] \\
&&&\fbox{4} \ar@{->}[dr]  & \fbox{6} \ar@{->}[d] \\
&&&&\fbox{5} \\
} \qquad
 \xymatrix@R=0.1 in@C=0.1 in{
&&&4 \ar@{->}[dr]  \ar@{->}[dl] \ar@{->}[d] & \\
\fbox{2} \ar@{->}[dr] &&1&2 \ar@{->}[d]  &3  \ar@{->}[dl]  \\
&\fbox{4} \ar@{->}[d] \ar@{->}[dr] &&4 \ar@{->}[dr]  \ar@{->}[d] & \\
\fbox{2} \ar@{->}[drrr] &\fbox{1} \ar@{->}[drr]  &\fbox{3} \ar@{->}[dr]  & 1 \ar@{->}[d] &5\ar@{->}[d]  \ar@{->}[dl] \\
&&&\fbox{4} \ar@{->}[dr]  &6 \ar@{->}[d] \\
&&&&\fbox{5} \\
}  \]
\[ \xymatrix@R=0.1 in@C=0.1 in{
&&&\fbox{4} \ar@{->}[dr]  \ar@{->}[dl] \ar@{->}[d] & \\
2 \ar@{->}[dr] && \fbox{1}& \fbox{2} \ar@{->}[d]  & \fbox{3}  \ar@{->}[dl]  \\
&\fbox{4} \ar@{->}[d] \ar@{->}[dr] && \fbox{4} \ar@{->}[dr]  \ar@{->}[d] & \\
2 \ar@{->}[drrr] &\fbox{1} \ar@{->}[drr]  &\fbox{3} \ar@{->}[dr]  & \fbox{1} \ar@{->}[d] &\fbox{5}\ar@{->}[d]  \ar@{->}[dl] \\
&&&\fbox{4} \ar@{->}[dr]  & \fbox{6} \ar@{->}[d] \\
&&&&\fbox{5} \\
} \qquad 
 \xymatrix@R=0.2 in@C=0.2 in{
&&&4 \ar@{->}[dr]  \ar@{->}[dl] \ar@{->}[d] & \\
2 \ar@{->}[dr] &&\fbox{1}&2 \ar@{->}[d]  &3  \ar@{->}[dl]  \\
&4 \ar@{->}[d] \ar@{->}[dr] &&4 \ar@{->}[dr]  \ar@{->}[d] & \\
2 \ar@{->}[drrr] &1 \ar@{->}[drr]  &3 \ar@{->}[dr]  & 1 \ar@{->}[d] &5\ar@{->}[d]  \ar@{->}[dl] \\
&&&4 \ar@{->}[dr]  &6 \ar@{->}[d] \\
&&&&5 \\
} 
\]
\caption{Some key submodules of our counterexample} \label{BadBrickSubs}
\end{figure}
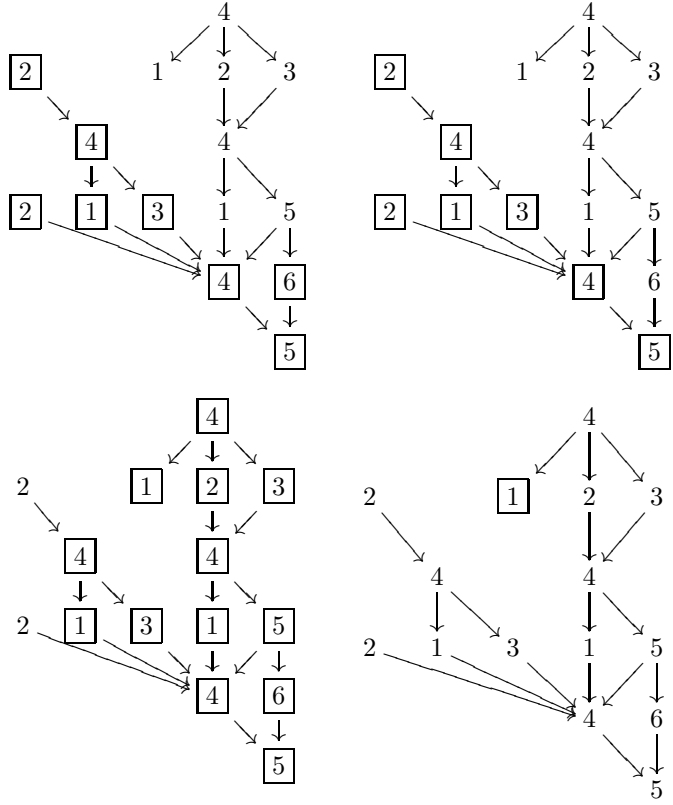
So any $\theta$ in $\Stab(B)$ must pair nonnegatively with the vectors $(1,2,1,2,1,1)$, $(1,2,1,2,1,0)$, $(3,1,2,4,2,1)$ and $(1,0,0,0,0,0)$, and also must pair to $0$ with $\dim B =  (3,3,2,4,2,1)$. 
But we have
\[ 2\cdot(1,2,1,2,1,1) + 2\cdot(1,2,1,2,1,0) +  (3,1,2,4,2,1) + 2\cdot(1,0,0,0,0,0) = 3 \cdot (3,3,2,4,2,1) . \]
So $\theta$ must pair to $0$ with all of these vectors, which restricts $\theta$ to a codimension $4$ linear space!
We have shown that $B$ is not a shard module. This completes our verification that
\begin{theorem}
The module $B$ discussed above is a real brick, but is not a shard module.
\end{theorem}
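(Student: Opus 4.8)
The plan is to verify four facts about the explicit module $B$ of dimension vector $(3,3,2,4,2,1)$ depicted in Figure~\ref{BadBrick}: that it is a genuine $\Lambda$-module, that $\dim B$ is a real root, that it is a brick, and that $\dim \Stab(B) \leq 2$. The first three facts give, directly from the definition of a real brick (or via Proposition~\ref{RealBrickProperties}), that $B$ is a real brick; the last shows $\dim \Stab(B) < n-1 = 5$, so $B$ fails the defining condition of a shard module.

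First I would check that $B$ satisfies the preprojective relations. It suffices to verify, for each basis vector, that the sum of the length-two paths in the doubled quiver that return to the same vertex vanishes. Reading off Figure~\ref{BadBrick}, the only such paths run along the sides of the four quadrilateral faces; for each such face the underlying quiver is oriented $i \to j \to k$, and with the sign convention encoded by the arrows the two contributions $a a^\ast$ and $b^\ast b$ appear with opposite signs, hence cancel. Next, $\dim B$ is exhibited as $s_1 s_2 s_4 s_3 s_5 s_4 s_1 s_2 s_4 s_5(\alpha_6)$, which is manifestly a real root once one performs the reflection computation.

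For the brick property, take any $\phi \in \End_\Lambda(B)$ and pin $\phi$ down on a generating set. Let $x \in B_4$ be the basis vector at the top vertex labelled $4$, and let $y,z \in B_2$ be the left ends of the second and fourth rows. One checks that $x$ spans the intersection of $B_4$ with the kernels of the operators $(5 \leftarrow 4)$ and $(4 \leftarrow 1 \leftarrow 4)$; since $\phi$ commutes with the $\Lambda$-action, $\phi(x)$ lies in the same one-dimensional space, so $\phi(x) = \lambda x$. Similarly $(5 \leftarrow 4 \leftarrow 2)$ annihilates $y$ and $(1 \leftarrow 4 \leftarrow 2)$ annihilates $z$, giving $\phi(y) = \mu y$, $\phi(z) = \nu z$. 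Now let $w \in B_5$ be the bottom vertex labelled $5$; it equals $(5 \leftarrow 4 \leftarrow 2)(z)$, $(5 \leftarrow 6 \leftarrow 5 \leftarrow 4 \leftarrow 3 \leftarrow 4)(x)$, and $(5 \leftarrow 4 \leftarrow 3 \leftarrow 4 \leftarrow 2)(y)$, so computing $\phi(w)$ three ways forces $\lambda = \mu = \nu$. Since $x,y,z$ generate $B$ over $\Lambda$, $\phi$ is multiplication by the scalar $\lambda$, whence $\End_\Lambda(B) = \kappa$ and $B$ is a brick; combined with the previous paragraph, $B$ is a real brick.

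Finally, to bound $\Stab(B)$, I would exhibit the four submodules of $B$ indicated by the boxed vertices in Figure~\ref{BadBrickSubs}, of dimension vectors $v_1 = (1,2,1,2,1,1)$, $v_2 = (1,2,1,2,1,0)$, $v_3 = (3,1,2,4,2,1)$, $v_4 = (1,0,0,0,0,0)$, checking in each case that the boxed set of basis vectors is stable under every quiver map. Any $\theta \in \Stab(B)$ satisfies $\langle \theta, v_k \rangle \geq 0$ for $k=1,\dots,4$ and $\langle \theta, \dim B \rangle = 0$. But $2v_1 + 2v_2 + v_3 + 2v_4 = 3 \dim B$, so a sum of four nonnegative numbers equals zero, forcing $\langle \theta, v_k \rangle = 0$ for all $k$. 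The $v_k$ are linearly independent, hence span a codimension-$4$ subspace of $V$, so $\Stab(B)$ is contained in a codimension-$4$ subspace of $V^\ast$; thus $\dim \Stab(B) \leq 2 < 5 = n-1$, and $B$ is not a shard module.

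I expect the brick verification to be the main obstacle: one must be careful about exactly which path compositions annihilate the chosen vectors and that the claimed one-dimensional kernel intersections in $B_4$ and $B_2$ are correct, since the eigenvalue-matching step through $w$ depends entirely on this. The submodule-closure checks in Figure~\ref{BadBrickSubs} and the independence of the $v_k$ are routine but must be carried out explicitly.
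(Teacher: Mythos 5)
Your proposal is correct and follows essentially the same route as the paper: the same quadrilateral-face check of the preprojective relations, the same reflection expression for $\dim B$, the same brick argument via the generators $x,y,z$ and the element $w$, and the same four submodules with the relation $2v_1+2v_2+v_3+2v_4 = 3\dim B$ forcing $\Stab(B)$ into a codimension-$4$ subspace. Your explicit remark that the $v_k$ are linearly independent is a small (and correct) point the paper leaves implicit.
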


By Theorem~\ref{RealBrickRecursion}, we know that $B$ should be obtainable by a series of reflection functors applied to a simple module. 
We give the corresponding formula without proof:
\[ B = \Sigma_1^{-} \Sigma_2^{+} \Sigma_4^{+} \Sigma_3^{+} \Sigma_5^{-} \Sigma_4^{-} \Sigma_1^{-} \Sigma_2^{+} \Sigma_4^{+} \Sigma_5^{+} S_6 . \]
The module $B$ was found using a SAGE notebook~\cite{DanaSAGE} that iteratively searched through formulas like the above one; the authors then worked backwards to find the computations given here, so that readers could trust our result without relying on our code.
Using this formula and Theorem~\ref{StabOfAShardModule}, we can compute the stability domain of $B$ exactly -- it is a $2$-dimensional cone with rays $(0,0,-1,0,1,0)$ and $(0,0,1,-1,1,0)$.

\begin{remark} \label{WhyBadBrickIsBad}
It is interesting to consider the module $B' := \Sigma_1^{+} B$,  which is the result of the above formula right before we apply $\Sigma_1^{-}$. 
This module is a shard module, and it lies in $\NoSub_1$ (which is why we can apply $\Sigma_1^+$ to it).
However, its stability domain lies entirely in the closed half space where $\langle \alpha_1, - \rangle$ is nonnegative.
This is why $\shard_1^+(\Stab(B'))$ has lower dimension than $\Stab(B')$ does.
\end{remark}

\appendix


\section{Proof of Theorem \ref{CB Identity} in the symmetrizable case}
\label{appendix}

The goal of this section is to outline a proof of the following identity:
{\renewcommand{\thethm}{\ref{CB Identity}}
\begin{theorem} 
Let $M$ and $N$ be left $\Lambda$-modules, which are finite dimensional over our ground field $\kappa$. Then
\[
\dim_\kappa(\Hom_\Lambda(M, N)) - \dim_\kappa(\Ext^1_\Lambda(M, N)) + \dim_\kappa(\Hom_\Lambda(N, M)) = (\dim M, \dim N)
\]
\end{theorem}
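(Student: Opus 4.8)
The strategy is to transcribe Crawley-Boevey's proof of the symmetric case into the species language of Section~\ref{preproj ssec gen}, in the form that makes the relevant complex manifestly self-dual. For a finite-dimensional $\Lambda$-module $X$, one has a \emph{standard complex} of projective left $\Lambda$-modules, with terms (from left to right) $P_2, P_1, P_0$ and augmentation to $X$:
\[
0 \longrightarrow \bigoplus_i \Lambda e_i \otimes_{\kappa(d_i)} X_i \xrightarrow{\ d_2\ } \bigoplus_{i\neq j} \Lambda e_j \otimes_{\kappa(d_j)} E(j\ot i)\otimes_{\kappa(d_i)} X_i \xrightarrow{\ d_1\ } \bigoplus_i \Lambda e_i \otimes_{\kappa(d_i)} X_i \xrightarrow{\ d_0\ } X \longrightarrow 0 ,
\]
where $d_0$ is multiplication, $d_1$ is assembled from the action maps $X_{j\ot i}$ of $X$, and $d_2$ is assembled from the defining preprojective relation (so that $d_0 d_1 = 0$ because $X$ is a $\Lambda$-module, and $d_1 d_2 = 0$ because $X$ satisfies the preprojective relation). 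In the symmetric case this is the familiar resolution used by Crawley-Boevey; the species version is due to Külshammer. I would record only the two pieces of exactness the argument actually needs, both formal: the sequence $P_1 \to P_0 \to X \to 0$ is exact (it is a projective presentation of $X$), and $\Ker d_1 = \Im d_2$. I would \emph{not} assert that $d_2$ is injective — it fails to be, in Dynkin type — so this is not a projective resolution in general.

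Applying $\Hom_\Lambda(-,N)$ together with the adjunctions $\Hom_\Lambda(\Lambda e_i \otimes_{\kappa(d_i)} X_i, N) \cong \Hom_{\kappa(d_i)}(X_i, N_i)$ and its one-$E$-factor analogue turns $\Hom_\Lambda(P_\bullet(M), N)$ into the concrete three-term complex, in cohomological degrees $0,1,2$,
\[
K^\bullet:\quad \bigoplus_i \Hom_{\kappa(d_i)}(M_i, N_i) \longrightarrow \bigoplus_{i\neq j}\Hom_{\kappa(d_j)}\!\big(E(j\ot i)\otimes_{\kappa(d_i)} M_i,\, N_j\big) \longrightarrow \bigoplus_i \Hom_{\kappa(d_i)}(M_i, N_i).
\]
Extending the standard complex to an honest projective resolution $Q_\bullet \to M$ with $Q_0 = P_0$, $Q_1 = P_1$, $Q_2 = P_2$ — legitimate precisely by the two exactness statements above — one gets $H^0(K^\bullet) = \Hom_\Lambda(M,N)$ and $H^1(K^\bullet) = \Ext^1_\Lambda(M,N)$, since these cohomology groups only see $Q_0, Q_1, Q_2$. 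A direct count of $\kappa$-dimensions, using $\dim_{\kappa(d_{ij})} E(j\ot i) = -d_i A_{ij}/d_{ij}$, $[\kappa(d_i):\kappa]=d_i$, and $A_{ii}=2$, gives
\[
\dim_\kappa K^0 - \dim_\kappa K^1 + \dim_\kappa K^2 = \sum_{i,j} d_i A_{ij}\,(\dim_{\kappa(d_i)} M_i)(\dim_{\kappa(d_j)} N_j) = (\dim M, \dim N),
\]
using $(\alpha_i,\alpha_j) = d_i A_{ij}$. So the theorem reduces to the identity $\dim_\kappa H^2(K^\bullet) = \dim_\kappa \Hom_\Lambda(N,M)$.

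This last point is the crux, and is where self-duality enters. Using the field trace $\tr_{\kappa(d_i)/\kappa}$ (which makes $\Hom_{\kappa(d_i)}(M_i,N_i)$ and $\Hom_{\kappa(d_i)}(N_i,M_i)$ perfectly paired over $\kappa$ via $(f,g)\mapsto \tr_{\kappa(d_i)/\kappa}\tr(fg)$) together with the $\kappa(d_{ij})$-bilinear pairing $E(j\ot i)\times E(i\ot j)\to \kappa(d_{ij})$ built into the species, I would construct an isomorphism of complexes
\[
K^\bullet(M,N) \;\cong\; \Hom_\kappa\!\big(K^{2-\bullet}(N,M),\,\kappa\big),
\]
that is, $K^\bullet(M,N)$ is the $\kappa$-linear dual of the reversal of $K^\bullet(N,M)$; the content is that $d_1$ and $d_2$ are mutually adjoint for these pairings, which comes down to the fact that the relation and the action maps in ``dual positions'' are governed by the same structure constants and the same $\sgn$ (compare \cite[Corollary 4.2]{Kulshammer}). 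Granting this, $H^2(K^\bullet(M,N)) \cong H^0(K^\bullet(N,M))^\ast = \Hom_\Lambda(N,M)^\ast$, so $\dim_\kappa H^2(K^\bullet(M,N)) = \dim_\kappa \Hom_\Lambda(N,M)$; combining with the Euler-characteristic identity of the previous paragraph proves the theorem. (No case split is needed: in non-Dynkin type $d_2$ is injective, $\Lambda$ has global dimension $2$, and $H^2(K^\bullet) = \Ext^2_\Lambda(M,N)$, so the isomorphism above is a form of the $2$-Calabi--Yau property, while in Dynkin type it is simply the self-duality of the four-term complex.)

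The main obstacle is this self-duality isomorphism in the symmetrizable setting: one must check the compatibility of the traces $\tr_{\kappa(d_i)/\kappa}$ and $\tr_{\kappa(d_j)/\kappa}$ with the trace maps $\kappa(d_{ij})\to\kappa(d_i)$, $\kappa(d_{ij})\to\kappa(d_j)$ and with the bilinear form on $E(j\ot i)$, being careful about the restriction of scalars that occurs because $E(j\ot i)$ is viewed over $\kappa(d_i)$ on one side of the tensor and over $\kappa(d_j)$ on the other. In the symmetric case all of this collapses — everything is over $\kappa$, $E(j\ot i)$ is the span of the arrows $i\to j$ with $a^\ast$ dual to $a$, and the statement is exactly Crawley-Boevey's — so the remaining work is bookkeeping within Külshammer's framework. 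A secondary, routine point is writing down the standard complex $P_\bullet(X)$ and verifying its exactness at $P_0$ and $P_1$ directly from the fact that the relations of $\Lambda$ acting on $X$ are generated by the preprojective relations.
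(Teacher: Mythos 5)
Your proposal follows essentially the same route as the paper's appendix: the truncated projective resolution coming from the preprojective presentation (the paper obtains its exactness at $P_1$ by tensoring K\"ulshammer's bimodule resolution with $M$), the $\Hom$ complex with $H^0=\Hom_\Lambda(M,N)$ and $H^1=\Ext^1_\Lambda(M,N)$, the Euler-characteristic count giving $(\dim M,\dim N)$, and the self-duality of the three-term complex via trace and arrow pairings (the paper's vertex and edge pairings, with the identity $\sum_k (b^{ji}_k)^\ast b^{ji}_k=1$ handling exactly the restriction-of-scalars bookkeeping you flag). The argument is correct as outlined, and the details you defer are precisely the ones the paper carries out.
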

\addtocounter{thm}{-1}}

This identity was originally observed for the case of symmetric Cartan matrices in \cite{Crawley-Boevey}. 
It is difficult to find a reference for the symmetrizable case, which is why we have included this appendix.
Morally, this theorem should be thought of as saying that $(\dim M, \dim N) = \sum_j (-1)^j \dim_{\kappa} \Ext^j_{\Lambda}(M,N)$, combined with a Serre duality property that $\dim_{\kappa} \Ext^j_{\Lambda}(M,N) = \dim_{\kappa} \Ext^{2-j}_\Lambda(N,M)$.
However, this is not literally correct in Dynkin type. In fact, Dynkin type
preprojective algebras are self-injective \cite[Corollary 3.4]{IO}, and
self-injective algebras are necessarily of infinite global dimension \cite[Section IV.3]{ARS}.

A proof of Theorem \ref{CB Identity} for the symmetric  case appears in \cite[Section 8]{GLS}: here, we explain how to generalize that proof to the framework established in Section \ref{preproj ssec gen} based on \cite{Kulshammer}. Recall in particular that $\{b^{ji}_k\}$ is a $\kappa(d_j)$-basis of $\kappa(d_{ij})$ and $\{(b^{ji}_k)^*\}$ is its dual basis under the trace pairing $\kappa(d_{ij})\times \kappa(d_{ij})\to \kappa(d_j)$. Because $\kappa(d_{ij})$ is the compositum of $\kappa(d_j)$ and $\kappa(d_i)$, we can choose the $b^{ji}_k$ to lie in $\kappa(d_i)$, which will simplify calculations later on.

The strategy of the proof is to construct, for any $\Lambda$-module $M$, the first three terms of a projective resolution
\[
P_2(M)\xrightarrow{d^1} P_1(M) \xrightarrow{d^0} P_0(M) \xrightarrow{\operatorname{mult}} M\to 0
\]
Then, by applying $\Hom_\Lambda(-, N)$, we obtain a complex whose first two homologies compute $\Hom_\Lambda(M, N)$ and $\Ext^1_\Lambda(M, N)$:
\[0\rightarrow\Hom_\Lambda(P_0(M), N) \xrightarrow{d^0_{M, N}} \Hom_\Lambda(P_1(M), N) \xrightarrow{d^1_{M, N}}  \Hom_\Lambda(P_2(M), N) \rightarrow 0\]
The crux of the argument is to show that this complex is isomorphic to the dual (over the ground field $\kappa$) of the corresponding complex with $M$ and $N$ switched:
\[
0\rightarrow \Hom_\Lambda(P_2(N), M)^*\xrightarrow{d^{1*}_{N, M}} \Hom_\Lambda(P_1(N), M)^* \xrightarrow{d^{0*}_{N, M}} \Hom_\Lambda(P_0(N), M)^* \rightarrow 0
\]
This will show that the third homology of our complex computes $\Hom_\Lambda(N, M)^*$. From this we can conclude that
\begin{multline*}
\dim_\kappa(\Hom_\Lambda(M, N)) - \dim_\kappa(\Ext^1_\Lambda(M, N)) + \dim_\kappa(\Hom_\Lambda(N, M)) =\\
\dim_\kappa(\Hom_\Lambda(P_0(M), N)) - \dim_\kappa(\Hom_\Lambda(P_1(M), N)) + \dim_\kappa(\Hom_\Lambda(P_2(M), N))
\end{multline*}
which we then show is equal to $(\dim M, \dim N)$.

We start with a recipe for building projective resolutions in $\Mod(\Lambda)$.

\begin{lemma}[\cite{Kulshammer}, Lemma 5.1]
\label{bimoduleresolution}
The following is the start of a projective $\Lambda$-$\Lambda$-bimodule resolution of $\Lambda$:
\[
\bigoplus_{1\leq i\leq n}V_i \xrightarrow{d^1} \bigoplus_{1\leq i\neq j\leq n} W_{j\ot i} \xrightarrow{d^0} \bigoplus_{1\leq i \leq n} V_i\xrightarrow{\operatorname{mult}} \Lambda\to 0\]
where 
\[
V_i = \Lambda e_i \otimes_{\kappa(d_i)} e_i \Lambda \text{ and }
W_{j\ot i} = \Lambda e_{j} \otimes_{\kappa(d_j)} E(j\ot i) \otimes_{\kappa(d_i)} e_{i}\Lambda 
\]
and the maps are given by
\begin{align*}
\operatorname{mult}(e_i\otimes e_i) &= e_i \\
d^0(e_j\otimes x \otimes e_i) &= x\otimes e_{i} - e_{j}\otimes x \\
d^1(e_i\otimes e_i) &= \sum_{j\neq i} \sgn(i, j)\sum_{i \overset{a}{\longrightarrow} j}\sum_{k} {\big(} (b^{ji}_k)^\ast a^{\ast} \otimes a b^{ji}_k\otimes e_i + e_i \otimes (b^{ji}_k)^\ast a^{\ast} \otimes a b^{ji}_k {\big)}
\end{align*}
\end{lemma}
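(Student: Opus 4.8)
The plan is to recognize the displayed sequence as the standard (Butler--King type) truncated $\Lambda$-bimodule resolution attached to the presentation of $\Lambda$ as a quotient $T_S(E)/(\rho_1,\dots,\rho_n)$ of a tensor algebra: here $S=\prod_{i=1}^n\kappa(d_i)$ is the semisimple $\kappa$-algebra spanned by the $e_i$, $E=\bigoplus_{i\neq j}E(j\ot i)$ is the arrow bimodule, and $\rho_i\in e_i\Lambda e_i$ is the preprojective relation at vertex $i$. Under the identifications $\bigoplus_i V_i\cong\Lambda\otimes_S\Lambda$, $\bigoplus_{i\neq j}W_{j\ot i}\cong\Lambda\otimes_S E\otimes_S\Lambda$, and the left-hand $\bigoplus_i V_i\cong\Lambda\otimes_S\langle\rho\rangle\otimes_S\Lambda$ with $\langle\rho\rangle\cong S$ the bimodule spanned by the relations, there are three things to establish: that the three terms are projective $\Lambda$-bimodules, that the sequence is a complex, and that it is exact at each of the three displayed spots.

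For projectivity, $\Lambda e_i$ is a finitely generated projective left $\Lambda$-module and $e_i\Lambda$ a finitely generated projective right $\Lambda$-module; since $\kappa(d_i)/\kappa$ is separable (being a subextension of the cyclic Galois extension $\kappa(L)/\kappa$), the algebra $\kappa(d_i)\otimes_\kappa\kappa(d_i)$ has a separability idempotent, which realizes $V_i=\Lambda e_i\otimes_{\kappa(d_i)}e_i\Lambda$ as a direct summand of the projective $\Lambda$-bimodule $\Lambda e_i\otimes_\kappa e_i\Lambda$; the same applies to $W_{j\ot i}$ after tensoring in the finite-dimensional space $E(j\ot i)$. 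That the sequence is a complex is a direct computation: $\operatorname{mult}(d^0(e_j\otimes x\otimes e_i))=x e_i-e_j x=x-x=0$; and $d^1(e_i\otimes e_i)$ is precisely the noncommutative Kähler differential (Fox derivative) of $\rho_i$, so when $d^0$ is applied to it the two families of summands in the formula have their ``inner'' $V_j$-components cancel pairwise, leaving only $V_i$-components, which reassemble to $\rho_i\otimes e_i-e_i\otimes\rho_i$; this vanishes since $\rho_i=0$ in $\Lambda$, and the expression is well-defined because the relation does not depend on the chosen dual bases $\{b^{ji}_k\},\{(b^{ji}_k)^\ast\}$ (\cite[Corollary 4.2]{Kulshammer}).

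Exactness at $\Lambda$ is immediate, since $\operatorname{mult}$ hits every $e_i$ and its image is a sub-bimodule. Exactness at $\bigoplus_i V_i\cong\Lambda\otimes_S\Lambda$ is the universal presentation of the multiplication kernel: for the free tensor algebra $T=T_S(E)$ one has $\ker(T\otimes_S T\to T)=T\cdot\{x\otimes 1-1\otimes x:x\in E\}\cdot T$, and base-changing this right-exactly along $T\twoheadrightarrow\Lambda$ yields the statement for $\Lambda$, using only that $E$ generates $\Lambda$ over $S$. The remaining and essential step is exactness at $\bigoplus_{i\neq j}W_{j\ot i}\cong\Lambda\otimes_S E\otimes_S\Lambda$, i.e. $\ker d^0\subseteq\operatorname{im}d^1$; concretely this asks that the Fox derivatives of $\rho_1,\dots,\rho_n$ generate every syzygy of the multiplication presentation, equivalently that $\{\rho_i\}$ is a minimal set of defining relations. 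This is where the special structure of preprojective algebras enters, and it is the main obstacle.

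To handle the crux I would first reduce, by faithfully flat base change along $\kappa\to\kappa(L)$, to a situation where all the ground rings are split: since the Galois group of $\kappa(L)/\kappa$ is cyclic, each $\kappa(d_i)$ is Galois over $\kappa$ and $\kappa(L)\otimes_\kappa\kappa(d_i)\cong\kappa(L)^{d_i}$, so $\kappa(L)\otimes_\kappa\Lambda$ is a preprojective-type algebra over the split semisimple base $\kappa(L)\otimes_\kappa S$, and the maps $d^0,d^1$, being visibly defined over $\kappa$, are carried to the corresponding maps there. Exactness of a complex of $\kappa$-vector spaces is preserved and reflected by $\kappa\to\kappa(L)$, so it suffices to prove exactness in the split case; there one is in the setting of \cite[Section 8]{GLS} (and, when the Cartan matrix is symmetric, of \cite{Crawley-Boevey}), where the syzygy computation identifying $\ker d^0$ with $\operatorname{im}d^1$ is carried out by an explicit comparison with the bar resolution. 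The delicate points of this reduction are making the base change explicit — checking that it really does turn the species-preprojective relations into the relations treated in \cite{GLS}, with dual bases matching the evident idempotent decomposition — and, if one prefers to bypass base change, running the GLS syzygy argument directly in the species framework; either way, the bookkeeping in this last step is the bulk of the work.
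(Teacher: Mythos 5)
The paper does not prove this lemma at all: it is imported verbatim as \cite[Lemma 5.1]{Kulshammer}, so there is no in-paper argument to compare yours against, and your proposal has to stand on its own. The routine parts of your outline are fine: projectivity of $V_i$ and $W_{j\ot i}$ via separability of $\kappa(d_i)/\kappa$, the check that the maps form a complex, surjectivity of $\operatorname{mult}$, and exactness at the right-hand $\bigoplus_i V_i$ using the derivation $x\mapsto x\otimes 1-1\otimes x$ and the fact that $E$ generates $\Lambda$ over $S=\prod_i\kappa(d_i)$. But the statement that carries all the weight -- exactness at $\bigoplus_{i\neq j}W_{j\ot i}$ -- is not established. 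You reduce it, without carrying out the reduction, to the claim that $\kappa(L)\otimes_\kappa\Lambda$ is the preprojective algebra of the unfolded quiver with relations and dual bases matching those of \cite{GLS}, and then to the syzygy computation there, and you yourself concede that the bookkeeping in this step is ``the bulk of the work.'' As written this is a plan, not a proof; note also that \cite[Section 8]{GLS} works with resolutions of a fixed module $M$, so even granting the unfolding one would still have to extract the bimodule-level exactness asserted here.

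There is also a conceptual misidentification in how you frame that crux, and it matters because it sends you toward unnecessary machinery. Exactness at the middle term is \emph{not} equivalent to minimality of the defining relations, and it does not require the special structure of preprojective algebras or any passage to the split case. For any $\Lambda=T_S(E)/I$ and any $S$-subbimodule $R\subseteq T_S(E)_{\geq 1}$ generating $I$ (here $R=\bigoplus_i\kappa(d_i)\rho_i$), the complex with $P_2=\Lambda\otimes_S R\otimes_S\Lambda$ and $d^1$ the Fox-derivative map is automatically exact at $\Lambda\otimes_S E\otimes_S\Lambda$: a bimodule map $\psi:\Lambda\otimes_S E\otimes_S\Lambda\to M$ corresponds to an $S$-bimodule map $\varphi:E\to M$; $\psi$ kills $\ker d^0$ exactly when $\varphi$ is the restriction of a derivation $\Lambda\to M$, and $\psi$ kills $\operatorname{im} d^1$ exactly when the unique derivation $T_S(E)\to M$ extending $\varphi$ kills each $\rho_i$; these conditions coincide because the $\rho_i$ generate $I$ and $I$ acts by zero on any $\Lambda$-bimodule, and taking $M=\operatorname{coker}(d^1)$ yields $\ker d^0=\operatorname{im} d^1$. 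The genuinely species-specific content sits elsewhere, and your sketch passes over it: one must know that $d^1$ is well defined on $V_i=\Lambda e_i\otimes_{\kappa(d_i)}e_i\Lambda$, i.e.\ that $\lambda\rho_i=\rho_i\lambda$ for $\lambda\in\kappa(d_i)$, which is exactly what the trace-dual bases provide (the Casimir element $\sum_k (b^{ji}_k)^\ast\otimes b^{ji}_k$ commutes with $\kappa(d_{ij})$), together with independence of the choice of bases (\cite[Corollary 4.2]{Kulshammer}); your assertion that $\langle\rho\rangle\cong S$ as an $S$-bimodule is precisely this unproved point. So the proposal has a genuine gap at the step it correctly identifies as essential, while the heavy reduction it proposes for that step is both unexecuted and avoidable.
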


Recall that the elements $e_i$ appearing here are the idempotents of $\Lambda$ corresponding to the length-0 paths at each vertex. If $M$ is any left $\Lambda$-module, then $M_i := e_i M$ is the vector space attached to vertex $i$ when we view $M$ as a quiver representation. We note the following useful identifications, which are standard properties of idempotents:
\begin{lemma}
There are canonical isomorphisms
\begin{align*}
&f:e_i\Lambda \otimes_\Lambda M \xrightarrow{\sim} M_i \\
&g:\Hom_\Lambda(\Lambda e_i, M) \xrightarrow{\sim} M_i
\end{align*}
given by
\begin{align*}
f(x\otimes m) &= xm \\
g(\phi) &= \phi(e_i)
\end{align*}
\end{lemma}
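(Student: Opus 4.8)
The plan is to exhibit explicit two-sided inverses for $f$ and $g$, using only the idempotent identity $e_i^2 = e_i$ together with the fact that $e_i$ acts as the identity on $M_i = e_iM$. This is the classical Morita-theoretic computation, and the only point requiring care is keeping track of which side of $\Lambda e_i$ and $e_i\Lambda$ each module structure acts on.

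For $f$, I would first check that the pairing $e_i\Lambda\times M\to M_i$, $(x,m)\mapsto xm$, really lands in $M_i$: since $x = e_ix$ we have $xm = e_i(xm)\in e_iM = M_i$. It is $\Lambda$-balanced because $(x\lambda)m = x(\lambda m)$, so it descends to the stated $\kappa$-linear map $f$. I would then define $h\colon M_i\to e_i\Lambda\otimes_\Lambda M$ by $h(m) = e_i\otimes m$ and compute $f(h(m)) = e_im = m$ for $m\in M_i$, and $h(f(x\otimes m)) = e_i\otimes xm = e_ix\otimes m = x\otimes m$, where the middle step moves $x$ across the tensor sign over $\Lambda$ and the last uses $x = e_ix$. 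Hence $h$ is a two-sided inverse of $f$.

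For $g$, I would first note that $\phi(e_i) = \phi(e_ie_i) = e_i\phi(e_i)\in M_i$ for any $\phi\in\Hom_\Lambda(\Lambda e_i,M)$, so $g$ is well-defined. Then I would define $k\colon M_i\to\Hom_\Lambda(\Lambda e_i,M)$ by taking $k(m)$ to be the $\Lambda$-linear map $x\mapsto xm$, and check $g(k(m)) = k(m)(e_i) = e_im = m$ and, for $x\in\Lambda e_i$, $k(g(\phi))(x) = x\phi(e_i) = \phi(xe_i) = \phi(x)$ using $x = xe_i$; so $k = g^{-1}$. Finally I would observe that $f$ and $g$ are natural in $M$, which is the sense in which they are canonical. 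There is no substantive obstacle here beyond the bookkeeping just described.
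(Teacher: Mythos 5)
Your proof is correct: the explicit two-sided inverses $m\mapsto e_i\otimes m$ and $m\mapsto(x\mapsto xm)$, verified using $e_i^2=e_i$, $x=e_ix$ (resp.\ $x=xe_i$), and the $\Lambda$-balanced/linearity identities, are exactly the standard argument. The paper in fact omits any proof, asserting these as ``standard properties of idempotents,'' so your write-up simply supplies the routine verification the paper takes for granted.
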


Now we obtain a partial projective resolution of $M$ by tensoring it with the above resolution. 
\begin{lemma}
\label{resolution}
The following is the start of a projective left $\Lambda$-module resolution of $M$:
\[
\bigoplus_{1\leq i\leq n} V_i^M \xrightarrow{d^1} \bigoplus_{1\leq i\neq j\leq n} W_{j\ot i}^M \xrightarrow{d^0} \bigoplus_{1\leq i \leq n} V_i^M \xrightarrow{\operatorname{mult}} M\to 0
\]
where
\[
V_i^M = \Lambda e_i \otimes_{\kappa(d_i)} M_i \text{ and }
W_{j\ot i}^M = \Lambda e_{j} \otimes_{\kappa(d_j)} E(j\ot i) \otimes_{\kappa(d_i)} M_i 
\]
with maps
\begin{align*}
\operatorname{mult}(e_i\otimes m) &= e_im \\
d^0(e_j\otimes x \otimes m) &= x\otimes m - e_{j}\otimes xm \\
d^1(e_i\otimes m) &= \sum_{j\neq i} \sgn(j, i)\sum_{i \overset{a}{\longrightarrow} j}\sum_{k} (b^{ji}_k)^\ast a^{\ast} \otimes a b^{ji}_k\otimes m + e_i \otimes (b^{ji}_k)^\ast a^{\ast} \otimes a b^{ji}_k m
\end{align*}
\end{lemma}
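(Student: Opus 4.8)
The plan is to obtain the displayed complex by applying $-\otimes_\Lambda M$ to the partial bimodule resolution of Lemma~\ref{bimoduleresolution}, and then to verify two things: that the resulting terms and differentials coincide with those written in the statement, and that exactness is preserved. For the identification of terms one invokes the canonical isomorphism $e_i\Lambda\otimes_\Lambda M\xrightarrow{\sim}M_i$ from the preceding lemma, which absorbs the rightmost tensor factor of each bimodule: it gives $V_i\otimes_\Lambda M\cong \Lambda e_i\otimes_{\kappa(d_i)}M_i=V_i^M$ and $W_{j\ot i}\otimes_\Lambda M\cong \Lambda e_j\otimes_{\kappa(d_j)}E(j\ot i)\otimes_{\kappa(d_i)}M_i=W_{j\ot i}^M$, while $\Lambda\otimes_\Lambda M=M$. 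Choosing a $\kappa(d_i)$-basis of $M_i$ (resp.\ a $\kappa(d_j)$-basis of $E(j\ot i)\otimes_{\kappa(d_i)}M_i$) exhibits $V_i^M$ (resp.\ $W_{j\ot i}^M$) as a finite direct sum of copies of the projective left module $\Lambda e_i$ (resp.\ $\Lambda e_j$), so all terms of the complex are projective left $\Lambda$-modules. Transporting the formulas for $\operatorname{mult}$, $d^0$ and $d^1$ from Lemma~\ref{bimoduleresolution} through these identifications then produces, after a direct computation, the maps in the statement.

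\textbf{Exactness.} The preservation of exactness rests on the observation that the bimodule resolution is built from bimodules which are projective not only on the left but also on the right: $V_i=\Lambda e_i\otimes_{\kappa(d_i)}e_i\Lambda$ and $W_{j\ot i}=\Lambda e_j\otimes_{\kappa(d_j)}E(j\ot i)\otimes_{\kappa(d_i)}e_i\Lambda$ are (possibly infinite) direct sums of copies of the projective right modules $e_i\Lambda$, and $\Lambda$ itself is projective as a right module. Write the bimodule complex as $P_2\xrightarrow{d^1}P_1\xrightarrow{d^0}P_0\xrightarrow{\operatorname{mult}}\Lambda\to 0$. Since it is exact at $P_0$ and $\Lambda$ is projective as a right module, the short exact sequence $0\to\operatorname{im}d^0\to P_0\to\Lambda\to 0$ of right $\Lambda$-modules splits, so $\operatorname{im}d^0$ is projective on the right; since the complex is exact at $P_1$, the sequence $0\to\operatorname{im}d^1\to P_1\to\operatorname{im}d^0\to 0$ of right $\Lambda$-modules then splits as well. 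Applying the additive functor $-\otimes_\Lambda M$ to these two split short exact sequences yields exact sequences $0\to\operatorname{im}d^0\otimes_\Lambda M\to P_0\otimes_\Lambda M\to M\to 0$ and $0\to\operatorname{im}d^1\otimes_\Lambda M\to P_1\otimes_\Lambda M\to\operatorname{im}d^0\otimes_\Lambda M\to 0$. Splicing these, and using that $P_2\otimes_\Lambda M\to\operatorname{im}d^1\otimes_\Lambda M$ is surjective by right-exactness of the tensor product, shows that $P_2\otimes_\Lambda M\to P_1\otimes_\Lambda M\to P_0\otimes_\Lambda M\to M\to 0$ is exact at $P_1\otimes_\Lambda M$, at $P_0\otimes_\Lambda M$ and at $M$, which is precisely the assertion that the displayed complex is the start of a projective resolution of $M$.

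\textbf{Main obstacle.} The only genuinely delicate step is the bookkeeping in the first paragraph: one must track how the tensor factors over the various fields $\kappa(d_i)$, $\kappa(d_j)$, $\kappa(d_{ij})$ interact with $-\otimes_\Lambda M$, and confirm that the trace-dual elements $(b^{ji}_k)^\ast$ and the arrows $a$, $a^\ast$ land in the tensor slots claimed; this is routine but requires care because of the several field extensions in play. (The sign of $d^1$, which is displayed with $\sgn(j,i)$ rather than $\sgn(i,j)$, is immaterial, since negating a differential changes neither its kernel nor its image.) Everything else — projectivity of the terms and preservation of exactness — is formal once one records that the bimodule resolution of Lemma~\ref{bimoduleresolution} is by bimodules projective on both sides.
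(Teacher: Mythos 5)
Your proposal takes essentially the same route as the paper: its proof is precisely to apply $-\otimes_\Lambda M$ to the bimodule resolution of Lemma~\ref{bimoduleresolution} and note that exactness is preserved (the paper justifies this in one line by flatness of $\Lambda$ as a right $\Lambda$-module, whereas you give the slightly more detailed, equally valid argument that the terms are projective as right modules, so the resolution splits into split short exact sequences of right modules). Your identification of the terms via $e_i\Lambda\otimes_\Lambda M\cong M_i$, the verification of left-projectivity, and your remark that the $\sgn(j,i)$ versus $\sgn(i,j)$ discrepancy in $d^1$ is immaterial are all correct, so there is no gap.
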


\begin{proof}
Because the resolution in Lemma \ref{bimoduleresolution} is a right $\Lambda$-module resolution of $\Lambda$, and $\Lambda$ is a flat right $\Lambda$-module, the sequence will remain exact when we apply $-\otimes_\Lambda M$. The result is the resolution given here.
\end{proof}

Finally, we apply $\Hom_\Lambda(-, N)$ to this resolution in order to compute $\Ext_\Lambda^1$. We note that by the tensor-Hom adjunction,
\begin{align*}
&\Hom_\Lambda(\Lambda e_i \otimes_{\kappa(d_i)} M_i, N) \cong \Hom_{\kappa(d_i)}(M_i, \Hom_\Lambda(\Lambda e_i, N)) \cong \Hom_{\kappa(d_i)}(M_i, N_i) \\
&\Hom_\Lambda(\Lambda e_{j} \otimes_{\kappa(d_j)} E(j\ot i) \otimes_{\kappa(d_i)} M_{i}, N) \cong \Hom_{\kappa(d_j)}(E(j\ot i)\otimes_{\kappa(d_i)} M_{i}, N_{j})
\end{align*}
We thus obtain a complex
\[\bigoplus_{1 \leq i\leq n} V_i^{M,N} \xrightarrow{d^0_{M, N}} \bigoplus_{1 \leq i \neq j \leq n} W_{j \from i}^{M,N} \xrightarrow{d^1_{M, N}}  \bigoplus_{1 \leq i\leq n} V_i^{M,N} \]
where
\[ V_i^{M, N} =  \Hom_{\kappa(d_i)}(M_i, N_i) \ \text{and} \ W_{j\from i}^{M,N} = \Hom_{\kappa(d_j)}(E(j\ot i) \otimes_{\kappa(d_i)} M_i, N_j) . \]
with
\[
d^0_{M, N}((f_i)_{i})(x\otimes m) = \left(xf_{i}(m) - f_{j}(xm)\right)_{i\neq j}
\]
\begin{multline*}
d^1_{M, N}((g_{j,i})_{i\neq j})(m) =\\ (\sum_{j\neq i} \sgn(j, i) \sum_{i \overset{a}{\longrightarrow} j}\sum_{k} (b^{ji}_k)^\ast a^{\ast} g_{j,i}(a b^{ji}_k \otimes m) 
 				 + g_{i,j}((b^{ji}_k)^\ast a^{\ast} \otimes a b^{ji}_k m))_{i}
\end{multline*}
The first two homologies of this complex will compute $\Hom_\Lambda(M, N)$ and $\Ext^1_\Lambda(M, N)$. The symmetry in Theorem \ref{CB Identity} will then come from dualizing it.

\begin{lemma}
\label{self-duality}
The above complex is isomorphic to its $\kappa$-dual with $M$ and $N$ swapped:
\[
\bigoplus_{1 \leq i\leq n} (V_i^{N,M})^* \xrightarrow{d^{1*}_{N, M}} \bigoplus_{1 \leq i \neq j \leq n} (W_{j \from i}^{N,M})^* \xrightarrow{d^{0*}_{N, M}}  \bigoplus_{1 \leq i\leq n} (V_i^{N,M})^*
\]
\end{lemma}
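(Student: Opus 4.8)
The plan is to apply $\Hom_\Lambda(-,N)$ to the projective resolution of $M$ from Lemma~\ref{resolution}, obtaining the three-term complex displayed just before Lemma~\ref{self-duality}, and then to exhibit explicit $\kappa$-linear isomorphisms of this complex with the $\kappa$-dual of the analogous complex in which $M$ and $N$ are interchanged, checking that these isomorphisms commute with the differentials. Every one of the identifications will come from a canonical nondegenerate pairing assembled from the trace forms of the field extensions $\kappa(d_i)/\kappa$ together with the given duality pairing on the spaces $E(j\ot i)$.

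For the outer terms $V_i^{M,N}=\Hom_{\kappa(d_i)}(M_i,N_i)$, composition followed by the $\kappa(d_i)$-linear trace of the resulting endomorphism and then the field trace gives a pairing
\[ \Hom_{\kappa(d_i)}(M_i,N_i)\times\Hom_{\kappa(d_i)}(N_i,M_i)\longrightarrow\kappa,\qquad (f,g)\longmapsto \tr_{\kappa(d_i)/\kappa}\bigl(\tr(g\circ f)\bigr), \]
which is perfect because both the field trace form and the composition-trace pairing are nondegenerate; this identifies $V_i^{M,N}$ with $(V_i^{N,M})^{*}$. For the middle terms $W_{j\from i}^{M,N}=\Hom_{\kappa(d_j)}(E(j\ot i)\otimes_{\kappa(d_i)}M_i,\,N_j)$, one combines the $\kappa(d_{ij})$-bilinear duality pairing $E(j\ot i)\times E(i\ot j)\to\kappa(d_{ij})$ (under which $a\mapsto a^{*}$ sends each basis to its dual), the trace maps down to $\kappa$, and the same composition-trace recipe, to obtain a perfect pairing of $W_{j\from i}^{M,N}$ with $W_{i\from j}^{N,M}$. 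Summing over all ordered pairs $i\neq j$, so that the $(j\ot i)$-summand on one side is matched with the $(i\ot j)$-summand on the other, yields an isomorphism between the middle term of the first complex and that of the dual complex.

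The crux is to verify that, under these identifications, $d^{0}_{M,N}$ is transposed to $d^{1}_{N,M}$ and $d^{1}_{M,N}$ to $d^{0}_{N,M}$; equivalently, to establish the two adjunction identities
\[ \langle d^{0}_{M,N}(f),\,g\rangle \;=\; \varepsilon\,\langle f,\,d^{1}_{N,M}(g)\rangle, \qquad \langle d^{1}_{M,N}(h),\,f\rangle \;=\; \varepsilon\,\langle h,\,d^{0}_{N,M}(f)\rangle \]
for a single fixed sign $\varepsilon\in\{\pm1\}$ and all $f,g,h$ in the appropriate terms. Using the explicit formulas from Lemma~\ref{resolution}, each side expands as a sum over vertices $j\neq i$, arrows $i\xrightarrow{a}j$, and the basis index $k$; one moves the scalars $b^{ji}_{k}$ and $(b^{ji}_{k})^{*}a^{*}$ across the trace pairings (here it helps, as noted before Lemma~\ref{bimoduleresolution}, that the $b^{ji}_{k}$ may be chosen in $\kappa(d_i)$), uses that $\{b^{ji}_{k}\}$ and $\{(b^{ji}_{k})^{*}\}$ are dual under the trace form to collapse the sum over $k$, and uses duality of the bases $\{a\}$ and $\{a^{*}\}$ of the $E$'s. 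The genuinely delicate point is the signs: the $d^{0}$-term carries no sign while $d^{1}$ carries the factor $\sgn(j,i)$ (equivalently $\sgn(i,j)$ at the paired vertex), and the relation $\sgn(i,j)=-\sgn(j,i)$ is exactly what makes both displayed equations hold with the same $\varepsilon$. This antisymmetry is the algebraic shadow of the ``$\sum a^{*}a=\sum aa^{*}$'' symmetry built into the preprojective relation, and it is precisely what forces $d^{1}$ to be the transpose of $d^{0}$.

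Granting the two adjunction identities, the vertex-wise isomorphisms assemble into an isomorphism of three-term complexes, which is the statement of Lemma~\ref{self-duality}. As explained in the surrounding text, dualizing then shows that the three cohomologies of the $\Hom$-complex compute $\Hom_\Lambda(M,N)$, $\Ext^{1}_\Lambda(M,N)$ and $\Hom_\Lambda(N,M)^{*}$, and comparing Euler characteristics with the elementary count of $\dim_\kappa V_i^{M,N}$ and $\dim_\kappa W_{j\from i}^{M,N}$ in terms of $\dim_{\kappa(d_i)}M_i$, $\dim_{\kappa(d_j)}N_j$, $d_i$ and $A_{ij}$ (using $\dim_\kappa E(j\ot i)=-d_iA_{ij}=-d_jA_{ji}$) recovers $(\dim M,\dim N)$, proving Theorem~\ref{CB Identity}. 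The main obstacle will be the sign-and-scalar bookkeeping in the adjunction identities: one must track carefully which field each tensor product is taken over, the asymmetric roles of $i$ and $j$ in $W_{j\from i}$, and the placement of the various trace maps, since a single misplaced trace or an errant sign would break the claimed transpose relationship.
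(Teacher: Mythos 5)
Your proposal follows essentially the same route as the paper's proof: identify the vertex terms $V_i^{M,N}$ with $(V_i^{N,M})^*$ by the trace-of-composite pairing, identify $W_{j\from i}^{M,N}$ with $(W_{i\from j}^{N,M})^*$ using the $E$-duality together with field traces, collapse the sums over $k$ via the dual-basis identity $\sum_k (b^{ji}_k)^\ast b^{ji}_k = 1$ (the paper's Lemma~\ref{tracepairingfact}, using that the $b^{ji}_k$ may be chosen in $\kappa(d_i)$), and then check that the differentials are transposed to one another, deferring the same ``tedious but straightforward'' computation that the paper defers to \cite{GLS}.

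The one inaccuracy is in your sign claim: with the edge pairing built only from the $E$-duality and traces, the two adjunction identities do not hold with a single global $\varepsilon$, because the contributions from the component $(j\from i)$ enter with the factor $\sgn(j,i)$, which varies with the pair $(i,j)$. The paper resolves this by twisting the edge pairing itself by $\sgn(j,i)$, and even then the outcome is that $d^{1*}_{N,M}$ is identified with $d^{0}_{M,N}$ while $d^{0*}_{N,M}$ is identified with $-d^{1}_{M,N}$ --- opposite global signs rather than a common $\varepsilon$. This is harmless for the lemma (component-wise or global signs can be absorbed into the identification maps, and negating one differential gives an isomorphic complex), so your argument goes through once the signs are placed in the pairing as the paper does; but as literally stated, the claim that antisymmetry of $\sgn$ yields both identities with the same $\varepsilon$ for your untwisted pairing is not correct.
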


\begin{proof}
First, we should identify the terms of the two complexes, which we do by constructing perfect pairings between them. There is a perfect pairing 
\[\bigoplus_{1\leq i\leq n} V_i^{M, N}\times \bigoplus_{1\leq i\leq n}V_i^{N, M}\to \kappa\] 
defined by 
\[
((f_i)_{i}, (\phi_i)_{i}) \mapsto \sum_{i}\tr_\kappa(\phi_i\circ f_i)
\]
We call this the \newword{vertex pairing}.

Next, we have an isomorphism
\[
\theta: \Hom_{\kappa(d_j)}(E(j\ot i) \otimes_{\kappa(d_i)} M_{i}, N_{j}) \xrightarrow{\sim} \Hom_{\kappa(d_i)}(M_{i}, E(i\ot j)\otimes_{\kappa(d_j)} N_{j})
\]
which we can define using our chosen $\kappa(d_j)$-basis of $E(j\ot i)$:
\[
\theta(g)(m) = \sum_{i \overset{a}{\longrightarrow} j}\sum_{k} (b^{ji}_k)^\ast a^{\ast} \otimes g(a b^{ji}_k \otimes m)
\]
We can then combine this identification with the trace pairing and twist by the sign factor to get a perfect pairing
\[
\bigoplus_{1\leq i\neq j\leq n} W_{j\ot i}^{M, N}\times \bigoplus_{1\leq i\neq j\leq n} W_{j\ot i}^{N, M}\to \kappa
\]
given by
\begin{align*}
((g_{j,i})_{i\neq j}, (\psi_{j, i})_{i\neq j}) &\mapsto \sum_{i\neq j} \sgn(j, i)\tr_\kappa(\psi_{i, j}\circ \theta(g_{j, i}))\\
&= \sum_{i\neq j} \sgn(j, i)\tr_\kappa\left(\sum_{i \overset{a}{\longrightarrow} j}\sum_{k} \psi_{i,j}((b^{ji}_k)^\ast a^{\ast} \otimes g_{j,i}(a b^{ji}_k \otimes -))\right)
\end{align*}
We call this the \newword{edge pairing}.

The presence of the field elements $b^{ji}_k$ is an additional complication compared to the proof in \cite{GLS}, and the discrepancy between $\{b^{ji}_k\}$ being a $\kappa(d_j)$-basis of $\kappa(d_{ij})$ and $\{b^{ij}_k\}$ being a $\kappa(d_i)$-basis prevents a direct adaptation. However, we can eliminate the $b$'s.

\begin{lemma}
\label{edgepairingsimplification}
The edge pairing is given by
\[
((g_{j,i})_{i\neq j}, (\psi_{j, i})_{i\neq j}) \mapsto \sum_{i\neq j} \sgn(j, i)\tr_\kappa\left(\sum_{i \overset{a}{\longrightarrow} j} g_{j, i}(a \otimes \psi_{i,j}(a^*\otimes -))\right)
\]
\end{lemma}

We show this using the following lemma on the trace pairing.
\begin{lemma}
\label{tracepairingfact}
Let $L / K$ be a separable field extension. Let $b_1,\ldots,b_n$ be a $K$-basis of $L$, and let $b_1^\ast,\ldots, b_n^\ast$ be the dual basis with respect to the trace pairing $\tr_K(-, -): L\times L\to K$. Then $\sum_{i=1}^n b_i^\ast b_i = 1$.
\end{lemma}

\begin{proof}
First, we note that the value of $\sum_{i=1}^n b_i^\ast b_i$ is independent of our choice of basis. This is because, under the isomorphism $L\otimes_K L \cong L^*\otimes_K L \cong \Hom_K(L, L)$, where the first map is induced by the trace pairing, the element $\sum_{i=1}^n b_i^\ast \otimes b_i$ corresponds to the identity map regardless of basis.

Let $\mathbb{K}$ be the algebraic closure of $K$ and let $A$ be the $\KK$-algebra $L \otimes_K \KK$. The trace pairing $L \times L \to K$ extends to a bilinear pairing $A \times A \to \KK$, and the ring structure on $L$ extends to a $\KK$-algebra structure on $A$, so we can do the computation in $A$.
But, since $L/K$ is separable, $A \cong \KK^{\times n}$ as a ring. If $e_1$, $e_2$, \dots, $e_n$ is a basis of primitive idempotents for $A$, then $(e_1, e_2, \ldots, e_n)$ forms a self dual basis of $A$ and $\sum e_i e_i = \sum e_i = 1$, as desired.
%
%
\end{proof}

\begin{proof}[Proof of Lemma \ref{edgepairingsimplification}]
By cyclically permuting within it, we can rewrite the trace term appearing in the definition of the edge pairing as
\[
\tr_\kappa\left(\sum_{i \overset{a}{\longrightarrow} j}\sum_{k} a b^{ji}_k \otimes \psi_{i,j}((b^{ji}_k)^\ast a^{\ast} \otimes g_{j,i}(-))\right)
\]
Because we assumed $b^{ji}_k\in \kappa(d_i)$, we can use linearity to pass it under $\psi_{i, j}$, next to $(b^{ji}_k)^*$, and then apply Lemma \ref{tracepairingfact} to eliminate the inner sum.
\end{proof}

With formulas for the vertex and edge pairings in place, it is tedious but straightforward to check that, under the resulting identifications $(V_i^{M, N})^* \cong V_i^{N, M}$ and $(W_{j\ot i}^{M, N})^*\cong W_{i\ot j}^{N, M}$, $d^{1*}_{N, M}$ is identified with $d^0_{M, N}$ and $d^{0*}_{N, M}$ is identified with $-d^1_{M, N}$. One can do the same computations as in \cite{GLS} while eliminating $b$'s using the trick of Lemma \ref{tracepairingfact}.

\end{proof}

\begin{proof}[Proof of Theorem \ref{CB Identity}]

Lemma \ref{self-duality} tells us that, in addition to first two homologies of our complex computing $\Hom_{\Lambda}(M, N)$ and $\Ext^1_\Lambda(M, N)$, the third is isomorphic to $\Hom_{\Lambda}(N, M)^*$. Thus we can compute the alternating sum
\[
\dim_\kappa(\Hom_\Lambda(M, N)) - \dim_\kappa(\Ext^1_\Lambda(M, N)) + \dim_\kappa(\Hom_\Lambda(N, M))
\]
as the alternating sum of the $\kappa$-dimensions of our complex.

We have
\[
\dim_{\kappa}(\Hom_{\kappa(d_i)}(M_i, N_i)) = d_i\dim_{\kappa(d_i)}(M_i)\dim_{\kappa(d_i)}(N_i).
\]

Additionally,
\begin{align*}
\dim_{\kappa(d_j)}(E(j\ot i)\otimes_{\kappa(d_i)} M_i) &= \frac{d_i}{d_j}\dim_{\kappa(d_i)}(E(j\ot i)\otimes_{\kappa(d_i)} M_i) \\
&= -\frac{d_i A_{ij}}{d_j}\dim_{\kappa(d_i)}(M_i)
\end{align*}
so
\[
\dim_\kappa(\Hom_{\kappa(d_j)}(E(j\ot i) \otimes_{\kappa(d_i)} M_i, N_j)) =
  -d_i A_{ij}\dim_{\kappa(d_i)}(M_i)\dim_{\kappa(d_j)}(N_j)
\]
Taking the alternating sum, we get
\[
\sum_{1\leq i\leq n} 2d_i\dim_{\kappa(d_i)}(M_i)\dim_{\kappa(d_i)}(N_i) + \sum_{1\leq i\neq j\leq n} d_i A_{ij}\dim_{\kappa(d_i)}(M_i)\dim_{\kappa(d_j)}(N_j)
\]
which is exactly $(\dim M, \dim N)$.
\end{proof}

\end{document}